\documentclass[reqno,12pt]{amsart}

\usepackage[a4paper,
left=2.2cm,right=2.2cm,
top=2.3cm,bottom=2.3cm]{geometry}

\usepackage{graphicx}
\usepackage{amsmath,amssymb,amsfonts,amsthm,amscd}
\usepackage{mathrsfs}
\usepackage{array}
\usepackage{enumerate}
\usepackage{xcolor}
\usepackage{textcomp}
\usepackage[colorlinks=true,linkcolor=blue,citecolor=blue,urlcolor=blue]{hyperref}
\usepackage{float}
\usepackage{subcaption}
\usepackage{longtable}
\usepackage{rotating}
\usepackage{booktabs}
\usepackage{adjustbox}
\usepackage{threeparttable}
\usepackage{multirow}
\usepackage{listings}

\usepackage{algorithm}
\usepackage{algorithmicx}
\usepackage{algpseudocode}

\usepackage{tikz}
\usepackage{tikz-cd}
\usetikzlibrary{arrows.meta,positioning,calc,intersections,backgrounds,matrix}
\usepackage{pgfplots}
\pgfplotsset{compat=1.16}

\usepackage{caption}

\graphicspath{{figs/}}

\newcolumntype{L}[1]{>{\raggedright\arraybackslash}p{#1}}

\DeclareMathOperator{\diag}{diag}
\newcommand{\R}{\mathbb{R}}

\numberwithin{equation}{section}

\newtheorem{theorem}{Theorem}[section]

\newtheorem{lemma}[theorem]{Lemma}
\newtheorem{corollary}[theorem]{Corollary}

\theoremstyle{definition}
\newtheorem{definition}[theorem]{Definition}
\newtheorem{assumption}[theorem]{Assumption}
\newtheorem{example}[theorem]{Example}
\newtheorem{remark}[theorem]{Remark}

\title[Yau's Affine Normal Descent]
{Yau's Affine Normal Descent: Algorithmic Framework and Convergence Analysis}

\author{Yi-Shuai Niu$^{1}$, Artan Sheshmani$^{1,3}$, and Shing-Tung Yau$^{1,2}$}

\begin{document}

\begin{abstract}
We propose Yau's Affine Normal Descent (YAND), a geometric framework for smooth unconstrained optimization in which search directions are defined by the equi--affine normal of level-set hypersurfaces.
The resulting directions are invariant under volume-preserving affine transformations and intrinsically adapt to anisotropic curvature.
Using the analytic representation of the affine normal from affine differential geometry, we establish its equivalence with the classical slice--centroid construction under convexity.
For strictly convex quadratic objectives, affine-normal directions are collinear with Newton directions, implying one-step convergence under exact line search.
For general smooth (possibly nonconvex) objectives, we characterize precisely when affine-normal directions yield strict descent and develop a line-search-based YAND.
We establish global convergence under standard smoothness assumptions, linear convergence under strong convexity and Polyak--{\L}ojasiewicz conditions, and quadratic local convergence near nondegenerate minimizers.
We further show that affine-normal directions are robust under affine scalings, remaining insensitive to arbitrarily ill-conditioned transformations.
Numerical experiments illustrate the geometric behavior of the method and its robustness under strong anisotropic scaling.
\end{abstract}

\subjclass[2020]{90C15, 90C30, 49M37, 53A15}

\keywords{Yau's affine normal descent, affine differential geometry, affine invariance, nonlinear optimization, global convergence, local quadratic convergence}

\maketitle
\tableofcontents

\section{Introduction}\label{sec:intro}

Designing optimization algorithms that are robust to scaling, conditioning,
and coordinate transformations remains a central challenge in mathematical programming.
A fundamental difficulty lies in the mismatch between the geometry of the objective
function and the geometry implicitly imposed by the algorithm.
Classical methods such as gradient descent and Newton's method rely on
Euclidean or locally quadratic models, whose performance can degrade severely
under affine transformations of the variables.
Even for simple convex problems, affine scalings can arbitrarily distort
the geometry of level sets, leading to ill-conditioning and poor algorithmic behavior.

A recurring geometric principle is that optimization performance is governed
by the shape of level sets.
Gradient descent performs well when level sets are nearly spherical,
while Newton's method is exact for strictly convex quadratic functions
with ellipsoidal level sets.
However, these methods do not derive their search directions from the intrinsic
geometry of level sets themselves.
This raises a fundamental question:

\emph{Can one design optimization methods whose search directions are determined
directly by the intrinsic geometry of level sets, and are therefore inherently
affine invariant?}

Affine invariance is widely regarded as a desirable structural property
for mitigating sensitivity to scaling and reparameterization.
Existing approaches achieve invariance by prescribing
a geometry on the ambient space, such as through
barrier functions, norm-based structures, or Minkowski gauges
\cite{NesterovNemirovskii1994IP,NesterovTodd1997SelfScaled,
dAspremontGuzmanJaggi2018AffineInvariant,DoikovNesterov2023ContractingPoint}.
In contrast, we derive invariance directly from the intrinsic
geometry of level sets, without relying on any externally prescribed metric.

Specifically, we consider the smooth unconstrained problem
\[
    \min_{x\in\R^{n+1}} f(x),
\]
where $f$ is sufficiently smooth.
At a current iterate $x_k$, we examine the level-set hypersurface
\[
    \mathcal{L}_k := \{x : f(x)=f(x_k)\},
\]
and extract a search direction from its equi--affine geometry.

The geometric object underlying our construction is the \emph{affine normal},
a classical notion in affine differential geometry, defined as a canonical transversal direction
determined solely by the local shape of a hypersurface,
independent of parametrization,
and invariant under unimodular (volume-preserving) affine transformations.
Unlike the Euclidean normal (the gradient),
the affine normal encodes higher-order geometric information
in a coordinate-free manner. For quadratic functions with ellipsoidal level sets,
the affine normal at every point points directly toward the unique critical point.
Consequently, affine normal directions coincide with Newton directions
in the quadratic case,
although the affine normal itself is defined independently of
second-order Taylor expansions.
This geometric exactness motivates its use
as a descent direction beyond quadratic models.

The use of affine-normal geometry for optimization was initiated by
Cheng--Cheng--Yau~\cite{ChengChengYau2005},
who proposed deriving search directions from equi--affine normal directions
of level sets.
This work reveals a fundamental geometric principle:
optimization directions can be defined intrinsically from level-set geometry,
rather than from local Taylor expansions.
For convenience, we refer to methods that follow this principle as
\emph{Yau-type} (or \emph{Yau-like}) methods.
To date, however, no general algorithmic framework or convergence theory
based on this principle has been established.

We develop a complete optimization framework
based on affine-normal geometry.
We introduce the Yau's \emph{affine normal descent} (YAND) algorithm
for smooth unconstrained optimization and establish its fundamental
algorithmic properties.
Our analysis shows that the affine-normal direction provides a
curvature-aware search direction derived from the intrinsic geometry
of level sets, leading to strong invariance properties and favorable
local convergence behavior.

At each iteration, the search direction is given by the affine normal
to the current level set, followed by a line search.
Since the affine normal is defined only up to scale,
step sizes are determined independently using standard rules
(e.g., exact line search, Armijo, or strong Wolfe conditions).
By construction, the resulting direction is invariant under
unimodular affine transformations of variables. Conceptually, YAND provides a geometric interpretation of
curvature-aware optimization directions that depend only on
the intrinsic structure of level sets rather than on a particular
coordinate representation.

A central subtlety is that the affine normal is not automatically
a descent direction.
We show that strict descent holds precisely at \emph{elliptic points},
where the Hessian restricted to the tangent space of the level set
is positive definite.
At non-elliptic points,
the affine normal may fail to point inward,
which necessitates a computable ellipticity test
and a principled orientation correction.
This yields a well-defined algorithm applicable to general smooth (possibly nonconvex) objectives.

We establish global and local convergence guarantees for YAND.
Globally, under standard smoothness assumptions and appropriate line-search rules,
the method admits convergence guarantees comparable to first-order methods;
in particular, we obtain linear convergence under strong convexity
and Polyak--{\L}ojasiewicz conditions,
and gradient convergence under strong Wolfe conditions
for general nonconvex objectives.
Locally, near a nondegenerate minimizer,
the affine normal direction coincides with the Newton direction
up to second-order terms,
which implies local quadratic convergence under standard assumptions. This relationship provides a geometric interpretation of
Newton's method: in the quadratic case the affine normal
exactly recovers the Newton direction,
while beyond the quadratic setting it yields a curvature-aware
search direction derived directly from level-set geometry.
Importantly, affine-normal directions are defined intrinsically
from level-set geometry and do not rely on explicit Hessian inversion or quadratic models.
Thus, YAND combines robust global behavior with high local efficiency, while being intrinsically equi--affine invariant.

In addition, we study the behavior of affine-normal directions
under affine-scaled quadratic models and show that the resulting
search directions are unaffected by arbitrarily ill-conditioned
linear scalings. This provides a geometric explanation of the
robustness of affine-normal descent with respect to anisotropic
affine transformations. 

The main contributions of this paper are as follows:

\begin{itemize}
\item
We establish the YAND framework, which defines
optimization directions intrinsically from the equi--affine geometry
of level sets.

\item
We characterize precisely when affine-normal directions yield strict descent,
leading to a well-defined algorithm for general smooth (possibly nonconvex) objectives.

\item
We establish global convergence under standard line-search rules,
linear convergence under strong convexity and Polyak--{\L}ojasiewicz conditions,
and quadratic local convergence near nondegenerate minimizers.

\item
We prove that affine-normal directions are inherently robust under
arbitrarily ill-conditioned affine scalings.

\end{itemize}

Finally, we present a series of numerical experiments that illustrate
the geometric behavior of the proposed method.
Rather than performing large-scale benchmarking,
the experiments are designed to highlight characteristic phenomena,
including robustness under affine scalings,
behavior on ill-conditioned quadratic models,
and representative convex/nonconvex test problems.
Comparisons with classical methods such as gradient descent and
Newton's method further illustrate the distinct convergence behavior
of affine-normal descent predicted by the theory. Large-scale implementation and benchmarking are deferred to future work.

Table~\ref{tab:comparison} positions YAND relative to representative
geometry-aware optimization paradigms,
highlighting the geometric object defining each direction,
the required information,
and key structural properties.

\begin{table}[t]
\centering
\caption{High-level comparison of YAND with representative geometric optimization paradigms. 
Convergence rates refer to standard theoretical regimes (e.g., strong convexity, PL condition, or local analysis).}
\label{tab:comparison}

\begin{adjustbox}{max width=\textwidth}
\begin{minipage}{\linewidth}
\footnotesize
\setlength{\tabcolsep}{2.5pt}
\renewcommand{\arraystretch}{1.1}

\begin{tabular}{L{2.2cm} L{2.8cm} L{2.6cm} L{3.0cm} L{2.6cm} L{2.8cm}}
\toprule
Method 
& Direction-defining object 
& Information needed 
& Typical guarantee 
& Affine invariance
& Typical limitation \\ 
\midrule

Newton / damped Newton
& Local quadratic model via $\nabla^2 f(x)$
& Gradient + Hessian solve
& Quadratic local convergence; linear global with damping
& Linear affine invariant
& Requires SPD (or regularization); unstable far from minimizers \\

\addlinespace

Quasi--Newton (BFGS/L-BFGS)
& Secant-based metric approximation
& Gradients; curvature pairs
& Superlinear local convergence (BFGS); linear global under standard assumptions
& Not affine invariant
& Sensitive to scaling \\

\addlinespace

Natural gradient \cite{Amari1998} / Riemannian methods \cite{Absil2008}
& Riemannian metric (e.g., Fisher information)
& Gradient + metric (or inverse)
& Typically linear convergence
& Coordinate invariant (metric-dependent)
& Requires problem-specific metric; curvature assumptions \\

\addlinespace

Mirror descent \cite{BeckTeboulle2003}
& Bregman divergence (mirror map)
& Gradient; prox/mirror step
& $O(1/k)$ for convex problems; $O(1/k^2)$ with acceleration
& Not affine invariant
& Performance depends on mirror choice \\

\addlinespace

Interior-point (self-concordant)
& Barrier geometry (Dikin ellipsoids)
& Barrier + derivatives; Newton steps
& Polynomial-time global complexity; quadratic local convergence
& Affine invariant under barrier geometry
& Restricted to structured convex problems \\

\addlinespace

YAND
& Equi-affine normal of level sets
& First/second derivatives or moment approximation
& Linear under PL/strong convexity; quadratic local convergence; exact on strictly convex quadratics
& Equi-affine invariant (volume-preserving maps)
& True affine normal inward only at elliptic points; correction needed otherwise \\

\bottomrule
\end{tabular}

\end{minipage}
\end{adjustbox}
\end{table}
The remainder of the paper is organized as follows.
Section~\ref{sec:affine-normal} reviews the affine normal construction
and its analytic representation for level sets.
Section~\ref{sec:newton-equivalencequadratic}
establishes the correspondence between affine normal directions
and Newton steps for strictly convex quadratic objectives.
Section~\ref{sec:AN-descent}
characterizes descent and ellipticity conditions.
Section~\ref{sec:examples}
provides illustrative examples for computing the affine normal.
Section~\ref{sec:AND}
introduces the YAND algorithm together with
line-search strategies.
Section~\ref{sec:YAND-convergence}
establishes global and local convergence results.
Section~\ref{sec:super-quadratic}
discusses the potential for beyond-quadratic convergence rates,
highlighting the interplay between local order and global geometry.
Section~\ref{sec:affine-scaling}
analyzes affine-scaling models and explains the robustness of
affine-normal directions with respect to condition numbers.
Numerical experiments are reported in Section~\ref{sec:numerics},
followed by concluding remarks.

\section{Affine normal direction}\label{sec:affine-normal} 
The concept of the affine normal emerged from affine differential geometry in the early 20th century through the work of Blaschke, Berwald, and others; see, e.g., \cite{NomizuSasaki1994,LiSimonZhao1991}. Unlike Euclidean geometry, which privileges orthogonal transformations, affine geometry studies properties invariant under the larger group of volume-preserving affine transformations. The affine normal represents the natural ``normal direction'' from this affine-invariant perspective.

\subsection{Two formulas for the affine normal direction}

\subsubsection{Derivative formula (analytical expression)}
Let $f:\mathbb{R}^{n+1}\rightarrow\mathbb{R}$, and at a point $z$, consider the level set hypersurface $M=\{x: f(x)=f(z)\}$. Rotate the coordinate system so that the last axis aligns with $\nabla f(z)$ (the ``normal-aligned coordinates''). Define
\[
f_{i}=\partial_{x_{i}}f,\quad f_{ij}=\partial_{x_{i}}\partial_{x_{j}}f,\quad f_{pqr}=\partial_{x_{p}}\partial_{x_{q}}\partial_{x_{r}}f,
\]
and denote $[f^{ij}]=[f_{ij}]^{-1}$ as the inverse of the \textbf{tangent-tangent} block of the Hessian $(i,j,p,q,r\in\{1,\ldots,n\})$. Then the affine normal direction (up to scale) can be written as (cf. Cheng-Cheng-Yau~\cite{ChengChengYau2005})
\begin{equation}
    \label{eq:AN-analytic}
\boxed{\quad
d_{\mathrm{AN}}(z)\ \propto\
\begin{pmatrix}
\displaystyle f^{ij}\Big(-\frac{1}{n+2}\,\|\nabla f\|\,f^{pq}f_{pqi}\ +\ f_{n+1,i}\Big)\\[6pt]
-1
\end{pmatrix}
\quad}
\end{equation}
This gives the coordinate components in the ``normal-aligned'' system; the mean curvature factor appearing in the full geometric derivation is ignored here since only the direction is relevant.

\subsubsection{Slice-centroid formula (geometric approximation)}
At a point $z$, consider the tangent plane
\[
P(x)=\nabla f(z)\cdot(x-z)=0,\qquad\text{and its parallel family }P(x)=C.
\]
The sublevel set is
\[
\Omega_{z}:=\{x:\ f(x)\leq f(z)\}.
\]
For each $C$, define the slice
\[
S_{C}:=\{x:\ P(x)=C\}\cap\Omega_{z},
\]
and let $g(C)$ be the centroid of $S_{C}$, when this region is a convex body. Choosing the normal so that $C<0$ corresponds to the interior of $\Omega_{z}$, we define the slice-centroid direction by
\[
d_{\mathrm{SC}}(z)
\ \propto\
\lim_{C\uparrow 0}\frac{g(C)-z}{-C},
\qquad
\text{if the limit exists.}
\]
Numerically, for small $\delta>0$,
\[
\widehat d_{\mathrm{SC}}(z)\ \propto\ \frac{g(-\delta)-z}{\delta},
\]
whose truncation error is $O(\delta)$. This formula yields the analytic affine normal (up to scale) precisely when $z$ is an elliptic point\footnote{A point $z$ is called \textbf{elliptic} if the tangent--tangent Hessian at $z$ is positive definite; in this case $M$ is locally strictly convex. It is called \textbf{hyperbolic} if that block is indefinite, and \textbf{parabolic} if it is singular.}, i.e., when the slice $S_{C}$ is convex for $C<0$ and shrinks smoothly to $z$. Outside the elliptic region (hyperbolic or parabolic points), the slices may be nonconvex, disconnected, or even unbounded, and $d_{\rm SC}(z)$ is no longer guaranteed to align with the analytic affine normal. Under the inward convention, whenever the centroid exists, the slice-centroid direction still has an inward/descent normal component, since
\[
\Bigl\langle \nabla f(z),\,\frac{g(C)-z}{-C}\Bigr\rangle=\frac{P(g(C))}{-C}=\frac{C}{-C} =-1.
\]
Thus the real difficulty outside the elliptic regime is geometric well-posedness and consistency, not the descent sign alone. Note that this method requires only first-order information (for constructing the tangent plane) and avoids third derivatives and matrix inversion, though computing the high-dimensional centroid $g(C)$ efficiently remains the main bottleneck.

\begin{remark}[Moment viewpoint]
From the perspective of affine integral geometry, the three constructions above fit naturally into a hierarchy of \textbf{geometric moments}. The $0$-th moment $\int_{K}1\,dx$ records only the total mass (volume) of a region $K$; the first moment $\int_{K}x\,dx$ determines its centroid; and the centered second moment $\int_{K}(x-G)(x-G)^{\top}dx$ captures local shape via an inertia (ellipsoid) tensor. In this hierarchy, volume, centroid, and curvature (encoded by the affine metric and ultimately the affine normal) correspond, respectively, to $0$-th, $1$-st, and $2$-nd order geometric moments.

At an \textbf{elliptic} point of the level set, the analytic derivative formula, the slice-centroid construction, and the cap-centroid construction all probe the same second-order geometry. When expressed in a normal-aligned coordinate chart and expanded to quadratic order, each construction recovers the same affine metric and therefore yields the same affine normal direction.

Thus the slice- and cap-based directions may be interpreted as integral (moment-based) realizations of the analytic affine normal at elliptic points. In the next section, we make this precise by showing that, after appropriate normalization, both the slice and the cap directions converge in the limit to the analytic affine normal, agreeing with it up to a positive scalar factor.
\end{remark}

\subsection{Equivalence of the analytical and slice-centroid formulas under convexity}

The slice-centroid formula captures the affine normal direction only when the level hypersurface is locally strictly convex. In general nonconvex situations, the slices may cease to be convex bodies: they can become disconnected, nonconvex, or unbounded, and the centroid trajectory may fail to agree with the analytical affine-normal direction. Under the inward convention, whenever the centroid exists, the corresponding slice-centroid direction is automatically inward in the normal component; the real issue is therefore geometric well-posedness and consistency, not descent sign alone.

\begin{theorem}[Consistency of slice-centroid and analytical affine normal under convexity]
Let $M=\{x: f(x)=f(z)\}$ be the level hypersurface at $z$, and assume that $M$ is \textbf{locally strictly convex} at $z$ (equivalently, the tangent-tangent Hessian of $f$ is positive definite at $z$). Let $g(C)$ be the centroid of the slice
\[
S_{C}=\{x:\ P(x)=C\}\cap\Omega_{z},\qquad\Omega_{z}=\{f\leq f(z)\},
\]
where $P(x)=\nabla f(z)\cdot(x-z)$ and $C<0$ corresponds to the interior. Then the inward one-sided limit defining $d_{\mathrm{SC}}(z)$ is well defined, and
\[
d_{\mathrm{SC}}(z)\ \parallel\ d_{\mathrm{AN}}(z).
\]
\end{theorem}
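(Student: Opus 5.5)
We work in the normal-aligned coordinates of \eqref{eq:AN-analytic}: translate so that $z=0$, rotate so that $\nabla f(0)=\gamma e_{n+1}$ with $\gamma=\|\nabla f(0)\|>0$, and write $x=(x',x_{n+1})$ with $x'\in\R^n$. Throughout we assume $f$ is $C^3$ near $z$; the quantitative remainder estimates in the limit step below may need $C^4$.

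\textbf{Step 1 (local graph).} By the implicit function theorem, near $0$ the level set $M$ is a graph $x_{n+1}=h(x')$ with $h(0)=0$ and $\nabla h(0)=0$. Since $f$ increases in the $e_{n+1}$ direction, $\Omega_z$ is locally $\{x_{n+1}\le h(x')\}$.

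\textbf{Step 2 (derivatives of $h$).} Differentiate $f(x',h(x'))\equiv f(0)$ twice and three times at $0$.
\begin{itemize}
\item The second derivatives give $h_{ij}=-f_{ij}/\gamma$. Set $A=[f_{ij}]_{i,j\le n}$, which is positive definite by hypothesis, so $h$ is strictly concave near $0$.
\item The third derivatives give
\[
\gamma h_{ijk}+f_{ijk}+f_{n+1,i}h_{jk}+f_{n+1,j}h_{ik}+f_{n+1,k}h_{ij}=0 .
\]
\end{itemize}
Contract the third-order identity with $A^{jk}=f^{jk}$ and use $f^{jk}h_{jk}=-n/\gamma$ and $f^{jk}h_{ik}=-\delta^j_i/\gamma$. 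This yields the key identity
\[
f^{pq}h_{pqi}=-\frac{1}{\gamma}\Big(f^{pq}f_{pqi}-\frac{n+2}{\gamma}f_{n+1,i}\Big).
\]
This is exactly where the factor $n+2$ of \eqref{eq:AN-analytic} is produced.

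\textbf{Step 3 (rescaled slices).} The slice $P=C$ with $C<0$ is the hyperplane $x_{n+1}=-t$, where $t=-C/\gamma>0$. Its projection is $S_C'=\{x':h(x')\ge -t\}$. This set is convex and compact for small $t$, because $h$ is strictly concave near $0$ and $h<-t$ outside a small neighbourhood, by the local argument. Rescale $x'=\sqrt t\,y$ and put $c(y)=\tfrac16 h_{pqr}(0)y_py_qy_r$ and $q(y)=\tfrac{1}{2\gamma}y^\top Ay$. The slice becomes
\[
E_t=\{y:\ q(y)-\sqrt t\,c(y)+O(t)\le 1\},
\]
a perturbation of the ellipsoid $E=\{q\le 1\}$.

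\textbf{Step 4 (centroid of the perturbed ellipsoid).} Since $E$ is centrally symmetric, its centroid is $0$. A first-order boundary-variation computation shows that the centroid of $\{q-\varepsilon c\le1\}$ equals
\[
\frac{\varepsilon}{|E|}\,\frac{d}{ds}\Big|_{s=1}\int_{\{q\le s\}}y\,c(y)\,dy+O(\varepsilon^2).
\]
To evaluate it, substitute $y=\sqrt{2\gamma}A^{-1/2}u$ and use the fourth-moment formula over the unit ball,
\[
\int_{B}u_au_bu_cu_d\,du=\frac{|B|}{(n+2)(n+4)}\big(\delta_{ab}\delta_{cd}+\delta_{ac}\delta_{bd}+\delta_{ad}\delta_{bc}\big).
\]
The result is $\kappa\,\varepsilon\,\gamma^2 A^{-1}\big(f^{pq}h_{pqi}\big)_i$ with an explicit constant $\kappa>0$. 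Returning to $x'$, with $\varepsilon=\sqrt t$ and the factor $\sqrt t$ from the rescaling, the centroid is
\[
g(C)=\big(t\,\kappa\gamma^2 A^{-1}(f^{pq}h_{pqi})_i+o(t),\ -t\big).
\]
Therefore $(g(C)-z)/(-C)$ converges as $C\uparrow0$, so $d_{\rm SC}(z)$ is well defined. By Step 2, the tangential part of the limit is proportional to $f^{ij}\big(-\tfrac{1}{n+2}\gamma f^{pq}f_{pqi}+f_{n+1,i}\big)$ and the normal part to $-1$, up to one common positive factor. This is $d_{\rm AN}(z)$.

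\textbf{Main obstacle.} I expect the main difficulty to be the matching of constants in Step 4, on two counts. First, the combination of the ellipsoid moment constant, the factor $1/6$ in $c$, the factor $3$ from symmetrizing $c$, and the powers of $\gamma$ must produce the tangential and normal components in exactly the ratio required by \eqref{eq:AN-analytic}, not merely the same tangential direction. I would first verify this ratio on a model surface $x_{n+1}=-|x'|^2/2+\lambda x_1^3$ before doing the general computation. Second, making the $o(t)$ remainder rigorous requires a uniform description of $\partial E_t$ as a radial graph over $\partial E$, obtained by the implicit function theorem on the unit sphere; this is routine but uses the $C^3$ (or $C^4$) regularity of $f$.
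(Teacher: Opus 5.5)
Your proposal is correct. The constant you flagged as the main obstacle comes out exactly as needed: $\kappa=1/(n+2)$. Set $M=\sqrt{2\gamma}\,A^{-1/2}$ and $w_p:=f^{qr}h_{pqr}$. Each of the three pairings in the ball moment formula contributes $(MM^\top)_{ip}(MM^\top)_{qr}h_{pqr}=4\gamma^2(A^{-1}w)_i$, so
\[
\frac{d}{ds}\Big|_{s=1}\int_{\{q\le s\}}y\,c(y)\,dy
=|\det M|\cdot\frac16\cdot 3\cdot 4\gamma^2\cdot\frac{|B|}{(n+2)(n+4)}\cdot\frac{n+4}{2}\,A^{-1}w
=\frac{\gamma^2}{n+2}\,|E|\,A^{-1}w .
\]
By your Step 2 identity, $\frac{\gamma^2}{n+2}A^{-1}w=\tau$, where $\tau_j:=f^{ij}\bigl(-\tfrac{\gamma}{n+2}f^{pq}f_{pqi}+f_{n+1,i}\bigr)$. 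Hence $g(C)=(t\tau+o(t),\,-t)$ and $(g(C)-z)/(-C)\to\gamma^{-1}(\tau,-1)$. This is \eqref{eq:AN-analytic} with exactly the right tangential/normal ratio; any other $\kappa$ would give $(\kappa(n+2)\tau,-1)$, which is not parallel. For $n=1$ this reduces to the chord-midpoint displacement $t\bigl(f_{21}/f_{11}-\tfrac{\gamma}{3}f_{111}/f_{11}^2\bigr)$, which matches the formula used in Section~\ref{sec:examples}. Also, $C^3$ already suffices. The symmetric difference between $E_t$ and $\{q-\sqrt t\,c\le 1\}$ lies in $\{|q-\sqrt t\,c-1|\le \eta_t\}$ with $\eta_t=o(\sqrt t)$. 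That is a bounded set of measure $O(\eta_t)$, so the rescaled centroid moves by only $o(\sqrt t)$.

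Your route differs from the paper's. The paper first applies an equi-affine change of variables to reach the normal form $t=\tfrac12|u|^2+\tfrac16C_{ijk}u_iu_ju_k+O(|u|^4)$. In these coordinates the affine normal is the $t$-axis and the cubic form is apolar. The first-order centroid shift is then a multiple of $C_{ibb}=0$ by the spherical fourth-moment identity, so $g(r)=(O(r^{3/2}),r)$. The result is carried back because affine maps send parallel slices to parallel slices and centroids to centroids. That argument is coordinate-free and needs no constant matching. What it proves directly, though, is parallelism with the geometric (Blaschke) normal. It relies on two further inputs:
\begin{enumerate}
\item the existence of the apolar normal form, which requires shearing the transversal axis onto the affine normal, not just diagonalizing the second fundamental form;
\item the identification of that normal with \eqref{eq:AN-analytic}, which is imported from Cheng--Cheng--Yau.
\end{enumerate}
You stay in the Euclidean normal-aligned frame, where the first-order shift does not vanish, and match it to \eqref{eq:AN-analytic} through the contracted third-order identity. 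So you prove the statement literally as written and rederive \eqref{eq:AN-analytic} along the way, including where the factor $n+2$ comes from. The price is the exact bookkeeping above. The paper's Step 5 is in fact the special case $w=0$ of your Step 4, since apolarity of $h_{pqr}$ with respect to $h_{ij}$ is exactly $f^{pq}h_{pqi}=0$.

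One caveat applies to both arguments: each only examines the part of $\Omega_z$ near $z$. For general $f$, distant pieces of $\Omega_z$ may meet $\{P=C\}$ for all small $C<0$ and destroy the limit, so $S_C$ must be understood as localized near $z$. This is automatic when $\Omega_z$ is convex, since then $\Omega_z\cap\{P=0\}=\{z\}$ and the slices shrink to $z$.
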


\begin{proof}
\textbf{Step 0 (Setup and choice of coordinates)}. By an equi-volume affine change of variables, translate $z$ to the origin, align the tangent plane with $\{t=0\}$, and align the normal $\nu=\nabla f(z)/\|\nabla f(z)\|$ with $e_{n+1}$. Denote the local coordinates by $(u,t)\in\mathbb{R}^{n}\times\mathbb{R}$.

\textbf{Step 1 (Local graph of the hypersurface)}. In these coordinates, after a further linear transformation that diagonalizes the second fundamental form, the hypersurface $M$ has the expansion
\[
t=\Phi(u)=\tfrac{1}{2}|u|^{2}+\tfrac{1}{6}\,C_{ijk}u_{i}u_{j}u_{k}+O(|u|^{4}),\qquad u\in\mathbb{R}^{n},
\]
where $(C_{ijk})$ is the totally symmetric Pick cubic form, satisfying the apolar (trace-free) condition
\[
\sum_{j=1}^{n}C_{ijj}=0,\quad i=1,\ldots,n.
\]
The absence of linear terms follows from the tangent plane condition, and the quadratic term is normalized to $\frac{1}{2}|u|^2$ by the choice of coordinates that diagonalizes the second fundamental form.

Under this equi-volume affine normalization, the analytical affine-normal direction at $z$ is the $+t$ direction.

\textbf{Step 2 (Sublevel set and slices)}. The local ``interior'' sublevel set is
\[
\Omega_{z}=\{(u,t):\ t\geq\Phi(u)\}.
\]
Take the parallel slice planes $P_{r}=\{(u,t):\ t=r\}$ with small $r>0$. Then
\[
S_{r}=\{(u,r):\ \Phi(u)\leq r\}
\]
is a bounded convex set in $\mathbb{R}^{n}$. Denote its centroid by
\[
g(r)=(\bar{u}(r),r),\qquad \bar{u}(r)=\frac{1}{V(r)}\int_{\{\Phi(u)\leq r\}}u\,du,\quad V(r)=\int_{\{\Phi(u)\leq r\}}1\,du.
\]

\textbf{Step 3 (Scaling)}. Let $u=\sqrt{r}\,y$ to obtain
\[
\Phi(\sqrt{r}\,y)=\tfrac{1}{2}r|y|^{2}+\tfrac{1}{6}r^{3/2}C_{ijk}y_{i}y_{j}y_{k}+O(r^{2}).
\]
Set $\varepsilon=\sqrt{r}$ and define
\[
\mathbb{D}_{\varepsilon}=\Big{\{}y:\ \tfrac{1}{2}|y|^{2}+\tfrac{1}{6}\varepsilon\,C(y,y,y)+O(\varepsilon^{2})\leq 1\Big{\}},\qquad \mathbb{B}=\{y:\tfrac{1}{2}|y|^{2}\leq 1\}.
\]
Then
\[
V(r)=r^{n/2}|\mathbb{D}_{\varepsilon}|,\qquad \int_{\{\Phi(u)\leq r\}}u\,du=r^{(n+1)/2}\int_{\mathbb{D}_{\varepsilon}}y\,dy,
\]
so
\[
\bar{u}(r)=\sqrt{r}\,\frac{\int_{\mathbb{D}_{\varepsilon}}y\,dy}{|\mathbb{D}_{\varepsilon}|}.
\]

\textbf{Step 4 (Boundary perturbation and Hadamard variation)}. Write the boundary as $y=\rho(\theta)\theta$, $\theta\in\mathbb{S}^{n-1}$. From
\[
F(\rho,\varepsilon;\theta)=\tfrac{1}{2}\rho^{2}+\tfrac{1}{6}\varepsilon\,\rho^{3}C(\theta,\theta,\theta)+O(\varepsilon^{2})-1=0,
\]
at $\varepsilon=0$ we have $\rho_{0}=\sqrt{2}$ and
\[
\delta\rho(\theta):=\left.\frac{d\rho}{d\varepsilon}\right|_{\varepsilon=0}=-\frac{\partial F/\partial \varepsilon}{\partial F/\partial \rho}\Big|_{\varepsilon=0,\rho=\rho_0}=-\tfrac{1}{3}C(\theta,\theta,\theta).
\]

By a Hadamard-type variation formula,
\[
\frac{d}{d\varepsilon}\Big{|}_{\varepsilon=0}\int_{\mathbb{D}_{\varepsilon}}\psi(y)\,dy=\int_{\partial\mathbb{B}}\psi(y)\,\delta\rho(\theta)\,d\sigma(\theta),\qquad y=\rho_{0}\theta.
\]

\textbf{Step 5 (Apply to $\psi(y)=y_{i}$)}. With $\psi(y)=y_{i}$,
\[
\frac{d}{d\varepsilon}\Big{|}_{\varepsilon=0}\int_{\mathbb{D}_{\varepsilon}}y_{i}\,dy=\int_{\partial\mathbb{B}} y_i \delta\rho(\theta)\,d\sigma(\theta)=-\tfrac{1}{3}\int_{\mathbb{S}^{n-1}} \theta_i C(\theta,\theta,\theta)\,d\sigma(\theta).
\]

Using the spherical moment identity from Lemma 3:
\[
\int_{\mathbb{S}^{n-1}}\theta_i\theta_a\theta_b\theta_c\,d\sigma(\theta)=\alpha_n\left(\delta_{ia}\delta_{bc}+\delta_{ib}\delta_{ac}+\delta_{ic}\delta_{ab}\right),
\]
where $\alpha_n=\frac{\sigma_{n-1}}{n(n+2)}$ and $\sigma_{n-1}$ denotes the surface area of $\mathbb{S}^{n-1}$, we compute:
\[
\int_{\mathbb{S}^{n-1}} \theta_i C(\theta,\theta,\theta)\,d\sigma(\theta)=\int_{\mathbb{S}^{n-1}} \theta_i \theta_a\theta_b\theta_c C_{abc}\,d\sigma(\theta)=\alpha_n C_{abc}\left(\delta_{ia}\delta_{bc}+\delta_{ib}\delta_{ac}+\delta_{ic}\delta_{ab}\right).
\]

This simplifies to:
\[
\alpha_n\left(C_{ibb} + C_{ibi} + C_{iib}\right)=3\alpha_n C_{ibb}=0,
\]
by the apolar condition $\sum_b C_{ibb}=0$. Therefore,
\[
\int_{\mathbb{D}_{\varepsilon}}y\,dy=O(\varepsilon^{2}),\quad \text{and}\quad |\mathbb{D}_{\varepsilon}|=|\mathbb{B}|+O(\varepsilon^{2}).
\]

\textbf{Step 6 (Centroid asymptotics and direction)}. Therefore
\[
\bar{u}(r)=\sqrt{r}\,\frac{O(\varepsilon^{2})}{|\mathbb{B}|+O(\varepsilon^{2})}=O(r^{3/2}),
\]
and hence
\[
g(r)=(O(r^{3/2}),\,r).
\]
Thus
\[
\lim_{r\downarrow 0}\frac{g(r)-z}{r}=(0,\ldots,0,1)\in\mathbb{R}^{n+1}.
\]

\textbf{Step 7 (Recovering the affine normal)}. Under the equi-volume affine normalization, $(0,\ldots,0,1)$ is the analytical affine-normal direction. Undoing the normalization preserves direction up to scale. Returning to the original notation $C=-r<0$ for inward slices gives
\[
\lim_{C\uparrow 0}\frac{g(C)-z}{-C}\ \parallel\ d_{\mathrm{AN}}(z).
\]
\end{proof}

The following Lemma is used in Step 5 to compute the first variation of the centroid.

\begin{lemma}[Fourth-moment tensor on the sphere]
For the unit sphere $\mathbb{S}^{n-1}$, the fourth-order spherical moment satisfies
\[
\int_{\mathbb{S}^{n-1}}\theta_{i}\theta_{a}\theta_{b}\theta_{c}\,d\sigma(\theta)=\alpha_{n}\left(\delta_{ia}\delta_{bc}+\delta_{ib}\delta_{ac}+\delta_{ic}\delta_{ab}\right),
\]
where $\alpha_{n}=\frac{\sigma_{n-1}}{n(n+2)}$ and $\sigma_{n-1}$ denotes the surface measure of $\mathbb{S}^{n-1}$. This identity follows from $O(n)$-invariance and the standard spherical moment formulas; see, for example, Fang and Zhang~\cite[Chapter~2]{FangZhang1990}.
\end{lemma}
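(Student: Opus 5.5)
The plan is to prove the identity by first reducing it to a statement about invariant tensors and then fixing the single constant by a trace computation. Define the symmetric $4$-tensor
\[
T_{iabc}:=\int_{\mathbb{S}^{n-1}}\theta_i\theta_a\theta_b\theta_c\,d\sigma(\theta).
\]
Since $d\sigma$ is invariant under $O(n)$, for every $Q\in O(n)$ we have $Q_{ii'}Q_{aa'}Q_{bb'}Q_{cc'}T_{i'a'b'c'}=T_{iabc}$, so $T$ is an isotropic tensor. Instead of quoting the general classification of isotropic tensors, I would argue directly.

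Reflections $\theta_k\mapsto-\theta_k$ show that $T_{iabc}=0$ unless every index appears an even number of times. Permutation matrices show that $T_{iiii}=:A$ and $T_{iijj}=:B$ (for $i\neq j$) do not depend on the particular indices. By total symmetry of $T$, this already gives
\[
T_{iabc}=B(\delta_{ia}\delta_{bc}+\delta_{ib}\delta_{ac}+\delta_{ic}\delta_{ab})+(A-3B)\,\delta_{iabc},
\]
where $\delta_{iabc}$ equals $1$ only when all four indices coincide. To eliminate the last term, I would use rotation invariance in a coordinate $2$-plane. Under the rotation $\theta_1\mapsto(\theta_1+\theta_2)/\sqrt2$, the invariance of $\int\theta_1^4\,d\sigma$ gives
\[
A=\tfrac14(2A+6B),
\]
so $A=3B$. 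For $n=1$ there is only one index and the formula holds trivially with $B=A/3$.

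It remains to identify $B=\alpha_n$. Contracting twice and using $|\theta|^2=1$,
\[
\sigma_{n-1}=\int_{\mathbb{S}^{n-1}}|\theta|^4\,d\sigma=\sum_{i,b}T_{iibb}=B(n^2+2n).
\]
Hence $B=\sigma_{n-1}/(n(n+2))=\alpha_n$, which is exactly the claimed constant.

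The step that needs the most care is the elimination of the non-isotropic term $\delta_{iabc}$, that is, showing $A=3B$. Finite symmetries (reflections and permutations) are not enough here: the $2$-plane rotation must really be used. If that expansion becomes messy, an alternative is a Gaussian check: integrate $x_1^4$ and $x_1^2x_2^2$ against $e^{-|x|^2/2}$ in polar coordinates. The radial factor is the same for both, and the Gaussian moments $3$ and $1$ give $A/B=3$ immediately.
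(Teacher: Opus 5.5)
Your proof is correct and takes essentially the route the paper indicates; the paper gives no argument beyond citing $O(n)$-invariance and the standard spherical moment formulas (Fang--Zhang). You carry it out in full: reflections, permutations and a $\pi/4$ rotation in a coordinate plane force $A=3B$ and hence the isotropic form, and the trace identity $\int|\theta|^4\,d\sigma=\sigma_{n-1}$ fixes $\alpha_n$ without quoting tabulated moments, with every computation checking out, including the $n=1$ case.
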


\paragraph{Comparison of the two formulas}

\begin{itemize}
\item \textbf{Derivative formula:}
\begin{itemize}
\item[+] Pros: Exact and deterministic direction; for convex quadratic forms, third derivatives vanish and the direction is parallel to the Newton direction, leading to one-step convergence under line search (see Theorem 4). Convenient when automatic differentiation is available.
\item[--] Cons: Requires inversion of the tangent-tangent Hessian ($O(n^{3})$) and evaluation of third derivatives; sensitive to noise or degenerate Hessians; direction degenerates when Hessian is singular.
\end{itemize}

\item \textbf{Slice-centroid formula:}
\begin{itemize}
\item[+] Pros: Only requires first derivatives; bypasses third derivatives and matrix inversion; more robust under noise; practical alternative when higher derivatives are unavailable.
\item[--] Cons: The main difficulty lies in computing the centroid of $S_{C}$, which is expensive in high dimensions and generally requires approximation via sampling, numerical integration, or minimal-volume ellipsoid estimation; accuracy and convergence depend on sampling quality.
\end{itemize}
\end{itemize}

\subsection{Role of convexity}

Convexity plays a fundamental role in the agreement of the two affine-normal constructions and in ensuring that the affine normal serves as a descent direction for $f$. We summarize these relationships below.

\textbf{Equivalence of the two constructions.} The analytic affine normal agrees (up to a positive scalar) with the slice-centroid construction \textbf{precisely at elliptic points} of the level set, i.e., points where the tangent-tangent Hessian is positive definite and the level set is locally strictly convex. At such points, small slices are convex bodies, their moments are well defined, and the centroid trajectories reproduce the analytic affine normal in the limit. Thus the equivalence of the derivative formula and slice formula holds exactly on elliptic patches of the hypersurface.

\textbf{Failure at non-elliptic points.} At hyperbolic points, the tangent-tangent Hessian is indefinite: the level set bends in opposite directions, slices become nonconvex, disconnected, or unbounded, and their centroids no longer reliably reflect the analytic affine normal. At parabolic points, the tangent-tangent Hessian is singular and the affine metric degenerates, so neither the analytic affine normal nor the moment-based construction yield a meaningful normal direction. Example~\ref{ex:nonconvex-counterexample} illustrates that in the nonconvex case the slice-centroid construction may remain descent in its normal component while still becoming geometrically unreliable because the slices cease to be convex bodies.

\textbf{Implications for descent directions in optimization.} The analytic affine normal remains formally well defined at any nondegenerate point of the hypersurface, but it represents an \textbf{inward} geometric direction only at elliptic points. Consequently, the analytic affine normal is a guaranteed strict descent direction for $f$ if each iterate $x_{k}$ lies on a locally strictly convex patch of the level set. By contrast, under the inward slice convention, the slice-centroid direction automatically carries a descent normal component whenever the centroid is well defined; however, outside the elliptic regime its slices may fail to be convex bodies and the resulting moment-based direction need not approximate the analytic affine normal. Within an elliptic neighborhood, the slice-centroid and cap-centroid constructions do provide consistent approximations of the analytic affine normal, since all three directions coincide up to scaling.

\subsection{Geometric illustration}

To close this section, we include a simple two-dimensional picture (Figure 1) that visualizes the difference between the Euclidean normal and the affine normal defined by the slice-centroid formula. On a convex curve (an ellipse is shown), at a point $p$, the \textbf{Euclidean normal} (red arrow) is given by the gradient $\nabla f(p)$, usually pointing outward, whereas the \textbf{affine normal} (blue arrow) can be obtained via the slice-centroid construction: shifting the tangent line inward and taking the centroid of the chord segment inside the ellipse. As $C\to 0$, the tangent direction of the centroid trajectory at $p$ gives the affine normal direction. For ellipses (affine spheres), the affine normals always point toward the center, showing the geometric distinction--Euclidean normal reflects local orthogonality, while affine normal encodes the global centroidal trend rather than merely local orthogonality.

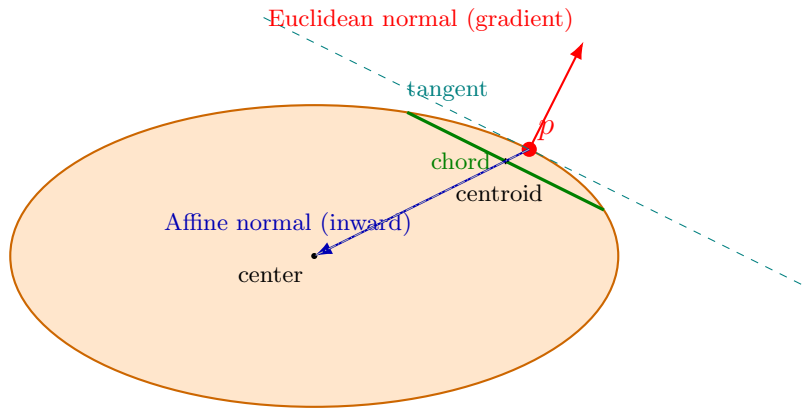
\begin{figure}[htbp]
  \centering
  \begin{tikzpicture}[scale=2,>=Latex]
  \def\a{2}
  \def\b{1}
  \path[name path=ellipse] (0,0) ellipse [x radius=\a, y radius=\b];
  \fill[orange!20] (0,0) ellipse [x radius=\a, y radius=\b];
  \draw[orange!80!black, thick] (0,0) ellipse [x radius=\a, y radius=\b];

  \fill (0,0) circle (0.02) node[below left] {\scriptsize center};

  \pgfmathsetmacro{\px}{\a/sqrt(2)}
  \pgfmathsetmacro{\py}{\b/sqrt(2)}
  \fill[red] (\px,\py) circle (0.05) node[above right=-1pt] {$p$};

  \pgfmathsetmacro{\gx}{2*\px/(\a*\a)}
  \pgfmathsetmacro{\gy}{2*\py/(\b*\b)}
  \pgfmathsetmacro{\gnorm}{sqrt(\gx*\gx+\gy*\gy)}
  \pgfmathsetmacro{\gux}{\gx/\gnorm}
  \pgfmathsetmacro{\guy}{\gy/\gnorm}

  \pgfmathsetmacro{\tx}{-\guy}
  \pgfmathsetmacro{\ty}{\gux}

  \draw[dashed, teal]
    (\px-2*\tx,\py-2*\ty) -- (\px+2*\tx,\py+2*\ty)
    node[pos=0.65,above=-1pt] {\scriptsize tangent};

  \def\s{0.14}
  \pgfmathsetmacro{\qx}{\px - \s*\gux}
  \pgfmathsetmacro{\qy}{\py - \s*\guy}
  \path[name path=pline] (\qx-2.2*\tx,\qy-2.2*\ty) -- (\qx+2.2*\tx,\qy+2.2*\ty);

  \path[name intersections={of=ellipse and pline, by={A,B}}];
  \draw[green!50!black, very thick] (A) -- (B) node[midway,left=1pt] {\scriptsize chord};

  \path let \p1=(A), \p2=(B) in coordinate (C) at ($ (A)!0.5!(B) $);
  \draw[line width=0.6pt] ($(C)+(-0.02,0)$) -- ($(C)+(0.02,0)$);
  \draw[line width=0.6pt] ($(C)+(0,-0.02)$) -- ($(C)+(0,0.02)$);
  \node[font=\scriptsize, anchor=north]
    at ($ (C) + (-0.09*\gux, -0.09*\guy) $) {centroid};

  \draw[->,red,thick] (\px,\py) -- ++(0.8*\gux,0.8*\guy)
    node[above left=-1pt] {\scriptsize Euclidean normal (gradient)};

  \draw[->,blue!70!black,thick] (\px,\py) -- (0,0)
    node[midway,below left=-1pt] {\scriptsize Affine normal (inward)};

  \draw[densely dashdotted, gray!60] (\px,\py) -- (0,0);

  \end{tikzpicture}
  \caption{Geometric comparison between the Euclidean and affine normals}
  \label{fig:affine-normal}
\end{figure}

\subsection{Computational complexity}

The analytic expression of the affine-normal direction
involves first-, second-, and third-order derivatives of
the objective function. In particular, evaluating the
analytic formula requires computing derivatives of the
Hessian, which may be computationally expensive in
high-dimensional settings.

In the present work our primary focus is on the geometric
structure and convergence properties of affine-normal
descent. The development of efficient techniques for
computing or approximating the affine-normal direction
in large-scale problems is an important topic for future
research.

\section{Equivalence of affine normal and Newton directions for strictly convex quadratics}\label{sec:newton-equivalencequadratic}

\begin{theorem}[Affine normal coincides with the Newton direction on strictly convex quadratics]\label{thm:ANeqNewton}
Let $f:\mathbb{R}^{n+1}\to\mathbb{R}$ be a strictly convex quadratic
\[
f(x)=\tfrac12\,x^\top A x + b^\top x + c,\qquad A\succ 0.
\]
For any $x$ with $\nabla f(x)\neq 0$, the affine normal direction $d_{\mathrm{AN}}(x)$ of the level set $\{y:\,f(y)=f(x)\}$ is collinear with the Newton direction
\[
d_{\mathrm{N}}(x)=-(\nabla^2 f(x))^{-1}\nabla f(x)=-A^{-1}\nabla f(x),
\]
that is, there exists $\lambda(x)>0$ such that
\[
d_{\mathrm{AN}}(x)=\lambda(x)\,d_{\mathrm{N}}(x).
\]
When $\nabla f(x)=0$, both vanish, trivially satisfying the claim.
\end{theorem}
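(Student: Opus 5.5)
We plan to use the derivative formula \eqref{eq:AN-analytic} directly, since for a quadratic all third derivatives $f_{pqr}$ vanish. Then only the Hessian entries enter the formula, and the claim reduces to a linear-algebra identity. As a cross-check we also keep the geometric argument in view: the level sets are concentric ellipsoids centered at $x^\ast=-A^{-1}b$, which are affine spheres, so the affine normal points toward the center. Moreover $x^\ast-x=-A^{-1}\nabla f(x)=d_{\mathrm N}(x)$, which is exactly what we want to recover.

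\textbf{Step 1 (normal-aligned coordinates).} We fix $x$ with $g:=\nabla f(x)\neq 0$ and rotate by an orthogonal $Q$ so that $e_{n+1}$ is parallel to $g$. A rotation preserves the affine normal direction, being unimodular, and it also preserves the Newton direction, since $d_{\mathrm N}$ transforms covariantly under linear maps. So we may assume $g=\|g\|e_{n+1}$ and write
\[
A=\begin{pmatrix} B & c\\ c^\top & a\end{pmatrix},
\]
where $B\in\mathbb{R}^{n\times n}$ is the tangent--tangent block, $c_i=f_{n+1,i}$, and $a=f_{n+1,n+1}$. Since $A\succ 0$, we get $B\succ 0$, so every point is elliptic and $[f^{ij}]=B^{-1}$ exists.

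\textbf{Step 2 (evaluate the formula).} With $f_{pqr}=0$, formula \eqref{eq:AN-analytic} gives
\[
d_{\mathrm{AN}}\propto v:=\begin{pmatrix}B^{-1}c\\ -1\end{pmatrix}.
\]
We now check that $Av$ is parallel to $e_{n+1}$. Its first block is $B B^{-1}c - c = 0$. Its last entry is $c^\top B^{-1}c - a = -(a-c^\top B^{-1}c)=-s$, where $s>0$ is the Schur complement of $B$ in $A$. Hence $Av=-s\,e_{n+1}=-(s/\|g\|)\,g$, and therefore
\[
v=-\tfrac{s}{\|g\|}A^{-1}g=\tfrac{s}{\|g\|}\,d_{\mathrm N}(x).
\]
This gives $\lambda(x)$ proportional to $s/\|g\|>0$.

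\textbf{Step 3 (sign and orientation).} The main subtlety is the scale and orientation convention hidden in ``$\propto$'' in \eqref{eq:AN-analytic}. We take the normalization in which the last component is $-1$ to be the inward orientation, which is consistent with the slice-centroid convention and with the earlier consistency theorem at elliptic points. Under this convention, positivity of $s$ makes $\lambda(x)>0$. We confirm the orientation independently: $\langle g, v\rangle=-\|g\|<0$, and $d_{\mathrm N}$ is a descent direction because $A^{-1}\succ0$, so the two vectors point to the same side.

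If the scalar mean-curvature factor omitted in \eqref{eq:AN-analytic} were restored, it would need to be positive at elliptic points. We would check this against the ellipsoid picture: by the slice-centroid theorem, chord centroids of the ellipsoid move toward $x^\ast$. The case $\nabla f(x)=0$ is trivial, since then $d_{\mathrm N}=0$ and the claim holds with the conventional zero direction.
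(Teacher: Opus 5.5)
Your proposal is correct and is essentially the paper's Proof~2: the same normal-aligned block decomposition, the same reduction of \eqref{eq:AN-analytic} to $(B^{-1}c,-1)^\top$ because third derivatives vanish, and the same positive Schur-complement factor $s/\|\nabla f(x)\|$; your ellipsoid cross-check is the paper's Proof~1. The only difference is cosmetic: you verify parallelism by computing $Av=-s\,e_{n+1}$ rather than writing out the last column of the block inverse $A^{-1}$, which is slightly more economical and gives the identical scalar.
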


\noindent\textbf{Proof 1 (Geometric Argument).}
When $A\succ0$, the level sets of $f$ are concentric ellipsoids centered at
$x^\star=-A^{-1}b$. By affine differential geometry, the affine normals of an ellipsoid point toward its center. Hence for any $x\neq x^\star$, the affine normal direction is along $x^\star-x$. Meanwhile,
\[
d_{\mathrm{N}}(x)=-A^{-1}\nabla f(x)=-A^{-1}(Ax+b)=x^\star-x,
\]
hence they are parallel. \qed

\medskip
\noindent\textbf{Proof 2 (Analytical Argument via Block Matrix and Schur Complement).}\\
\noindent\noindent\textbf{Step 1 (Change of basis and notation).}
At $x$, choose an orthonormal basis so that the last axis $e_{n+1}$ is parallel to $\nabla f(x)$ and the first $n$ axes are tangent to the level set. Write the Hessian $\nabla^2 f(x)=A$ in block form
\[
A=\begin{pmatrix}B & c\\ c^\top & d\end{pmatrix},
\]
where $B\in\mathbb{R}^{n\times n}$ (the tangent–tangent block $[f_{ij}]$), $c\in\mathbb{R}^n$ (the mixed normal–tangent block $[f_{n+1,i}]$), and $d=f_{n+1,n+1}$.

\noindent\noindent\textbf{Step 2 (Explicit affine-normal direction in this basis).}
In the normal-aligned coordinates, the affine normal reads
\[
d_{\mathrm{AN}}(x) \propto \begin{pmatrix} f^{ij}\big(-\tfrac{1}{n+2}\,|\nabla f|\, f^{pq} f_{pq i} + f_{n+1,i}\big) \\[2pt] -1 \end{pmatrix},
\]
where $[f^{ij}]=[f_{ij}]^{-1}$ is the inverse of the tangent–tangent block. For a quadratic form, all third derivatives vanish, hence $f_{pq i}\equiv 0$, and we get
\[
\boxed{\quad d_{\mathrm{AN}}(x) \propto \begin{pmatrix} B^{-1}c\\[2pt]-1\end{pmatrix}.\quad}
\]

\noindent\noindent\textbf{Step 3 (Parallelism with the Newton direction).}
In the same basis, $\nabla f=\|\nabla f\|\,e_{n+1}$. Therefore
\[
d_{\mathrm{N}}(x) \;=\; -A^{-1}\nabla f(x) \;=\; -\|\nabla f(x)\|\cdot \big(\text{last column of }A^{-1}\big).
\]
Let $S:=d-c^\top B^{-1}c>0$ be the Schur complement (since $A\succ0$). Then
\[
A^{-1}
=\begin{pmatrix}
B^{-1}+B^{-1}c\,S^{-1}c^\top B^{-1} & -\,B^{-1}c\,S^{-1}\\[2pt]
-\,S^{-1}c^\top B^{-1} & \ \ S^{-1}
\end{pmatrix},
\]
so the last column is $\big(-B^{-1}c\,S^{-1},\ S^{-1}\big)^\top$. Hence
\[
d_{\mathrm{N}}(x)= \|\nabla f(x)\|\,S^{-1}\begin{pmatrix} B^{-1}c\\[2pt]-1\end{pmatrix},
\]
which shows
\[
\boxed{\, d_{\mathrm{N}}\ \parallel\ d_{\mathrm{AN}} \,}
\]
and they differ only by the positive scalar $\|\nabla f\|\,S^{-1}$. \qed

\begin{corollary}[One-step convergence with exact line search]\label{cor:onestep}
Let $f(x)=\tfrac12 x^\top A x+b^\top x+c$ with $A$ symmetric positive definite and minimizer $x^\star=-A^{-1}b$.
For any $x$ with $\nabla f(x)\neq0$, an \emph{exact} line search along the affine-normal direction reaches $x^\star$ in one step.
\end{corollary}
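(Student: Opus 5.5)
The plan is to reduce the corollary to Theorem~\ref{thm:ANeqNewton}, and then to a one-dimensional minimization along the ray from $x$ to $x^\star$. Fix $x$ with $\nabla f(x)\neq 0$, so that $x\neq x^\star$. By Theorem~\ref{thm:ANeqNewton} there is $\lambda(x)>0$ with
\[
d_{\mathrm{AN}}(x)=\lambda(x)\,d_{\mathrm N}(x).
\]
The computation in Proof~1 of that theorem gives $d_{\mathrm N}(x)=-A^{-1}(Ax+b)=x^\star-x$. Writing $d:=d_{\mathrm{AN}}(x)=\lambda(x)(x^\star-x)$, the line-search iterate $x+\alpha d$ equals $x^\star$ exactly when $\alpha=1/\lambda(x)$.

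Next I would check that the exact line search selects this step. Define $\varphi(\alpha):=f(x+\alpha d)$. Since $f$ is quadratic,
\[
\varphi(\alpha)=f(x)+\alpha\,\nabla f(x)^\top d+\tfrac12\alpha^2\, d^\top A d .
\]
Because $A\succ0$ and $d\neq0$, the coefficient $d^\top A d$ is positive, so $\varphi$ is a strictly convex parabola in $\alpha$. Its unique minimizer is
\[
\alpha^\ast=-\frac{\nabla f(x)^\top d}{d^\top A d}.
\]
I would substitute $\nabla f(x)=A(x-x^\star)=-A(x^\star-x)$ and $d=\lambda(x)(x^\star-x)$ and simplify. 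This yields
\[
\alpha^\ast=\frac{\lambda\,(x^\star-x)^\top A(x^\star-x)}{\lambda^2\,(x^\star-x)^\top A(x^\star-x)}=\frac1{\lambda(x)}>0 .
\]
Hence $x+\alpha^\ast d=x^\star$.

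A shorter alternative avoids the explicit formula. Since $x^\star$ lies on the line $\{x+\alpha d\}$ and is the global minimizer of $f$, it also minimizes $f$ restricted to that line. Strict convexity of $\varphi$ then makes this minimizer unique, so the exact line search must return $x^\star$.

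There is no serious obstacle here. The only point to watch is orientation: the direction must be inward so that the minimizing step is positive, and this is guaranteed by $\lambda(x)>0$ in Theorem~\ref{thm:ANeqNewton}. The conclusion also does not depend on the arbitrary normalization of $d_{\mathrm{AN}}$, because the exact line search absorbs any positive rescaling of $d$ into $\alpha^\ast$.
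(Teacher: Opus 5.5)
Your proposal is correct and follows essentially the same route as the paper. You invoke Theorem~\ref{thm:ANeqNewton} to write $d_{\mathrm{AN}}(x)=\lambda(x)(x^\star-x)$ with $\lambda(x)>0$, and then minimize the strictly convex one-dimensional quadratic along this direction to obtain the unique step $\alpha^\ast=1/\lambda(x)$, which lands exactly on $x^\star$; the paper reaches the same step by setting $\phi'(\alpha)=\lambda(x)^2\|x^\star-x\|_A^2(\alpha\lambda(x)-1)$ to zero, which is your closed-form minimizer in another guise.
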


\begin{proof}
Take any $x$ with $\nabla f(x)\neq0$. By Theorem~\ref{thm:ANeqNewton}, $d_{\mathrm{AN}}(x)$ is collinear with $d_{\mathrm{N}}(x)$.  For a quadratic,
\[
d_{\mathrm{N}}(x)=-(\nabla^2 f)^{-1}\nabla f(x)=-A^{-1}(Ax+b)=x^\star-x.
\]
Hence there exists a scalar $\lambda(x)>0$ such that $$d_{\mathrm{AN}}(x)=\lambda(x)d_{\mathrm{N}}(x) = \lambda(x)(x^\star-x).$$
Consider the univariate function  $\phi(\alpha):=f(x+\alpha d_{\mathrm{AN}}(x))$, Since $f$ is quadratic,
\[
\nabla f(x+\alpha d_{\mathrm{AN}}(x))
=
Ax+b+\alpha A d_{\mathrm{AN}}(x),
\]
and thus
\[
\phi'(\alpha)
=
d_{\mathrm{AN}}(x)^\top \bigl(Ax+b+\alpha A d_{\mathrm{AN}}(x)\bigr).
\]
Substituting $d_{\mathrm{AN}}(x)=\lambda(x)(x^\star-x)$ and 
$Ax+b=A(x-x^\star)$ gives
\[
\phi'(\alpha)
=
\lambda(x)(x^\star-x)^\top 
\bigl(A(x-x^\star)+\alpha \lambda(x) A (x^\star-x)\bigr)
=
\lambda(x)^2\,\|x^\star-x\|_A^2\,(\alpha \lambda(x)-1),
\]
where 
\(
\|v\|_A^2=v^\top A v.
\)
Since $\|x^\star-x\|_A^2>0$ and $\lambda(x)>0$, the derivative $\phi'(\alpha)$
vanishes if and only if
\[
\alpha^*=\frac{1}{\lambda(x)},
\]
which is the unique minimizer of $\phi$.  
The corresponding update is
\[
x^+
=
x+\alpha^* d_{\mathrm{AN}}(x)
=
x+\frac{1}{\lambda(x)}\lambda(x)(x^\star-x)
=
x^\star.
\]
If $\nabla f(x)=0$, then $x=x^\star$ and the statement holds trivially.
\end{proof}

This shows that, on strictly convex quadratic objectives,
the affine normal direction coincides with the Newton direction
(up to a positive scalar multiple).
In particular, with exact line search,
the resulting one-step update reaches the minimizer.
Hence, in the quadratic setting,
Newton direction appears as a special case
of the affine-normal direction framework.
 
\section{When is the affine normal a descent direction?}
\label{sec:AN-descent}

The affine normal direction is always well-defined when the 
tangent--tangent block of the Hessian is invertible. 
However, its \emph{descent} property depends crucially on the 
local convexity of the level set. 
In this section we characterize precisely when the affine normal is a 
strict descent direction, and explain why moment-based constructions 
(slice--centroid and cap--centroid) require convexity.

\subsection{Strict descent holds exactly at elliptic points}

\begin{theorem}[Strict descent at elliptic points]
\label{thm:ANDescent}
Let $f\in C^3$ and $\nabla f(z)\neq0$, and assume the tangent--tangent Hessian 
at $z$ is invertible. 
Then the analytic affine normal $d_{\mathrm{AN}}(z)$ satisfies
\[
\langle \nabla f(z),\, d_{\mathrm{AN}}(z)\rangle < 0 
\qquad\Longleftrightarrow\qquad 
z\ \text{is elliptic}.
\]
\end{theorem}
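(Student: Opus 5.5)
The plan is to reduce the sign of $\langle\nabla f(z),d_{\mathrm{AN}}(z)\rangle$ to the sign of the scalar normalization that formula~\eqref{eq:AN-analytic} discards ("the mean curvature factor \dots is ignored"). We then show that this scalar is positive exactly when the tangent--tangent block $B=[f_{ij}]$ is positive definite.

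\textbf{Step 1 (setup in normal-aligned coordinates).} Work in the normal-aligned coordinates of Section~\ref{sec:affine-normal}, so that $\nabla f(z)=\|\nabla f(z)\|\,e_{n+1}$. Write the genuine (Blaschke) affine normal as
\[
\xi(z)=\mu(z)\begin{pmatrix}w\\-1\end{pmatrix},\qquad w=f^{ij}\Big(-\tfrac{1}{n+2}\|\nabla f\|f^{pq}f_{pqi}+f_{n+1,i}\Big),
\]
where $\mu(z)$ is a nonzero scalar. Then $\langle\nabla f(z),\xi(z)\rangle=-\mu(z)\|\nabla f(z)\|$. Since $\nabla f(z)\neq0$, strict descent is equivalent to $\mu(z)>0$. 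The tangential part $w$ plays no role, and $C^3$ plus invertibility of $B$ are needed only so that $w$, hence $\xi$, is well defined.

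\textbf{Step 2 (local graph).} By the implicit function theorem, near $z$ the level set is a graph $t=\Phi(u)$ with
\[
\Phi(u)=-\frac{u^\top Bu}{2\|\nabla f(z)\|}+O(|u|^3).
\]
The sublevel set $\{f\le f(z)\}$ is locally $\{t\le\Phi(u)\}$, i.e.\ the side toward $-e_{n+1}$. For such a graph, the Blaschke normal has normal component $\varepsilon\,|\det D^2\Phi(0)|^{1/(n+2)}$ along $e_{n+1}$, where the sign $\varepsilon$ is fixed by the orientation convention. That convention requires the affine metric $h$, defined through $D_XY=\nabla_XY+h(X,Y)\xi$, to be the positive (definite) second fundamental form; equivalently, $\xi$ lies on the side toward which the hypersurface curves. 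Since $D^2\Phi(0)=-B/\|\nabla f\|$, we get $h\propto \varepsilon\cdot(-D^2\Phi)\propto \varepsilon B$ up to the sign of $\xi$'s $e_{n+1}$-component. Hence $\mu(z)>0$, i.e.\ $\xi$ points to the $-e_{n+1}$ (sublevel) side, precisely when this convention is compatible with $B\succ0$.

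\textbf{Step 3 (the two directions).} If $z$ is elliptic ($B\succ0$), the surface curves toward the sublevel side, so $\xi$ points inward and $\mu>0$, giving strict descent. Conversely, if $B\prec0$, the level set is locally convex but curves toward the superlevel side. Then $\xi$ points outward, $\mu<0$, and $\langle\nabla f,\xi\rangle>0$. If $B$ is indefinite, the hypersurface curves toward both sides, so no orientation makes $h$ definite. In that case the paper's inward convention cannot be satisfied, and the orientation inherited from $\varepsilon$ (continuity or Berwald-type convention) does not produce an inward point. I would formalize this case by showing that the normalized $\xi$ cannot be a descent direction consistent with the equi-affine normalization, i.e.\ $\mu>0$ would force $h=\mu^{-1}(\text{const})B$ definite, a contradiction.

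\textbf{Main obstacle.} The hardest part is pinning down the orientation/normalization convention for $\mu$ at hyperbolic points. Formula~\eqref{eq:AN-analytic} hard-codes the last component $-1$, so that formula alone would suggest descent everywhere. The equivalence is therefore really a statement about the suppressed scalar. I would first try to make $\mu$ explicit via the standard level-set formula
\[
\mu\propto \operatorname{sgn}(\cdot)\,\bigl|\det B/\|\nabla f\|\bigr|^{1/(n+2)},
\]
with the sign chosen so that $h=\mu^{-1}\|\nabla f\|^{-1}B$ up to positive factors. This identity reduces both implications to the definiteness of $B$. If that explicit form proves awkward, I would instead argue via the slice-centroid characterization (the consistency theorem of Section~\ref{sec:affine-normal}) for the elliptic direction, and by the sign of $h$ for the converse.
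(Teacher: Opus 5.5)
Your reduction is the same as the paper's. The paper also writes $d_{\mathrm{AN}}(z)=\lambda(z)\,(\tau(z),-1)^\top$, notes that $\langle\nabla f(z),d_{\mathrm{AN}}(z)\rangle=-\lambda(z)\|\nabla f(z)\|$, and reads the sign of the suppressed scalar off the affine metric, via $\lambda=(\det h)^{-1/(n+2)}$. Your elliptic $\Rightarrow$ descent direction is sound, and more careful than the paper's. The normal component $|\det D^2\Phi(0)|^{1/(n+2)}$, the orientation rule ``$h\succ0$'', and your identity $h=c\,\mu^{-1}B$ with $c>0$ give $\mu>0$ for $B\succ0$. They also give $\mu<0$ for $B\prec0$, a case the paper's elliptic/hyperbolic/parabolic trichotomy does not address.

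The gap is the hyperbolic half of the converse: the contradiction you propose there does not exist. If $B$ is indefinite and $\mu>0$, your own identity makes $h$ a positive multiple of $B$, hence indefinite. Nothing in the equi-affine conditions forces $h$ to be definite.

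Replacing $\xi$ by $-\xi$ replaces $h$ by $-h$ and leaves the induced connection and the transversal connection form unchanged. It flips the sign of $\theta_\xi$ while $\omega_{-h}=\omega_h$, so the volume condition is restored by reversing the orientation of $M$. Thus the affine normal is determined only up to sign, and the rule ``$h\succ0$'' that fixes the sign is unavailable exactly at hyperbolic points. Hence at a hyperbolic point one of the two admissible affine normals \emph{is} a strict descent direction, and ``descent $\Rightarrow$ elliptic'' cannot be derived from the geometry.

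That implication can only hold through an orientation convention for $d_{\mathrm{AN}}$ at non-definite points. You must state such a convention explicitly as part of the definition, and the case is then true by fiat. Alternatively, restrict the converse to points where $B$ is definite.

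Nor can you borrow the paper's way of closing this case, namely ``$\det h<0$ at hyperbolic points, so no real inward $\lambda=(\det h)^{-1/(n+2)}$ exists''. That argument fails for several reasons:
for $n\ge3$, an indefinite $h$ with an even number of negative eigenvalues has $\det h>0$;
a negative number has a real $(n+2)$-th root when $n$ is odd;
as your Step~2 shows, the actual normal component $|\det D^2\Phi(0)|^{1/(n+2)}$ is real for every nondegenerate signature.
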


\begin{proof}
In a normal-aligned frame, the affine normal has the form
\[
d_{\mathrm{AN}}(z)=\lambda(z)\begin{pmatrix}\tau(z)\\ -1\end{pmatrix},
\]
where $\tau$ is determined by the tangent--tangent Hessian and 
\[
\lambda(z)=\bigl(\det h_{ij}(z)\bigr)^{-1/(n+2)}
\]
is the affine normalization factor.  
The sign of $\lambda(z)$ is determined by the sign of 
$\det(h_{ij})$.  
If $z$ is elliptic, then $h_{ij}$ is positive definite and 
$\det h_{ij}>0$, hence $\lambda(z)>0$.  
Since $\nabla f(z)=\|\nabla f(z)\|e_{n+1}$,
\[
\langle\nabla f(z),\,d_{\mathrm{AN}}(z)\rangle
=-\lambda(z)\|\nabla f(z)\|<0.
\]
If $z$ is hyperbolic, then $\det(h_{ij})<0$; 
no choice of $\lambda(z)$ produces an inward-pointing 
real affine normal.  
If $z$ is parabolic, $h_{ij}$ is singular and the affine normal 
is not defined. 
\end{proof}

\begin{remark}[Geometric meaning]
Ellipticity means that the level set is locally strictly convex; in this case 
the affine normal points strictly into the interior of the sublevel set and 
its normalization factor is positive.  
When the tangent--tangent Hessian is indefinite (hyperbolic points), the level 
set is not locally convex, and nearby slices may become nonconvex or 
multi-component.  The affine normal no longer represents an inward variational 
direction in this situation.  
At parabolic points the tangent--tangent Hessian is singular and the affine 
metric degenerates, so the affine normal is not well defined.
\end{remark}

\subsection{Moment-based constructions require convexity}

The analytic affine normal exists without convexity, 
but its \emph{moment-based} approximations require it.  
Indeed, the slice--centroid formulation assumes that 
the intersection of the level set with a nearby plane is 
a convex body with a well-behaved centroid trajectory.  
This fails when the level set is nonconvex.

\begin{example}[Disconnected slices in the nonconvex case]
\label{ex:nonconvex-counterexample}
Consider in $\mathbb{R}^2$ the nonconvex function
\[
f(x,y)=x^2(x^2-1)(x^2-4)(x^2-9)-y,\qquad z=(0,0),
\]
for which $f(z)=0$ and $\nabla f(z)=(0,-1)$.  
The tangent line at $z$ is $y=0$, and
\[
\Omega_z=\{(x,y):f(x,y)\le0\}
={\Bigl\{}(x,y):y\ge x^2(x^2-1)(x^2-4)(x^2-9){\Bigr\}}
\]
is nonconvex.
For $C<0$, the slice
\[
S_C
=
{\Bigl\{}(x,-C):x^2(x^2-1)(x^2-4)(x^2-9)\le -C{\Bigr\}}
\]
may already be a union of several disconnected symmetric intervals.
For example, when $C=-24$, the slice is disconnected, as shown in
Figure~\ref{fig:nonconvex-disconnected-slices}. By symmetry, whenever the
centroid exists it satisfies $g(C)=(0,-C)$.
Thus the slice-centroid direction
\[
\widehat d_{\mathrm{SC}}(z)
\propto \frac{g(C)-z}{-C}
=(0,1)
\]
satisfies
\[
\langle \nabla f(z), \widehat d_{\mathrm{SC}}(z) \rangle = -1 < 0,
\]
so under the inward convention it is indeed a descent direction.
However, the slices are \emph{disconnected} and not convex bodies.
Thus the moment-based construction is no longer a faithful local geometric proxy
for the analytic affine normal: the issue in the nonconvex case is not
automatic loss of descent, but rather the loss of convexity, connectedness,
and geometric well-posedness of the slices.
\end{example}

\begin{figure}[!ht]
\centering
\includegraphics[width=0.74\linewidth]{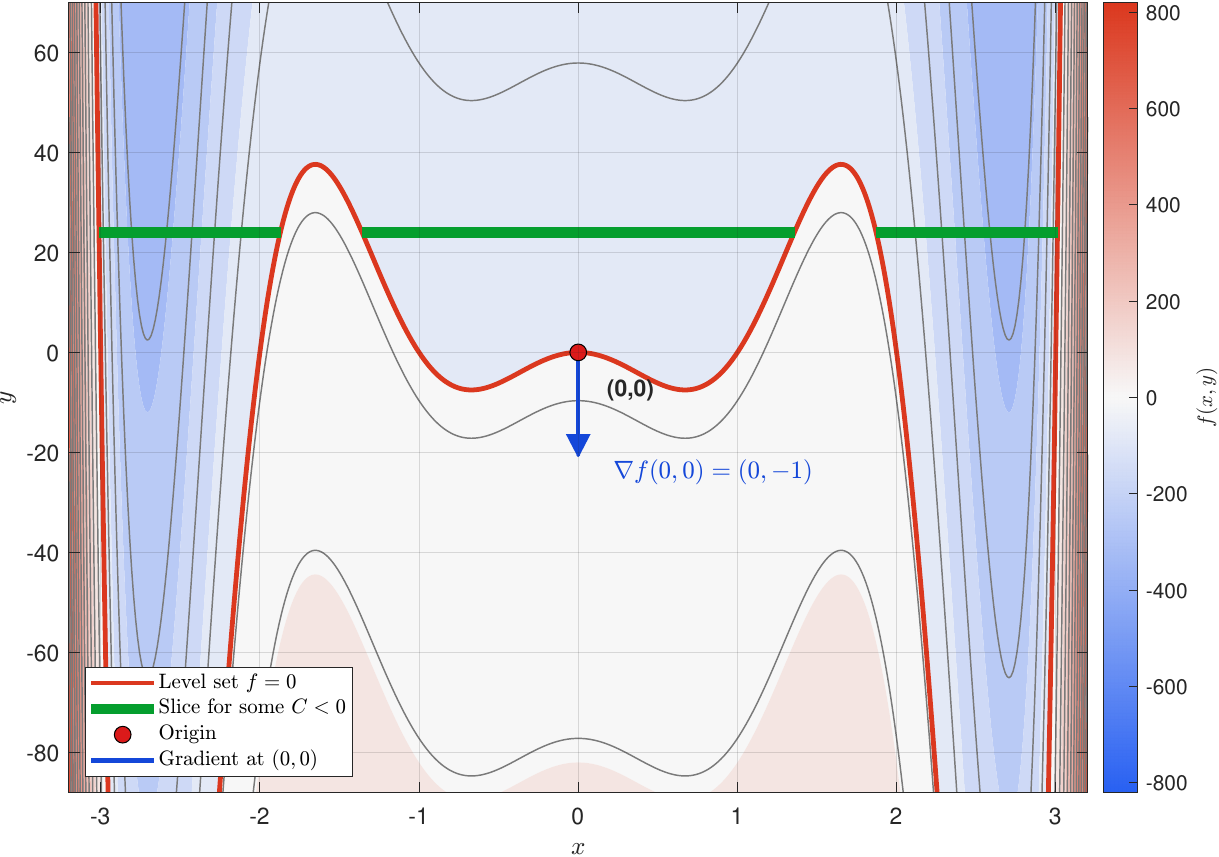}
\caption{Disconnected inward slices for
$f(x,y)=x^2(x^2-1)(x^2-4)(x^2-9)-y$ at $z=(0,0)$.
The red curve is the zero level set $f(x,y)=f(z)$, and the green set is a
representative slice $S_C$ for some $C<0$. Under the inward convention,
the centroid direction remains inward/descent, but the slice is disconnected and
thus ceases to be a faithful local geometric proxy for the analytic affine normal.}
\label{fig:nonconvex-disconnected-slices}
\end{figure}

\section{Examples for computing the affine normal}\label{sec:examples}

Although the affine normal originates from affine differential geometry,
its computation can be made completely explicit in standard optimization
settings. To facilitate understanding for readers in optimization,
we begin with a few low--dimensional examples in which the affine normal
can be computed analytically and directly compared with classical
directions such as the gradient and the Newton.

\subsection{Quadratic convex function in two Variables}
Let
\[
f(x,y)=\tfrac12\,(x^2+4y^2)\;-\;x\;-\;4y,
\]
with $A=\mathrm{diag}(1,4)\succ0,\ b=(-1,-4)$, minimizing at $x^\star=(1,1)$.
Pick $p=(2,0)$.

\medskip
\noindent\textbf{Gradient and Hessian.}
\[
\nabla f(x,y)=(x-1,\ 4y-4),\qquad
\nabla f(p)=(1,-4),\qquad
H=\nabla^2 f=A=\begin{pmatrix}1&0\\0&4\end{pmatrix}.
\]

\medskip
\noindent\textbf{Unit normal/tangent ($n=1$).}
\[
\|\nabla f(p)\|=\sqrt{17},\quad
\hat{\boldsymbol n}=\frac{\nabla f(p)}{\|\nabla f(p)\|}=\Big(\tfrac{1}{\sqrt{17}},-\tfrac{4}{\sqrt{17}}\Big),\quad
\hat{\boldsymbol t}=\Big(\tfrac{4}{\sqrt{17}},\tfrac{1}{\sqrt{17}}\Big).
\]

\medskip
\noindent\textbf{Recall ($n=1$).}
In the orthonormal frame $(\hat{\boldsymbol t},\hat{\boldsymbol n})$,
\[
\boxed{\quad \boldsymbol d_{\rm AN}\ \propto\ (\tau,\,-1), \qquad
\tau=\frac{f_{21}}{f_{11}}-\frac{\|\nabla f\|}{3}\,\frac{f_{111}}{f_{11}^2}\quad}
\]
with
\[
f_{11}=D^2 f[\hat{\boldsymbol t},\hat{\boldsymbol t}],\quad
f_{21}=D^2 f[\hat{\boldsymbol n},\hat{\boldsymbol t}],\quad
f_{111}=D^3 f[\hat{\boldsymbol t},\hat{\boldsymbol t},\hat{\boldsymbol t}].
\]

\medskip
\noindent\textbf{Directional derivatives (quadratic case).}
Here $f_{111}=0$, so $\tau={f_{21}}/{f_{11}}$ with
\[
f_{11}=\hat{\boldsymbol t}^{\!\top} H \hat{\boldsymbol t}=\tfrac{20}{17},\quad
f_{21}=\hat{\boldsymbol n}^{\!\top} H \hat{\boldsymbol t}=-\tfrac{12}{17},\quad
\tau=-\tfrac{3}{5}.
\]

\medskip
\noindent\textbf{Affine-normal direction.}
\[
\boldsymbol d_{\mathrm{AN}}
\ \propto\
\tau\,\hat{\boldsymbol t}-\hat{\boldsymbol n}
=\frac{1}{\sqrt{17}}(-3.4,\,3.4)
\ \parallel\ (-1,\,1).
\]

\medskip
\noindent\textbf{Newton direction.}
\[
\boldsymbol d_{N}
= -H^{-1}\nabla f(p)
= (-1,\,1).
\]
Thus $\boldsymbol d_{\mathrm{AN}}\parallel \boldsymbol d_{\mathrm{N}}$.

\subsection{Quadratic convex function in three variables}
Let
\[
f(x,y,z)=\tfrac12\,(x^2+4y^2+9z^2)\;+\;(-1,\,0,\,0)\!\cdot\!(x,y,z),
\]
so $A=\mathrm{diag}(1,4,9)\succ0,\ b=(-1,0,0)$ and $x^\star=(1,0,0)$.
Take $p=(2,0,0)$.

\medskip
\noindent\textbf{Gradient and Hessian.}
\[
\nabla f(x,y,z)=(x-1,\;4y,\;9z),\qquad \nabla f(p)=(1,0,0),\qquad
H=\mathrm{diag}(1,4,9).
\]

\medskip
\noindent\textbf{Normal alignment and tangents ($n=2$).}
\[
\hat{\boldsymbol n}=(1,0,0),\qquad
\hat{\boldsymbol t}_1=(0,1,0),\quad \hat{\boldsymbol t}_2=(0,0,1).
\]

\medskip
\noindent\textbf{Blocks.}
\[
B=\mathrm{diag}(4,9),\quad c=(0,0).
\]

\medskip
\noindent\textbf{Affine-normal direction ($n=2$).}
For quadratics (no third derivatives),
\[
\boldsymbol d_{\rm AN}\ \propto\ \sum_{i=1}^2 (B^{-1}c)_i\,\hat{\boldsymbol t}_i\ -\ \hat{\boldsymbol n}
= -\,\hat{\boldsymbol n}=(-1,0,0).
\]

\medskip
\noindent\textbf{Newton direction.}
\[
\boldsymbol d_{N}=-H^{-1}\nabla f(p)=(-1,0,0).
\]
Thus $\boldsymbol d_{\rm AN}\parallel\boldsymbol d_{\mathrm{N}}$.

\subsection{Strictly convex non-quadratic example}
Consider $n=1$ and
\[
f(x,y)=\frac12\,x^2+2y^2+\frac1{12}x^4 .
\]
Then
\[
\nabla^2 f(x,y)=
\begin{pmatrix}
1+x^2 & 0\\
0 & 4
\end{pmatrix}\succ0,
\]
so $f$ is strictly convex. At $p=(1,1)$, in the orthonormal frame $(\hat{\boldsymbol t},\hat{\boldsymbol n})$ the affine normal direction is
\[
\boxed{\quad \boldsymbol d_{\rm AN}\ \propto\ (\tau,\,-1), \qquad
\tau=\frac{f_{21}}{f_{11}}-\frac{\|\nabla f\|}{3}\,\frac{f_{111}}{f_{11}^2}\quad}
\]
with the ingredients computed in the text; numerically one finds $\tau\approx0.7687$ and
\[
\boldsymbol d_{\rm AN}\ \propto\ (-1.0454,\,-0.7056),
\qquad
\nabla f(1,1)\cdot \boldsymbol d_{\rm AN}\approx -4.2164<0,
\]
confirming strict descent.

\section{YAND}
\label{sec:AND}

We now transition from the geometric theory of affine normals to the 
optimization algorithm that uses these directions as search directions.  
Since the analytic affine normal is defined only up to a nonzero scalar 
(and in particular, up to sign), we must ensure that the chosen direction
is always a descent direction for $f$.

\subsection{Affine normal descent direction}\label{subsec:dk}

Given an iterate $x_k$ with $\nabla f(x_k)\neq 0$, we first compute the affine normal 
$d_{\mathrm{AN}}(x_k)$ of the level set $\{f=f(x_k)\}$ and define the search direction by
\begin{equation}\label{eq:dk}
d_k := 
\begin{cases}
    d_{\mathrm{AN}}(x_k), & \text{if } \langle \nabla f(x_k), d_{\mathrm{AN}}(x_k)\rangle < 0,\\[1.5mm]
    -d_{\mathrm{AN}}(x_k), & \text{if } \langle \nabla f(x_k), d_{\mathrm{AN}}(x_k)\rangle > 0,\\[1.5mm]
    -\nabla f(x_k)/\|\nabla f(x_k)\|, & \text{otherwise}.
\end{cases}
\end{equation}
The third case corresponds to an affine-degenerate point where the equi--affine curvature vanishes and the affine normal collapses into the tangent space.
At such points, any vector $v_{\mathrm{tan}}$ in the tangent space satisfies
$\langle \nabla f(x_k), v_{\mathrm{tan}} \rangle = 0$, so the affine normal (even if formally computable) cannot serve as a descent direction.
Since no representative with a negative normal component exists, we fall back to the steepest–descent direction $-\nabla f(x_k)/\|\nabla f(x_k)\|$. The sign correction ensures
\[
\langle \nabla f(x_k), d_k\rangle < 0,
\]
so $d_k$ is always a strict descent direction of $f$ at $x_k$.

\medskip
To analyze the geometry of $d_k$, we work in the \emph{normal–aligned frame} at $x_k$.  
Let
\[
e_{n+1}:=\frac{\nabla f(x_k)}{\|\nabla f(x_k)\|},
\]
where $\{e_i\}_{i=1}^n$ is an orthonormal basis for the tangent space of the level set $\{f=f(x_k)\}$.
In this frame, every descent direction $d_k$ constructed from \eqref{eq:dk} 
can be expressed as
\[
d_k = \sum_{i=1}^n (\tau_k)_i\, e_i \; - \; e_{n+1}, 
\qquad \tau_k \in \mathbb{R}^n,
\]
as illustrated in Figure~\ref{fig:dk}.
Let $T_k := \|\tau_k\|$.  
Then 
\[
\|d_k\|^2 = 1 + T_k^2,
\]
so the scalar $T_k$ measures the tangential magnitude of the affine–normal direction.
Its boundedness plays a central role in the convergence analysis developed in Section~\ref{sec:YAND-convergence}.
\begin{figure}[ht!]
    \centering
    \begin{tikzpicture}[scale=0.86,
        axis/.style={->,thick},
        levelset/.style={thick, draw=blue},
        gradvec/.style={->,thick, draw=red!70!black},
        gradvecdash/.style={->,thick, draw=red!70!black, dashed},
        anvec/.style={->,thick, draw=green!60!black},
        anvecdash/.style={->,thick, draw=green!60!black, dashed},
        projline/.style={dashed},
        >=Stealth,
        every node/.style={font=\footnotesize}
    ]

    \begin{scope}[shift={(-5.8,0)}]
        \draw[axis] (-1.9,0) -- (1.9,0) node[right] {$e_i$};
        \draw[axis] (0,-1.6) -- (0,1.8) node[above] {$e_{n+1}$};

        \draw[levelset, domain=-1.4:1.4, smooth] plot (\x, {-0.1*(\x)^2});
        \node at (-1.45,-0.3) {\color{blue}{$\{f=f(x_k)\}$}};

        \fill (0,0) circle (1.5pt) node[below left=2pt] {$x_k$};

        \draw[gradvec] (0,0) -- (0,1.5)
            node[pos=0.9,left=3pt] {\color{red!70!black}{$\nabla f(x_k)$}};

        \draw[anvec] (0,0) -- (1.1,-1.1)
            node[pos=0.8,below right=4pt] {\color{green!60!black}{$d_k=d_{\mathrm{AN}}(x_k)$}};

        \draw[projline] (1.1,-1.1) -- (1.1,0);  
        \draw[projline] (1.1,-1.1) -- (0,-1.1); 

        \node[above=1pt] at (1.1,0) {$(\tau_k)_i$};
        \node[left=1pt]  at (0,-1.1) {$-1$};

        \node at (0,-1.9) {Case 1: $\langle \nabla f(x_k), d_{\mathrm{AN}}(x_k)\rangle < 0$};
    \end{scope}

    \begin{scope}[shift={(0,0)}]
        \draw[axis] (-1.9,0) -- (1.9,0) node[right] {$e_i$};
        \draw[axis] (0,-1.6) -- (0,1.8) node[above] {$e_{n+1}$};

        \draw[levelset, domain=-1.4:1.4, smooth] plot (\x, {-0.1*(\x)^2});
        \node at (-1.45,-0.3) {\color{blue}{$\{f=f(x_k)\}$}};

        \fill (0,0) circle (1.5pt) node[below left=2pt] {$x_k$};

        \draw[gradvec] (0,0) -- (0,1.5)
            node[pos=0.9,left=3pt] {\color{red!70!black}{$\nabla f(x_k)$}};

        \draw[anvecdash] (0,0) -- (-1.1,1.1)
            node[pos=0.8,below left=2pt] {\color{green!60!black}{$d_{\mathrm{AN}}(x_k)$}};

        \draw[anvec] (0,0) -- (1.1,-1.1)
            node[pos=0.8,below right=4pt] {\color{green!60!black}{$d_k = -d_{\mathrm{AN}}(x_k)$}};
            
        \draw[projline] (1.1,-1.1) -- (1.1,0);  
        \draw[projline] (1.1,-1.1) -- (0,-1.1); 

        \node[above=1pt] at (1.1,0) {$(\tau_k)_i$};
        \node[left=1pt]  at (0,-1.1) {$-1$};

        \node at (0,-1.9) {Case 2: $\langle \nabla f(x_k), d_{\mathrm{AN}}(x_k)\rangle > 0$};
    \end{scope}

    \begin{scope}[shift={(5.8,0)}]
        \draw[axis] (-1.9,0) -- (1.9,0) node[right] {$e_i$};
        \draw[axis] (0,-1.6) -- (0,1.8) node[above] {$e_{n+1}$};

        \draw[levelset, domain=-1.4:1.4, smooth] plot (\x, {-0.1*(\x)^2});
        \node at (-1.45,-0.3) {\color{blue}{$\{f=f(x_k)\}$}};

        \fill (0,0) circle (1.5pt) node[below left=2pt] {$x_k$};

        \draw[gradvec] (0,0) -- (0,1.5)
            node[pos=0.9,left=3pt] {\color{red!70!black}{$\nabla f(x_k)$}};
        \draw[anvec] (0,0) -- (0,-1.2)
            node[pos=0.9,right=3pt] {\color{green!60!black}{$d_k = -\nabla f(x_k)/\|\nabla f(x_k)\|$}};

        \node[above=1pt] at (0.9,-0.05) {$(\tau_k)_i = 0$};
        \node[left=1pt]  at (0,-1.1) {$-1$};

        \node at (0,-1.9)
            {Case 3: $\langle \nabla f(x_k), d_{\mathrm{AN}}(x_k)\rangle = 0$};
    \end{scope}

    \end{tikzpicture}
    \caption{
Normal–aligned frame at $x_k$ illustrating three typical constructions of $d_k$.
The analytic affine normal $d_{\mathrm{AN}}(x_k)$ is represented in the frame
$\{e_1,\dots,e_n,e_{n+1}\}$ with its $(n+1)$-st component normalized to $-1$.
Case~1: the affine normal is already a descent direction 
($\langle\nabla f(x_k), d_{\mathrm{AN}}(x_k)\rangle<0$), hence $d_k=d_{\mathrm{AN}}(x_k)$.
Case~2: the affine normal points uphill 
($\langle\nabla f(x_k), d_{\mathrm{AN}}(x_k)\rangle>0$), so we flip the sign and set 
$d_k=-d_{\mathrm{AN}}(x_k)$.
Case~3: the affine normal is orthogonal to the gradient 
($\langle\nabla f(x_k), d_{\mathrm{AN}}(x_k)\rangle=0$); in this degenerate case we revert to 
the steepest–descent direction 
$d_k=-\nabla f(x_k)/\|\nabla f(x_k)\|$.
}
\label{fig:dk}
\end{figure}
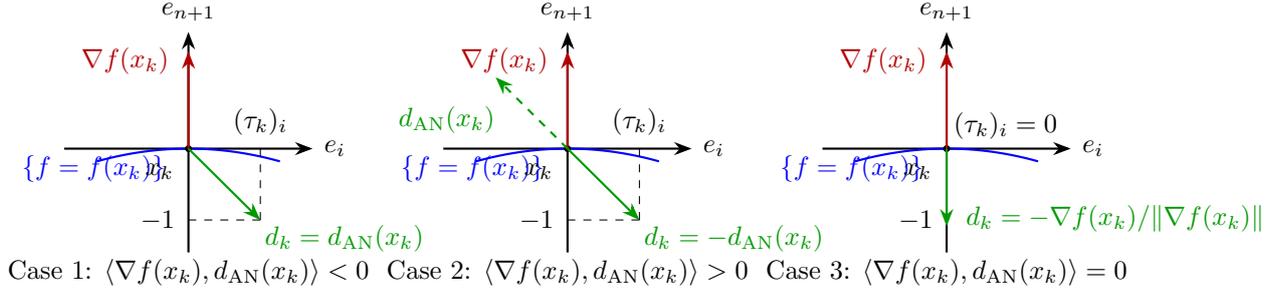

\begin{remark}[Why using the \emph{negative} affine--normal direction in Case~2]
The sign correction in \eqref{eq:dk} ensures descent by flipping the affine normal direction when $\langle \nabla f(x_k), d_{\mathrm{AN}}(x_k) \rangle > 0$. This is not merely an algorithmic fix, and rather is governed by a deeper geometric justification. Roughly speaking, at non-elliptic points, the level set possesses saddle-like or degenerate geometry, which implies that while the analytical affine normal may point uphill, its construction persists to encode the ``valuable" curvature information. Specifically, the affine normal direction $d_{\mathrm{AN}}$ is derived from the affine metric and the cubic form, which together suppress contributions from directions where the level set curvature is extreme (whether positive or negative). Consequently, the negative direction $-d_{\mathrm{AN}}$ is aligned with the axis along which the Monge-Ampère measure of the local sublevel set, governed by $\det(\nabla^2 f)$, contracts most sharply. This makes $-d_{\mathrm{AN}}$ a geometry-aware descent direction that is often more stable and effective than both the raw negative gradient (which ignores the curvature) and the Newton direction (which can be misled by negative eigenvalues). Therefore, ``\textit{the sign flip}", leverages the geometric strength of the affine normal construction, even outside the elliptic regime. 

Note that the affine normal is fundamentally linked to the Monge-Ampère operator defined by the affine metric. At a point \( x \), the Monge-Ampère measure of the sublevel set \( \{\,y: f(y) \le f(x)\,\} \), to second order, is proportional to \( \det(\nabla^2 f(x)) \). The affine normal direction \( \xi \) is characterized by the condition that the volume form \( \omega_\xi \) induced by \( \xi \) is parallel with respect to the affine connection. When \( \langle \nabla f, \xi \rangle > 0 \), the vector \( -\xi \) points in the direction where this canonical volume form contracts most rapidly, i.e., the direction along which the local sublevel-set volume decreases fastest, providing a geometrically intrinsic and stable descent direction even in nonconvex regions.
\end{remark}

\subsection{YAND algorithm}
We now summarize the proposed YAND algorithm as follows:
\begin{enumerate}
  \item Initialize $x_0$, tolerance $\varepsilon>0$, and a step strategy (exact line search / strong Wolfe / Armijo).
  \item For $k=0,1,2,\dots$:
  \begin{enumerate}
    \item If $\|\nabla f(x_k)\|\le\varepsilon$, stop.
    \item Compute the descent direction $d_k$ at $x_k$
          according to \eqref{eq:dk}.
    \item Line search: take $\alpha_k>0$ satisfying one of
    \begin{itemize}
        \item \textbf{Exact:} $\alpha_k\in\arg\min_{\alpha\ge0} f(x_k+\alpha d_k)$.
        \item \textbf{Armijo \cite{Armijo1966}:}
          \[
          f(x_k+\alpha_k d_k)\le f(x_k)+\sigma\alpha_k\,\nabla f(x_k)^\top d_k, \quad (0<\sigma <1).
          \]
        \item \textbf{Strong Wolfe \cite{Wolfe1969}:}
          \[
          \begin{cases}
          f(x_k+\alpha_k d_k)\le f(x_k)+c_1\alpha_k\,\nabla f(x_k)^\top d_k,\\
          |\nabla f(x_k+\alpha_k d_k)^\top d_k|\le c_2|\nabla f(x_k)^\top d_k|
          \end{cases}\quad (0<c_1<c_2<1).
          \]
    \end{itemize}
    \item Update $x_{k+1}=x_k+\alpha_k d_k$.
  \end{enumerate}
\end{enumerate}

\begin{remark}[Gradient is still required]
Although the search direction $d_k$ is defined by the affine normal, the gradient
$\nabla f(x_k)$ remains essential: 
(i) the affine normal construction requires the Euclidean normal of the level set,
which is exactly $\nabla f(x_k)$; 
(ii) line–search conditions (Armijo, strong Wolfe, exact) all depend on 
$\nabla f(x_k)^\top d_k$; 
(iii) the affine normal modifies and preconditions the Newton direction, 
but does not replace the role of the gradient in descent verification, especially for nonconvex cases.
Thus YAND does not eliminate gradient evaluations, but rather uses them
more geometrically and more robustly.
\end{remark}

\paragraph*{Implementation notes}
\begin{itemize}
  \item \textbf{Stability:} When the Hessian degenerates (tangent–tangent block not invertible), switch to slice-centroid, cap-centroid or regularize the Hessian (e.g., trust-region/Levenberg-Marquardt).
  \item \textbf{Higher derivatives:} Use Auto-Differentiation (AD) or finite differences along tangential directions for the third derivatives; prefer strong Wolfe when noise is present.
  \item \textbf{Armijo backtracking:} For Armijo backtracking, we set parameters 
$0<\sigma<1$, $0<\beta<1$, and an initial trial stepsize $\alpha_0>0$ (e.g., $\alpha_0 = 1$). 
Given a descent direction $d_k$, 
choose the smallest integer $m\ge0$ such that
\[
   f(x_k+\alpha_m d_k)
   \;\le\;
   f(x_k)
   + \sigma\,\alpha_m \langle \nabla f(x_k), d_k\rangle,
   \qquad
   \alpha_m = \beta^m \alpha_0 .
\]
This guarantees sufficient decrease for any direction satisfying 
$\langle \nabla f(x_k), d_k\rangle < 0$.

\item \textbf{BB initialization (optional):}
By default we set the initial stepsize to a fixed value, e.g.\ $\alpha_0=1$.
Optionally, for $k\ge1$, a Barzilai--Borwein (BB) estimate may be used to
initialize the line search.  
Let $s_{k-1}=x_k-x_{k-1}$ and 
$y_{k-1}=\nabla f(x_k)-\nabla f(x_{k-1})$.  
The classical BB formulas are
\[
   \alpha_k^{\mathrm{BB1}}
   = \frac{s_{k-1}^\top s_{k-1}}
          {s_{k-1}^\top y_{k-1}},
   \qquad
   \alpha_k^{\mathrm{BB2}}
   = \frac{s_{k-1}^\top y_{k-1}}
          {y_{k-1}^\top y_{k-1}} .
\]
When $s_{k-1}^\top y_{k-1}>0$, we compute a safeguarded BB value
\[
   \alpha_0
   = \min\{\alpha_{\max},
      \max\{\alpha_{\min},\,\alpha_k^{\mathrm{BB}}\}\},
\]
where $\alpha_k^{\mathrm{BB}}$ denotes either $\alpha_k^{\mathrm{BB1}}$ or
$\alpha_k^{\mathrm{BB2}}$,
and $0<\alpha_{\min}<\alpha_{\max}$ are fixed bounds.
If $s_{k-1}^\top y_{k-1}\le0$, or if $k=0$, we simply use the default
$\alpha_0=1$.
\end{itemize}

\paragraph{Quadratic specialization}
Combined with the quadratic equivalence established in Section~\ref{sec:newton-equivalencequadratic},
we conclude that YAND coincides with Newton's method
on strictly convex quadratic objectives
under exact line search.

\section{Convergence analysis of YAND}
\label{sec:YAND-convergence}

In this section we develop a full convergence theory for YAND algorithm.
We proceed in a hierarchy of assumptions, starting from the most restrictive
(strongly convex and smooth), and gradually relaxing to nonconvex settings,
and finally establishing local quadratic convergence under classical nondegeneracy conditions.

\subsection{Preliminaries}
Throughout this section, $L$–smoothness refers to 
\[
\|\nabla f(x)-\nabla f(y)\|\le L\|x-y\|, \qquad\forall x,y.
\]
We impose the following mild geometric assumption on the affine normal direction.
\begin{assumption}[T--boundedness: Uniformly bounded affine-normal direction]
\label{assump:T-bounded}
Let $T_k := \|\tau_k\|$ denote the tangential magnitude of $d_k$ in the normal–aligned frame.
We suppose that there exists a constant $T<\infty$ such that $T_k \le T$ for all iterates $k$. 
In particular,
\begin{equation}
    \label{eq:T-bounded}
\|d_k\|^2 = 1 + T_k^2 \;\leq\; 1+T^2.
\end{equation}
\end{assumption}
\begin{remark}
The boundedness of ${d_k}$ is a mild assumption and is automatic whenever the level sets of $f$ admit uniformly bounded third-order affine geometry (e.g., bounded affine metric, bounded cubic form, and nondegenerate Hessian).
In particular, this condition holds whenever $\nabla^2 f$ is uniformly positive definite and Lipschitz continuous on the relevant level sets, which includes strongly convex functions with Lipschitz continuous Hessian on level sets. Indeed, the analytic affine normal is constructed from the inverse Hessian and third-order derivatives of $f$; hence whenever the level sets avoid degeneracy and their cubic form is uniformly controlled, the resulting affine-normal vector remains bounded along the entire trajectory.
Thus Assumption~\ref{assump:T-bounded} merely excludes pathological degeneracies of the affine metric and does not restrict typical optimization problems.
\end{remark}

\begin{lemma}[Angle bound]
\label{lem:angle-from-T}
Suppose Assumption~\ref{assump:T-bounded} holds.
Let $\theta_k$ denote the angle between $-\,\nabla f(x_k)$ and $d_k$, i.e.,
\begin{equation}\label{eq:theta-def}
\cos\theta_k
:=
\frac{-\langle \nabla f(x_k), d_k\rangle}{\|\nabla f(x_k)\|\,\|d_k\|},
\qquad \theta_k\in[0,\pi].
\end{equation}
Then there exists a constant $c>0$ such that
\[
\cos\theta_k \;\ge\; c
\qquad\text{for all }k.
\]
In particular, we may take
\[
c \;=\; \frac{1}{\sqrt{1+T^2}}.
\]
Consequently, the uniform angle condition holds:
\begin{equation}\label{eq:angle-condition}
-\langle \nabla f(x_k), d_k\rangle 
\;\ge\;
c\,\|\nabla f(x_k)\|\,\|d_k\|, \qquad \forall k.
\end{equation}
\end{lemma}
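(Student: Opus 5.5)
The plan is to compute $\cos\theta_k$ directly in the normal--aligned frame at $x_k$ introduced in Section~\ref{subsec:dk}. There $e_{n+1}=\nabla f(x_k)/\|\nabla f(x_k)\|$ and $\{e_i\}_{i=1}^n$ is an orthonormal tangent basis. As noted after \eqref{eq:dk}, every direction produced by the three cases of \eqref{eq:dk} can be written as
\[
d_k=\sum_{i=1}^n(\tau_k)_i e_i-e_{n+1}.
\]
In Cases~1 and~2 this is the analytic affine normal, or its negative, rescaled so that the normal component equals $-1$. In Case~3 we have $\tau_k=0$. Since the angle $\theta_k$ is unchanged by positive rescaling of $d_k$, we may work with this representative throughout.

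The key computation uses only orthonormality:
\[
\langle\nabla f(x_k),d_k\rangle=\|\nabla f(x_k)\|\langle e_{n+1},d_k\rangle=-\|\nabla f(x_k)\|,
\qquad
\|d_k\|=\sqrt{1+T_k^2}.
\]
Substituting into \eqref{eq:theta-def} gives
\[
\cos\theta_k=\frac{1}{\sqrt{1+T_k^2}}.
\]
Assumption~\ref{assump:T-bounded} gives $T_k\le T$, so $\cos\theta_k\ge 1/\sqrt{1+T^2}=:c>0$ for every $k$. Multiplying the definition of $\cos\theta_k$ by $\|\nabla f(x_k)\|\,\|d_k\|$ then yields \eqref{eq:angle-condition}.

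The only step needing care is the normalization. We must justify that in every case of \eqref{eq:dk} the direction has normal component exactly $-1$, so that $T_k$ in Assumption~\ref{assump:T-bounded} is the same $\tau_k$ used here. This means checking that the sign flip in Case~2 and the rescaling preserve the frame representation: the flip makes the normal component negative, and dividing by its absolute value makes it $-1$. The statement is scale-invariant in $d_k$, and the paper's convention already fixes the normal component to $-1$. So I expect no real obstacle beyond stating this convention explicitly; the rest is routine.
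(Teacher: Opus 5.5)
Your proposal is correct and matches the paper's proof essentially step for step. You compute in the normal-aligned frame to get $\cos\theta_k = 1/\sqrt{1+T_k^2}$, bound this using $T_k \le T$, and multiply back to obtain \eqref{eq:angle-condition}. Your extra observation that $\cos\theta_k$ and \eqref{eq:angle-condition} are both invariant under positive rescaling of $d_k$ usefully justifies the normalization (normal component $-1$), which the paper simply takes as a convention.
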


\begin{proof}
By construction of the normal-aligned frame at $x_k$, we have
\[
e_{n+1} = \frac{\nabla f(x_k)}{\|\nabla f(x_k)\|},
\qquad
d_k = \sum_{i=1}^n (\tau_k)_i\, e_i - e_{n+1},
\]
so the $(n+1)$-st component of $d_k$ is $-1$. Hence
\[
\langle \nabla f(x_k), d_k\rangle
=
\|\nabla f(x_k)\|\langle e_{n+1}, \sum_{i=1}^n (\tau_k)_i\, e_i - e_{n+1}\rangle
=
-\|\nabla f(x_k)\|.
\]
Moreover, by definition of $T_k$,
\[
\|d_k\|^2
=
\|\tau_k\|^2 + 1
=
1 + T_k^2.
\]
Substituting into \eqref{eq:theta-def} yields
\[
\cos\theta_k
=
\frac{-\langle \nabla f(x_k), d_k\rangle}{\|\nabla f(x_k)\|\,\|d_k\|}
=
\frac{1}{\|d_k\|}
=
\frac{1}{\sqrt{1+T_k^2}}.
\]
Assumption~\ref{assump:T-bounded} ensures $T_k\le T$ for all $k$, hence
\[
\cos\theta_k
=
\frac{1}{\sqrt{1+T_k^2}}
\;\ge\;
\frac{1}{\sqrt{1+T^2}}
=: c > 0.
\]
Finally, plugging this lower bound on $\cos\theta_k$ back into
\eqref{eq:theta-def} gives
\[
-\langle \nabla f(x_k), d_k\rangle
=
\|\nabla f(x_k)\|\,\|d_k\|\,\cos\theta_k
\;\ge\;
c\,\|\nabla f(x_k)\|\,\|d_k\|.
\]
\end{proof}

\begin{lemma}[Armijo step lower bound and one--step decrease]\label{lem:Armijo-lower}
Assume that $f$ is $L$--smooth, and let $d_k$ be any descent direction satisfying 
$\langle \nabla f(x_k), d_k\rangle < 0$.
Then with with Armijo parameter $\sigma \in (0,1)$ and backtracking ratio $\beta\in [1/2, 1)$, the Armijo backtracking step $\alpha_k$ satisfies
\[
\alpha_k
\;\ge\;
\frac{1-\sigma}{L}\cdot 
\frac{-\langle \nabla f(x_k), d_k\rangle}{\|d_k\|^2},
\]
and the corresponding iterate obeys the decrease estimate
\[
f(x_{k+1})
\;\le\;
f(x_k)
-
\frac{\sigma(1-\sigma)}{L}\cdot
\frac{\langle \nabla f(x_k), d_k\rangle^2}{\|d_k\|^2}.
\]
\end{lemma}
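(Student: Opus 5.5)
The argument is the standard backtracking analysis for an $L$--smooth function, applied to an arbitrary descent direction. Write $g_k:=\nabla f(x_k)$ and $s_k:=-\langle g_k,d_k\rangle>0$. The first step is to invoke the descent lemma implied by $L$--smoothness:
\[
f(x_k+\alpha d_k)\le f(x_k)+\alpha\langle g_k,d_k\rangle+\tfrac{L}{2}\alpha^2\|d_k\|^2 .
\]
The Armijo inequality $f(x_k+\alpha d_k)\le f(x_k)-\sigma\alpha s_k$ holds as soon as $-\alpha s_k+\tfrac{L}{2}\alpha^2\|d_k\|^2\le -\sigma\alpha s_k$. This is equivalent to
\[
\alpha\le \bar\alpha_k:=\frac{2(1-\sigma)}{L}\cdot\frac{s_k}{\|d_k\|^2}.
\]
So every trial step in $(0,\bar\alpha_k]$ is accepted, and backtracking terminates after finitely many reductions.

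The second step converts this into a lower bound on the accepted step. There are two cases. If the initial trial $\alpha_0$ is accepted, then $\alpha_k=\alpha_0$. Otherwise the accepted step is $\alpha_k=\beta^m\alpha_0$ with $m\ge1$, and the previous trial $\alpha_k/\beta$ was rejected. Rejection forces $\alpha_k/\beta>\bar\alpha_k$, hence $\alpha_k>\beta\bar\alpha_k$. Using $\beta\ge 1/2$ this gives
\[
\alpha_k\ge \tfrac12\bar\alpha_k=\frac{1-\sigma}{L}\cdot\frac{s_k}{\|d_k\|^2}.
\]
This is exactly where the hypothesis $\beta\in[1/2,1)$ is used: it cancels the factor $2$ coming from the quadratic term.

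The first case is the main obstacle, and it requires an extra assumption. When no backtracking happens, the bound $\alpha_k=\alpha_0\ge\frac{1-\sigma}{L}\frac{s_k}{\|d_k\|^2}$ does not follow from the listed hypotheses alone. I would handle it by stating that the initial trial step is large enough, namely $\alpha_0\ge \frac{1-\sigma}{L}\frac{s_k}{\|d_k\|^2}$. For the normalized YAND directions, Lemma~\ref{lem:angle-from-T} gives $s_k=\|g_k\|$ and $\|d_k\|\ge1$. The condition then reads $\alpha_0\ge(1-\sigma)\|g_k\|/L$. An alternative is to note that the final decrease estimate $\sigma\alpha_k s_k\ge\frac{\sigma(1-\sigma)}{L}\frac{s_k^2}{\|d_k\|^2}$ needs only this same lower bound on $\alpha_k$. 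So the extra assumption enters only through that case, and I would flag it explicitly in the proof; it holds whenever $\alpha_0=1$ and $\|g_k\|\le L/(1-\sigma)$.

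The last step is the decrease estimate. Substituting the step lower bound into the accepted Armijo inequality gives
\[
f(x_{k+1})\le f(x_k)-\sigma\alpha_k s_k\le f(x_k)-\frac{\sigma(1-\sigma)}{L}\cdot\frac{\langle g_k,d_k\rangle^2}{\|d_k\|^2},
\]
which is the claimed inequality.
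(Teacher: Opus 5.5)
Your argument in the backtracking case is the paper's proof. The descent lemma shows that every trial $\alpha\le\bar\alpha_k$ is accepted, minimality forces $\alpha_k/\beta>\bar\alpha_k$, $\beta\ge\tfrac12$ absorbs the factor $2$, and the decrease then follows from the Armijo inequality. The one difference is that you treat the case $m=0$, which the paper skips. The paper asserts that ``minimality of $\alpha_k$ implies that $\alpha_k/\beta$ fails Armijo,'' and this presupposes at least one reduction.

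Your flag is correct, and the extra hypothesis is genuinely necessary, because the statement is false without it. Take $f(x)=\tfrac{L}{2}\|x\|^2$. Its level sets are spheres, so the normalized YAND direction is $d_k=-x_k/\|x_k\|$, with $T_k=0$ and $-\langle\nabla f(x_k),d_k\rangle=L\|x_k\|$. With $\alpha_0=1$ the Armijo test passes immediately whenever $\|x_k\|\ge 1/(2(1-\sigma))$. Then $\alpha_k=1$ and the actual decrease is $L\|x_k\|-L/2$. The lemma instead claims $\alpha_k\ge(1-\sigma)\|x_k\|$ and a decrease of at least $\sigma(1-\sigma)L\|x_k\|^2$. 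For $\sigma=\tfrac12$ and $\|x_k\|=4$, both claims fail.

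In this example the step bound holds exactly when $\|\nabla f(x_k)\|\le L/(1-\sigma)$, so your sufficient condition is sharp. The same computation with $\alpha_0=A$ shows that no fixed initial trial step rescues the statement for all $x_k$. Your proof is therefore correct for the amended statement.

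Your remark that the decrease estimate ``needs only this same lower bound'' is not really an alternative route. A hypothesis-free alternative is the standard bound
$\alpha_k\ge\min\{\alpha_0,\ \tfrac{2\beta(1-\sigma)}{L}\cdot(-\langle\nabla f(x_k),d_k\rangle)/\|d_k\|^2\}$, together with the decrease $\sigma\alpha_k\,(-\langle\nabla f(x_k),d_k\rangle)$. This still gives $\|\nabla f(x_k)\|\to0$ in Theorem~\ref{thm:global-conv-Armijo}. However, the linear rates in Theorem~\ref{thm:YAND-linear-strongly-convex} and Corollary~\ref{cor:PL-linear} then hold only once $\|\nabla f(x_k)\|$ is small enough, or with an initial trial step scaled like yours. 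In the example above with $\sigma=\tfrac12$ and $\|x_k\|=4$, the actual contraction $(3/4)^2$ exceeds the claimed $\tfrac12$.
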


\begin{proof}
Let $g_k = \nabla f(x_k)$ and consider the univariate function 
$\phi_k(\alpha) := f(x_k+\alpha d_k)$. 
By $L$--smoothness,
\[
\phi_k(\alpha)
\;\le\;
\phi_k(0) + \alpha \phi_k'(0)
+ \tfrac12 L \alpha^2 \|d_k\|^2, 
\qquad \forall \alpha \ge 0.
\]
Since $d_k$ is a descent direction, 
$\phi_k'(0)=\langle g_k,d_k\rangle < 0$.

\noindent\textbf{Step 1: Sufficient condition for Armijo.}
The Armijo condition reads
\[
\phi_k(\alpha)
\;\le\;
\phi_k(0) + \sigma \alpha \phi_k'(0).
\]
Using the smoothness bound, a sufficient condition is
\[
\tfrac12 L \alpha^2 \|d_k\|^2 
+ \alpha \phi_k'(0)
\;\le\;
\sigma \alpha \phi_k'(0),
\]
which rearranges to
\[
\alpha
\;\le\;
\frac{2(1-\sigma)}{L}\cdot 
\frac{-\phi_k'(0)}{\|d_k\|^2}.
\]

\noindent\textbf{Step 2: Lower bound on the accepted step.}
Let $\alpha_k$ be the first step satisfying Armijo in backtracking with 
ratio $\beta\ge 1/2$.
Minimality of $\alpha_k$ implies that $\alpha_k/\beta$ fails Armijo.
Using the sufficient condition above, we obtain
\[
\frac{\alpha_k}{\beta}
\;>\;
\frac{2(1-\sigma)}{L}\cdot 
\frac{-\phi_k'(0)}{\|d_k\|^2}.
\]
Thus
\[
\alpha_k
\;\ge\;
\frac{2\beta(1-\sigma)}{L}\cdot 
\frac{-\phi_k'(0)}{\|d_k\|^2}
\;\ge\;
\frac{1-\sigma}{L}\cdot 
\frac{-\phi_k'(0)}{\|d_k\|^2},
\]
where the last inequality uses $\beta\ge 1/2$.
Recalling $\phi_k'(0)=\langle g_k,d_k\rangle$ proves the first claim:
\[
\alpha_k
\;\ge\;
\frac{1-\sigma}{L}\cdot 
\frac{-\langle \nabla f(x_k), d_k\rangle}{\|d_k\|^2}.
\]

\noindent\textbf{Step 3: Decrease of $f$.}
Applying the Armijo condition at $\alpha_k$:
\[
f(x_{k+1})
=
f(x_k+\alpha_k d_k)
\;\le\;
f(x_k) + \sigma \alpha_k \langle g_k,d_k\rangle.
\]
Using the lower bound on $\alpha_k$ from Step~2,
\[
f(x_{k+1})
\;\le\;
f(x_k)
-
\frac{\sigma(1-\sigma)}{L}
\cdot
\frac{\langle g_k,d_k\rangle^2}{\|d_k\|^2}.
\]
This completes the proof.
\end{proof}

\begin{theorem}[Global convergence under Armijo backtracking]
\label{thm:global-conv-Armijo}
Let $f:\R^{n+1}\to\R$ be continuously differentiable, $L$--smooth, and bounded below on $\R^{n+1}$.
Suppose that $\{x_k\}$ is generated by
\[
x_{k+1} = x_k + \alpha_k d_k,
\]
where the step sizes $\alpha_k$ are obtained by Armijo backtracking with parameters
$\sigma\in(0,1)$ and $\beta\in[1/2,1)$, and the directions $d_k$ satisfy
\[
\langle \nabla f(x_k), d_k\rangle < 0
\quad\text{for all }k.
\]
Assume in addition that Assumption~\ref{assump:T-bounded} holds.
Then
\begin{enumerate}
\item[(i)] $f(x_k)$ is strictly decreasing and convergent:
      $f(x_k)\downarrow f_\infty$ as $k\to\infty$.
\item[(ii)] The gradients converge to zero:
      $\|\nabla f(x_k)\|\to 0$ as $k\to\infty$.
\item[(iii)] Moreover, if the sequence $\{x_k\}$ is bounded, then every cluster point
      of $\{x_k\}$ is a first–order stationary point of $f$.
\end{enumerate}
\end{theorem}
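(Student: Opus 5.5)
The plan is a standard Zoutendijk-type argument that combines Lemma~\ref{lem:Armijo-lower} with the angle bound of Lemma~\ref{lem:angle-from-T}. First, Lemma~\ref{lem:Armijo-lower} applies at every iteration, since each $d_k$ is a strict descent direction and $\beta\in[1/2,1)$. It gives
\[
f(x_{k+1})\le f(x_k)-\frac{\sigma(1-\sigma)}{L}\,\frac{\langle \nabla f(x_k),d_k\rangle^2}{\|d_k\|^2}.
\]
The subtracted term is strictly positive because $\langle\nabla f(x_k),d_k\rangle<0$. Hence $f(x_k)$ is strictly decreasing. Since $f$ is bounded below, the monotone sequence converges to some $f_\infty$, which proves (i).

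For (ii), I will rewrite the decrease term using Lemma~\ref{lem:angle-from-T}. By \eqref{eq:angle-condition},
\[
\frac{\langle \nabla f(x_k),d_k\rangle^2}{\|d_k\|^2}\ge c^2\|\nabla f(x_k)\|^2,
\qquad c=(1+T^2)^{-1/2}.
\]
In the normal-aligned frame this is even an identity up to $T_k$: there $\langle\nabla f(x_k),d_k\rangle=-\|\nabla f(x_k)\|$ and $\|d_k\|^2=1+T_k^2$. The only point to check is that this normalization matches the representation of $d_k$ in \eqref{eq:dk}. Rescaling $d_k$ by a positive factor does not change the ratio $\langle g,d\rangle^2/\|d\|^2$, so the estimate holds whatever scale the algorithm uses. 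This step is where Assumption~\ref{assump:T-bounded} enters essentially. Without it the angle could degenerate, and the decrease would no longer control the gradient.

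Next I telescope from $k=0$ to $N$:
\[
\frac{\sigma(1-\sigma)c^2}{L}\sum_{k=0}^{N}\|\nabla f(x_k)\|^2\le f(x_0)-f(x_{N+1})\le f(x_0)-\inf f<\infty .
\]
So the series $\sum_k\|\nabla f(x_k)\|^2$ is finite, which forces $\|\nabla f(x_k)\|\to0$. This is (ii). The same bound also gives the rate $\min_{k\le N}\|\nabla f(x_k)\|^2=O(1/N)$ as a by-product.

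For (iii), suppose $\{x_k\}$ is bounded and let $\bar x$ be a cluster point, with $x_{k_j}\to\bar x$. The gradient $\nabla f$ is continuous (it is even Lipschitz), so $\nabla f(\bar x)=\lim_j\nabla f(x_{k_j})=0$ by (ii). I expect no real obstacle in this proof. The only delicate step is justifying the angle bound uniformly in $k$, including the steepest-descent fallback case of \eqref{eq:dk}. In that case $\tau_k=0$, so $T_k=0\le T$ and the bound holds trivially.
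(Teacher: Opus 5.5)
Your proposal is correct and follows essentially the same route as the paper: Lemma~\ref{lem:Armijo-lower} combined with the angle bound \eqref{eq:angle-condition}, telescoping to get $\sum_k\|\nabla f(x_k)\|^2<\infty$, and continuity of $\nabla f$ along a convergent subsequence for (iii). The only differences are minor: you read off strict decrease from the lemma's decrease estimate rather than directly from the Armijo inequality, and you add checks the paper leaves implicit, namely that $\langle g,d\rangle^2/\|d\|^2$ is invariant under positive rescaling and that the fallback case gives $T_k=0$.
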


\begin{proof}
By construction of Armijo backtracking, each accepted step $\alpha_k$ satisfies
\[
f(x_{k+1})
=
f(x_k+\alpha_k d_k)
\;\le\;
f(x_k) + \sigma \alpha_k \langle \nabla f(x_k), d_k\rangle,
\]
and since $\langle \nabla f(x_k),d_k\rangle<0$, we obtain $f(x_{k+1})<f(x_k)$.
Hence $\{f(x_k)\}$ is strictly decreasing.
Because $f$ is bounded below, the limit
$f_\infty := \lim_{k\to\infty} f(x_k)$ exists and is finite.
This proves (i).

Next, apply Lemma~\ref{lem:Armijo-lower} to each iteration.
With $g_k := \nabla f(x_k)$, the lemma gives
\begin{equation}\label{eq:one-step-decrease-general}
f(x_k) - f(x_{k+1})
\;\ge\;
\frac{\sigma(1-\sigma)}{L}\cdot
\frac{\langle g_k,d_k\rangle^2}{\|d_k\|^2}.
\end{equation}
Using the angle condition \eqref{eq:angle-condition}, we have
\[
-\langle g_k,d_k\rangle 
\;\ge\;
c\,\|g_k\|\,\|d_k\|
\quad\Longrightarrow\quad
\frac{\langle g_k,d_k\rangle^2}{\|d_k\|^2}
=
\frac{(-\langle g_k,d_k\rangle)^2}{\|d_k\|^2}
\;\ge\;
c^2 \|g_k\|^2.
\]
Substituting into \eqref{eq:one-step-decrease-general} yields
\[
f(x_k) - f(x_{k+1})
\;\ge\;
\frac{\sigma(1-\sigma)c^2}{L}\,\|g_k\|^2.
\]
Summing over $k=0,1,\dots,N$ gives
\[
\frac{\sigma(1-\sigma)c^2}{L}
\sum_{k=0}^N \|\nabla f(x_k)\|^2
\;\le\;
f(x_0) - f(x_{N+1})
\;\le\;
f(x_0) - f_\infty.
\]
Letting $N\to\infty$, we obtain
\[
\sum_{k=0}^\infty \|\nabla f(x_k)\|^2 < +\infty.
\]
Therefore $\|\nabla f(x_k)\|\to 0$ as $k\to\infty$, proving (ii).

For (iii), assume that $\{x_k\}$ is bounded.
Then it has at least one cluster point $\bar x$.
Let $\{x_{k_j}\}$ be a subsequence with $x_{k_j}\to\bar x$.
By continuity of $\nabla f$ and (ii),
\[
\nabla f(\bar x)
=
\lim_{j\to\infty} \nabla f(x_{k_j})
=
0,
\]
so $\bar x$ is a first–order stationary point of $f$.
\end{proof}

\subsection{Strongly convex and smooth case: Armijo backtracking}

\begin{theorem}[Global linear convergence with Armijo backtracking under strong convexity]
\label{thm:YAND-linear-strongly-convex}
Let $f$ be $\mu$--strongly convex and $L$--smooth, and suppose
Assumption~\ref{assump:T-bounded} holds.
Then under Armijo backtracking with $\sigma\in(0,1)$ and $\beta\in[1/2,1)$,
the following hold:

\begin{itemize}
\item[(i)] \textbf{(Function values)}  
\[
f(x_{k+1}) - f^\star
\;\le\;
(1-\rho_{\mathrm{Armijo}})\,(f(x_k)-f^\star),
\qquad
\rho_{\mathrm{Armijo}}
:= \frac{2\sigma(1-\sigma)}{1+T^2}\cdot\frac{\mu}{L}\in(0,1/2).
\]
Consequently,
\[
f(x_k)-f^\star
\;\le\;
(1-\rho_{\mathrm{Armijo}})^k\,(f(x_0)-f^\star),
\qquad \forall k\ge0,
\]
i.e., $\{f(x_k)\}$ converges $Q$--linearly to $f^\star$.

\item[(ii)] \textbf{(Iterates)}  
Using strong convexity,
\[
\|x_k - x^\star\|^2
\;\le\;
\frac{2}{\mu}\,(f(x_k)-f^\star)
\;\le\;
\frac{2}{\mu}\,(f(x_0)-f^\star)\,(1-\rho_{\mathrm{Armijo}})^k,
\qquad \forall k\ge0.
\]
Hence, $\{x_k\}$ converges $R$--linearly to $x^\star$.

\item[(iii)] \textbf{(Gradients)}  
Using smoothness,
\[
\|\nabla f(x_k)\|^2
\;\le\;
2L\,(f(x_k)-f^\star)
\;\le\;
2L\,(f(x_0)-f^\star)\,(1-\rho_{\mathrm{Armijo}})^k,
\qquad \forall k\ge0.
\]
Thus, $\{\nabla f(x_k)\}$ converges $R$--linearly to $0$.
\end{itemize}
\end{theorem}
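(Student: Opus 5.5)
The plan is to combine the one-step decrease of Lemma~\ref{lem:Armijo-lower} with the angle bound of Lemma~\ref{lem:angle-from-T}, and then close the recursion using the Polyak--{\L}ojasiewicz inequality implied by strong convexity.

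\textbf{Step 1 (decrease per step).} Recall from the proof of Lemma~\ref{lem:angle-from-T} that in the normal-aligned frame $\langle \nabla f(x_k),d_k\rangle=-\|\nabla f(x_k)\|$ and $\|d_k\|^2=1+T_k^2\le 1+T^2$. Hence
\[
\frac{\langle \nabla f(x_k),d_k\rangle^2}{\|d_k\|^2}\ \ge\ \frac{\|\nabla f(x_k)\|^2}{1+T^2}.
\]
Inserting this into the decrease estimate of Lemma~\ref{lem:Armijo-lower}, which applies because $\beta\ge 1/2$ and $f$ is $L$-smooth, gives
\[
f(x_{k+1})\le f(x_k)-\frac{\sigma(1-\sigma)}{L(1+T^2)}\|\nabla f(x_k)\|^2 .
\]

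\textbf{Step 2 (PL inequality).} Strong convexity yields $\|\nabla f(x)\|^2\ge 2\mu(f(x)-f^\star)$. To see this, minimize the lower bound $f(y)\ge f(x)+\langle\nabla f(x),y-x\rangle+\tfrac{\mu}{2}\|y-x\|^2$ over $y$. Substituting into Step 1 and subtracting $f^\star$ from both sides gives
\[
f(x_{k+1})-f^\star\le (1-\rho_{\mathrm{Armijo}})(f(x_k)-f^\star),
\qquad
\rho_{\mathrm{Armijo}}=\frac{2\sigma(1-\sigma)\mu}{(1+T^2)L}.
\]
To see that $\rho_{\mathrm{Armijo}}\in(0,1/2)$, use three facts: $\sigma(1-\sigma)\le 1/4$, $\mu\le L$ (which follows by comparing the strong convexity lower bound with the smoothness upper bound), and $1+T^2\ge 1$. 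Together these give $\rho_{\mathrm{Armijo}}\le 1/2$. Strict inequality needs a little care: equality would require $\sigma=1/2$, $\mu=L$ and $T=0$ simultaneously. This is the only delicate point. I would handle it by interpreting $T$ as an upper bound that may be taken strictly positive; enlarging $T$ is harmless because it only weakens the rate. Alternatively one can simply state $\rho_{\mathrm{Armijo}}\in(0,1/2]$. Positivity of $\rho_{\mathrm{Armijo}}$ is immediate. Iterating the contraction gives the geometric bound in (i).

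\textbf{Steps 3--4 (iterates and gradients).} For (ii), apply strong convexity at the minimizer, where $\nabla f(x^\star)=0$. This gives $f(x_k)-f^\star\ge \tfrac{\mu}{2}\|x_k-x^\star\|^2$, and combining with (i) yields the R-linear bound. For (iii), use the standard consequence of $L$-smoothness,
\[
f^\star\le f\!\left(x-\tfrac1L\nabla f(x)\right)\le f(x)-\tfrac{1}{2L}\|\nabla f(x)\|^2,
\]
so $\|\nabla f(x_k)\|^2\le 2L(f(x_k)-f^\star)$, and combine again with (i). Everything after Step 2 is routine; the main substance is Step 1, the exact identity $-\langle\nabla f,d_k\rangle=\|\nabla f\|$ coming from the $-1$ normalization of $d_k$.
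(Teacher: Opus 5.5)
Your argument follows the paper's proof step for step. Both chain the decrease estimate of Lemma~\ref{lem:Armijo-lower}, the angle bound coming from the $-1$ normalization of $d_k$ (Lemma~\ref{lem:angle-from-T}), and the PL inequality $\|\nabla f\|^2\ge 2\mu(f-f^\star)$ implied by strong convexity. Both then obtain (ii)--(iii) from quadratic growth and $\|\nabla f\|^2\le 2L(f-f^\star)$.

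Your remark on the interval is correct. $\rho_{\mathrm{Armijo}}=\tfrac12$ is attained for $\sigma=\tfrac12$ and $f=\tfrac L2\|x\|^2$: the affine normals are radial, so $T_k=0$ and one may take $T=0$, with $\mu=L$. The paper's $(0,1/2)$ should therefore read $(0,1/2]$. This is cosmetic, since the contraction itself is unaffected.

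There is, however, a genuine gap, which you inherit from the paper rather than introduce. You check that Lemma~\ref{lem:Armijo-lower} ``applies because $\beta\ge 1/2$ and $f$ is $L$-smooth.'' But Step~2 of its proof assumes $\alpha_k/\beta$ was tried and rejected, which is false when the initial trial step $\alpha_0$ is accepted. In general one only has $\alpha_k\ge\min\bigl\{\alpha_0,\ \tfrac{1-\sigma}{L}\,\tfrac{-\langle\nabla f(x_k),d_k\rangle}{\|d_k\|^2}\bigr\}$.

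Under the $-1$ normalization, $\|d_k\|\le\sqrt{1+T^2}$ regardless of $\|\nabla f(x_k)\|$, and the claimed contraction then really fails far from $x^\star$. Take $f=\tfrac12\|x\|^2$ on $\R^2$, so $d_k=-x_k/\|x_k\|$. If $r=\|x_k\|\ge \alpha_0/(2(1-\sigma))$, the first trial step is accepted. This gives $f(x_{k+1})-f^\star=\tfrac12(r-\alpha_0)^2$, which exceeds $(1-\rho_{\mathrm{Armijo}})\tfrac12 r^2$ once $r$ is large.

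The argument (yours and the paper's) is valid whenever $\|\nabla f(x_k)\|\le \alpha_0 L/(1-\sigma)$, since then the lemma's lower bound on $\alpha_k$ holds even if no backtracking occurs. Two provisos secure this:
\begin{itemize}
\item By $\|\nabla f\|^2\le 2L(f-f^\star)$ and monotonicity of $f(x_k)$, the condition holds for all $k$ if $f(x_0)-f^\star\le \alpha_0^2L/(2(1-\sigma)^2)$.
\item Alternatively, choose the trial step $\alpha_0\ge(1-\sigma)\|\nabla f(x_k)\|/L$ at each iteration.
\end{itemize}
Without such a proviso, the stated $Q$-linear bound holds only after a finite burn-in phase, during which each step decreases $f$ by at least a fixed positive amount.
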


\begin{proof}
\noindent\textbf{(i) Function values.}

\noindent\textbf{Step 1: Armijo decrease.}
Lemma~\ref{lem:Armijo-lower} gives
\begin{equation}\label{eq:YAND-decrease}
f(x_{k+1})
\;\le\;
f(x_k)
-
\frac{\sigma(1-\sigma)}{L}\cdot
\frac{\langle\nabla f(x_k),d_k\rangle^2}{\|d_k\|^2}.
\end{equation}

\noindent\textbf{Step 2: Angle condition.}
By Lemma~\ref{lem:angle-from-T}, Assumption~\ref{assump:T-bounded}
implies 
\[
\cos\theta_k
:= -\frac{\langle\nabla f(x_k),d_k\rangle}{\|\nabla f(x_k)\|\,\|d_k\|}
 \;\ge\; \frac{1}{\sqrt{1+T^2}} \qquad\text{for all }k.
\]
Hence 
\[
\langle\nabla f(x_k),d_k\rangle^2
=
(\cos\theta_k)^2\,\|\nabla f(x_k)\|^2\,\|d_k\|^2
\;\ge\;
\frac{\|\nabla f(x_k)\|^2\,\|d_k\|^2}{1+T^2}.
\]

\noindent\textbf{Step 3: PL inequality from strong convexity.}
Strong convexity implies the PL inequality:
\[
\|\nabla f(x_k)\|^2
\;\ge\;
2\mu\,(f(x_k)-f^\star).
\]

\noindent\textbf{Step 4: Combine the estimates.}
Substitute the angle bound into \eqref{eq:YAND-decrease}:
\[
f(x_{k+1})
\;\le\;
f(x_k)
-
\frac{\sigma(1-\sigma)}{L(1+T^2)}\,
\|\nabla f(x_k)\|^2.
\]
Then by the PL inequality,
we have
\[
f(x_{k+1})
\;\le\;
f(x_k)
-
\frac{2\sigma(1-\sigma)\mu}{L(1+T^2)}\,
\bigl(f(x_k)-f^\star\bigr),
\]
so that
\[
f(x_{k+1})-f^\star
\;\le\;
(1-\rho_{\mathrm{Armijo}})\,(f(x_k)-f^\star),
\]
with
\[
\rho_{\mathrm{Armijo}}
=
\frac{2\sigma(1-\sigma)}{(1+T^2)}\cdot\frac{\mu}{L}.
\]
Iterating gives the claimed $Q$--linear rate
\[
f(x_k)-f^\star
\;\le\;
(1-\rho_{\mathrm{Armijo}})^k\bigl(f(x_0)-f^\star\bigr),\qquad \forall k\geq 0.
\]

\medskip
\noindent\textbf{(ii) and (iii) Iterates and gradients.}
Strong convexity and $L$--smoothness imply the standard equivalences
\[
\frac{\mu}{2}\,\|x-x^\star\|^2
\;\le\;
f(x)-f^\star
\;\le\;
\frac{L}{2}\,\|x-x^\star\|^2,
\qquad \forall x,
\]
and
\[
\frac{1}{2L}\,\|\nabla f(x)\|^2
\;\le\;
f(x)-f^\star
\;\le\;
\frac{1}{2\mu}\,\|\nabla f(x)\|^2.
\]
Combining these with the function-value estimate
\[
f(x_k)-f^\star
\;\le\;
(1-\rho_{\mathrm{Armijo}})^k\,(f(x_0)-f^\star),
\]
gives
\[
\|x_k-x^\star\|^2
\;\le\;
\frac{2}{\mu}\,\bigl(f(x_k)-f^\star\bigr)
\;\le\;
\frac{2}{\mu}\,(f(x_0)-f^\star)\,(1-\rho_{\mathrm{Armijo}})^k,
\qquad \forall k\ge0,
\]
and similarly
\[
\|\nabla f(x_k)\|^2
\;\le\;
2L\,\bigl(f(x_k)-f^\star\bigr)
\;\le\;
2L\,(f(x_0)-f^\star)\,(1-\rho_{\mathrm{Armijo}})^k,
\qquad \forall k\ge0.
\]
This proves the linear convergence of $\{x_k\}$ and $\{\nabla f(x_k)\}$.
\end{proof}

\begin{remark}[Optimal Armijo parameter and rate constant]
\label{rem:Armijo-optimal-sigma}
The linear rate factor in Theorem~\ref{thm:YAND-linear-strongly-convex} is
\[
\rho_{\mathrm{Armijo}}(\sigma)
=
\frac{2\sigma(1-\sigma)}{1+T^2}\cdot\frac{\mu}{L},
\qquad \sigma\in(0,1),
\]
which is maximized at $\sigma^\star=\tfrac12$.
Thus the best possible rate within this analysis is obtained for choosing $\sigma=\tfrac12$, yielding
\[
\rho_{\mathrm{Armijo}}^{\max}
=
\frac{1}{2}\cdot\frac{1}{1+T^2}\cdot\frac{\mu}{L} \in (0, \frac{1}{2}).
\]
This shows that \textbf{the contraction factor decays at a quadratic rate in the curvature parameter~$T$ and linearly in the condition number $L/\mu$}.
\end{remark}

\subsection{Nonconvex setting: global linear convergence with PL inequality}

In the proof of Theorem~\ref{thm:YAND-linear-strongly-convex}, the key ingredient
linking the gradient norm to the function suboptimality was the inequality
\[
\|\nabla f(x_k)\|^2 \;\ge\; 2\mu\,(f(x_k)-f^\star),
\]
which follows from strong convexity.  
More generally, the same structural bound is provided by the
Polyak–Łojasiewicz (PL) inequality \cite{Polyak1963,Lojasiewicz1963,KarimiNutiniSchmidt2016}
\begin{equation}\label{eq:PLineq-general}
\frac12\|\nabla f(x)\|^2
\;\ge\; \mu_{\mathrm{PL}}\,(f(x)-f^\star),
\end{equation}
which does not require convexity.  
This condition is known to hold for a broad class of nonconvex objectives
(including many over-parameterized models, gradient-dominated landscapes, and functions
with benign geometry).

Replacing Step~3 in the proof of
Theorem~\ref{thm:YAND-linear-strongly-convex} with
\eqref{eq:PLineq-general} immediately yields the following result.

\begin{corollary}[Linear convergence with Armijo backtracking under the PL condition]
\label{cor:PL-linear}
Suppose $f$ is $L$–smooth, satisfies the PL inequality
\eqref{eq:PLineq-general}, and Assumption~\ref{assump:T-bounded} holds.
Then the YAND iterates produced by Armijo backtracking with $\sigma \in (0,1)$ and $\beta\in [1/2,1)$ satisfy
\[
f(x_{k+1})-f^\star
\;\le\;
\Bigl(1-\frac{2\sigma(1-\sigma)}{1+T^2}\cdot\frac{\mu_{\mathrm{PL}}}{L}\Bigr)
\,(f(x_k)-f^\star),
\]
and therefore
\[
f(x_k)-f^\star
\;\le\;
\Bigl(1-\frac{2\sigma(1-\sigma)}{1+T^2}\cdot\frac{\mu_{\mathrm{PL}}}{L}\Bigr)^k
\,\bigl(f(x_0)-f^\star\bigr).
\]
The optimal rate is obtained by taking $\sigma =\frac{1}{2}$, and hence
\[
f(x_k)-f^\star
\;\le\;
\Bigl(1-\frac{1}{2(1+T^2)}\cdot\frac{\mu_{\mathrm{PL}}}{L}\Bigr)^k
\,\bigl(f(x_0)-f^\star\bigr).
\]
\end{corollary}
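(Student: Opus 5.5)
The argument is the proof of Theorem~\ref{thm:YAND-linear-strongly-convex}, part (i), with the PL inequality \eqref{eq:PLineq-general} substituted in Step~3. Strong convexity is never used elsewhere in that part of the proof. Two facts need checking: $f^\star$ is finite, and every ingredient before Step~3 needs only $L$--smoothness and Assumption~\ref{assump:T-bounded}. For the first, I will interpret $f^\star$ as $\inf f$, which the PL condition presupposes is attained or at least finite. Then $f$ is bounded below, and Lemma~\ref{lem:Armijo-lower} together with Theorem~\ref{thm:global-conv-Armijo} applies.

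\textbf{Step 1: sufficient decrease.} Since $d_k$ from \eqref{eq:dk} satisfies $\langle\nabla f(x_k),d_k\rangle<0$, Lemma~\ref{lem:Armijo-lower} (with $\beta\in[1/2,1)$) gives
\[
f(x_k)-f(x_{k+1})\ge \frac{\sigma(1-\sigma)}{L}\,\frac{\langle\nabla f(x_k),d_k\rangle^2}{\|d_k\|^2}.
\]

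\textbf{Step 2: angle bound.} Lemma~\ref{lem:angle-from-T} gives $\langle\nabla f(x_k),d_k\rangle^2\ge \|\nabla f(x_k)\|^2\|d_k\|^2/(1+T^2)$. The decrease therefore becomes
\[
f(x_k)-f(x_{k+1})\ge \frac{\sigma(1-\sigma)}{L(1+T^2)}\,\|\nabla f(x_k)\|^2.
\]

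\textbf{Step 3: PL in place of strong convexity.} Apply \eqref{eq:PLineq-general} in the form $\|\nabla f(x_k)\|^2\ge 2\mu_{\mathrm{PL}}(f(x_k)-f^\star)$. Subtracting $f^\star$ from both sides and rearranging yields the one-step contraction with factor $1-\rho$, where $\rho=\frac{2\sigma(1-\sigma)}{1+T^2}\frac{\mu_{\mathrm{PL}}}{L}$. Induction on $k$ then gives the geometric bound.

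\textbf{Step 4: optimal $\sigma$.} The rate $\sigma(1-\sigma)$ is maximized at $\sigma=\tfrac12$, where it equals $\tfrac14$. Substituting gives $\rho=\frac{1}{2(1+T^2)}\frac{\mu_{\mathrm{PL}}}{L}$, exactly as in Remark~\ref{rem:Armijo-optimal-sigma}.

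The one point needing care is that the contraction factor lies in $[0,1)$, so that iterating is meaningful. I would use the standard fact that $L$--smoothness gives $f^\star\le f(x)-\frac{1}{2L}\|\nabla f(x)\|^2$. Combined with PL, this forces $\mu_{\mathrm{PL}}\le L$, hence $\rho\le \tfrac12$. Even without that bound the inequality would remain valid, since the left side is nonnegative.

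The remaining subtlety is the case $\nabla f(x_k)=0$. Then $x_k$ is a global minimizer by PL, the algorithm stops, and the bounds hold trivially from then on.
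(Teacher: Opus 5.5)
Your argument is the paper's own: its proof of this corollary consists of replacing Step~3 of Theorem~\ref{thm:YAND-linear-strongly-convex} by \eqref{eq:PLineq-general}. Your side checks on $f^\star$, on $\mu_{\mathrm{PL}}\le L$, and on stationary iterates are fine. However, Step~1 has a genuine gap, which you inherit from Lemma~\ref{lem:Armijo-lower}.

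The proof of that lemma assumes that $\alpha_k/\beta$ was tried and rejected. This is true only if at least one backtrack occurred. If the first trial $\alpha_0$ is accepted, you only know $\alpha_k=\alpha_0$. With the normalization of Section~\ref{subsec:dk}, namely $-\langle\nabla f(x_k),d_k\rangle=\|\nabla f(x_k)\|$ and $\|d_k\|\ge 1$, the claimed lower bound $(1-\sigma)\|\nabla f(x_k)\|/(L\|d_k\|^2)$ exceeds any fixed $\alpha_0$ once the gradient is large.

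This breaks the statement itself, not just the proof. Take $f(x)=\tfrac{L}{2}\|x\|^2$ on $\R^2$. Then $\mu_{\mathrm{PL}}=L$, the level sets are circles, $T_k=0$, and $d_k=-x_k/\|x_k\|$. Use the default $\alpha_0=1$ and $\sigma=\beta=\tfrac12$. Armijo accepts $\alpha$ iff $\alpha\le 2(1-\sigma)\|x_k\|$. So at $\|x_k\|=10$ the unit step is accepted and $f(x_{k+1})/f(x_k)=0.81$. The corollary promises the factor $1-\tfrac12=\tfrac12$.

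What is actually true is
$\alpha_k\ge\min\{\alpha_0,\ \tfrac{2\beta(1-\sigma)}{L}\,\tfrac{-\langle\nabla f(x_k),d_k\rangle}{\|d_k\|^2}\}$.
When $\alpha_0$ is accepted, Armijo gives $f(x_k)-f(x_{k+1})\ge\sigma\alpha_0\|\nabla f(x_k)\|$. The smoothness inequality you already use, together with monotonicity, gives $\|\nabla f(x_k)\|^2\le 2L(f(x_k)-f^\star)\le 2L(f(x_0)-f^\star)=:G^2$. Hence in every iteration
\[
f(x_k)-f(x_{k+1})\;\ge\;\sigma\min\Bigl\{\frac{\alpha_0}{G},\ \frac{1-\sigma}{L(1+T^2)}\Bigr\}\,\|\nabla f(x_k)\|^2 .
\]
Applying PL then gives linear convergence with factor $1-2\sigma\mu_{\mathrm{PL}}\min\{\alpha_0/G,\,(1-\sigma)/(L(1+T^2))\}$.

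This reduces to the stated factor only under an extra hypothesis. Examples are $\alpha_0\ge(1-\sigma)G/(L(1+T^2))$, or a per-iteration first trial $\alpha_0\ge(1-\sigma)\|\nabla f(x_k)\|/L$. Without such a hypothesis, neither your proof nor the paper's establishes the corollary as written.
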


Thus, even in the absence of convexity, the PL property guarantees
global $Q$–linear convergence of YAND, with a contraction factor governed
by the geometry of the affine–normal direction via $(1+T^2)^{-1}$.

\subsection{Nonconvex setting: strong Wolfe and gradient convergence}

We now consider fully nonconvex objectives under strong Wolfe line search.

\begin{theorem}[Gradient convergence under strong Wolfe]
\label{thm:nonconvex-Wolfe}
Assume:
\begin{itemize}
\item[(i)] $f:\R^{n+1}\to\R$ is twice continuously differentiable, $L$--smooth, and bounded below on $\R^{n+1}$;
\item[(ii)] the step sizes $\alpha_k$ satisfy the strong Wolfe conditions
with parameters $0<c_1<c_2<1$;
\item[(iii)] Assumption~\ref{assump:T-bounded} holds 
(so that $\cos\theta_k\ge c>0$ for all $k$ by Lemma~\ref{lem:angle-from-T}).
\end{itemize}
Let $f^\star:=\inf_{x\in\mathbb{R}^n} f(x)$.
Then the YAND iterates satisfy
\[
\sum_{k=0}^\infty \cos^2\theta_k\,\|\nabla f(x_k)\|^2 < \infty,
\qquad\text{and hence}\qquad
\|\nabla f(x_k)\|\to 0.
\]
Moreover,
\[
\min_{0\le j < K} \|\nabla f(x_j)\|
\;\le\;
\frac{C}{\sqrt{K}},
\qquad
C = \sqrt{\frac{L\bigl(f(x_0)-f^\star\bigr)}
{c_1(1-c_2)c^2}}.
\]
\end{theorem}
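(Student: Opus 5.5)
This is the classical Zoutendijk argument. The curvature (second) strong Wolfe condition gives a lower bound on the step size, and the sufficient-decrease condition then converts that bound into a per-iteration decrease proportional to $\cos^2\theta_k\|\nabla f(x_k)\|^2$. Throughout, write $g_k:=\nabla f(x_k)$.

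\textbf{Step 1: lower bound on the step.} The second strong Wolfe inequality implies the weak curvature condition
\[
\langle g_{k+1},d_k\rangle \ge c_2\langle g_k,d_k\rangle .
\]
Subtracting $\langle g_k,d_k\rangle$ from both sides gives
\[
\langle g_{k+1}-g_k,d_k\rangle\ge (c_2-1)\langle g_k,d_k\rangle=(1-c_2)\bigl(-\langle g_k,d_k\rangle\bigr)>0 .
\]
By $L$-smoothness and Cauchy--Schwarz, the left side is at most $L\alpha_k\|d_k\|^2$. Hence
\[
\alpha_k\ge \frac{(1-c_2)\,(-\langle g_k,d_k\rangle)}{L\|d_k\|^2}.
\]

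\textbf{Step 2: sufficient decrease.} Insert this bound into the first strong Wolfe condition $f(x_{k+1})\le f(x_k)+c_1\alpha_k\langle g_k,d_k\rangle$. Using the definition \eqref{eq:theta-def} of $\theta_k$, this yields
\[
f(x_k)-f(x_{k+1})\ge \frac{c_1(1-c_2)}{L}\frac{\langle g_k,d_k\rangle^2}{\|d_k\|^2}=\frac{c_1(1-c_2)}{L}\cos^2\theta_k\|g_k\|^2 .
\]
Sum this over $k=0,\dots,K-1$ and telescope. Since $f(x_K)\ge f^\star$ (finite, because $f$ is bounded below), we obtain
\[
\sum_{k<K}\cos^2\theta_k\|g_k\|^2\le \frac{L(f(x_0)-f^\star)}{c_1(1-c_2)} .
\]
Letting $K\to\infty$ gives the Zoutendijk series bound.

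\textbf{Step 3: conclusions.} Lemma~\ref{lem:angle-from-T} gives $\cos^2\theta_k\ge c^2$, so $\sum_k\|g_k\|^2<\infty$, and therefore $\|g_k\|\to0$. For the rate, bound the sum from below by $K c^2\min_{j<K}\|g_j\|^2$ and take square roots. This gives exactly the constant $C=\sqrt{L(f(x_0)-f^\star)/(c_1(1-c_2)c^2)}$.

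There is no real obstacle. The only points needing care are the following.
\begin{itemize}
\item Only the one-sided consequence of the absolute-value Wolfe condition is used, and the sign bookkeeping with $\langle g_k,d_k\rangle<0$ must be handled correctly.
\item Existence of a strong Wolfe step is standard: $f$ is bounded below along the ray and $d_k$ is a descent direction by \eqref{eq:dk}.
\item The $C^2$ hypothesis is stronger than needed; $L$-smoothness suffices.
\end{itemize}
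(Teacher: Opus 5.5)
Your proof is correct and is essentially the paper's Zoutendijk argument: the curvature condition gives the step-size lower bound, sufficient decrease turns it into a per-iteration decrease $\tfrac{c_1(1-c_2)}{L}\cos^2\theta_k\|\nabla f(x_k)\|^2$, and telescoping plus the angle bound yield summability and the $O(K^{-1/2})$ rate with the same constant $C$. The only difference is minor: you bound $\langle g_{k+1}-g_k,d_k\rangle$ directly by the Lipschitz gradient and Cauchy--Schwarz instead of integrating $d_k^\top\nabla^2 f\,d_k$, and, as you note, this shows the $C^2$ hypothesis is not needed.
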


\begin{proof}
The proof follows the standard Zoutendijk analysis, adapted to the
affine–normal direction.
Let
\(
\phi_k(\alpha):=f(x_k+\alpha d_k).
\)
Strong Wolfe conditions give:
\begin{align}
\phi_k(\alpha_k)
&\le \phi_k(0)+c_1\alpha_k\phi_k'(0), \tag{W1}
\\
|\phi_k'(\alpha_k)|
&\le c_2|\phi_k'(0)|. \tag{W2}
\end{align}
Expand \( \phi_k''(\alpha) \):
\[
\phi_k''(\alpha) = d_k^{\top} \nabla^2 f(x_k + \alpha d_k) d_k.
\]
Integrate \( \phi_k''(\alpha) \) from \( 0 \) to \( \alpha_k \) and by $L$–smoothness:
\[
|\phi_k'(\alpha_k)-\phi_k'(0)|
= \left|\int_0^{\alpha_k} d_k^\top \nabla^2 f(x_k+\alpha d_k)\,d_k\,d\alpha\right|
\;\le\;
L\,\|d_k\|^2\,\alpha_k.
\]
Because $d_k$ is a descent direction,
\(
\phi_k'(0)=\langle \nabla f(x_k),d_k\rangle<0.
\)
Using (W2) and rearranging, this implies the classical bound (see, e.g., Nocedal--Wright \cite{NocedalWright2006}):
\[
\alpha_k \;\ge\; \frac{1-c_2}{L}\,
\frac{-\phi_k'(0)}{\|d_k\|^2}.
\tag{*}
\]
From (W1),
\[
f(x_k)-f(x_{k+1})
\;\ge\;
-c_1\alpha_k\phi_k'(0).
\]
Using (*),
\[
f(x_k)-f(x_{k+1})
\;\ge\;
c_1(1-c_2)
\frac{[\phi_k'(0)]^2}{L\|d_k\|^2}.
\tag{**}
\]
Since
\[
[\phi_k'(0)]^2
= \langle \nabla f(x_k), d_k\rangle^2
= \|\nabla f(x_k)\|^2\,\|d_k\|^2\,\cos^2\theta_k,
\]
Substituting into (**) yields the Zoutendijk inequality:
\[
f(x_k)-f(x_{k+1})
\;\ge\;
\frac{c_1(1-c_2)}{L}\,
\|\nabla f(x_k)\|^2\,\cos^2\theta_k.
\tag{***}
\label{eq:zoutendijk-main}
\]
Since $f(x_{k+1}) \le f(x_k)$ by (W1) and $f$ is bounded below by $f^\star$,
summing (***) gives
\[
\sum_{k=0}^\infty
\|\nabla f(x_k)\|^2 \cos^2\theta_k
\;\le\;
\frac{L}{c_1(1-c_2)}\,
\bigl(f(x_0)-f^\star\bigr)
<\infty.
\]
Using $\cos\theta_k\ge c>0$, we conclude
\[
\sum_{k=0}^\infty \|\nabla f(x_k)\|^2 <\infty,
\qquad
\text{hence}\qquad
\|\nabla f(x_k)\|\to 0.
\]
From (***) and $\cos\theta_k\ge c>0$,
\[
\|\nabla f(x_k)\|^2
\;\le\;
\frac{L}{c_1(1-c_2)c^2}\,
\bigl(f(x_k)-f(x_{k+1})\bigr).
\]
Summing for $k=0,\dots,K$ yields
\[
\min_{0\le j<K}\|\nabla f(x_j)\|^2
\;\le\; \frac{1}{K}\sum_{k=0}^{K-1}\|\nabla f(x_k)\|^2\;\le\;
\frac{L\bigl(f(x_0)-f^\star\bigr)}
{c_1(1-c_2)c^2}\cdot\frac{1}{K},
\]
and taking square roots gives the stated $O(K^{-1/2})$ rate.
\end{proof}

\begin{remark}
In the nonconvex setting, we do not claim convergence of the iterate sequence
$\{x_k\}$ or of the function values $\{f(x_k)\}$.  
Under strong Wolfe line search, a descent direction ensures only that
$f(x_{k+1})\le f(x_k)$, but the iterates may still oscillate across different
level sets or accumulate on a nonoptimal critical set.  
Likewise, the values $f(x_k)$ need not converge to $f^\star$, since the
algorithm may approach nonoptimal stationary points.  
For these reasons, our analysis focuses on gradient convergence, which is the
strongest form of convergence one can guarantee for general nonconvex
objectives under strong Wolfe conditions.
\end{remark}

\subsection{Exact line search: factor--two improvement}

Exact line search plays a special role in affine--normal descent.  
Since $d_k$ is always a descent direction, minimizing $f(x_k+\alpha d_k)$
along $\alpha\ge0$ yields the maximal decrease allowed by the $L$--smooth
quadratic upper bound.  
As shown below, this leads to a factor--two improvement in the linear
convergence constant under the PL condition, compared with Armijo
backtracking.

\begin{lemma}[One-step decrease under exact line search]
\label{lem:one-step-exact}
Suppose that $f$ is $L$--smooth and that $d_k$ is the affine normal descent direction at $x_k$,
i.e., $\langle\nabla f(x_k),d_k\rangle<0$.  
If $\alpha_\star$ is chosen by exact line search, then
\[
f(x_{k+1})
\;\le\;
f(x_k)\;-\;\frac{1}{2L}\cdot
\frac{\|\nabla f(x_k)\|^2}{1+T_k^2}.
\]
\end{lemma}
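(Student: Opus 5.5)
The plan is to compare the exact-line-search value with the minimum of the quadratic upper model supplied by $L$--smoothness. Along the ray this model gives a one-dimensional quadratic in $\alpha$ whose minimum is explicit. Set $g_k=\nabla f(x_k)$ and $\phi_k(\alpha)=f(x_k+\alpha d_k)$. The descent lemma, exactly as used in the proof of Lemma~\ref{lem:Armijo-lower}, gives
\[
\phi_k(\alpha)\le \phi_k(0)+\alpha\langle g_k,d_k\rangle+\tfrac{L}{2}\alpha^2\|d_k\|^2,\qquad \forall\alpha\ge0 .
\]
Since $\alpha_\star$ minimizes $\phi_k$ over $\alpha\ge 0$, we have $f(x_{k+1})=\phi_k(\alpha_\star)\le\phi_k(\bar\alpha)$ for every $\bar\alpha\ge0$. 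I would take $\bar\alpha=-\langle g_k,d_k\rangle/(L\|d_k\|^2)$, which is positive because $d_k$ is a descent direction. This choice minimizes the right-hand side and yields
\[
f(x_{k+1})\le f(x_k)-\frac{\langle g_k,d_k\rangle^2}{2L\|d_k\|^2}.
\]
If the minimum over $\alpha\ge0$ is not attained, the same bound holds for any choice of $\alpha_k$ with $\phi_k(\alpha_k)\le\phi_k(\bar\alpha)$. I would note this in passing.

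The second step is to evaluate the ratio $\langle g_k,d_k\rangle^2/\|d_k\|^2$ exactly, using the normal-aligned representation from Subsection~\ref{subsec:dk}. There $d_k=\sum_i(\tau_k)_ie_i-e_{n+1}$ with $e_{n+1}=g_k/\|g_k\|$. The proof of Lemma~\ref{lem:angle-from-T} shows that $\langle g_k,d_k\rangle=-\|g_k\|$ and $\|d_k\|^2=1+T_k^2$. Substituting these gives
\[
\frac{\langle g_k,d_k\rangle^2}{\|d_k\|^2}=\frac{\|g_k\|^2}{1+T_k^2},
\]
which is exactly the claimed estimate. This step needs no uniform bound $T$; it uses only the per-iterate $T_k$.

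The main point requiring care is that all three cases of \eqref{eq:dk} really fit the representation with last component $-1$. In Case~2, the flipped vector $-d_{\mathrm{AN}}$ must be rescaled by a positive factor so that its normal component is $-1$. Because exact line search is invariant under positive rescaling of $d_k$, this changes neither $x_{k+1}$ nor the ratio above. In Case~3, $\tau_k=0$ and $T_k=0$, and the bound reduces to the classical steepest-descent estimate $f(x_{k+1})\le f(x_k)-\|g_k\|^2/(2L)$. Apart from this normalization check, the argument is routine. The factor-two gain over Lemma~\ref{lem:Armijo-lower} comes from replacing $\sigma(1-\sigma)\le 1/4$ with $1/2$.
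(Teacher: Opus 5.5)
Your proposal is correct and follows essentially the same argument as the paper. You bound the exact line-search value by the minimum $f(x_k)-\langle g_k,d_k\rangle^2/(2L\|d_k\|^2)$ of the $L$-smoothness quadratic upper model along the ray, then use $\langle g_k,d_k\rangle=-\|g_k\|$ and $\|d_k\|^2=1+T_k^2$ from the normal-aligned frame (the paper writes this as $\cos^2\theta_k=1/(1+T_k^2)$); your extra checks on the normalization in Cases~2 and~3 and on a possibly non-attained minimum are sound details the paper leaves implicit.
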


\begin{proof}
Let $\phi(\alpha):=f(x_k+\alpha d_k)$.  
By $L$--smoothness of $f$ we have, for all $\alpha\ge0$,
\[
\phi(\alpha)
=
f(x_k+\alpha d_k)
\;\le\;
f(x_k) + \alpha \phi'(0)
      + \tfrac12 L\alpha^2\|d_k\|^2,
\]
where $\phi'(0)=\langle\nabla f(x_k),d_k\rangle<0$.
Consider the quadratic upper bound
\[
q(\alpha)
:=
f(x_k) + \alpha \phi'(0)
      + \tfrac12 L\alpha^2\|d_k\|^2.
\]
Since $\phi'(0)<0$, the unconstrained minimizer of $q$ is
\[
\alpha^{\rm quad}
=
-\frac{\phi'(0)}{L\|d_k\|^2}
\;>\;0.
\]
Evaluating the upper bound at $\alpha^{\rm quad}$ gives
\[
f(x_k+\alpha^{\rm quad} d_k)
=
\phi(\alpha^{\rm quad})
\;\le\;
q(\alpha^{\rm quad})
=
f(x_k) - \frac{[\phi'(0)]^2}{2L\|d_k\|^2}.
\]
Since $\alpha_\star$ is an exact line-search step,
\[
f(x_k+\alpha_\star d_k)
=
\phi(\alpha_\star)
\;\le\;
\phi(\alpha^{\rm quad})
\;\le\;
f(x_k) - \frac{[\phi'(0)]^2}{2L\|d_k\|^2}.
\]
Using
\[
[\phi'(0)]^2
=
\langle\nabla f(x_k),d_k\rangle^2
=
\|\nabla f(x_k)\|^2\,\|d_k\|^2\cos^2\theta_k,
\]
we obtain
\[
\frac{[\phi'(0)]^2}{\|d_k\|^2}
=
\|\nabla f(x_k)\|^2 \cos^2\theta_k
=
\frac{\|\nabla f(x_k)\|^2}{1+T_k^2}.
\]
Substituting this identity yields
\[
f(x_{k+1})
=
f(x_k+\alpha_\star d_k)
\;\le\;
f(x_k)
-
\frac{1}{2L}\cdot
\frac{\|\nabla f(x_k)\|^2}{1+T_k^2},
\]
which proves the claim.
\end{proof}

\begin{theorem}[Linear rate under PL and exact line search]
\label{thm:linear-exact}
Suppose $f$ is $L$--smooth, satisfies the PL inequality \eqref{eq:PLineq-general}, and Assumption~\ref{assump:T-bounded} holds. Then under exact line search
\[
f(x_{k+1})-f^\star
\;\le\;
(1-\rho_{\mathrm{exact}})\,(f(x_k)-f^\star),
\qquad
\rho_{\mathrm{exact}}
=
\frac{\mu_{\mathrm{PL}}}{L(1+T^2)}\in(0,1).
\]
Consequently,
\[
f(x_k)-f^\star
\;\le\;
(1-\rho_{\mathrm{exact}})^k\,\bigl(f(x_0)-f^\star\bigr).
\]
\end{theorem}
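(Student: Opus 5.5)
The proof combines Lemma~\ref{lem:one-step-exact} with the PL inequality \eqref{eq:PLineq-general}, in the same way as the proof of Theorem~\ref{thm:YAND-linear-strongly-convex}, except that Lemma~\ref{lem:Armijo-lower} is replaced by the exact-line-search decrease estimate.

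\textbf{Step 1: one-step decrease.} Fix $k$ with $\nabla f(x_k)\neq 0$; otherwise $x_k$ is a global minimizer by PL and the claim is trivial. The construction \eqref{eq:dk} guarantees $\langle\nabla f(x_k),d_k\rangle<0$, and $L$-smoothness holds, so Lemma~\ref{lem:one-step-exact} applies and gives
\[
f(x_{k+1})\le f(x_k)-\frac{1}{2L}\cdot\frac{\|\nabla f(x_k)\|^2}{1+T_k^2}.
\]
By Assumption~\ref{assump:T-bounded}, $T_k\le T$, so $1/(1+T_k^2)\ge 1/(1+T^2)$ and the right-hand side is at most
\[
f(x_k)-\frac{\|\nabla f(x_k)\|^2}{2L(1+T^2)}.
\]

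\textbf{Step 2: apply PL.} The inequality \eqref{eq:PLineq-general} gives $\|\nabla f(x_k)\|^2\ge 2\mu_{\mathrm{PL}}(f(x_k)-f^\star)$. Substituting and subtracting $f^\star$ from both sides yields
\[
f(x_{k+1})-f^\star\le\Bigl(1-\frac{\mu_{\mathrm{PL}}}{L(1+T^2)}\Bigr)(f(x_k)-f^\star).
\]
Induction on $k$ then gives the geometric bound.

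\textbf{Step 3: $\rho_{\mathrm{exact}}\in(0,1)$.} Positivity is immediate. For the upper bound I need $\mu_{\mathrm{PL}}\le L$. Apply the $L$-smoothness descent lemma at the point $x-\tfrac1L\nabla f(x)$:
\[
f^\star\le f\bigl(x-\tfrac1L\nabla f(x)\bigr)\le f(x)-\frac{1}{2L}\|\nabla f(x)\|^2.
\]
This gives $\|\nabla f(x)\|^2\le 2L(f(x)-f^\star)$. Combined with PL at any point with $f(x)>f^\star$, it yields $\mu_{\mathrm{PL}}\le L$. Then $\rho_{\mathrm{exact}}\le 1/(1+T^2)\le 1$, with strict inequality when $T>0$. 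In the edge case $T=0$, $\rho=1$ simply means convergence in one step, and the contraction inequality still holds. This range check is the only step that needs care; the rest is a direct chaining of the earlier lemma with PL.
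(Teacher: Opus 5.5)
Your proof is correct and follows the paper's argument: combine Lemma~\ref{lem:one-step-exact} with the PL inequality \eqref{eq:PLineq-general}, replace $T_k$ by $T$, and iterate. Your Step~3 goes beyond the paper, which asserts $\rho_{\mathrm{exact}}\in(0,1)$ without proof. Your derivation of $\mu_{\mathrm{PL}}\le L$ from the descent lemma is sound, and you are right that $\rho_{\mathrm{exact}}=1$ can occur when $T=0$ (e.g.\ $f(x)=\tfrac{L}{2}\|x\|^2$, where $\mu_{\mathrm{PL}}=L$ and $T_k=0$), so the interval should really be $(0,1]$; this does not affect the contraction bound.
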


\begin{proof}
Applying the PL inequality
\[
\|\nabla f(x_k)\|^2
\;\ge\; 2\mu_{\mathrm{PL}}\,(f(x_k)-f^\star)
\]
and Lemma~\ref{lem:one-step-exact}
\[
f(x_{k+1})
\le
f(x_k)
-
\frac{1}{2L(1+T_k^2)}\,
\|\nabla f(x_k)\|^2,
\]
we have 
\[
f(x_{k+1})-f^\star
\le
f(x_k)-f^\star
-
\frac{\mu_{\mathrm{PL}}}{L(1+T_k^2)}\,\bigl(f(x_k)-f^\star\bigr).
\]
Since $T_k\le T$,
\[
f(x_{k+1})-f^\star
\le
\Bigl(1-\frac{\mu_{\mathrm{PL}}}{L(1+T^2)}\Bigr)\,(f(x_k)-f^\star).
\]
Thus $Q$--linear convergence holds with rate $\rho_{\text{exact}}=\mu_{\mathrm{PL}}/(L(1+T^2))$.
\end{proof}

We now compare the linear convergence factors obtained under Armijo
backtracking and exact line search.  Under the optimal Armijo choice
$\sigma=\tfrac12$, the contraction factor is
\[
\rho_{\mathrm{Armijo}}
=\frac{1}{2}\cdot\frac{\mu_{\mathrm{PL}}}{L(1+T^2)}.
\]
Exact line search improves this constant by a factor of two:
\[
\rho_{\mathrm{exact}} = 2\,\rho_{\mathrm{Armijo}}.
\]

\begin{theorem}[Linear rate under PL and benefit of exact line search]
\label{thm:PL-linear-and-exact}
Assume that $f$ is $L$--smooth and satisfies the PL
inequality
\[
\frac12\|\nabla f(x)\|^2 \;\ge\; \mu_{\mathrm{PL}}\bigl(f(x)-f^\star\bigr)
\qquad\text{for all $x$,}
\]
for some $\mu_{\mathrm{PL}}>0$.
Let $d_k$ be the affine-normal descent directions with bound $T_k\le T$ for all $k$ (so that $\cos^2\theta_k\ge 1/(1+T^2)$), and
consider the YAND iteration $x_{k+1}=x_k+\alpha_k d_k$.

\begin{itemize}
\item[(a)] (Armijo backtracking)  
Suppose the step sizes $\alpha_k$ are generated by Armijo backtracking
with parameter $\sigma\in(0,\tfrac12]$.  
Then
\[
f(x_{k+1})-f^\star
\;\le\;
\bigl(1-\rho_{\mathrm{Armijo}}\bigr)\,\bigl(f(x_k)-f^\star\bigr),
\qquad
\rho_{\mathrm{Armijo}}
=\frac{2\sigma(1-\sigma)\,\mu_{\mathrm{PL}}}{L(1+T^2)}.
\]
In particular, the optimal choice $\sigma=\tfrac12$ yields
\[
\rho_{\mathrm{Armijo}}
=\frac{1}{2}\cdot\frac{\mu_{\mathrm{PL}}}{L(1+T^2)}.
\]

\item[(b)] (Exact line search)  
If $\alpha_k$ is chosen by exact line search along $d_k$, then
\[
f(x_{k+1})-f^\star
\;\le\;
\bigl(1-\rho_{\mathrm{exact}}\bigr)\,\bigl(f(x_k)-f^\star\bigr),
\qquad
\rho_{\mathrm{exact}}
=\frac{\mu_{\mathrm{PL}}}{L(1+T^2)}.
\]
Consequently,
\[
\rho_{\mathrm{exact}} = 2\,\rho_{\mathrm{Armijo}}
\quad\text{for the optimal Armijo choice } \sigma=\tfrac12,
\]
so exact line search improves the linear convergence constant by a factor
of two compared with the best Armijo backtracking.
\end{itemize}
\end{theorem}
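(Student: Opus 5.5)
The proof splits into the two line-search regimes. Both combine a per-step sufficient-decrease bound with the PL inequality, and $\cos^2\theta_k$ is controlled through Lemma~\ref{lem:angle-from-T}.

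For part (a), invoke Lemma~\ref{lem:Armijo-lower}. With $\beta\in[1/2,1)$, it gives
\[
f(x_{k+1})\le f(x_k)-\frac{\sigma(1-\sigma)}{L}\cdot\frac{\langle\nabla f(x_k),d_k\rangle^2}{\|d_k\|^2}.
\]
In the normal-aligned frame $\langle\nabla f(x_k),d_k\rangle=-\|\nabla f(x_k)\|$ and $\|d_k\|^2=1+T_k^2\le 1+T^2$. Hence the ratio equals $\|\nabla f(x_k)\|^2\cos^2\theta_k\ge \|\nabla f(x_k)\|^2/(1+T^2)$. Substituting the PL inequality $\|\nabla f(x_k)\|^2\ge 2\mu_{\mathrm{PL}}(f(x_k)-f^\star)$ and subtracting $f^\star$ from both sides gives the contraction with $\rho_{\mathrm{Armijo}}=2\sigma(1-\sigma)\mu_{\mathrm{PL}}/(L(1+T^2))$. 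This is exactly the argument of Corollary~\ref{cor:PL-linear}.

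For $\rho_{\mathrm{Armijo}}$ to be a genuine rate we need $\rho_{\mathrm{Armijo}}\in(0,1)$. This holds because $2\sigma(1-\sigma)\le 1/2$ and $\mu_{\mathrm{PL}}\le L$. The latter is standard: combine the PL inequality with the $L$-smooth bound $\|\nabla f(x)\|^2\le 2L(f(x)-f^\star)$ at any non-optimal point. On $(0,\tfrac12]$ the function $\sigma\mapsto 2\sigma(1-\sigma)$ is increasing, so its maximum is attained at $\sigma=\tfrac12$ with value $\tfrac12$. This gives the stated optimal constant. The restriction $\sigma\le\tfrac12$ is harmless, since the bound is symmetric about $\tfrac12$.

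For part (b), apply Lemma~\ref{lem:one-step-exact} directly:
\[
f(x_{k+1})\le f(x_k)-\frac{\|\nabla f(x_k)\|^2}{2L(1+T_k^2)}.
\]
Bound $T_k\le T$ and apply the PL inequality as in Theorem~\ref{thm:linear-exact}; this yields $\rho_{\mathrm{exact}}=\mu_{\mathrm{PL}}/(L(1+T^2))$. Dividing by the optimal Armijo constant gives $\rho_{\mathrm{exact}}=2\rho_{\mathrm{Armijo}}$.

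There is no real obstacle here, since the statement repackages earlier results. The only points needing care are two. First, the Armijo lemma requires the backtracking ratio $\beta\ge 1/2$, which I will assume implicitly as in the earlier theorems. Second, the exact line-search step must exist; this follows from $\phi'(0)<0$ and boundedness below. The comparison of the factor of two is a comparison of the guaranteed bounds, not of actual iterates, and I will state it that way.
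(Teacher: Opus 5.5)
Your argument is the paper's own: its proof of (a) simply cites Corollary~\ref{cor:PL-linear}, its proof of (b) cites Theorem~\ref{thm:linear-exact}, and you unpack both. Part (b) and the factor-two comparison are fine. The problem is in (a). You say there is ``no real obstacle'' and that the only hidden hypothesis of Lemma~\ref{lem:Armijo-lower} is $\beta\ge\tfrac12$. But Step~2 of that lemma assumes $\alpha_k/\beta$ was tried and rejected. That is false whenever the initial trial $\alpha_0$ is accepted. In general one only has $\alpha_k\ge\min\{\alpha_0,\ \frac{2\beta(1-\sigma)}{L}\frac{-\langle\nabla f(x_k),d_k\rangle}{\|d_k\|^2}\}$. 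With the paper's normalization ($\langle\nabla f(x_k),d_k\rangle=-\|\nabla f(x_k)\|$, $\|d_k\|^2=1+T_k^2$), the second entry is $\frac{2\beta(1-\sigma)\|\nabla f(x_k)\|}{L(1+T_k^2)}$. This is unbounded, so any fixed $\alpha_0$ becomes the binding term far from the solution.

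Here is a concrete counterexample. Let $f(x)=\frac L2\|x\|^2$ on $\R^{n+1}$ with $n\ge1$, so $f^\star=0$ and $\mu_{\mathrm{PL}}=L$. The level sets are spheres, so $d_k=-x_k/\|x_k\|$ and $T_k=0$. A trial $\alpha>0$ passes Armijo iff $\alpha\le 2(1-\sigma)\|x_k\|$. Take $\sigma=\tfrac12$, $\alpha_0=1$ and $\|x_k\|=10$. The first trial is accepted, so $x_{k+1}=0.9\,x_k$ and $f(x_{k+1})=0.81\,f(x_k)$. Yet (a) asserts the factor $1-\rho_{\mathrm{Armijo}}=\tfrac12$, and the lemma would predict $\alpha_k\ge 5$. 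Letting $\|x_k\|\to\infty$, the ratio tends to $1$ for every fixed $\alpha_0$ and $\beta$.

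What survives is this. Your argument is valid at every iteration where $\alpha_0\ge\frac{(1-\sigma)\|\nabla f(x_k)\|}{L(1+T_k^2)}$, for instance eventually, once the gradients are small. In general the decrease is only $\sigma\min\{\alpha_0\|\nabla f(x_k)\|,\ \frac{(1-\sigma)\|\nabla f(x_k)\|^2}{L(1+T_k^2)}\}$. Bounding $\|\nabla f(x_k)\|\le\sqrt{2L(f(x_0)-f^\star)}$ along the monotone iterates then gives a global linear rate with the weaker constant $\min\{\rho_{\mathrm{Armijo}},\ 2\sigma\alpha_0\mu_{\mathrm{PL}}/\sqrt{2L(f(x_0)-f^\star)}\}$. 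So (a) needs either that extra hypothesis on $\alpha_0$ or a modified constant. The paper's proof has the same gap, because it rests on the same lemma.

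A minor point: your existence argument for the exact step is not valid, since boundedness below does not imply that the infimum over $[0,\infty)$ is attained. Part (b) presupposes an exact step, however, so nothing depends on it.
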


\begin{proof}
(a) Under Armijo backtracking with parameter $\sigma\in(0,\tfrac12]$,
the Corollary~\ref{cor:PL-linear} gives
\[
f(x_{k+1})-f^\star
\;\le\;
\Bigl(1-
\frac{2\sigma(1-\sigma)\,\mu_{\mathrm{PL}}}{L(1+T^2)}
\Bigr)\bigl(f(x_k)-f^\star\bigr),
\]
which yields the claimed factor
$\rho_{\mathrm{Armijo}}=2\sigma(1-\sigma)\mu_{\mathrm{PL}}/[L(1+T^2)].$
The specialization $\sigma=\tfrac12$ gives
$$\rho_{\mathrm{Armijo}}=\frac{\mu_{\mathrm{PL}}}{2L(1+T^2)}.$$

(b) For exact line search, Theorem~\ref{thm:linear-exact} states that
\[
f(x_{k+1})-f^\star
\;\le\;
\Bigl(1-
\frac{\mu_{\mathrm{PL}}}{L(1+T^2)}
\Bigr)\bigl(f(x_k)-f^\star\bigr),
\]
so $\rho_{\mathrm{exact}}=\mu_{\mathrm{PL}}/[L(1+T^2)]$.
Comparing with part~(a) at $\sigma=\tfrac12$ yields
$\rho_{\mathrm{exact}}=2\,\rho_{\mathrm{Armijo}}$, as claimed.
\end{proof}

\subsection{Local quadratic convergence} 

We now show that affine--normal descent enjoys \emph{local quadratic convergence}
near a nondegenerate minimizer. The key point is that, in a sufficiently small
neighborhood of $x^\star$, the affine normal direction becomes a second–order
accurate approximation of the Newton direction.

\begin{assumption}[Nondegenerate minimizer and local regularity]
\label{ass:regular}
Let $f\in C^3$ and suppose $x^\star$ is a nondegenerate minimizer,
that is,
\[
\nabla f(x^\star)=0, 
\qquad 
H_\star := \nabla^2 f(x^\star) \succ 0 .
\]
Moreover, there exists a neighborhood $\mathcal{U}$ of $x^\star$ in which

\begin{itemize}
\item the affine--normal direction $d_{\mathrm{AN}}(x)$ of the level set
$\{f=f(x)\}$ is well defined for all $x\in\mathcal{U}$, and

\item all third derivatives of $f$ are bounded.
\end{itemize}
\end{assumption}

The perturbation term $\Delta_k:=d_{\mathrm{AN}}(x_k) - d_{\mathrm{N}}(x_k)$ comes entirely from higher-order
curvature terms of the level set.
Under Assumption~\ref{ass:regular}, the Weingarten map and all third-order
derivatives of $f$ vary smoothly near $x^\star$. As a consequence, the affine
normal direction and the Newton direction coincide to first order near the
minimizer:
\[
\|\Delta_k\| = O\bigl(\|x_k-x^\star\|^2\bigr).
\]
This follows from expanding the affine normal formula in
local coordinates,
where the tangential correction terms are governed by second- and third-order
curvatures. Hence, we have the following lemma:

\begin{lemma}[First-order coincidence of YAND and Newton]
\label{lem:YAND-Newton-first-order}
Under Assumption~\ref{ass:regular}, the affine--normal descent direction $d_k$
(at $x_k$) and the Newton direction 
$d_{\mathrm{N}}(x_k) := -\nabla^2 f(x_k)^{-1}\nabla f(x_k)$ satisfy
\[
\|d_{\mathrm{N}}(x_k)\| = O(\|x_k-x^\star\|),
\qquad
\|d_k - d_{\mathrm{N}}(x_k)\| = O(\|x_k-x^\star\|^2).
\]
\end{lemma}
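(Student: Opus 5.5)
The plan is to compare the two directions in the normal--aligned frame at $x_k$. The key point is that the analytic formula \eqref{eq:AN-analytic} differs from the exact block--Schur representation of the Newton direction, obtained in Proof~2 of Theorem~\ref{thm:ANeqNewton}, only by the third--derivative term. That term carries an extra factor $\|\nabla f(x_k)\|$.

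Because the affine normal is defined only up to scale, the statement must be read for a specific normalization of $d_k$. The representative $\sum_i(\tau_k)_ie_i-e_{n+1}$ has norm at least $1$, so it cannot be $O(\|x_k-x^\star\|)$. I will therefore fix the scaling so that the normal ($e_{n+1}$) component of $d_k$ equals that of $d_{\mathrm N}(x_k)$. This scaling does not affect the line search, since only the direction matters there.

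\textbf{Step 1 (uniform bounds near $x^\star$).} Since $H_\star\succ0$ and $f\in C^3$, I shrink $\mathcal U$ so that $mI\preceq\nabla^2 f(x)\preceq MI$ on $\mathcal U$ for some $0<m\le M$. Then:
\begin{itemize}
\item the tangent--tangent block $B$, being a compression of $\nabla^2 f$, satisfies $B\succeq mI$;
\item the Schur complement satisfies $S=d-c^\top B^{-1}c\ge m$, because $S^{-1}$ is the $(n+1,n+1)$ entry of $\nabla^2 f^{-1}\preceq m^{-1}I$;
\item $\|c\|\le M$, and third derivatives are bounded by some $K$ (Assumption~\ref{ass:regular});
\item $\|\nabla f(x)\|\le M\|x-x^\star\|$.
\end{itemize}
The first two bullets show every $x\in\mathcal U$ is elliptic. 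By Theorem~\ref{thm:ANDescent}, Case~1 of \eqref{eq:dk} therefore applies, so no sign flip or gradient fallback occurs. The first claim also follows directly: $\|d_{\mathrm N}(x_k)\|\le m^{-1}\|\nabla f(x_k)\|\le (M/m)\|x_k-x^\star\|$.

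\textbf{Step 2 (exact forms in the frame).} Write $g_k=\|\nabla f(x_k)\|$. The Schur--complement computation in Proof~2 of Theorem~\ref{thm:ANeqNewton} uses only that the Hessian is symmetric positive definite, not that $f$ is quadratic. It therefore gives exactly
\[
d_{\mathrm N}(x_k)=\frac{g_k}{S}\begin{pmatrix}B^{-1}c\\ -1\end{pmatrix}.
\]
From \eqref{eq:AN-analytic}, I take
\[
d_k=\frac{g_k}{S}\begin{pmatrix}B^{-1}c-\tfrac{g_k}{n+2}\,B^{-1}w\\ -1\end{pmatrix},\qquad w_i:=f^{pq}f_{pqi},
\]
which is a positive multiple of $d_{\mathrm{AN}}(x_k)$.

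\textbf{Step 3 (the difference).} Subtracting the two expressions,
\[
d_k-d_{\mathrm N}(x_k)=-\frac{g_k^2}{(n+2)S}\begin{pmatrix}B^{-1}w\\0\end{pmatrix}.
\]
Using $\|B^{-1}\|\le m^{-1}$ and $\|w\|\le C_n K/m$, this has norm at most $C\,g_k^2/m^3\le C' \|x_k-x^\star\|^2$. That gives the second claim.

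\textbf{Main obstacle.} The delicate step is the normalization and well-definedness issue, not the estimate itself. I must justify that the scaling $g_k/S$, which tends to $0$, is the intended representative. I must also confirm that the formula \eqref{eq:AN-analytic} is continuous with uniformly bounded ingredients on $\mathcal U$, since the frame rotates with $\nabla f(x_k)$ and degenerates as $x_k\to x^\star$. The bounds in Step~1 are frame-independent, because $\|B^{-1}\|$, $S$ and $\|w\|$ are controlled by the eigenvalues of $\nabla^2 f$ and the third-derivative bound $K$ under orthogonal changes of frame. This is what makes the estimate uniform despite the rotating frame, and I will state it explicitly.
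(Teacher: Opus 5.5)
Your proof is correct, and it takes a more explicit route than the paper's. The paper expands $d_{\mathrm N}(x)=-e+O(\|e\|^2)$ by Taylor's theorem. It then simply asserts that, since $\nabla f(x^\star)=0$, the leading term of the affine-normal formula is the Newton direction and the third-order contributions enter only at order $O(\|e\|^2)$. It neither verifies this nor says which representative of the affine-normal line is meant.

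You instead note that the Schur-complement computation in Proof~2 of Theorem~\ref{thm:ANeqNewton} needs only $\nabla^2 f(x_k)\succ0$. This gives the exact identity $d_k-d_{\mathrm N}(x_k)=-\frac{\|\nabla f(x_k)\|^2}{(n+2)S}\,(B^{-1}w,\,0)$. So the mixed block $c$ cancels identically, and the whole discrepancy is the cubic-form term carrying its extra factor $\|\nabla f\|$. Frame-invariant bounds ($\|B^{-1}\|\le m^{-1}$, $S\ge m$, $\|w\|\le C_nK/m$, $\|\nabla f(x)\|\le M\|x-x^\star\|$) then give an explicit constant. You never need to expand the affine-normal formula around $x^\star$, where the frame is undefined.

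The main gain of your route is that it exposes the normalization issue the paper glosses over:
\begin{itemize}
\item With the paper's own convention $d_k=\sum_i(\tau_k)_ie_i-e_{n+1}$ one has $\|d_k\|\ge1$, hence $\|d_k-d_{\mathrm N}(x_k)\|\ge 1-O(\|x_k-x^\star\|)$, and the lemma fails.
\item The equi-affine (Blaschke) representative is worse: its normal component has size $\det(B/\|\nabla f(x_k)\|)^{1/(n+2)}\to\infty$.
\item Comparing normal components shows that your scaling $\|\nabla f(x_k)\|/S$ is forced up to $O(\|x_k-x^\star\|^2)$. So it is the only reading under which the lemma holds.
\end{itemize}

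Two small points. First, your remark that the rescaling ``does not affect the line search'' is true for exact line search (Lemma~\ref{lem:exact-absorb}). It is not true for Armijo or Wolfe started at $\alpha_0=1$. For those, and for the hypothesis $\alpha_k\to1$ in Theorem~\ref{thm:YAND-quadratic-LS}, the Newton-matched scaling must be built into the definition of $d_k$ rather than treated as immaterial. Second, you should say explicitly that $\nabla f(x_k)\neq0$ for $x_k\neq x^\star$, so that the normal-aligned frame exists. On a ball around $x^\star$ this follows from $\|\nabla f(x)\|\ge m\|x-x^\star\|$.
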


\begin{proof}
Let $e := x-x^\star$. Since $f\in C^3$ and $\nabla f(x^\star)=0$, a Taylor
expansion yields
\[
\nabla f(x)=H_\star e + O(\|e\|^2),
\qquad
\nabla^2 f(x)=H_\star + O(\|e\|).
\]
Hence the Newton direction satisfies
\[
d_{\mathrm{N}}(x)
=
-\nabla^2 f(x)^{-1}\nabla f(x)
=
- H_\star^{-1} H_\star e + O(\|e\|^2)
=
- e + O(\|e\|^2).
\]
On the other hand, the analytic formula for the affine normal direction
depends smoothly on $\nabla f(x)$, $\nabla^2 f(x)$, and the third
derivatives of $f$.  
Because the gradient vanishes at $x^\star$, the leading-order term of the
affine-normal expansion coincides with the Newton direction, while the
contributions of the third-order curvature terms appear only at order
$O(\|e\|^2)$.\\
Consequently,
\[
d_{\mathrm{AN}}(x)
=
d_{\mathrm{N}}(x) + O(\|x-x^\star\|^2),
\]
which proves the claim.
\end{proof}
Then, the local quadratic convergence of YAND follows from nondegeneracy of the Hessian and the smoothness of third derivatives.

\begin{theorem}[Local quadratic convergence of YAND]
\label{thm:YAND-quadratic-LS}
Under Assumption~\ref{ass:regular}, suppose the affine--normal descent direction
$d_k$ at $x_k$ admits the decomposition
\[
d_k = d_{\mathrm{N}}(x_k) + \Delta_k,
\qquad
d_{\mathrm{N}}(x_k) := -\nabla^2 f(x_k)^{-1}\nabla f(x_k),
\]
and satisfies
\[
\|d_{\mathrm{N}}(x_k)\| = O(\|x_k-x^\star\|),
\qquad
\|\Delta_k\| = O(\|x_k-x^\star\|^2).
\]
Consider the line-search iteration
\[
x_{k+1} = x_k + \alpha_k d_k,
\]
where the step sizes obey
\begin{equation}
\label{eq:alpha-to-1}
\alpha_k \to 1
\quad\text{and}\quad
|\alpha_k - 1| \;\le\; C_\alpha \,\|x_k-x^\star\|
\quad\text{for all $k$ sufficiently large.}
\end{equation}
Then there exists a constant $C>0$ such that, for all $k$ sufficiently large,
\[
\|x_{k+1}-x^\star\| \;\le\; C\,\|x_k-x^\star\|^2,
\]
i.e., YAND with such step sizes enjoys local quadratic convergence.
\end{theorem}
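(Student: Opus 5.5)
Write $e_k := x_k - x^\star$. The plan is to decompose the error after one step into a pure Newton part, a step-size defect, and the affine-normal perturbation, and to bound each by $O(\|e_k\|^2)$. From $x_{k+1} = x_k + \alpha_k d_k$ and $d_k = d_{\mathrm{N}}(x_k) + \Delta_k$ we get
\[
e_{k+1} \;=\; \bigl(e_k + d_{\mathrm{N}}(x_k)\bigr) \;+\; (\alpha_k - 1)\,d_{\mathrm{N}}(x_k) \;+\; \alpha_k\,\Delta_k .
\]

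\textbf{Newton term.} The first bracket is exactly the error of a full Newton step. We bound it by the classical argument, using Assumption~\ref{ass:regular}. Since $H_\star \succ 0$ and $\nabla^2 f$ is Lipschitz near $x^\star$ (third derivatives are bounded on $\mathcal{U}$), we may shrink $\mathcal{U}$ to a ball on which $\|\nabla^2 f(x)^{-1}\| \le 2\|H_\star^{-1}\|$. We then write
\[
e_k + d_{\mathrm{N}}(x_k) \;=\; \nabla^2 f(x_k)^{-1}\Bigl[\nabla^2 f(x_k)\,e_k - \bigl(\nabla f(x_k) - \nabla f(x^\star)\bigr)\Bigr].
\]
Applying the integral form of the mean value theorem to $\nabla f(x_k) - \nabla f(x^\star)$, the bracket equals $\int_0^1 \bigl(\nabla^2 f(x_k) - \nabla^2 f(x^\star + t e_k)\bigr) e_k \, dt$. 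Its norm is therefore at most $\tfrac{M}{2}\|e_k\|^2$, where $M$ bounds the third derivatives. This gives $\|e_k + d_{\mathrm{N}}(x_k)\| \le M\|H_\star^{-1}\|\,\|e_k\|^2$.

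\textbf{Remaining terms.} By hypothesis~\eqref{eq:alpha-to-1} and $\|d_{\mathrm{N}}(x_k)\| = O(\|e_k\|)$, the second term is at most $C_\alpha \|e_k\| \cdot C_N \|e_k\| = O(\|e_k\|^2)$. Since $\alpha_k \to 1$, we have $\alpha_k \le 2$ for large $k$, so the third term is at most $2\|\Delta_k\| = O(\|e_k\|^2)$. Summing the three bounds gives $\|e_{k+1}\| \le C\|e_k\|^2$ with $C = M\|H_\star^{-1}\| + C_\alpha C_N + 2C_\Delta$.

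\textbf{Main obstacle: staying in the neighborhood.} The $O(\cdot)$ constants are valid only inside $\mathcal{U}$, so we must make sure the iterates remain there. We pick $r > 0$ with $B(x^\star, r) \subset \mathcal{U}$ and $Cr \le \tfrac12$. The hypotheses are asymptotic ("for all $k$ sufficiently large"), so we interpret them as holding for $k \ge k_0$ with $x_{k_0} \in B(x^\star, r)$; this is implicitly assumed, and is guaranteed for instance when $x_k \to x^\star$, as under the global results of Theorem~\ref{thm:YAND-linear-strongly-convex}. Induction then gives $\|e_{k+1}\| \le \tfrac12\|e_k\| < r$, so every later iterate stays in the ball, and the quadratic estimate holds for all $k \ge k_0$.
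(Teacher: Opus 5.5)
Your proof is correct and follows the paper's argument. Both split $e_{k+1}$ into three pieces: the full-Newton error $e_k+d_{\mathrm{N}}(x_k)$, the step-size defect $(\alpha_k-1)$ times an $O(\|e_k\|)$ direction, and the $O(\|e_k\|^2)$ perturbation $\Delta_k$; you simply regroup the paper's $\Delta_k+(\alpha_k-1)\Delta_k$ as $\alpha_k\Delta_k$.

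The differences are only in detail. You derive the Newton bound from the integral mean-value theorem rather than citing standard Newton theory, and you make explicit the neighborhood-invariance induction that the paper leaves implicit. For the inequality alone that induction can be dropped, taking the hypothesized $O$-bounds along the sequence: when $\|e_k\|\ge r$ the Newton term is trivially at most $(1+C_N)\|e_k\|\le \frac{1+C_N}{r}\|e_k\|^2$. The induction is needed only to conclude $x_k\to x^\star$.
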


\begin{proof}
Let $e_k := x_k-x^\star$. Since the affine-normal direction coincides
with the Newton direction up to second-order terms, the local behavior
of YAND can be analyzed as a second-order perturbation of Newton's method. For Newton's method with unit step,
\[
x_{k+1}^{(N)} = x_k + d_{\mathrm{N}}(x_k), \qquad d_{\mathrm{N}}(x_k)
:= -H(x_k)^{-1}\nabla f(x_k),
\]
and standard Newton theory yields
\begin{equation}
\label{eq:newton-local}
\|e_{k+1}^{(N)}\|
:= \|x_{k+1}^{(N)} - x^\star\|
\;\le\;
C_N \,\|e_k\|^2,
\qquad
\|d_{\mathrm{N}}(x_k)\| = O(\|e_k\|).
\end{equation}
By assumption,
\[
d_k = d_{\mathrm{N}}(x_k) + \Delta_k,
\qquad
\|\Delta_k\| = O(\|e_k\|^2),
\]
With step sizes $\alpha_k$, we have
\[
e_{k+1}
:= x_{k+1}-x^\star
= e_k + \alpha_k d_k
= \bigl(e_k + d_k\bigr) + (\alpha_k-1)d_k.
\]
For the first term $e_k+d_k$, we have 
\begin{align*}
\|e_k+d_k\| 
&= \|e_k + d_{\mathrm{N}}(x_k) + \Delta_k\| = \|x_k + d_{\mathrm{N}}(x_k)  - x^\star + \Delta_k\| = \|x_{k+1}^{(N)} - x^\star + \Delta_k\|\\
&= \|e_{k+1}^{(N)} + \Delta_k\| \le
\|e_{k+1}^{(N)}\|  + \|\Delta_k\| \leq C_N \,\|e_k\|^2 + O(\|e_k\|^2)
= O(\|e_k\|^2).
\end{align*}
For the second term $(\alpha_k-1)d_k$, we use
\[
\|d_k\| = \|d_{\mathrm{N}}(x_k) + \Delta_k\|
\le
\|d_{\mathrm{N}}(x_k)\| + \|\Delta_k\|
= O(\|e_k\|) + O(\|e_k\|^2)
= O(\|e_k\|),
\]
together with the step-size condition~\eqref{eq:alpha-to-1}:
\[
\|(\alpha_k-1)d_k\|
\le
|\alpha_k-1|\,\|d_k\|
\le
C_\alpha \|e_k\| \cdot O(\|e_k\|)
= O(\|e_k\|^2).
\]
Therefore,
\[
\|e_{k+1}\|
\;\le\;
\|e_k + d_k\| + \|(\alpha_k-1)d_k\|
= O(\|e_k\|^2),
\]
which yields the desired quadratic bound.
\end{proof}

\begin{remark}[Effect of line searches]
\label{rem:line-search-quadratic}
The abstract step-size condition \eqref{eq:alpha-to-1} is classical in
line-search analyses of Newton-type methods. In particular:

\begin{itemize}
\item For Armijo or strong Wolfe line searches with standard parameters,
it is well known (see, e.g., Nocedal and Wright \cite{NocedalWright2006}) that once $x_k$ enters a
sufficiently small neighborhood of $x^\star$, the full step $\alpha_k=1$
satisfies the line-search conditions and is accepted. Hence there exists
$k_0$ such that $\alpha_k=1$ for all $k\ge k_0$, so
\eqref{eq:alpha-to-1} holds trivially and
Theorem~\ref{thm:YAND-quadratic-LS} applies.

\item For exact line search along $d_k$, quadratic convergence of YAND
is preserved whenever the resulting $\alpha_k$ satisfies
\eqref{eq:alpha-to-1}. We do not claim that this holds automatically
under exact line search without additional local regularity assumptions.
Rather, the theorem shows that exact line search is fully compatible
with quadratic convergence once \eqref{eq:alpha-to-1} can be verified. In general, however, exact line search is not required for quadratic
convergence and may even be overly conservative; a backtracking
Armijo or strong Wolfe line search is sufficient.
\end{itemize}

Thus, in the practically relevant setting where the line search accepts
the full step asymptotically, affine--normal descent with line search
inherits the quadratic local rate of pure Newton.
\end{remark}

\begin{remark}[Geometric interpretation of quadratic convergence]
\label{rem:YAND-Newton-geometry}
The quadratic local rate of YAND is a direct consequence of the fact that,
near a nondegenerate minimizer $x^\star$, the affine normal direction becomes
a \emph{second–order accurate surrogate} for the Newton direction.

Geometrically, the Newton step $d_{\mathrm{N}}(x_k)=-\nabla^2f(x_k)^{-1}\nabla f(x_k)$ points toward
the center of the osculating quadratic model of $f$ at $x_k$.
On the other hand, the affine normal direction is the inward normal of the
affine differential geometry of the level set $\{f=f(x_k)\}$, and its
definition involves the second and third fundamental forms of the hypersurface. Thus, locally,
\[
d_k \;=\; d_{\mathrm{N}}(x_k) + O\!\left(\|x_k-x^\star\|^2\right),
\]
so the affine–normal update differs from Newton’s method only by a
second–order perturbation.  
Since Newton iterations satisfy
$\|x_{k+1}-x^\star\| = O(\|x_k-x^\star\|^2)$,
the same recursion persists for YAND, yielding full quadratic convergence.

A useful way to view this is:
\[
\text{YAND} \;\approx\; \text{Newton} \;+\; \text{(terms quadratic in the error)}.
\]
Consequently, the two methods share the same local rate. 
\end{remark}

\subsection{Summary of convergence regimes}

\begin{itemize}
    \item \textbf{Strong convexity} $\Rightarrow$ global linear convergence (Armijo or exact).
    \item \textbf{PL inequality (possibly nonconvex)} $\Rightarrow$ global linear convergence.
    \item \textbf{Nonconvex with strong Wolfe} $\Rightarrow$ $\|\nabla f(x_k)\|\to0$ with $O(k^{-1/2})$ sublinear rate.
    \item \textbf{Exact line search} (strongly convex or PL settings) $\Rightarrow$ same global linear rate but with a factor--two improvement in the contraction constant.
    \item \textbf{Local neighborhood of a nondegenerate minimizer} $\Rightarrow$ quadratic convergence.
\end{itemize}
These results jointly show that \textbf{YAND behaves as a robust first-order method globally,
while attaining quadratic local convergence.}
This is a consequence of the fact that the affine normal direction
is a second-order accurate approximation of the Newton direction.
Moreover, exact line search improves the global contraction constant
by a factor of two.

\section{Beyond quadratic rates?—local order vs global geometry}\label{sec:super-quadratic}

The convergence results in the previous section establish
global linear convergence and local quadratic convergence of YAND.
A natural question is whether higher--order convergence rates
can be achieved by further exploiting affine normal geometry.

\paragraph{Local rate vs.~global behavior.}
It is well known that Newton’s method achieves the optimal local order (quadratic), and
superquadratic convergence generally requires explicit third–order corrections
(e.g., Halley’s method). 
Thus one should not expect a generic geometric direction, such as the affine normal, to
universally exceed Newton’s local order.
The strength of YAND therefore lies not in the asymptotic order, but in its 
\textbf{global geometric invariance}, \textbf{escape from ill-conditioning}, 
and \textbf{robust alignment with low-curvature directions}, which
Newton's method does not possess.

\paragraph{Why YAND is fundamentally meaningful: three advantages over Newton.}

\begin{itemize}
\item \textbf{Affine invariance.}
Newton’s method is invariant only under \emph{linear} changes of variables, whereas YAND is
invariant under the full unimodular affine group. 
This protects YAND from spurious local distortions of coordinate scaling, 
a known source of instability for Newton in badly conditioned problems.
Affine invariance is the fundamental reason why YAND is exact in one step for all 
quadratic functions, regardless of the conditioning of $H$.

\item \textbf{Superior global geometry.}
YAND uses the intrinsic geometry of level sets.
Near nonconvex ridges or highly skewed valleys, Newton directions may point 
outside the “energy valley” or lead to erratic steps; YAND remains aligned with the
curvature of the level set itself.
This has profound implications:
\begin{itemize}
\item Newton is highly sensitive to $H^{-1}$ and may diverge far from the minimum.
\item YAND maintains descent even when $H$ is indefinite.
\item YAND “pulls inward’’ along level-set curvature, behaving like a geometric flow
smoothing out ill-conditioned valleys.
\end{itemize}

\item \textbf{Effective conditioning reduction.}
On badly conditioned convex functions, Newton requires solving
$H(x_k)s = -\nabla f(x_k)$, which amplifies noise in small eigenvalue directions.
By contrast, YAND uses only \emph{local shape} of level sets and avoids explicitly inverting
$H$, replacing it with a geometric normalization.
Empirically this yields:
\[
\text{fewer backtracking steps, more stable progress, often fewer outer iterations}.
\]
This is especially pronounced on objectives with large spectral gaps or 
highly anisotropic Hessians, where YAND effectively performs an “implicit preconditioning’’   
based on affine geometry. 
\end{itemize}

\paragraph{Conclusion: comparable local order, but stronger global behavior.}
Hence YAND should not be viewed as “slower Newton’’;
instead, it is a \emph{geometrically preconditioned Newton direction}, with:
\[
\text{same local order} \quad + \quad \text{better global invariance} \quad + \quad
\text{stronger robustness under ill-conditioning}.
\]

These properties are precisely what make YAND attractive for high-dimensional optimization
despite the cost of computing affine normals.
The geometric normalization can reduce the number of line-search rejections, 
avoid erratic Newton steps, and stabilize the early phase of iterations,
which is often the dominant computational cost.

\paragraph{Local order.}
We summarize the local result as follows.

\begin{remark}[Local order: quadratic but with a smaller constant]
Because $d_k=d_{\mathrm{N}}(x_k)+O(\|x_k-x^\star\|^2)$, YAND matches Newton’s quadratic order.
The constant in the recursion
\[
\|x_{k+1}-x^\star\|\le C\|x_k-x^\star\|^2
\]
is often \emph{smaller} for YAND, because the affine correction damps 
high-curvature tangential components that Newton may exaggerate.
Thus YAND may require fewer iterations even though both methods are second–order.
\end{remark}

\paragraph{Superquadratic modifications.}
Achieving cubic or higher order requires a tailored third-order correction, such as Halley:
\[
x_{k+1}
= x_k
- H(x_k)^{-1}\nabla f(x_k)
- \tfrac12 H^{-1}\nabla^3 f[H^{-1}\nabla f,\,H^{-1}\nabla f]
+ o(\|\nabla f\|^2).
\]
The 
third-order derivative tensor $\nabla^3 f(x)$ on a vector pair $u,v\in\mathbb{R}^n$ is the vector
\[
\bigl(\nabla^3 f(x)[u,v]\bigr)_i
=
\sum_{j=1}^n\sum_{k=1}^n 
\frac{\partial^3 f}{\partial x_i\,\partial x_j\,\partial x_k}(x)\,u_j v_k,
\qquad i=1,\dots,n,
\]
which is premultiplied by $H(x)^{-1}$ in Halley's cubic correction term
\[
-\tfrac12\, H(x)^{-1}\,\nabla^3 f(x)\bigl[H(x)^{-1}\nabla f(x),\,
H(x)^{-1}\nabla f(x)\bigr].
\]
This goes beyond pure YAND and constitutes a different algorithmic class.

The preceding results establish global and local convergence properties under general smoothness assumptions.
We now examine a fundamental structural model in which ill-conditioning arises purely from anisotropic affine scaling,
and show that YAND is intrinsically insensitive to such spurious affine distortions.

\section{Affine-scaling models and condition-number robustness}
\label{sec:affine-scaling}

In practice, optimization algorithms may behave poorly when the objective
function becomes strongly anisotropic due to affine scalings of the variables.
Such transformations may severely distort the geometry of level sets and
artificially worsen the condition number seen by classical methods.

In this section we analyze a fundamental model class in which
ill-conditioning is induced purely by anisotropic affine scaling.
More precisely, we study objectives of the form
$
f(x)=\phi(Bx),
$
where \(B\) is invertible and \(\phi\) is a fixed base function.
Our goal is to show that affine-normal directions transform covariantly
under such scalings, so that the induced search directions and, under
standard line-search rules, the mapped iterates in the transformed
coordinates follow the same dynamics as those of the unscaled objective \(\phi\).

We first establish exact iterate-level equivalence under exact line search
for orientation-preserving scalings (\(\det B>0\)), then extend the result
to Armijo and strong Wolfe line searches, and finally derive a regime-wise
transfer principle together with illustrative examples.

\subsection{Affine-scaling model}

Let $\phi:\R^{n+1}\to\R$ be $C^3$ and define
\begin{equation}
\label{eq:affine-model}
f(x) := \phi(Bx),
\end{equation}
where $B\in\R^{(n+1)\times(n+1)}$ is invertible.
Even if $\phi$ is well-conditioned, $f$ may become severely ill-conditioned due solely to anisotropy in $B$.
Indeed, by the chain rule,
\[
\nabla f(x)=B^\top \nabla\phi(Bx),
\qquad
\nabla^2 f(x)=B^\top \nabla^2\phi(Bx)B,
\]
so that, in general,
\[
\kappa\bigl(\nabla^2 f(x)\bigr)
=
\kappa\!\left(B^\top \nabla^2\phi(Bx) B\right)
\le
\kappa(B)^2\cdot \kappa\!\left(\nabla^2\phi(Bx)\right),
\]
where $\kappa(\cdot)$ denotes the $2$-norm condition number, and for typical anisotropic scalings $B$ one may have $\kappa(\nabla^2 f)$ as large as $\kappa(B)^2$.
Our main message is that, for this model class, YAND behaves essentially as if the scaling were absent.

\subsection{Three basic invariances: unimodular covariance, isotropic scaling, and step-size absorption}

We start with three basic facts:
(i) the affine normal is covariant under unimodular affine changes of variables,
(ii) the affine normal is collinear under isotropic scaling of the ambient space,
and (iii) exact line search absorbs any positive rescaling of the direction.

\begin{lemma}[Affine-normal covariance under unimodular transforms]
\label{lem:affine-covariance}
Let $\psi:\R^{n+1}\to\R$ be $C^3$ and let $B\in\R^{(n+1)\times(n+1)}$ be invertible with $\det B=1$.
Define $g(x):=\psi(Bx)$.
Assume the affine normal direction is well-defined at $x$ for $g$ and at $y:=Bx$ for $\psi$
(e.g., the corresponding level-set hypersurfaces are elliptic at these points).
Then
\begin{equation}
\label{eq:covariance-parallel}
d_{\mathrm{AN}}^{g}(x)\ \parallel\ B^{-1}\, d_{\mathrm{AN}}^{\psi}(Bx),
\end{equation}
where $\parallel$ denotes equality up to a positive scalar multiple (with the inward/elliptic orientation fixed).
\end{lemma}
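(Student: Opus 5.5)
The cleanest route avoids the coordinate formula \eqref{eq:AN-analytic} and uses the geometric characterization of the affine normal, here the slice--centroid construction. By the consistency theorem of Section~\ref{sec:affine-normal}, at elliptic points the analytic affine normal coincides up to a positive scalar with $d_{\mathrm{SC}}$. Write $A:=B^{-1}$, so that $x\mapsto Bx$ maps the configuration for $g$ at $x$ to the configuration for $\psi$ at $y=Bx$. Every ingredient of the slice--centroid construction is preserved by linear maps: sublevel sets, tangent hyperplanes and parallel slices, centroids, and the ``inward'' side. The plan is to show $d^{g}_{\mathrm{SC}}(x)\parallel B^{-1}d^{\psi}_{\mathrm{SC}}(y)$ with a positive factor, and then transfer the relation to $d_{\mathrm{AN}}$.

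\textbf{Step 1 (sublevel sets and slices).} Since $\Omega^{g}_x=\{g\le g(x)\}=A\,\Omega^{\psi}_y$, the ellipticity of $x$ for $g$ is equivalent to that of $y$ for $\psi$; the tangent--tangent Hessians are congruent via the restriction of $B$. The tangent functional is
\[
P^{g}(x')=\nabla g(x)\cdot(x'-x)=\nabla\psi(y)\cdot(Bx'-y)=P^{\psi}(Bx').
\]
Hence $S^{g}_C=A\,S^{\psi}_C$ for every $C$, with the same level $C$, and $C<0$ still corresponds to the interior on both sides.

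\textbf{Step 2 (centroids).} Centroids of $(n)$-dimensional convex slices are affine-covariant, because a linear map scales $n$-dimensional measure on parallel hyperplanes by a constant factor. Therefore $g^{g}(C)=A\,g^{\psi}(C)$. Dividing $g^{g}(C)-x=A\bigl(g^{\psi}(C)-y\bigr)$ by $-C>0$ and letting $C\uparrow0$, which is legitimate because both limits exist by the consistency theorem, gives $d^{g}_{\mathrm{SC}}(x)=B^{-1}d^{\psi}_{\mathrm{SC}}(y)$ exactly, with factor $1$. Note that $\det B=1$ is not needed for the direction. It only matters for the \emph{normalized} Blaschke normal, and here orientation preservation guarantees that the inward side is respected.

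\textbf{Step 3 (transfer to $d_{\mathrm{AN}}$).} Apply the consistency theorem on both sides and fix the positive orientation. By Theorem~\ref{thm:ANDescent}, at elliptic points $d_{\mathrm{AN}}$ is the representative with $\langle\nabla g,d\rangle<0$. The slice--centroid direction also satisfies $\langle\nabla g(x),d^{g}_{\mathrm{SC}}\rangle=-1$, and similarly for $\psi$. Since $\langle\nabla g(x),B^{-1}v\rangle=\langle\nabla\psi(y),v\rangle$, the map $B^{-1}$ sends the inward representative for $\psi$ to the inward representative for $g$. This fixes the scalar in \eqref{eq:covariance-parallel} as positive.

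\textbf{Main obstacle.} The delicate point is that the consistency theorem asserts $d_{\mathrm{SC}}\parallel d_{\mathrm{AN}}$ but does not state the sign. I would check the sign directly: both vectors have normal component $-1$ (respectively a negative normal component) in the normal-aligned frame, so the proportionality constant is positive. The covariance argument above needs only ellipticity. If one instead wants a derivation from the formula \eqref{eq:AN-analytic}, the alternative is to verify, by the chain rule and the congruence of tangent blocks, that the vector field $(\tau,-1)$ transforms by $B^{-1}$. That route is a messier tensor computation, since orthonormal frames are not preserved by $B$, so I would keep it only as a backup.
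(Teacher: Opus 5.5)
\textbf{Gap: the consistency theorem already presupposes the lemma.} Your Steps~1 and~2 are correct, and they show more than you need. For every invertible $B$ one has $d^{g}_{\mathrm{SC}}(x)=B^{-1}d^{\psi}_{\mathrm{SC}}(y)$ exactly. Orientation plays no role: $\Omega^g_x=B^{-1}\Omega^\psi_y$ and $\langle\nabla g(x),B^{-1}v\rangle=\langle\nabla\psi(y),v\rangle$ already preserve the inward side. Your sign argument in Step~3 is also fine. The problem is that everything the lemma says about $d_{\mathrm{AN}}$ comes from the consistency theorem of Section~\ref{sec:affine-normal}, and that theorem's proof uses the lemma.

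The paper's proof of the consistency theorem works in a chart where the level set is $t=\tfrac12|u|^2+\tfrac16C_{ijk}u_iu_ju_k+O(|u|^4)$ with $C$ apolar, and asserts that there the affine normal is the $+t$ axis. In the Euclidean normal-aligned frame neither fact holds in general: there $d_{\mathrm{AN}}\propto(\tau,-1)$ with $\tau\neq0$, as in the examples of Section~\ref{sec:examples}. Reaching that chart therefore needs a non-orthogonal linear map diagonalizing the second fundamental form, plus a shear of the transversal axis. Identifying the affine normal after these maps is the affine covariance of $d_{\mathrm{AN}}$, and for maps that need not be unimodular. So is Step~7's ``undoing the normalization preserves direction up to scale''. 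Within the paper's logic your argument is therefore circular. It also only reaches elliptic points, whereas the lemma assumes only that $d_{\mathrm{AN}}$ is well defined.

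\textbf{How to close it.} You need a characterization of $d_{\mathrm{AN}}$ that is proved without covariance. The paper uses the Blaschke (equi-affine) characterization, which done carefully runs as follows.
\begin{itemize}
\item Since $B$ is constant, $D_{BX}(BY)=B\,D_XY$ and $D_{BX}(B\xi)=B\,D_X\xi$.
\item Hence transporting the Blaschke normal $\xi_g$ of $S$ by $B$ gives a transversal field on $\widetilde S=BS$ with the same induced connection, the same fundamental form $h$, and still $\tau=0$.
\item Because $\det B=1$, it also has the same induced volume, $\theta_{B\xi_g}=\theta_{\xi_g}=\omega_h$.
\item Uniqueness of the Blaschke normal, with the sign fixed by $h\succ0$, then gives $B\xi_g=\xi_\psi$.
\end{itemize}
The pointwise identity $\det(BX_1,\dots,BX_n,B\xi)=\det(X_1,\dots,X_n,\xi)$ is all the paper writes out, and it does not by itself fix the direction. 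Adding a tangent vector to $\xi$ leaves that determinant unchanged; it is the transported condition $\tau=0$ that determines the tangential part.

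Two other routes would also work and make your Steps~1--2 into a genuine, rather elegant proof, valid at elliptic points for every invertible $B$:
\begin{itemize}
\item your backup chain-rule verification of \eqref{eq:AN-analytic};
\item a direct proof of the slice--centroid identity in the Euclidean normal-aligned frame, with no affine normalization. Expand the level set as a graph, keeping the contributions of $f_{n+1,i}$ and $f_{ijk}$, and read off the order-$r$ drift of the centroid.
\end{itemize}
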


\begin{proof}
Fix $c=g(x)=\psi(Bx)$ and set
\[
S:=\{u\in\R^{n+1}: g(u)=c\},\qquad 
\widetilde S:=\{v\in\R^{n+1}: \psi(v)=c\}.
\]
Then $S=B^{-1}\widetilde S$, and the map $\Phi(u):=Bu$ is a diffeomorphism from $S$ onto $\widetilde S$.

\medskip
\noindent\textbf{Step 1: transformation of tangent spaces and co-normals.}
Let $u\in S$ and $v=\Phi(u)=Bu\in \widetilde S$.
Since $\Phi$ is linear, $D\Phi(u)=B$ and hence
\[
T_v\widetilde S = B\,T_u S.
\]
Moreover, by the chain rule,
\[
\nabla g(u)=B^\top \nabla \psi(Bu)=B^\top \nabla\psi(v).
\]
Thus the Euclidean normal \emph{line} (equivalently, the co-normal line) transforms covariantly.

The following argument is a direct application of the standard equi--affine
characterization of the affine normal; see, e.g., Nomizu--Sasaki \cite{NomizuSasaki1994} or related
references in affine differential geometry.

\noindent\textbf{Step 2: characterization of the equi--affine normal and invariance of the normalization.}
Recall a standard characterization from equi--affine hypersurface theory:
the equi--affine normal at a point is the (unique up to sign) transversal direction whose induced volume normalization
agrees with the \emph{ambient} volume form (and for elliptic hypersurfaces the inward choice fixes the sign).
More precisely, for a transversal vector $\xi$ along $S$, one can form the induced volume density
\[
\omega_S(\xi)(X_1,\dots,X_n):=\det\bigl(X_1,\dots,X_n,\xi\bigr),
\qquad X_i\in T_u S,
\]
computed using the fixed ambient volume form on $\R^{n+1}$.
The equi--affine normal direction is the transversal line for which $\omega_S(\xi)$ equals the equi--affine area density.

Now take any representatives $\xi_g(u)$ and $\xi_\psi(v)$ of the affine-normal directions of $S$ at $u$ and $\widetilde S$ at $v$.
Consider $\widehat\xi(v):=B\,\xi_g(u)$ as a transversal at $v$.
For any tangent basis $X_1,\dots,X_n\in T_u S$, since $BX_i\in T_v\widetilde S$ and $\det B=1$,
\[
\det\bigl(BX_1,\dots,BX_n,B\xi_g(u)\bigr)
=
\det(B)\,\det\bigl(X_1,\dots,X_n,\xi_g(u)\bigr)
=
\det\bigl(X_1,\dots,X_n,\xi_g(u)\bigr).
\]
Hence the normalization induced by $\xi_g$ on $S$ is transported by $\Phi$ to the \emph{same} normalization on $\widetilde S$.
By uniqueness (with the inward sign convention), $\widehat\xi(v)$ must be collinear with $\xi_\psi(v)$ with a \emph{positive} scalar factor. That is,
\[
B\,d_{\mathrm{AN}}^{g}(x)\ \parallel\ d_{\mathrm{AN}}^{\psi}(Bx).
\]
Multiplying by $B^{-1}$ yields \eqref{eq:covariance-parallel}.
\end{proof}

The preceding lemma expresses a fundamental structural principle:
the affine--normal direction is covariant under volume-preserving affine
changes of variables. Since YAND is defined entirely in terms of this direction,
its behavior is intrinsically tied to the geometry of level sets rather than
to the ambient coordinate representation.

\begin{remark}[Role of the determinant]
\label{rem:det-role}
The condition $\det B=1$ guarantees exact preservation of the ambient
volume form, so the equi--affine normal vector field is transported
without renormalization.
If $\det B>0$ but $\det B\neq 1$, the ambient volume form is merely scaled
by a positive constant. This changes only the normalization of the
equi--affine normal vector, not the underlying normal \emph{line} field.
Hence the directional covariance statement \eqref{eq:covariance-parallel}
remains valid for all affine maps with positive determinant, provided the inward (elliptic/descent) orientation is fixed consistently.
In particular, isotropic scalings $B=\rho I$ with $\rho>0$
are covered as a special case; see Corollary~\ref{cor:isotropic-scaling}.
\end{remark}

\begin{corollary}[Isotropic scaling covariance of the affine-normal direction]
\label{cor:isotropic-scaling}
Let $\phi:\R^{n+1}\to\R$ be $C^3$ and fix $\rho>0$.
Define $\psi:\R^{n+1}\to\R$ by $\psi(z):=\phi(\rho z)$.
Assume the affine normal directions are well-defined at $z$ for $\psi$
and at $y=\rho z$ for $\phi$ (e.g., the corresponding level sets are elliptic).
Then
\begin{equation}
\label{eq:isotropic-parallel}
d_{\mathrm{AN}}^{\psi}(z)\ \parallel\ d_{\mathrm{AN}}^{\phi}(y),
\qquad y=\rho z .
\end{equation}
\end{corollary}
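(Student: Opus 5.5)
We must prove: for $\psi(z)=\phi(\rho z)$ with $\rho>0$, $d^\psi_{\mathrm{AN}}(z)\parallel d^\phi_{\mathrm{AN}}(\rho z)$ with a positive factor.

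We reduce to Lemma~\ref{lem:affine-covariance} by splitting the isotropic scaling $\rho I$ into a unimodular part and a pure volume rescaling. Lemma~\ref{lem:affine-covariance} with $B=\rho I$ is not directly available, because $\det(\rho I)=\rho^{n+1}\neq 1$. Formally, Remark~\ref{rem:det-role} already asserts that only the normalization changes when $\det B>0$. For a self-contained argument, we instead verify the claim directly from the analytic representation \eqref{eq:AN-analytic}, which is homogeneous and easy to track under scalings.

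First, relate the derivatives. Set $y=\rho z$. By the chain rule,
\[
\nabla\psi(z)=\rho\,\nabla\phi(y),\qquad
\nabla^2\psi(z)=\rho^2\nabla^2\phi(y),\qquad
\nabla^3\psi(z)=\rho^3\nabla^3\phi(y).
\]
Since $\nabla\psi(z)$ is a positive multiple of $\nabla\phi(y)$, the two normal-aligned orthonormal frames coincide, with the same $e_{n+1}$ and the same tangent basis. In that common frame the components transform as
\[
\psi_{ij}=\rho^2\phi_{ij},\qquad \psi^{ij}=\rho^{-2}\phi^{ij},\qquad \psi_{pqr}=\rho^3\phi_{pqr},\qquad \psi_{n+1,i}=\rho^2\phi_{n+1,i},\qquad \|\nabla\psi\|=\rho\|\nabla\phi\|.
\]

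Second, check that the tangential part of \eqref{eq:AN-analytic} is invariant. The inner bracket scales term by term:
\[
\|\nabla\psi\|\,\psi^{pq}\psi_{pqi}=\rho\cdot\rho^{-2}\cdot\rho^3\,\|\nabla\phi\|\,\phi^{pq}\phi_{pqi}=\rho^2\,\|\nabla\phi\|\,\phi^{pq}\phi_{pqi},
\]
and $\psi_{n+1,i}=\rho^2\phi_{n+1,i}$, so the whole bracket scales by $\rho^2$. Multiplying by $\psi^{ij}=\rho^{-2}\phi^{ij}$ cancels this factor. The tangential vector $\tau$ is therefore identical for $\psi$ at $z$ and for $\phi$ at $y$. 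Both directions are then represented by the same vector $(\tau,-1)$ in the same frame, which gives \eqref{eq:isotropic-parallel} with factor exactly $1$ in this representation.

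Third, confirm that the orientation agrees. The tangent–tangent block $\psi_{ij}$ is a positive multiple ($\rho^2>0$) of $\phi_{ij}$, so ellipticity is shared at $z$ and $y$. The inward (descent) sign convention therefore selects the same orientation for both, and the proportionality constant is positive.

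The one delicate point is justifying that \eqref{eq:AN-analytic} is the correct representative in both cases. The only normalization discarded in \eqref{eq:AN-analytic} is the positive factor $\lambda=(\det h_{ij})^{-1/(n+2)}$, and it is positive on both sides in the elliptic case. For a more geometric argument, write $\rho I=\rho^{(n+1)}\cdot U$ with $U=I$; then the equi-affine normal line is determined by the hypersurface up to scaling, since it is the unique transversal whose induced volume form is parallel. A homothety maps the level set onto a homothetic copy and preserves parallelism of the induced volume form up to a constant. Applying Remark~\ref{rem:det-role} then gives the same conclusion.
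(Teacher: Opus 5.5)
Your proof is correct, but it follows a different route from the paper. The paper gives no separate computation for this corollary. It reads it off Remark~\ref{rem:det-role}, i.e.\ the volume-form argument of Lemma~\ref{lem:affine-covariance} extended to $\det B>0$ by the observation that multiplying the ambient volume form by a positive constant only rescales the Blaschke normal. Taking $B=\rho I$ then gives $d^{\psi}_{\mathrm{AN}}(z)\parallel\rho^{-1}d^{\phi}_{\mathrm{AN}}(\rho z)$; this is the route you mention in passing and then set aside.

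You instead work inside the derivative formula \eqref{eq:AN-analytic}. Since $\rho>0$ the normal-aligned frames coincide, and the count $\rho\cdot\rho^{-2}\cdot\rho^{3}=\rho^{2}$ for the cubic term matches $\psi_{n+1,i}=\rho^{2}\phi_{n+1,i}$. That factor is cancelled by $\psi^{ij}=\rho^{-2}\phi^{ij}$, so the normalized representatives $(\tau,-1)$ are literally equal.

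Your approach gives you two things:
\begin{itemize}
\item A self-contained check that does not depend on the volume-rescaling claim of Remark~\ref{rem:det-role}, which the paper asserts without computation. The factorization $B=\rho A$ in Theorem~\ref{thm:affine-scaling-invariance} then rests only on Lemma~\ref{lem:affine-covariance} and an explicit calculation.
\item A slightly stronger conclusion: the tangential part $\tau$, hence the quantity bounded in Assumption~\ref{assump:T-bounded}, is unchanged by the homothety. This holds at every point where the tangent--tangent block is invertible, elliptic or not.
\end{itemize}
The cost is that everything hinges on the correctness of \eqref{eq:AN-analytic}, which the paper imports from Cheng--Cheng--Yau without derivation. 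The paper's argument, by contrast, uses only the intrinsic characterization of the affine normal and handles all $B$ with $\det B>0$ at once.

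Your closing ``geometric'' aside should be dropped or repaired, for two reasons. First, the factorization $\rho I=\rho^{(n+1)}\cdot U$ is wrong: the unimodular factor of $\rho I$ is $I$ and the scalar factor is $(\det\rho I)^{1/(n+1)}=\rho$. Second, parallelism of the induced volume form does not by itself single out the affine normal line. For a hypersurface avoiding the origin, the centroaffine position field $-X$ is also equi-affine. Uniqueness also needs the volume-compatibility condition $\theta_\xi=c\,\omega_h$ from the existence--uniqueness theorem in the appendix.
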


Consequently, the affine--normal direction field is covariant (up to a positive scaling of representatives)
under all affine transformations with positive determinant. To turn this directional covariance into an \emph{iterate-level} equivalence (and hence into condition-number robustness),
we also need the following elementary fact: exact line search absorbs any positive rescaling of the search direction.

\begin{lemma}[Exact line search absorbs positive rescaling]
\label{lem:exact-absorb}
Let $h:\R^{n+1}\to\R$ and fix $x\in\R^{n+1}$ and a direction $d\neq 0$.
For any scalar $\tau>0$,
\[
\alpha^\star \in \arg\min_{\alpha>0} h(x+\alpha d)
\quad\Longleftrightarrow\quad
\frac{\alpha^\star}{\tau} \in \arg\min_{\alpha>0} h(x+\alpha\, \tau d).
\]
In particular, the \emph{step} $x+\alpha^\star d$ is invariant under rescaling $d\mapsto \tau d$.
\end{lemma}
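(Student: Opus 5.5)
The proof is a change of variables in the one-dimensional minimization. Fix $\tau>0$ and define the two univariate functions
\[
\Phi(\alpha):=h(x+\alpha d),\qquad \Psi(\beta):=h(x+\beta\,\tau d),\qquad \alpha,\beta>0 .
\]
These satisfy the identity $\Psi(\beta)=\Phi(\tau\beta)$ for every $\beta>0$. The map $\beta\mapsto\tau\beta$ is a bijection of $(0,\infty)$ onto itself, with inverse $\alpha\mapsto\alpha/\tau$. The plan is to show that this bijection carries minimizers to minimizers, and then to read off the invariance of the step.

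For the forward implication, suppose $\alpha^\star\in\arg\min_{\alpha>0}\Phi(\alpha)$ and set $\beta^\star:=\alpha^\star/\tau$. Take any $\beta>0$. Then
\[
\Psi(\beta^\star)=\Phi(\alpha^\star)\le\Phi(\tau\beta)=\Psi(\beta),
\]
because $\tau\beta>0$ is an admissible competitor for $\Phi$. Hence $\beta^\star\in\arg\min_{\beta>0}\Psi(\beta)$. For the converse, swap the roles of the two functions using $\Phi(\alpha)=\Psi(\alpha/\tau)$ and the inverse bijection; the argument is identical. Equivalently, the converse is the forward implication applied to the direction $\tau d$ and the scalar $1/\tau$.

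The statement about the step follows by substitution:
\[
x+\beta^\star(\tau d)=x+\frac{\alpha^\star}{\tau}\,\tau d=x+\alpha^\star d .
\]
So the new iterate is unchanged when the direction is rescaled by $\tau$.

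I see no real obstacle here. The one point that needs care is that $\tau>0$, so the rescaling preserves the admissible set $\alpha>0$ and does not reverse the search direction. This is exactly why the lemma only needs positive multiples, and why the positivity built into the covariance relations of Lemma~\ref{lem:affine-covariance} and Corollary~\ref{cor:isotropic-scaling} is what later allows exact-line-search iterates to be identified. The same argument also shows that the two argmin sets are related by $\arg\min_{\beta>0}\Psi=\tau^{-1}\arg\min_{\alpha>0}\Phi$, including the case where they are empty.
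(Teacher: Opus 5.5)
Your proof is correct and is essentially the paper's argument: the substitution $\beta=\tau\alpha$ is a bijection of $(0,\infty)$ onto itself, so minimizers correspond and the step $x+\alpha^\star d$ is unchanged. Your version is more careful than the paper's, since you check both directions of the equivalence explicitly, whereas the paper reuses the symbol $\alpha^\star$ loosely.
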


\begin{proof}
Let $\tau>0$ and define the change of variable $\beta=\tau\alpha$.
Then
\[
\min_{\alpha>0} h(x+\alpha\,\tau d)
=
\min_{\beta>0} h(x+\beta d),
\]
and the minimizers satisfy $\beta^\star=\tau\alpha^\star$.
Hence $x+\alpha^\star \tau d = x+\beta^\star d$, proving the claim.
\end{proof}

\subsection{Exact invariance under general affine scaling}

We now prove the main invariance property for the affine-scaling model \eqref{eq:affine-model}.
The key is to reduce general $B$ to a unimodular map times an isotropic scaling,
and then use Lemma~\ref{lem:affine-covariance}, Corollary~\ref{cor:isotropic-scaling} and Lemma~\ref{lem:exact-absorb}.

\begin{theorem}[Affine-scaling equivalence under exact line search]
\label{thm:affine-scaling-invariance}
Let $f(x)=\phi(Bx)$ with $B$ invertible and $\det B>0$, and let $\{x_k\}$ be generated by YAND with exact line search
\begin{equation}
\label{eq:and-exact}
x_{k+1}
=
x_k+\alpha_k\, d_{\mathrm{AN}}^{f}(x_k),
\qquad
\alpha_k \in
\arg\min_{\alpha>0}
f\bigl(x_k+\alpha d_{\mathrm{AN}}^{f}(x_k)\bigr).
\end{equation}
Define $y_k:=Bx_k$.
Assume along the iterates the affine normal directions are well-defined for $f$ at $x_k$
and for $\phi$ at $y_k$ (e.g., the relevant level sets are elliptic).
Then $\{y_k\}$ coincides with the YAND iterates (with exact line search) applied directly to $\phi$:
\begin{equation}
\label{eq:and-exact-y}
y_{k+1}
=
y_k+\beta_k\, d_{\mathrm{AN}}^{\phi}(y_k),
\qquad
\beta_k \in
\arg\min_{\beta>0}
\phi\bigl(y_k+\beta d_{\mathrm{AN}}^{\phi}(y_k)\bigr).
\end{equation}
Consequently, after the change of variables \(y=Bx\), the mapped YAND iterates
for \(f\) coincide with the YAND iterates for \(\phi\). In this sense, the
behavior of the method is unaffected by affine scalings arising solely from \(B\),
and the corresponding convergence statements in \(y\)-space do not depend on
\(\kappa(B)\).
\end{theorem}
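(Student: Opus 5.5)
The proof goes by induction on $k$. It reduces the statement to a one-step claim: if $y_k=Bx_k$, then $Bx_{k+1}$ is an exact-line-search YAND step for $\phi$ from $y_k$.

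First, factor $B=\rho U$ with $\rho:=(\det B)^{1/(n+1)}>0$ and $U:=\rho^{-1}B$. Then $\det U=1$, and $f(x)=\psi(Ux)$ with $\psi(z):=\phi(\rho z)$. Lemma~\ref{lem:affine-covariance} applied to $g=f$ gives
\[
d_{\mathrm{AN}}^{f}(x)\ \parallel\ U^{-1}d_{\mathrm{AN}}^{\psi}(Ux).
\]
Corollary~\ref{cor:isotropic-scaling} gives $d_{\mathrm{AN}}^{\psi}(z)\parallel d_{\mathrm{AN}}^{\phi}(\rho z)$. Since $U^{-1}=\rho B^{-1}$ and positive factors may be absorbed, these combine to
\[
B\,d_{\mathrm{AN}}^{f}(x_k)=\tau_k\, d_{\mathrm{AN}}^{\phi}(y_k)\qquad\text{for some }\tau_k>0.
\]
The well-definedness hypotheses along the iterates are needed here: ellipticity at $x_k$ for $f$ should transfer to $Ux_k$ for $\psi$, because $U$ is linear and the tangent--tangent Hessians are congruent. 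Alternatively, one may simply assume it for all three functions. The positivity of $\tau_k$ relies on the fixed inward orientation in both results, so both directions are descent directions. Consistently, $\langle\nabla f(x_k),d^f_{\mathrm{AN}}\rangle=\langle\nabla\phi(y_k),Bd^f_{\mathrm{AN}}\rangle=\tau_k\langle\nabla\phi(y_k),d^\phi_{\mathrm{AN}}\rangle$, so the sign-correction rule \eqref{eq:dk} selects the same case on both sides.

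Next, transfer the line search. For $\alpha>0$,
\[
f\bigl(x_k+\alpha d_{\mathrm{AN}}^f(x_k)\bigr)=\phi\bigl(y_k+\alpha B d_{\mathrm{AN}}^f(x_k)\bigr)=\phi\bigl(y_k+\alpha\tau_k d_{\mathrm{AN}}^\phi(y_k)\bigr).
\]
So $\alpha_k$ minimizes the left side exactly when it minimizes $\alpha\mapsto\phi(y_k+\alpha\tau_k d^\phi_{\mathrm{AN}}(y_k))$. By Lemma~\ref{lem:exact-absorb}, this is equivalent to $\beta_k:=\tau_k\alpha_k$ minimizing $\phi(y_k+\beta d^\phi_{\mathrm{AN}}(y_k))$. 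Consequently
\[
y_{k+1}=Bx_{k+1}=y_k+\alpha_k B d^f_{\mathrm{AN}}(x_k)=y_k+\beta_k d^\phi_{\mathrm{AN}}(y_k),
\]
which is \eqref{eq:and-exact-y}. If the argmin is not a singleton, the correspondence $\alpha\leftrightarrow\tau_k\alpha$ is a bijection between the two minimizer sets. The sequences therefore coincide under matched selections, and the statement should be read with this understanding. Induction from $y_0=Bx_0$ then finishes the proof, and the $\kappa(B)$-independence follows because every quantity in $y$-space involves only $\phi$.

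The main obstacle is the directional covariance step with the correct positive orientation. Lemma~\ref{lem:affine-covariance} is stated for $\det B=1$, and the extension to $\det B>0$ is only asserted in Remark~\ref{rem:det-role} and Corollary~\ref{cor:isotropic-scaling}. I would rely on the factorization above rather than the remark. If needed, I would verify the isotropic case directly from formula \eqref{eq:AN-analytic}. Under $z\mapsto\rho z$, the derivatives of order $j$ scale by $\rho^j$, so $f^{ij}$ scales by $\rho^{-2}$, $\|\nabla f\|$ by $\rho$, $f_{pqi}$ by $\rho^3$, and $f_{n+1,i}$ by $\rho^2$. Both terms in the bracket then scale by $\rho^2$, which makes the tangential part $\tau$ scale-invariant. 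The normal component stays $-1$, giving collinearity with a positive factor.
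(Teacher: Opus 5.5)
Your proposal is correct and follows essentially the same route as the paper: the factorization $B=\rho A$ with $\det A=1$ and $f(x)=\psi(Ax)$, Lemma~\ref{lem:affine-covariance} for the unimodular part, Corollary~\ref{cor:isotropic-scaling} for the isotropic part, and Lemma~\ref{lem:exact-absorb} to absorb the positive factor $\tau_k$ into the exact step. Your additions are sound refinements of that argument rather than a different one: the explicit induction, the matched-selection caveat for non-unique minimizers, and the direct scaling check of the isotropic case from \eqref{eq:AN-analytic}, which the paper only asserts via Remark~\ref{rem:det-role}.
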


\begin{proof}
\noindent\textbf{Step 1: unimodular--scaling factorization.}
Let
\[
\rho := (\det B)^{1/(n+1)} >0,
\qquad
A := \rho^{-1}B,
\]
so that $B=\rho A$ and $\det A=1$.

\noindent\textbf{Step 2: rewrite $f$ through a unimodular transform.}
Define $\psi:\R^{n+1}\to\R$ by $\psi(z):=\phi(\rho z)$.
Then for every $x$,
\[
f(x)=\phi(Bx)=\phi(\rho Ax)=\psi(Ax).
\]

\noindent\textbf{Step 3: relate affine normals by unimodular covariance.}
Applying Lemma~\ref{lem:affine-covariance} to $g=f$ and $\psi$ yields
\[
d_{\mathrm{AN}}^{f}(x)\ \parallel\ A^{-1} d_{\mathrm{AN}}^{\psi}(Ax).
\]
Thus there exists a scalar $\eta(x)>0$ such that
\begin{equation}
\label{eq:d-relate-final}
d_{\mathrm{AN}}^{f}(x)=\eta(x)\, A^{-1} d_{\mathrm{AN}}^{\psi}(Ax).
\end{equation}

\noindent\textbf{Step 4: map the $x$-update into $y$-space.}
Let $y=Bx=\rho Ax$ and write $x^+=x+\alpha d_{\mathrm{AN}}^{f}(x)$.
Then
\[
y^+ := Bx^+ = y + \alpha\, B d_{\mathrm{AN}}^{f}(x)
= y + \alpha\, \rho A d_{\mathrm{AN}}^{f}(x).
\]
Using \eqref{eq:d-relate-final},
\[
\rho A d_{\mathrm{AN}}^{f}(x)
=
\rho A\bigl(\eta(x)A^{-1} d_{\mathrm{AN}}^{\psi}(Ax)\bigr)
=
(\rho\eta(x))\, d_{\mathrm{AN}}^{\psi}(Ax).
\]
Since $Ax=\rho^{-1}y$, we obtain
\begin{equation}
\label{eq:y-step-psi-final}
y^+ = y + \tilde\alpha\, d_{\mathrm{AN}}^{\psi}(\rho^{-1}y),
\qquad
\tilde\alpha := \alpha\,\rho\eta(x) >0.
\end{equation}

\noindent\textbf{Step 5: identify the direction as the affine normal for $\phi$.}
By Corollary~\ref{cor:isotropic-scaling}, with $\psi(z)=\phi(\rho z)$ and $y=\rho z$, we have
\[
d_{\mathrm{AN}}^{\psi}(\rho^{-1}y)\ \parallel\ d_{\mathrm{AN}}^{\phi}(y).
\]
Hence there exists $\xi(y)>0$ such that
\[
d_{\mathrm{AN}}^{\psi}(\rho^{-1}y)=\xi(y)\, d_{\mathrm{AN}}^{\phi}(y).
\]
Substituting into \eqref{eq:y-step-psi-final} yields
\begin{equation}
\label{eq:y-step-phi-final}
y^+ = y + \beta\, d_{\mathrm{AN}}^{\phi}(y),
\qquad
\beta := \tilde\alpha\, \xi(y) >0.
\end{equation}

\noindent\textbf{Step 6: exact line search yields identical $y$-steps.}
Since $f(x)=\phi(Bx)$ and $y=Bx$, we have
\[
\alpha^\star \in \arg\min_{\alpha>0} f(x+\alpha d_{\mathrm{AN}}^{f}(x))
\quad\Longleftrightarrow\quad
\alpha^\star \in \arg\min_{\alpha>0}\phi\bigl(y+\alpha\,B d_{\mathrm{AN}}^{f}(x)\bigr).
\]
By \eqref{eq:y-step-phi-final}, there exists $p>0$ such that
$B d_{\mathrm{AN}}^{f}(x)=p\, d_{\mathrm{AN}}^{\phi}(y)$, hence
\[
\arg\min_{\alpha>0}\phi\bigl(y+\alpha\,B d_{\mathrm{AN}}^{f}(x)\bigr)
=
\arg\min_{\alpha>0}\phi\bigl(y+\alpha p\, d_{\mathrm{AN}}^{\phi}(y)\bigr).
\]
Applying Lemma~\ref{lem:exact-absorb} shows that the resulting \emph{step} in $y$-space
coincides with that produced by exact line search along $d_{\mathrm{AN}}^{\phi}(y)$,
i.e., by $\beta^\star \in \arg\min_{\beta>0}\phi(y+\beta d_{\mathrm{AN}}^{\phi}(y))$.
Therefore, $y_{k+1}=Bx_{k+1}$ coincides with the YAND update \eqref{eq:and-exact-y} at each $k$.

Finally, any convergence property of YAND on $\phi$ transfers verbatim to $f$ under the change of variables $y=Bx$,
and no dependence on $\kappa(B)$ can enter at the iterate level in $y$-space.
\end{proof}

Theorem~\ref{thm:affine-scaling-invariance} formalizes this invariance: the
change of variables $y=Bx$ removes the spurious conditioning induced by $B$,
so the resulting convergence guarantees and constants are inherited from
$\phi$ and do not depend on $\kappa(B)$. The analysis shows that affine-normal directions transform covariantly under affine scalings of the coordinates. As a consequence, the geometric search direction is intrinsically insensitive to anisotropic affine distortions of the objective landscape.

\begin{remark}[Where the assumptions enter]
\label{rem:where-assumptions-enter}
The only nontrivial assumption in Theorem~\ref{thm:affine-scaling-invariance}
is that the affine normal direction is defined along the iterates.
In our framework this is ensured, for instance, when the visited level sets are locally elliptic
(see Theorem~\ref{thm:ANDescent}).
\end{remark}

\subsection{Extension to Armijo line search}

We now extend the invariance principle from exact line search to the standard inexact line search
based on Armijo (sufficient decrease) conditions.
Let
\[
d_k := d_{\mathrm{AN}}^{f}(x_k),
\]
and assume the direction is chosen with the \emph{descent orientation}, i.e.,
\begin{equation}
\label{eq:descent-orientation}
\nabla f(x_k)^\top d_k < 0.
\end{equation}
Let $\alpha_k>0$ satisfy the Armijo condition
\begin{equation}
\label{eq:armijo}
f(x_k+\alpha_k d_k)
\le
f(x_k)
+
c_1 \alpha_k \nabla f(x_k)^\top d_k,
\qquad
0<c_1<1.
\end{equation}

\begin{theorem}[Armijo invariance under orientation-preserving affine scaling]
\label{thm:armijo-scaling}
Let $f(x)=\phi(Bx)$, where $B\in GL(n+1)$ satisfies $\det B>0$.
Define $y_k:=Bx_k$.
Assume the affine normal directions are well-defined at $x_k$ for $f$ and at $y_k$ for $\phi$.
Then there exists $\tau_k>0$ such that
\[
B d_{\mathrm{AN}}^{f}(x_k)
=
\tau_k\, d_{\mathrm{AN}}^{\phi}(y_k).
\]
Let $\beta_k:=\alpha_k \tau_k$.
Then
\[
y_{k+1}
=
y_k+\beta_k d_{\mathrm{AN}}^{\phi}(y_k),
\]
and $\beta_k$ satisfies the Armijo condition for $\phi$ with the same constant $c_1$:
\[
\phi(y_k+\beta_k d_{\mathrm{AN}}^{\phi}(y_k))
\le
\phi(y_k)
+
c_1 \beta_k
\nabla\phi(y_k)^\top d_{\mathrm{AN}}^{\phi}(y_k).
\]
Consequently, Armijo-based YAND is affine invariant
under orientation-preserving affine scalings (after the change of variables $y=Bx$).
\end{theorem}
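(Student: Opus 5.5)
The plan has three steps: establish the direction identity, translate the iterate update, and then check the Armijo inequality term by term using the chain rule.

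\textbf{Step 1: the direction identity.} I would reuse Steps 1--5 of the proof of Theorem~\ref{thm:affine-scaling-invariance} verbatim. Factor $B=\rho A$ with $\rho=(\det B)^{1/(n+1)}>0$ and $\det A=1$, and set $\psi(z):=\phi(\rho z)$, so that $f=\psi\circ A$.
\begin{itemize}
\item Lemma~\ref{lem:affine-covariance} gives $d_{\mathrm{AN}}^{f}(x_k)=\eta_k A^{-1}d_{\mathrm{AN}}^{\psi}(Ax_k)$ with $\eta_k>0$.
\item Corollary~\ref{cor:isotropic-scaling} gives $d_{\mathrm{AN}}^{\psi}(\rho^{-1}y_k)=\xi_k\, d_{\mathrm{AN}}^{\phi}(y_k)$ with $\xi_k>0$.
\item Combining the two, $B d_{\mathrm{AN}}^{f}(x_k)=\rho\eta_k\xi_k\, d_{\mathrm{AN}}^{\phi}(y_k)$, so $\tau_k:=\rho\eta_k\xi_k>0$.
\end{itemize}
Positivity of $\tau_k$ is the essential point, and it is exactly what the inward-orientation convention in those two results supplies. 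As a consistency check, the descent orientation \eqref{eq:descent-orientation} on the $f$ side transfers to the $\phi$ side:
\[
\nabla f(x_k)^\top d_k=\nabla\phi(y_k)^\top B d_k=\tau_k\,\nabla\phi(y_k)^\top d_{\mathrm{AN}}^{\phi}(y_k),
\]
so both inner products have the same sign.

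\textbf{Step 2: the iterate update.} This is immediate from linearity:
\[
y_{k+1}=Bx_{k+1}=y_k+\alpha_k B d_k=y_k+\alpha_k\tau_k\, d_{\mathrm{AN}}^{\phi}(y_k)=y_k+\beta_k\, d_{\mathrm{AN}}^{\phi}(y_k).
\]

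\textbf{Step 3: the Armijo inequality.} I would rewrite each term of \eqref{eq:armijo} in $y$-variables.
\begin{itemize}
\item The left side is $f(x_k+\alpha_k d_k)=\phi(y_{k+1})=\phi\bigl(y_k+\beta_k d_{\mathrm{AN}}^{\phi}(y_k)\bigr)$.
\item The constant term is $f(x_k)=\phi(y_k)$.
\item For the slope term, the chain rule $\nabla f(x)=B^\top\nabla\phi(Bx)$ gives
\[
\alpha_k\nabla f(x_k)^\top d_k=\alpha_k\,\nabla\phi(y_k)^\top B d_k=\alpha_k\tau_k\,\nabla\phi(y_k)^\top d_{\mathrm{AN}}^{\phi}(y_k)=\beta_k\,\nabla\phi(y_k)^\top d_{\mathrm{AN}}^{\phi}(y_k).
\]
\end{itemize}
Substituting these into \eqref{eq:armijo} yields the Armijo condition for $\phi$ with the same constant $c_1$. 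The statement that Armijo-based YAND is invariant then follows by induction on $k$, given a fixed correspondence $y_0=Bx_0$.

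\textbf{Main obstacle.} The only real difficulty is in Step 1: guaranteeing $\tau_k>0$ rather than merely $\tau_k\neq 0$. Without it, the transported direction could be uphill and $\beta_k$ could be negative. I would handle this by invoking the fixed inward/descent orientation together with $\det B>0$ (Remark~\ref{rem:det-role}). The chain-rule identity above then shows $\nabla\phi(y_k)^\top d_{\mathrm{AN}}^{\phi}(y_k)<0$ whenever \eqref{eq:descent-orientation} holds, confirming that the sign bookkeeping is consistent.

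**A caveat on backtracking.** If one additionally wants to compare backtracking trial step sizes, note that $\alpha=\beta^m\alpha_0$ in $x$-space corresponds to $\beta^m\alpha_0\tau_k$ in $y$-space. The accepted point is therefore the same, but only up to a rescaled initial trial step. The theorem as stated claims only that $\beta_k$ satisfies Armijo, so this issue does not need to be resolved.
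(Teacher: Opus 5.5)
Your proposal is correct and follows the paper's proof: the paper obtains $\tau_k>0$ by citing Remark~\ref{rem:det-role} directly, whereas you rederive it through the $B=\rho A$ factorization from Theorem~\ref{thm:affine-scaling-invariance}, which rests on the same facts. You then substitute $f(x_k+\alpha_k d_k)=\phi(y_k+\beta_k d_{\mathrm{AN}}^{\phi}(y_k))$ and $\nabla f(x_k)^\top d_k=\tau_k\nabla\phi(y_k)^\top d_{\mathrm{AN}}^{\phi}(y_k)$ into the Armijo inequality exactly as the paper does, and your closing caveat that backtracking trial steps get rescaled by $\tau_k$ is a fair point the paper does not address.
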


\begin{proof}
By Remark~\ref{rem:det-role}, the affine-normal directions satisfy
$B d_{\mathrm{AN}}^{f}(x_k)\parallel d_{\mathrm{AN}}^{\phi}(y_k)$; hence the stated identity holds for some $\tau_k>0$.
Using
\[
\nabla f(x)=B^\top\nabla\phi(Bx),
\]
we obtain
\[
\nabla f(x_k)^\top d_k
=
\nabla\phi(y_k)^\top (B d_k)
=
\tau_k
\nabla\phi(y_k)^\top d_{\mathrm{AN}}^{\phi}(y_k).
\]
Moreover,
\[
f(x_k+\alpha_k d_k)
=
\phi(y_k+\beta_k d_{\mathrm{AN}}^{\phi}(y_k)).
\]
Substituting these identities into \eqref{eq:armijo}
yields exactly the Armijo condition for $\phi$
with step size $\beta_k$.
\end{proof}

\subsection{Extension to strong Wolfe line search}

We now strengthen Theorem~\ref{thm:armijo-scaling}
by incorporating the curvature condition, i.e., strong Wolfe conditions.
Let
\[
d_k := d_{\mathrm{AN}}^{f}(x_k),
\]
and assume \eqref{eq:descent-orientation}.
Let $\alpha_k>0$ satisfy the strong Wolfe conditions
\begin{align}
\label{eq:wolfe1-final}
f(x_k+\alpha_k d_k)
&\le f(x_k)+c_1 \alpha_k \nabla f(x_k)^\top d_k,
\\
\label{eq:wolfe2-final}
\bigl|\nabla f(x_k+\alpha_k d_k)^\top d_k\bigr|
&\le c_2 \bigl|\nabla f(x_k)^\top d_k\bigr|,
\end{align}
with $0<c_1<c_2<1$.

\begin{theorem}[Strong-Wolfe invariance under orientation-preserving affine scaling]
\label{thm:wolfe-scaling-final}
Let $f(x)=\phi(Bx)$, where $B\in GL(n+1)$ satisfies $\det B>0$.
Assume YAND is implemented with strong Wolfe line search
\eqref{eq:wolfe1-final}--\eqref{eq:wolfe2-final},
and that directions are chosen with the descent orientation
\eqref{eq:descent-orientation}. Define $y_k:=Bx_k$.
Assume the affine normal directions are well-defined at $x_k$ for $f$ and at $y_k$ for $\phi$.
Then for each $k$ there exists a scalar $\tau_k>0$ such that
\begin{equation}
\label{eq:tau-def}
B d_{\mathrm{AN}}^{f}(x_k)
=
\tau_k\, d_{\mathrm{AN}}^{\phi}(y_k).
\end{equation}
Let $\beta_k := \alpha_k \tau_k$.
Then the mapped iterate satisfies
\[
y_{k+1}
=
y_k + \beta_k\, d_{\mathrm{AN}}^{\phi}(y_k),
\]
and $\beta_k$ satisfies the \emph{same} strong Wolfe conditions
(with parameters $(c_1,c_2)$) for $\phi$
along the direction $d_{\mathrm{AN}}^{\phi}(y_k)$.
Consequently, any convergence guarantee proved for YAND on \(\phi\)
under strong Wolfe conditions transfers verbatim
(after the change of variables \(y=Bx\)),
and the corresponding convergence statements in \(y\)-space do not explicitly depend on \(\kappa(B)\).
\end{theorem}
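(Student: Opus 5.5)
The plan mirrors the proof of Theorem~\ref{thm:armijo-scaling}, with one additional identity needed for the curvature condition. First, the existence of $\tau_k>0$ in \eqref{eq:tau-def} follows exactly as in the Armijo case. Combining Lemma~\ref{lem:affine-covariance}, Corollary~\ref{cor:isotropic-scaling} and Remark~\ref{rem:det-role} through the factorization $B=\rho A$ with $\rho=(\det B)^{1/(n+1)}$ and $\det A=1$, as in Steps~1--5 of the proof of Theorem~\ref{thm:affine-scaling-invariance}, gives $B\,d_{\mathrm{AN}}^{f}(x_k)\parallel d_{\mathrm{AN}}^{\phi}(y_k)$ with a positive factor. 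The sign is positive because both directions carry the descent orientation: by \eqref{eq:descent-orientation} and the chain rule,
\[
0>\nabla f(x_k)^\top d_k=\nabla\phi(y_k)^\top B d_k=\tau_k\,\nabla\phi(y_k)^\top d_{\mathrm{AN}}^{\phi}(y_k),
\]
so $\tau_k$ and $-\nabla\phi(y_k)^\top d_{\mathrm{AN}}^{\phi}(y_k)$ have the same sign, and the latter is positive when $d_{\mathrm{AN}}^{\phi}$ is also descent-oriented. The update identity is immediate: $y_{k+1}=Bx_k+\alpha_k B d_k=y_k+\alpha_k\tau_k d_{\mathrm{AN}}^{\phi}(y_k)$.

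Second, the sufficient-decrease condition \eqref{eq:wolfe1-final} transfers verbatim, by Theorem~\ref{thm:armijo-scaling}. This uses $f(x_k+\alpha_k d_k)=\phi(y_k+\beta_k d_{\mathrm{AN}}^{\phi}(y_k))$ together with $\alpha_k\nabla f(x_k)^\top d_k=\beta_k\nabla\phi(y_k)^\top d_{\mathrm{AN}}^{\phi}(y_k)$.

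Third, the curvature condition \eqref{eq:wolfe2-final} requires the same chain-rule identity, now evaluated at the trial point. Since $x_k+\alpha_k d_k$ maps under $B$ to $y_k+\beta_k d_{\mathrm{AN}}^{\phi}(y_k)$, we have
\[
\nabla f(x_k+\alpha_k d_k)^\top d_k=\nabla\phi\bigl(y_k+\beta_k d_{\mathrm{AN}}^{\phi}(y_k)\bigr)^\top B d_k=\tau_k\,\nabla\phi\bigl(y_k+\beta_k d_{\mathrm{AN}}^{\phi}(y_k)\bigr)^\top d_{\mathrm{AN}}^{\phi}(y_k).
\]
Both sides of \eqref{eq:wolfe2-final} therefore pick up the common factor $\tau_k>0$. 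Taking absolute values and dividing by $\tau_k$ yields the strong Wolfe curvature inequality for $\phi$ at step $\beta_k$ with the same $c_2$. Equivalently, if $\varphi_f(\alpha):=f(x_k+\alpha d_k)$ and $\varphi_\phi(\beta):=\phi(y_k+\beta d_{\mathrm{AN}}^{\phi}(y_k))$, then $\varphi_f(\alpha)=\varphi_\phi(\tau_k\alpha)$, and both Wolfe conditions are invariant under a positive reparametrization of the step.

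Finally, the transfer of convergence guarantees follows because the $y$-sequence is literally a strong-Wolfe YAND sequence for $\phi$. Every hypothesis and constant used in Theorem~\ref{thm:nonconvex-Wolfe}, namely $L$, $T$, $c_1$, $c_2$ and $f^\star$, refers to $\phi$ alone. The only point I expect to need care is the sign of $\tau_k$, that is, that both directions are oriented consistently. If $d^\phi_{\mathrm{AN}}$ were defined via the sign-corrected rule \eqref{eq:dk}, I would argue that the correction commutes with $B$, since the sign test is $B$-invariant by the displayed identity in the first step.
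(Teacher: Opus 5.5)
Your proposal is correct and follows essentially the same route as the paper's proof. Both obtain $\tau_k>0$ from affine-normal covariance (the paper simply cites Remark~\ref{rem:det-role}), map the search ray $x_k+\alpha d_k\mapsto y_k+\alpha\tau_k d_{\mathrm{AN}}^{\phi}(y_k)$, and use $\nabla f(x)^\top d=\nabla\phi(Bx)^\top Bd$ at $x_k$ and at the trial point, so that both strong Wolfe inequalities acquire the common positive factor $\tau_k$; your explicit check that the descent orientation forces $\tau_k>0$ is a small but useful addition, which the paper leaves implicit in the ``consistent orientation'' proviso of that remark.
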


\begin{proof}
\noindent\textbf{Step 1: Mapping of directions and positivity of $\tau_k$.}
By Remark~\ref{rem:det-role}, $B d_{\mathrm{AN}}^{f}(x_k)\parallel d_{\mathrm{AN}}^{\phi}(y_k)$, hence \eqref{eq:tau-def} holds for some $\tau_k>0$.

\noindent\textbf{Step 2: Mapping of search curves.}
For any $\alpha>0$,
\[
Bx_k + \alpha\, B d_k
=
y_k + (\alpha\tau_k)\, d_{\mathrm{AN}}^{\phi}(y_k).
\]
Let $\beta := \alpha \tau_k$.

\noindent\textbf{Step 3: Sufficient decrease condition.}
Using $\nabla f(x)=B^\top \nabla\phi(Bx)$,
\[
\nabla f(x_k)^\top d_k
=
\nabla\phi(y_k)^\top (B d_k)
=
\tau_k\, \nabla\phi(y_k)^\top d_{\mathrm{AN}}^{\phi}(y_k),
\]
and
\[
f(x_k+\alpha d_k)
=
\phi(y_k+\beta d_{\mathrm{AN}}^{\phi}(y_k)).
\]
Substituting into \eqref{eq:wolfe1-final} gives
\[
\phi(y_k+\beta d_{\mathrm{AN}}^{\phi}(y_k))
\le
\phi(y_k)
+
c_1 \beta \nabla\phi(y_k)^\top d_{\mathrm{AN}}^{\phi}(y_k),
\]
which is exactly the strong Wolfe sufficient-decrease condition for $\phi$.

\noindent\textbf{Step 4: Curvature condition.}
Similarly,
\[
\nabla f(x_k+\alpha d_k)^\top d_k
=
\tau_k
\nabla\phi(y_k+\beta d_{\mathrm{AN}}^{\phi}(y_k))^\top
d_{\mathrm{AN}}^{\phi}(y_k),
\]
so \eqref{eq:wolfe2-final} becomes
\[
\bigl|
\nabla\phi(y_k+\beta d_{\mathrm{AN}}^{\phi}(y_k))^\top
d_{\mathrm{AN}}^{\phi}(y_k)
\bigr|
\le
c_2
\bigl|
\nabla\phi(y_k)^\top
d_{\mathrm{AN}}^{\phi}(y_k)
\bigr|,
\]
which is precisely the strong Wolfe curvature condition for $\phi$.
\end{proof}

\begin{remark}[Natural notion of invariance for inexact line search]
\label{rem:wolfe-invariance}
Theorem~\ref{thm:wolfe-scaling-final} does not assert that the numerical values of the step sizes $\alpha_k$
in the $x$-variables remain unchanged under affine scalings.
Rather, after the change of variables $y=Bx$ (with $\det B>0$) and the reparameterization
$\beta_k=\alpha_k\tau_k$, the accepted step in $x$-space corresponds exactly to an accepted
strong-Wolfe step in $y$-space along $d_{\mathrm{AN}}^{\phi}(y_k)$.
Consequently, the induced sequence \(\{y_k\}\) is identical to the sequence obtained by applying
the same line-search rule directly to \(\phi\), and the associated convergence properties in \(y\)-space
do not explicitly depend on \(\kappa(B)\).
\end{remark}

\subsection{Unified affine invariance of monotone line-search rules}

The previous results show that both Armijo and strong Wolfe conditions are preserved under affine scaling.
We now state a unified formulation.

\begin{theorem}[Unified invariance of first-order monotone line search]
\label{thm:unified-line-search}
Let $f(x)=\phi(Bx)$ with $B\in GL(n+1)$ satisfying $\det B>0$.
Assume the search direction $d_k=d_{\mathrm{AN}}^{f}(x_k)$ is chosen with descent orientation \eqref{eq:descent-orientation}.
Let $y_k=Bx_k$.
Assume the affine normal directions are well-defined at $x_k$ for $f$ and at $y_k$ for $\phi$. Suppose a step size $\alpha_k>0$ is accepted according to any line-search rule that depends only on
\begin{itemize}
\item function values $f(x_k+\alpha d_k)$,
\item and first-order directional derivatives $\nabla f(x_k+\alpha d_k)^\top d_k$,
\end{itemize}
through inequalities that are homogeneous of degree one with respect to the directional-derivative term.
Then there exists $\tau_k>0$ such that
\[
B d_k = \tau_k d_{\mathrm{AN}}^{\phi}(y_k),
\]
and, defining $\beta_k=\alpha_k \tau_k$, the step $\beta_k$ is accepted by the \emph{same} rule applied to $\phi$
along $d_{\mathrm{AN}}^{\phi}(y_k)$.
Consequently, the induced sequence $\{y_k\}$ is identical to the sequence obtained by applying
the same line-search rule directly to $\phi$, and the iteration complexity in $y$-space is independent of $\kappa(B)$.
(This covers, in particular, Armijo, Wolfe/strong Wolfe, and related monotone first-order rules.)
\end{theorem}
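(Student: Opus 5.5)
The plan is to reuse the argument behind Theorems~\ref{thm:armijo-scaling} and~\ref{thm:wolfe-scaling-final}, abstracted so that it uses only the structure of the acceptance test.

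\textbf{Step 1: direction relation.} By Lemma~\ref{lem:affine-covariance} together with Remark~\ref{rem:det-role}, exactly as in Step~1 of the proof of Theorem~\ref{thm:wolfe-scaling-final}, we have $B d_k \parallel d_{\mathrm{AN}}^{\phi}(y_k)$. Because $\det B>0$ and the descent orientation \eqref{eq:descent-orientation} is fixed, the proportionality constant $\tau_k$ is positive. Positivity can also be checked directly: $\nabla f(x_k)^\top d_k=\nabla\phi(y_k)^\top B d_k<0$, so $\tau_k$ and $\nabla\phi(y_k)^\top d_{\mathrm{AN}}^\phi(y_k)$ have opposite signs. Taking the descent representative of $d_{\mathrm{AN}}^\phi(y_k)$ therefore forces $\tau_k>0$.

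\textbf{Step 2: one-dimensional data in the two coordinate systems.} Fix $\alpha>0$ and set $\beta=\alpha\tau_k$. I will formalize the line-search rule as a finite family of inequalities in the quantities $\Phi(\alpha)=f(x_k+\alpha d_k)$, $\Phi(0)$, $\alpha\Phi'(\alpha)$, $\alpha\Phi'(0)$ (and their absolute values). Here ``homogeneous of degree one in the directional-derivative term'' means that every derivative appears multiplied by the step parameter, or in a ratio with another derivative, or on both sides of a homogeneous inequality such as the curvature test. The chain rule then gives the identities
\[
\Phi(\alpha)=\phi\bigl(y_k+\beta d_{\mathrm{AN}}^{\phi}(y_k)\bigr)=:\Psi(\beta),
\qquad
\Phi'(\alpha)=\nabla f(x_k+\alpha d_k)^\top d_k=\tau_k\Psi'(\beta).
\]
Hence $\alpha\Phi'(\alpha)=\beta\Psi'(\beta)$ and $\alpha\Phi'(0)=\beta\Psi'(0)$. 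Any inequality that is homogeneous of degree one in the derivative terms, with the same degree on each side, is multiplied through by a positive power of $\tau_k$. Since $\tau_k>0$, the direction of each inequality is preserved. The test on $(\Phi,\alpha)$ therefore holds if and only if the same test holds on $(\Psi,\beta)$. Armijo and Wolfe/strong Wolfe are recovered as the special cases already verified in Theorems~\ref{thm:armijo-scaling} and~\ref{thm:wolfe-scaling-final}.

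\textbf{Step 3: iterate-level identity.} The update satisfies $Bx_{k+1}=y_k+\beta_k d_{\mathrm{AN}}^\phi(y_k)$, and by Step~2 $\beta_k$ is accepted by the rule applied to $\phi$. For the claim that the whole sequence $\{y_k\}$ coincides with the $\phi$-run, I will proceed by induction on $k$. The induction additionally requires that the rule's choice among acceptable steps is itself reparametrization-consistent. For example, backtracking from an initial trial step must be compatible with $\beta=\tau_k\alpha$; otherwise one obtains only the statement ``$\beta_k$ is an admissible step for $\phi$.'' Complexity bounds proved for $\phi$ under that rule use only admissibility, so they transfer and involve no $\kappa(B)$.

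\textbf{Main obstacle.} I expect the main obstacle to be making ``homogeneous of degree one'' precise enough that the scaling argument in Step~2 is airtight. I would first try to define the admissible class explicitly as inequalities of the form
\[
F\bigl(\Phi(\alpha)-\Phi(0),\ \alpha\Phi'(0),\ \alpha\Phi'(\alpha)\bigr)\le 0,
\]
possibly together with positively homogeneous comparisons between $\Phi'(\alpha)$ and $\Phi'(0)$. I would then note that Armijo, Wolfe, strong Wolfe and Goldstein all belong to this class.
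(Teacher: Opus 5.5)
Your Steps 1--2 follow the paper's own justification. The paper gives no separate proof of Theorem~\ref{thm:unified-line-search}. It relies on Remark~\ref{rem:structural-origin}: function values transform by composition, directional derivatives transform linearly, and the direction transforms by a positive factor. This is instantiated in Theorems~\ref{thm:armijo-scaling} and~\ref{thm:wolfe-scaling-final}. Your version is sharper in two places.
\begin{enumerate}
\item Once $Bd_k$ and $d_{\mathrm{AN}}^{\phi}(y_k)$ are known to be collinear, the identity $\tau_k\,\nabla\phi(y_k)^\top d_{\mathrm{AN}}^{\phi}(y_k)=\nabla f(x_k)^\top d_k<0$ forces $\tau_k>0$ for the descent representative. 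No orientation bookkeeping is needed.
\item Your admissible class is the right formalization. It consists of tests in $\Phi(\alpha)-\Phi(0)$, $\alpha\Phi'(0)$ and $\alpha\Phi'(\alpha)$, plus positively homogeneous comparisons of $\Phi'(\alpha)$ with $\Phi'(0)$. The paper's phrase ``homogeneous of degree one in the directional-derivative term'' would literally admit a non-invariant test such as $\Phi(\alpha)\le\Phi(0)+c\,\Phi'(0)$.
\end{enumerate}

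Your Step~3 caveat is correct, and the paper does not address it. Acceptance equivalence only shows that $\beta_k$ is admissible for $\phi$. Identity of the $y$-sequence with the direct $\phi$-run also needs the step \emph{selection} to commute with $\alpha\mapsto\tau_k\alpha$. This holds for exact line search (Lemma~\ref{lem:exact-absorb}), but not for backtracking from a fixed $\alpha_0$.

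However, your fallback fails for Armijo. You claim that complexity bounds for $\phi$ use only admissibility and therefore transfer free of $\kappa(B)$. But Armijo admissibility alone gives no lower bound on the step. The bound of Lemma~\ref{lem:Armijo-lower} rests on the backtracking mechanism: either the previous trial was rejected, or the first trial was accepted. In $y$-space the first trial is $\tau_k\alpha_0$, which depends on $B$.

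A concrete counterexample, using the YAND normalization $\langle\nabla f(x_k),d_k\rangle=-\|\nabla f(x_k)\|$ of Section~\ref{subsec:dk}, for which $\tau_k=\|B^\top\nabla\phi(y_k)\|/\|\nabla\phi(y_k)\|$:
\begin{itemize}
\item Take $\phi(y)=\tfrac12\|y\|^2$, $B=\mathrm{diag}(1,1/\gamma)$, $y_k=(0,s)$ with $s\ge 1$, $\alpha_0=1$ and $\sigma\le\tfrac12$.
\item Then $d_k=(0,-1)$, $d_{\mathrm{AN}}^{\phi}(y_k)=(0,-1)$ and $\tau_k=1/\gamma$.
\item The unit trial passes Armijo, since this reduces to $\tfrac{1}{2\gamma}\le(1-\sigma)s$. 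Hence $y_{k+1}=(0,s-1/\gamma)$.
\item The direct $\phi$-run instead moves to $(0,s-1)$.
\item Reducing $\|y\|$ from $10$ to $1$ therefore takes $9\gamma$ iterations instead of $9$.
\end{itemize}

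So for Armijo backtracking, both the sequence identity and the $\kappa(B)$-independence asserted in the theorem fail, and no argument can supply them. What can actually be proved is:
\begin{itemize}
\item the acceptance equivalence;
\item sequence identity under exact line search;
\item $\kappa(B)$-free Zoutendijk bounds under (strong) Wolfe, because the proof of Theorem~\ref{thm:nonconvex-Wolfe} genuinely uses only admissibility.
\end{itemize}
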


\begin{remark}[Structural origin of invariance]
\label{rem:structural-origin}
The invariance of Armijo and strong Wolfe conditions is not accidental.
It follows from two structural facts:
\begin{enumerate}
\item Function values transform by composition: $f(x)=\phi(Bx)$.
\item Directional derivatives transform linearly:
      $\nabla f(x)^\top d
      =
      \nabla\phi(Bx)^\top (B d)$.
\end{enumerate}
Since monotone line-search rules are expressed purely in terms of these two quantities,
and the affine normal direction transforms by a positive scalar factor (for $\det B>0$ with consistent orientation),
their acceptance mechanisms are preserved under orientation-preserving affine scalings.
This provides a complete line-search-level affine invariance theory for YAND.
\end{remark}

\subsection{Regime-wise transfer and explicit rates}
The affine covariance established above implies a transfer principle:
any convergence property of YAND proved for a base objective $\phi$
is inherited verbatim by all anisotropically scaled objectives
$f(x)=\phi(Bx)$.
Crucially, the rate constants depend only on the intrinsic geometry
of $\phi$ and not on the conditioning of $B$. The following transfer principle should be understood under the same
line-search regime and local regularity assumptions under which the
corresponding convergence result is established for the base objective \(\phi\).

\begin{corollary}[Regime-wise invariance under affine scaling]
\label{cor:regime-transfer-final}
Let $f(x)=\phi(Bx)$ with $B$ invertible and let $y_k=Bx_k$.
Under the standing assumption that the affine normal is well-defined along the iterates:
\begin{itemize}
\item[(i)] If YAND applied to $\phi$ is globally convergent (under the chosen line search),
then YAND applied to $f$ is globally convergent.
\item[(ii)] If YAND applied to $\phi$ enjoys a global linear rate under some condition
(e.g., strong convexity, PL, etc.), then YAND applied to $f$ enjoys the same linear rate with identical constants.
\item[(iii)] If YAND applied to $\phi$ is locally quadratically convergent near a nondegenerate minimizer,
then the same local quadratic convergence holds for YAND applied to $f$.
\end{itemize}
In all cases, the convergence rates and associated constants
coincide with those for \(\phi\) and therefore do not explicitly depend on \(\kappa(B)\)
within the affine-scaling model \(f(x)=\phi(Bx)\).
\end{corollary}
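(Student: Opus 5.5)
The plan is to reduce each of (i)--(iii) to the iterate-level equivalence already established. Theorems~\ref{thm:affine-scaling-invariance}, \ref{thm:armijo-scaling}, \ref{thm:wolfe-scaling-final} and \ref{thm:unified-line-search} show the following: if $\{x_k\}$ is generated by YAND on $f$ under exact, Armijo or strong Wolfe line search, then $y_k=Bx_k$ is a valid YAND sequence for $\phi$ under the same rule and with the same parameters. This uses the factorization $B=\rho A$ with $\det A=1$, Lemma~\ref{lem:affine-covariance}, Corollary~\ref{cor:isotropic-scaling}, and the positive factor $\tau_k$. So I will first fix the line-search regime and invoke the matching theorem. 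The conclusion is that $\{y_k\}$ is \emph{exactly} an admissible YAND trajectory for $\phi$, so every statement proved for $\phi$ applies to $\{y_k\}$. One caveat: those theorems assume $\det B>0$. For $\det B<0$ I would compose with a reflection $R$ ($\det R=-1$) and note that the affine normal line is preserved by any affine map (Remark~\ref{rem:det-role}). The descent-orientation rule \eqref{eq:dk} then fixes the sign intrinsically, since $\nabla f(x)^\top d=\nabla\phi(Bx)^\top(Bd)$. Hence orientation is still transported consistently, and I will state the corollary with this interpretation.

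Next, I transfer the quantities in which convergence is measured. Function values satisfy $f(x_k)=\phi(y_k)$ and $\inf f=\inf\phi$, so statements of the form $f(x_k)-f^\star\le(1-\rho)^k(f(x_0)-f^\star)$ carry over with the same $\rho$. This gives (ii) for function values with constants depending only on $\phi$; for instance, the $T$, $\mu_{\mathrm{PL}}$ and $L$ of $\phi$ along $\{y_k\}$ are the ones entering the bound. For (i), global convergence for $\phi$ means either $\nabla\phi(y_k)\to0$ or that cluster points of $y_k$ are stationary. Since $\nabla f(x_k)=B^\top\nabla\phi(y_k)$, we get $\nabla f(x_k)\to0$. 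Since $x=B^{-1}y$ is a homeomorphism, cluster points of $\{x_k\}$ are exactly the images of cluster points of $\{y_k\}$, and stationarity is preserved because $B^\top$ is invertible. For (iii), $x^\star=B^{-1}y^\star$ is a nondegenerate minimizer of $f$ exactly when $y^\star$ is one of $\phi$, because $\nabla^2 f(x^\star)=B^\top\nabla^2\phi(y^\star)B\succ0$. A quadratic recursion $\|y_{k+1}-y^\star\|\le C\|y_k-y^\star\|^2$ then holds with the same $C$ when measured in $y$-space.

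The main obstacle is making precise the sense of ``identical constants, independent of $\kappa(B)$.'' Measured in $x$-space Euclidean norms, iterate and gradient bounds pick up factors $\|B\|,\|B^{-1}\|$. For example, $\|x_{k+1}-x^\star\|\le C\|B^{-1}\|\|B\|^2\|x_k-x^\star\|^2$. I would handle this by stating everything in the natural invariant quantities, namely the function gap and the errors $\|B(x_k-x^\star)\|=\|y_k-y^\star\|$. In these quantities the constants coincide exactly. The $x$-space Euclidean versions retain the same \emph{order} (linear or quadratic), but their constants are scaled by $\kappa(B)$-dependent factors that do not affect rates. I would also check that the standing hypotheses used for $\phi$ really are properties of the trajectory $\{y_k\}$ rather than of $f$; these are ellipticity along the iterates, $T$-boundedness, and the step-size condition \eqref{eq:alpha-to-1}. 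Step-size conditions transfer via $\beta_k=\alpha_k\tau_k$. For \eqref{eq:alpha-to-1}, the Newton-unit-step regime transfers because Newton's method is also linearly invariant.
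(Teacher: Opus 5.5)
Your proposal is correct and is essentially the paper's proof. You use the iterate-level equivalences (Theorems~\ref{thm:affine-scaling-invariance}, \ref{thm:armijo-scaling}, \ref{thm:wolfe-scaling-final}) to see that $y_k=Bx_k$ is an admissible YAND run on $\phi$, and then pull each convergence statement back through $y=Bx$. Your extra points are sound refinements of details the paper's three-line proof passes over: you handle $\det B<0$ via the descent orientation (the cited theorems assume $\det B>0$, though the corollary allows any invertible $B$), and you observe that the constants coincide exactly only for invariant quantities such as $f(x_k)-f^\star$ and $\|y_k-y^\star\|$, while $x$-space Euclidean bounds pick up $\|B\|$, $\|B^{-1}\|$ factors.
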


\begin{proof}
Under exact line search, Theorem~\ref{thm:affine-scaling-invariance} shows that $\{y_k\}$ coincides with the YAND iterates on $\phi$.
Under strong Wolfe, Theorem~\ref{thm:wolfe-scaling-final} shows that $\{y_k\}$ follows the same accepted steps as YAND on $\phi$
after reparameterization.
Therefore, any convergence statement for YAND on $\phi$ transfers directly to YAND on $f$ under $y=Bx$,
with the same constants.
No dependence on $\kappa(B)$ can enter because $B$ is eliminated by the change of variables.
\end{proof}

As a concrete instantiation of Corollary~\ref{cor:regime-transfer-final},
we record an explicit linear-rate bound (and thus an iteration-complexity bound)
by invoking the exact-line-search rate established in Section~\ref{sec:YAND-convergence}.

\begin{corollary}[Function-value complexity transfer under affine scaling (exact line search)]
\label{cor:kappa-free-final}
Assume that YAND with exact line search applied to $\phi$ enjoys a global linear rate
\[
\phi(y_k)-\phi^\star \le (1-\theta)^k\bigl(\phi(y_0)-\phi^\star\bigr)
\qquad\text{for some }\theta\in(0,1),
\]
under certain regularity conditions on $\phi$.
Let $f(x)=\phi(Bx)$ with $B$ invertible and $\det B>0$, and let $\{x_k\}$ be generated by YAND with exact line search on $f$.
Then
\[
f(x_k)-f^\star \le (1-\theta)^k\bigl(f(x_0)-f^\star\bigr),
\]
and to reach $f(x_k)-f^\star\le \varepsilon$ it suffices that
\[
k \ge 
\left\lceil
\frac{1}{\theta}\log\!\left(\frac{f(x_0)-f^\star}{\varepsilon}\right)
\right\rceil.
\]
All constants are inherited from the base objective \(\phi\) and therefore
do not explicitly depend on \(\kappa(B)\) within this affine-scaling model.
\end{corollary}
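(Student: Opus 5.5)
The argument has two parts. First, the exact-line-search iterates on $f$, pushed forward by $y=Bx$, are exactly the exact-line-search iterates on $\phi$. Second, the assumed linear rate for $\phi$ transfers because function values are preserved under this change of variables. We then read off the iteration count from the geometric bound.

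\textbf{Identify the iterates.} Set $y_k:=Bx_k$. Since $\det B>0$ and the affine normals are assumed well-defined along the iterates, Theorem~\ref{thm:affine-scaling-invariance} applies directly. It says that $\{y_k\}$ coincides with the sequence produced by YAND with exact line search on $\phi$ started from $y_0=Bx_0$. If the exact line search has several minimizers, we fix corresponding selections. Lemma~\ref{lem:exact-absorb} makes the positive rescaling $\tau_k$ of the direction irrelevant, so the selections in $x$-space and $y$-space correspond bijectively. Hence the hypothesized rate holds for this particular sequence $\{y_k\}$:
\[
\phi(y_k)-\phi^\star\le(1-\theta)^k\bigl(\phi(y_0)-\phi^\star\bigr).
\]

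\textbf{Transfer values and optimal values.} By definition $f(x_k)=\phi(Bx_k)=\phi(y_k)$. Because $B$ is a bijection of $\R^{n+1}$, the infima agree:
\[
f^\star=\inf_x\phi(Bx)=\inf_y\phi(y)=\phi^\star.
\]
Substituting these identities into the rate for $\phi$ gives
\[
f(x_k)-f^\star\le(1-\theta)^k\bigl(f(x_0)-f^\star\bigr),
\]
with the same $\theta$. That constant depends only on $\phi$ and the line-search rule, not on $B$.

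\textbf{Complexity bound.} We use $\log(1-\theta)\le-\theta$, so $(1-\theta)^k\le e^{-\theta k}$. It therefore suffices that $e^{-\theta k}\bigl(f(x_0)-f^\star\bigr)\le\varepsilon$, which is equivalent to $k\ge\theta^{-1}\log\bigl((f(x_0)-f^\star)/\varepsilon\bigr)$. Taking the ceiling gives the stated bound; if $f(x_0)-f^\star\le\varepsilon$, then $k=0$ already suffices. No quantity in this chain involves $\kappa(B)$.

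\textbf{Main obstacle.} The analytic content is entirely in Theorem~\ref{thm:affine-scaling-invariance}. The only delicate point is the bookkeeping behind the rate transfer. The rate hypothesis for $\phi$ must apply to the specific exact-line-search sequence started at $Bx_0$, which requires the regularity conditions on $\phi$ to hold along $\{y_k\}$. This is where the standing well-definedness or ellipticity assumption along the iterates enters. Everything else is the routine identification of optimal values and the logarithmic estimate.
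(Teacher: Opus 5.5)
Your proposal is correct and follows the paper's proof: you invoke Theorem~\ref{thm:affine-scaling-invariance} to identify $y_k=Bx_k$ with the exact-line-search YAND iterates on $\phi$, and then substitute $f(x_k)=\phi(y_k)$ and $f^\star=\phi^\star$ into the assumed rate. Your additional details simply fill in what the paper calls ``follows immediately'': matching the argmin selections via Lemma~\ref{lem:exact-absorb}, deriving the iteration count from $(1-\theta)^k\le e^{-\theta k}$, and flagging that the standing well-definedness assumption along the iterates is needed.
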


\begin{proof}
By Theorem~\ref{thm:affine-scaling-invariance}, $y_k:=Bx_k$ coincides with the YAND iterates on $\phi$,
and $f(x_k)=\phi(y_k)$ with $f^\star=\phi^\star$. The claim follows immediately.
\end{proof}

This shows that, within the affine-scaling model \(f(x)=\phi(Bx)\),
ill-conditioning arising purely from anisotropic affine scaling does not
affect the mapped YAND dynamics: the relevant convergence constants are
inherited from the base function \(\phi\) (e.g., through \(\mu\), \(L\),
and the geometric bound \(T\) in the exact-line-search analysis) rather
than from \(\kappa(B)\).

\subsection{Illustrative examples}

\paragraph{Example 1: anisotropic quadratic scaling.}
Let
\[
\phi(y)=\tfrac12 \|y\|^2,
\qquad
f(x)=\tfrac12 \|Bx\|^2,
\]
with $B=\diag(1,\dots,1,\gamma)$ for some $\gamma\ge 1$.
Then $\nabla^2 f = B^\top B$ and $\kappa(\nabla^2 f)=\kappa(B)^2=\gamma^2$.
Gradient descent requires $O(\gamma^2\log(1/\varepsilon))$ iterations.
In contrast, by the quadratic equivalence established earlier,
YAND coincides with Newton's method on strictly convex quadratics
and reaches the minimizer in one step, independently of $\gamma$.

\paragraph{Example 2: feature scaling in $\ell_2$-regularized logistic regression.}
Let
\[
\phi(y)
=
\sum_{i=1}^m
\log\bigl(1+\exp(-b_i a_i^\top y)\bigr)
+\tfrac{\lambda}{2}\|y\|^2,
\qquad \lambda>0,
\]
and consider $f(x)=\phi(Bx)$ where $B$ is diagonal and highly anisotropic (feature scaling).
While the smoothness constants and Hessian conditioning of $f$ may deteriorate with $\kappa(B)$,
Theorems~\ref{thm:affine-scaling-invariance} and~\ref{thm:wolfe-scaling-final}
show that YAND behaves as if the scaling were absent in the transformed coordinates \(y=Bx\):
its accepted steps and progress are governed by the intrinsic geometry of \(\phi\)
rather than by the spurious anisotropy induced by \(B\).

\medskip

Overall, for objective functions whose ill-conditioning is induced purely
by affine scaling, YAND inherits the convergence behavior of the underlying
unscaled objective after the change of variables \(y=Bx\).
The results should not be interpreted as asserting that the global
complexity of YAND is condition-number-independent for arbitrary problems.
Rather, they show that under the affine-scaling model, the search direction
and the induced mapped iterates are invariant with respect to anisotropic
affine distortions of the coordinate system.

\section{Numerical experiments}
\label{sec:numerics}

In this section we present a series of numerical experiments designed to
illustrate the geometric behavior of the proposed YAND. The goal is not large-scale benchmarking, but rather to
verify the main theoretical predictions of the paper and to examine how
the method behaves across representative geometric regimes.

The experiments are organized in three stages. We first study convex
quadratic problems, where the theory predicts that the affine-normal
direction coincides with the Newton direction and exhibits affine-scaling
robustness. We then turn to smooth nonquadratic convex objectives to show
that the favorable behavior of YAND is not restricted to the quadratic
setting. Finally, we consider smooth nonconvex problems with curved
valleys, saddle regions, and multi-well landscapes in order to assess the
stability of the method beyond convexity. The following subsections examine these behaviors on representative
examples of increasing geometric complexity.

\subsection{Experimental setup}

All experiments were conducted on a Windows\,11 laptop, 
MATLAB~R2025b with an Intel(R)~Core(TM)~Ultra\,9\,275HX CPU. The implementation is in MATLAB and uses analytic derivatives (via automatic differentiation). No external optimization libraries were used. Unless otherwise specified, the following default parameters are used.

\paragraph{Algorithmic parameters.}
\begin{itemize}
\item maximum number of iterations: $\texttt{maxIter}=200$,
\item gradient-norm stopping tolerance: \(\texttt{tolGrad} = 10^{-4}\),
  \item initial step length for inexact line searches: \(\texttt{alpha0} = 1\),
  \item upper bound for exact line search: \(\texttt{alpha\_max} = 10\),
  \item Armijo backtracking parameter: \(\rho = 0.5\),
\item strong Wolfe parameters: $c_1 = 10^{-4}$ and $c_2 = 0.9$.
\end{itemize}

\paragraph{Line-search strategies.}
For each test problem we employ three standard step-size rules:
\begin{enumerate}
  \item \textbf{Exact line search:}  
        one-dimensional minimization of $f(x_k+\alpha d_k)$ over
      $\alpha\in[0,\alpha_{\max}]$.
  \item \textbf{Armijo backtracking:}  
        sufficient-decrease condition with parameter $\rho$.
  \item \textbf{Strong Wolfe:}  
        the standard Armijo and curvature conditions with parameters
      $(c_1,c_2)$.
\end{enumerate}
In all cases the search direction is the affine-normal direction;
only the step-size selection differs.

\paragraph{Quantities reported.}
For every run we display three diagnostic plots:
\begin{enumerate}
  \item the YAND trajectory overlaid on level sets of the objective function \(f\),
  \item the semilog plot of the objective gap \(f(x_k) - f^\star\), 
        where the optimal objective value \(f^\star\) is known, 
  \item the semilog plot of the gradient norm \(\|\nabla f(x_k)\|_2\).
\end{enumerate}
This combination visualizes both the global path geometry and the local convergence rate.

\paragraph{Classes of test problems.}
To evaluate YAND under different curvature and conditioning regimes, the experiments are grouped into three categories:
\begin{itemize}
  \item \textbf{Convex quadratic problems:}  
        including well-conditioned and severely ill-conditioned instances;
  \item \textbf{Smooth convex nonquadratic problems:}
        nonlinear objectives with  tunable curvature and conditioning;
  \item \textbf{Smooth nonconvex problems:}  examples containing saddle regions, curved valleys, and
      multi-well landscapes.
\end{itemize}
All problems are posed in two dimensions to enable clear visualization
of level sets and optimization trajectories and to highlight the
connection between the numerical behavior and the theoretical
convergence results.

\subsection{Convex quadratic problems}
\label{subsec:quad-convex}

We begin with convex quadratic objectives, for which the theory predicts
that the affine-normal direction coincides with the Newton direction.
These examples serve as a baseline and provide the cleanest setting in
which to visualize both quadratic exactness and affine-scaling robustness.

\subsubsection{Well-conditioned quadratic}

We first consider the simple quadratic
\begin{equation}
\label{eq:quad-well}
f(x) \;=\; \tfrac12 x^\top A x + b^\top x,
\qquad
A = \begin{bmatrix} 2 & 0 \\[0.2em] 0 & 8 \end{bmatrix},
\quad
b = \begin{bmatrix} 0.1 \\[0.2em] 0.2 \end{bmatrix}.
\end{equation}
The unique minimizer is $x^\star = -A^{-1}b = (-0.05,-0.025)^\top$,
and the experiment is initialized at $x_0=(1,1)^\top$. Since $A$ is diagonal with positive eigenvalues, the level sets of~\eqref{eq:quad-well} are ellipses. For this strictly convex quadratic, the affine normal direction coincides (up to scaling) with the Newton direction, and the theory predicts essentially one-step convergence with exact line search.
The numerical results confirm this behavior.

Figure~\ref{fig:quad-well-AND} shows that YAND reaches the minimizer
in one iteration under exact line search, while the Wolfe and Armijo
variants require only a few additional steps due to their inexact
step sizes. In all three cases, the semilog plots of $f(x_k)-f^\star$ and
$\|\nabla f(x_k)\|_2$ show the rapid local convergence predicted by the
theoretical analysis.

\begin{figure}[h!]
  \centering
  \begin{subfigure}[t]{0.9\linewidth}
    \centering
    \includegraphics[width=\linewidth]{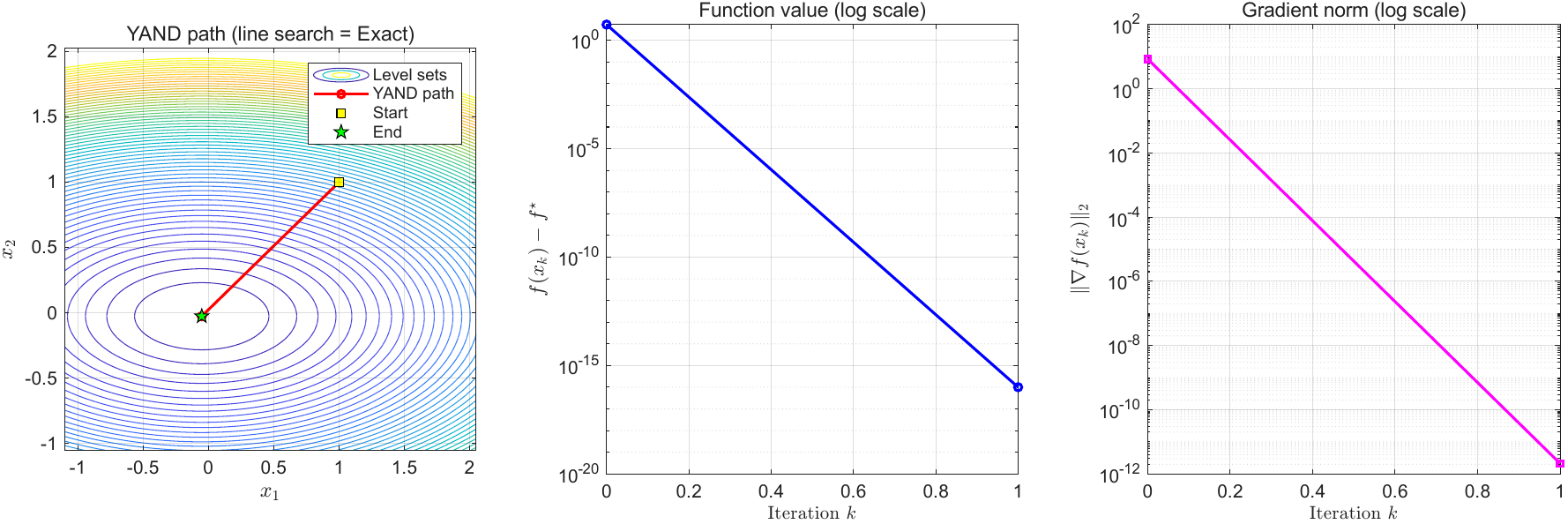}
    \caption{Exact line search.}
    \label{fig:quad-well-exact}
  \end{subfigure}
  \hfill
  \begin{subfigure}[t]{0.9\linewidth}
    \centering
    \includegraphics[width=\linewidth]{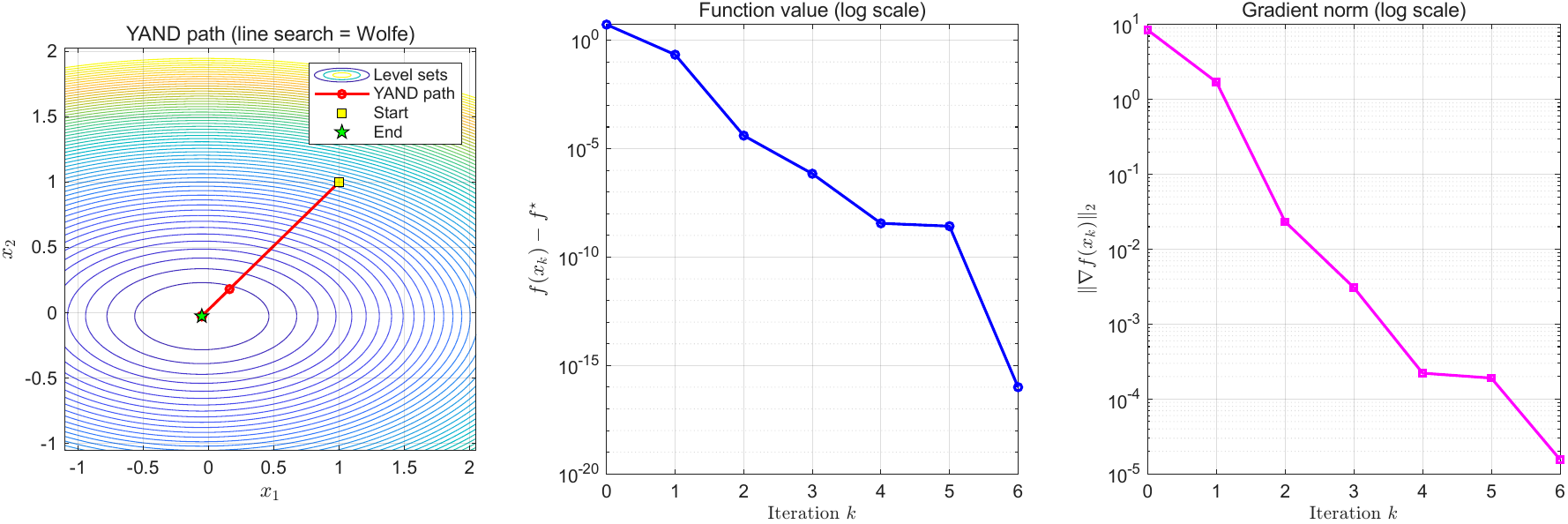}
    \caption{Strong Wolfe line search.}
    \label{fig:quad-well-wolfe}
  \end{subfigure}
  \hfill
  \begin{subfigure}[t]{0.9\linewidth}
    \centering
    \includegraphics[width=\linewidth]{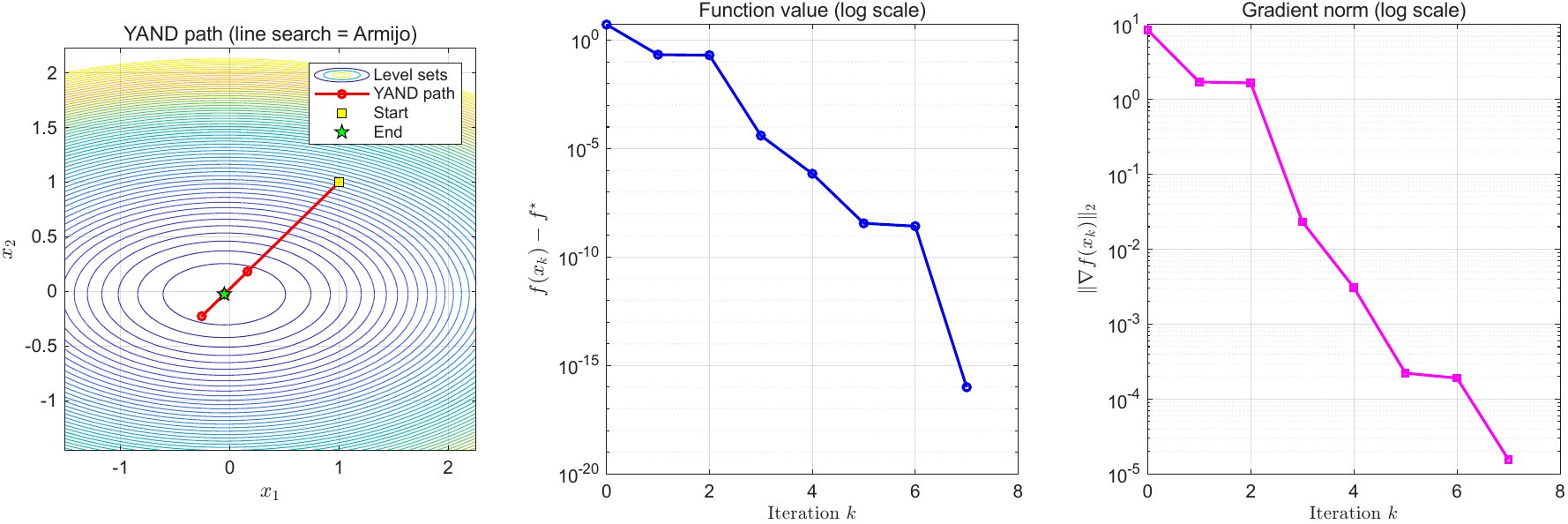}
    \caption{Armijo backtracking.}
    \label{fig:quad-well-armijo}
  \end{subfigure}
  \caption{YAND on the well-conditioned quadratic~\eqref{eq:quad-well} with three different line-search strategies. Each panel shows (from left to right) the YAND trajectory on level sets, the function value $f(x_k)-f^\star$ (log scale), 
and the gradient norm $\|\nabla f(x_k)\|_2$ (log scale).}
  \label{fig:quad-well-AND}
\end{figure}

\subsubsection{Ill-conditioned and affine-scaled quadratics}
\label{sec:affine-scaling-test}

To isolate the effect of affine scaling, we consider the base quadratic
\[
\phi(y)=\tfrac12\|y\|^2=\tfrac12(y_1^2+y_2^2)
\]
and construct a family of functions obtained through the affine change of
variables
\[
f_\gamma(x)=\phi(B_\gamma x), \qquad 
B_\gamma=\mathrm{diag}(1,\gamma),
\]
which yields
\[
f_\gamma(x)=\tfrac12(x_1^2+\gamma^2 x_2^2).
\]
In this model, the ill-conditioning is induced entirely by the affine
transformation $y=B_\gamma x$.
In particular,
\[
\kappa(B_\gamma)=\gamma,
\qquad
\kappa(\nabla^2 f_\gamma)=\kappa(B_\gamma^\top B_\gamma)=\gamma^2.
\]
We test the values
\[
\gamma \in \{1,10,10^2,10^3,10^4\},
\]
starting from the common initial point $x_0=(1,1)^\top$.
The following methods are compared:
YAND with exact line search (YAND-Exact),
YAND with strong Wolfe line search (YAND-Wolfe),
YAND with Armijo backtracking (YAND-Armijo),
gradient descent with exact line search (GD-Exact),
gradient descent with fixed step size $\alpha=1/\gamma^2$ (GD-Fixed),
and Newton's method (Newton).

\begin{figure}[t]
\centering
\includegraphics[width=\textwidth]{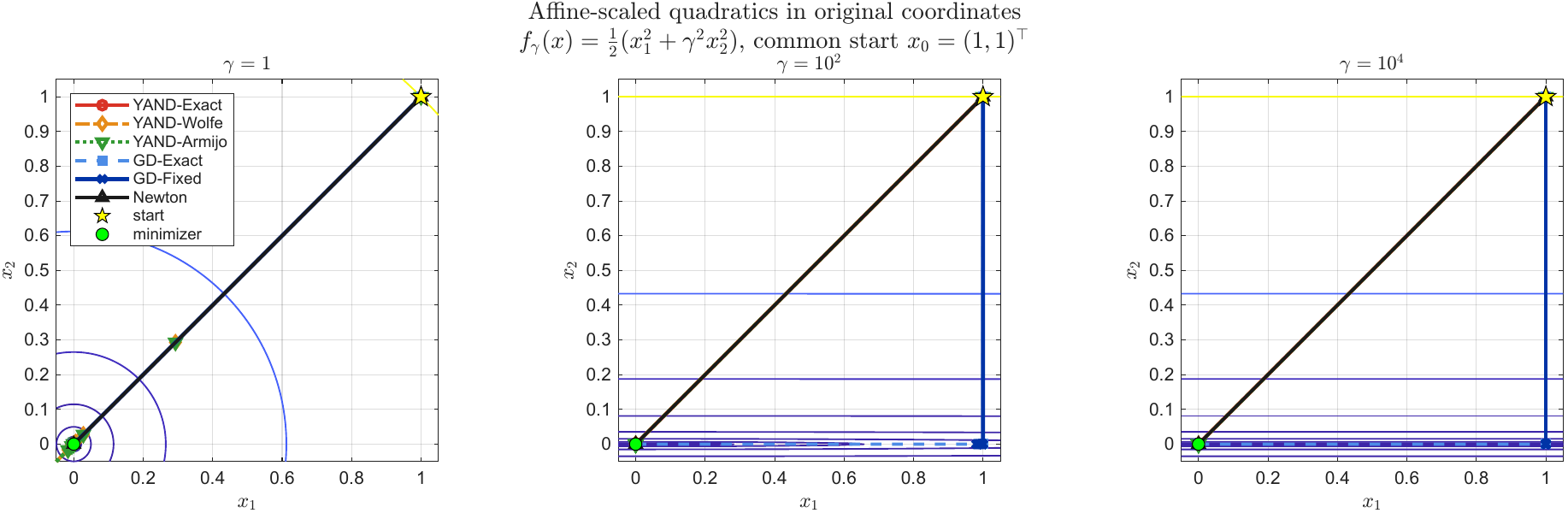}
\caption{Optimization trajectories in the original $x$-coordinates for
$f_\gamma(x)=\tfrac12(x_1^2+\gamma^2 x_2^2)$ with $\gamma=1,10^2,10^4$.
As $\gamma$ increases, the level sets become increasingly elongated.
YAND-Exact and Newton remain essentially one-step methods, while the
behavior of gradient descent depends more strongly on the step-size rule.
In particular, GD-Fixed becomes substantially slower as the anisotropy
increases, whereas GD-Exact remains convergent on this diagonal quadratic.}
\label{fig:affine-xspace}
\end{figure}

Figure~\ref{fig:affine-xspace} shows the optimization trajectories in the
original $x$-coordinates for representative values
$\gamma=1,10^2,10^4$.
As $\gamma$ increases, the level sets become increasingly elongated,
resulting in a progressively more anisotropic landscape.
For this axis-aligned quadratic, GD-Exact remains convergent, whereas
GD-Fixed becomes much more sensitive to the conditioning.
By contrast, both Newton's method and YAND-Exact reach the minimizer in
essentially one step for all tested values of $\gamma$.

To highlight the intrinsic affine invariance of the affine-normal
direction, we map the iterates to the normalized coordinates
$
y = B_\gamma x .
$
Figure~\ref{fig:affine-yspace} displays the corresponding YAND-Exact
trajectories in the $y$-coordinates.
After this normalization, the trajectories nearly collapse onto the same
path, indicating that the convergence behavior of YAND is governed
primarily by the geometry of the base function $\phi$ rather than by the
artificial conditioning introduced through the affine scaling.

\begin{figure}[t]
\centering
\includegraphics[width=0.4\textwidth]{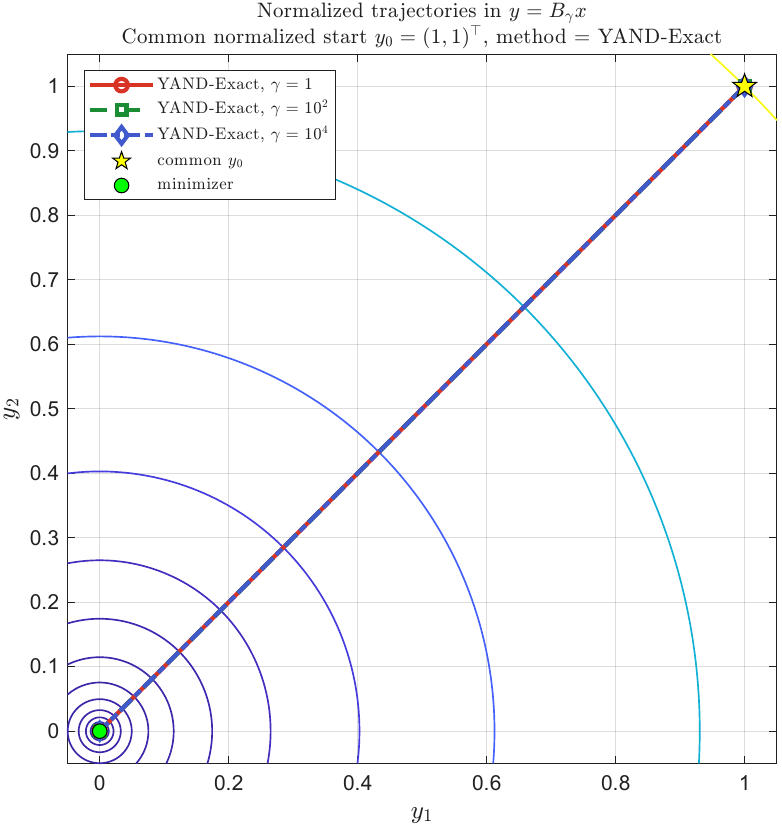}
\caption{
YAND trajectories after normalization $y=B_\gamma x$ for
$\gamma=1,10^2,10^4$.
After mapping to the intrinsic coordinates of
$\phi(y)=\tfrac12\|y\|^2$, the trajectories collapse onto nearly
identical paths, illustrating the affine invariance predicted by the
theory.
}
\label{fig:affine-yspace}
\end{figure}

\begin{table}[t]
\centering
\caption{Effect of affine scaling on algorithm performance for $f_\gamma(x)$.}
\label{tab:affine-scaling}

\footnotesize
\setlength{\tabcolsep}{3pt}
\renewcommand{\arraystretch}{1.12}

\begin{adjustbox}{max width=\textwidth,center}
\begin{tabular}{
>{\centering\arraybackslash}p{0.8cm}
>{\centering\arraybackslash}p{1.0cm}
>{\centering\arraybackslash}p{1.5cm}
>{\centering\arraybackslash}p{1.9cm}
>{\centering\arraybackslash}p{1.9cm}
>{\centering\arraybackslash}p{1.99cm}
>{\centering\arraybackslash}p{1.9cm}
>{\centering\arraybackslash}p{1.55cm}
>{\centering\arraybackslash}p{1.1cm}
}
\toprule
$\gamma$ & $\kappa(B_\gamma)$ & $\kappa(\nabla^2 f_\gamma)$
& YAND-Exact & YAND-Wolfe & YAND-Armijo & GD-Exact & GD-Fixed & Newton \\
\midrule
$1$      & $1$      & $1$      & 1 & 10 & 11 & 1 & 1 & 1 \\
$10^{1}$ & $10^{1}$ & $10^{2}$ & 1 & 8  & 10 & 7 & $200^{\ast}$ & 1 \\
$10^{2}$ & $10^{2}$ & $10^{4}$ & 1 & 7  & 13 & 5 & $200^{\ast}$ & 1 \\
$10^{3}$ & $10^{3}$ & $10^{6}$ & 1 & 3  & 10 & 5 & $200^{\ast}$ & 1 \\
$10^{4}$ & $10^{4}$ & $10^{8}$ & 1 & 2  & 12 & 5 & $200^{\ast}$ & 1 \\
\bottomrule
\end{tabular}
\end{adjustbox}

\vspace{2pt}
\footnotesize
Entries marked with $^\ast$ reached the iteration cap before satisfying the stopping tolerance.
\end{table}

Table~\ref{tab:affine-scaling} reports the iteration counts for the
tested values of $\gamma$.
As predicted by the quadratic theory, both YAND-Exact and Newton converge
in a single step for all $\gamma$.
The Armijo variant of YAND remains stable, and the Wolfe variant also
shows robust practical behavior in this experiment.
Among the two gradient-descent baselines, GD-Exact remains convergent on
this diagonal quadratic, while GD-Fixed deteriorates much more severely
and reaches the iteration cap for all cases with $\gamma \ge 10$.
Overall, the results confirm the affine-scaling robustness predicted by
the theoretical analysis in Section~\ref{sec:affine-scaling} and show
that the exact affine-normal step is essentially insensitive to the
artificial conditioning induced by $B_\gamma$.

Taken together, the quadratic experiments confirm two central features of
YAND: exact agreement with Newton's method on strictly convex quadratics
and strong robustness with respect to affine scaling. We next examine
whether similarly favorable behavior persists beyond the quadratic setting.

\subsection{Smooth nonquadratic convex problems}
\label{subsec:general-convex}

We next consider smooth convex functions that are not quadratic.
In this regime the affine-normal direction no longer coincides with
either the gradient direction or the Newton direction.
These experiments therefore probe the genuine behavior of YAND beyond
the quadratic setting and test whether its geometry-adaptive character
persists for nonlinear convex objectives with strongly varying curvature.

\subsubsection{Sixth-degree anisotropic polynomial}

We first consider the sixth-degree convex polynomial
\begin{equation}\label{eq:convex-poly6-well}
    f(x)
    \;=\;
    \bigl(x_1^2 + 4x_2^2\bigr)^3
    \;+\;
    0.1\,(x_1^2 + x_2^2)
    \;+\;
    0.01\,(x_1 + 2x_2),
\end{equation}
We use the initial point $x_0 = (0.5,-0.5)^\top$.
The sixth-degree term produces a steep convex bowl with strongly
anisotropic curvature, while the small linear perturbation breaks
the radial symmetry and ensures that the affine-normal direction
does not coincide with either the gradient or the Newton direction.
Consequently, exact line search no longer terminates in a single iteration,
allowing us to observe the characteristic curvature-adaptive behavior of YAND.

\begin{figure}[h!]
  \centering
  \begin{subfigure}[t]{0.9\linewidth}
    \centering
    \includegraphics[width=\linewidth]{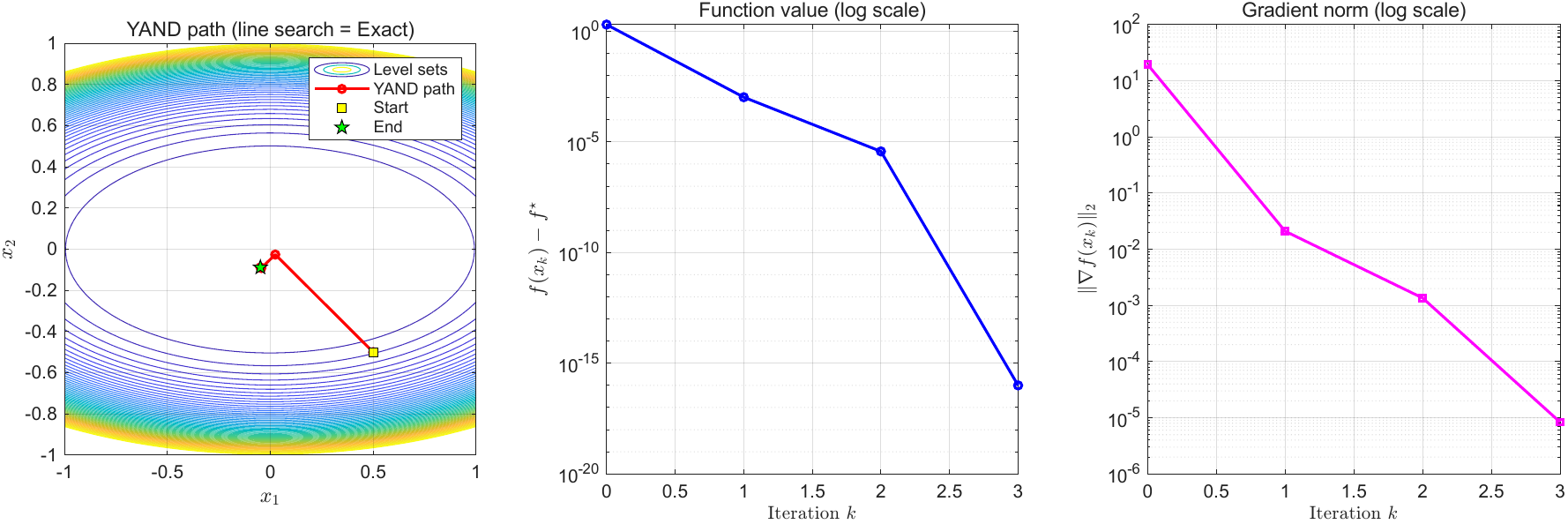}
    \caption{Exact line search.}
    \label{fig:convex-poly6-well-exact}
  \end{subfigure}
  \hfill
  \begin{subfigure}[t]{0.9\linewidth}
    \centering
    \includegraphics[width=\linewidth]{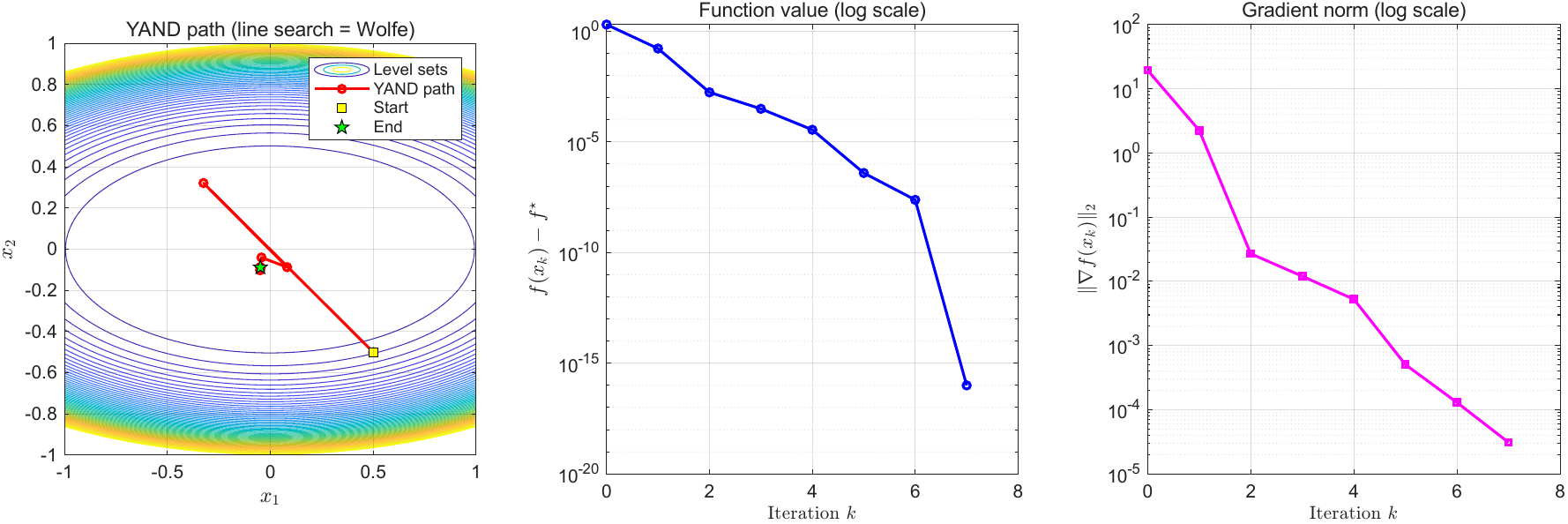}
    \caption{Strong Wolfe line search.}
    \label{fig:convex-poly6-well-wolfe}
  \end{subfigure}
  \hfill
  \begin{subfigure}[t]{0.9\linewidth}
    \centering
    \includegraphics[width=\linewidth]{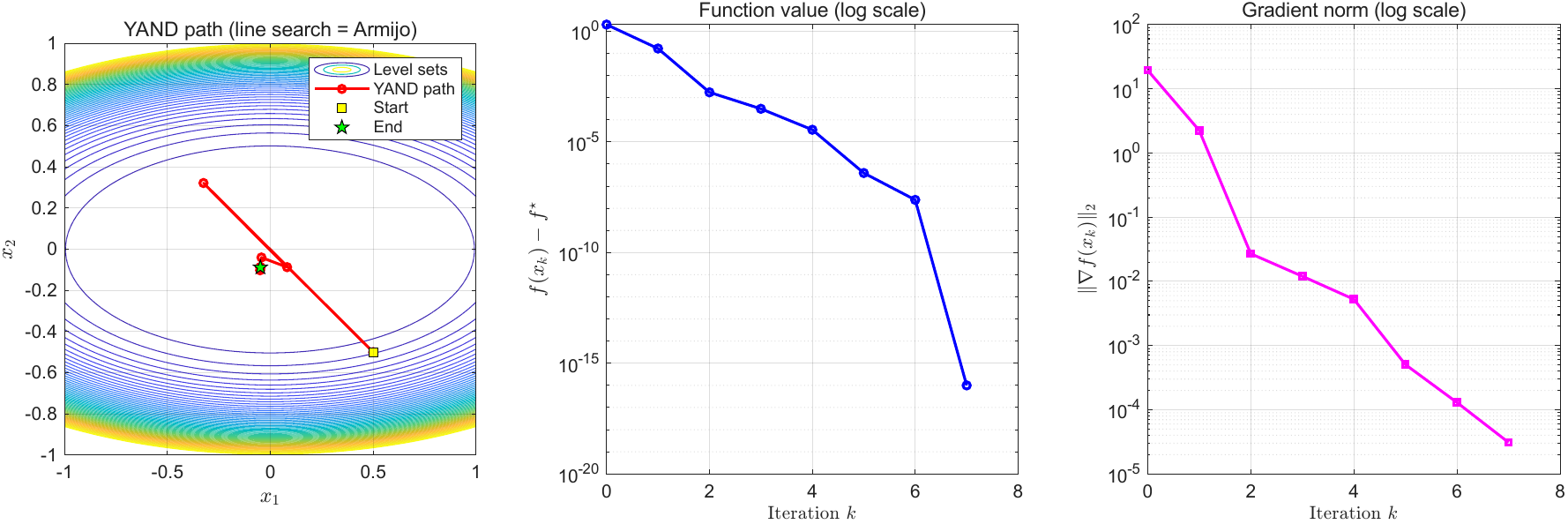}
    \caption{Armijo backtracking.}
    \label{fig:convex-poly6-well-armijo}
  \end{subfigure}
  \caption{YAND on the smooth convex polynomial
\eqref{eq:convex-poly6-well}. Each panel shows the optimization
trajectory together with the semilog plots of $f(x_k)-f^\star$
and $\|\nabla f(x_k)\|_2$.}
  \label{fig:convex-poly6-well-AND}
\end{figure}

Figure~\ref{fig:convex-poly6-well-AND} shows the trajectories produced by the three
line-search schemes.  
All variants converge rapidly. The exact line search produces the most direct trajectory
and reaches the minimizer in only three iterations,
illustrating the curvature-adaptive nature of the affine-normal step, with the path bending in a manner
that faithfully reflects the anisotropic curvature of the objective.  
The strong Wolfe and Armijo rules exhibit more conservative step sizes, as expected from
their inexact step conditions, but still maintain fast and stable convergence.

The semilog plots of $f(x_k)-f^\star$ and $\|\nabla f(x_k)\|_2$ display smooth,
monotone decay for all three methods, fully consistent with the theoretical guarantees for
smooth strongly convex functions.  
Overall, this example demonstrates that YAND remains robust and curvature-aware even for
high-order, nonquadratic convex objectives with pronounced anisotropy.

\subsubsection{Ill-conditioned convex inverse-barrier problem}

We next consider a smooth yet highly ill-conditioned convex objective
obtained by adding an inverse barrier to a quadratic bowl:
\begin{equation}\label{eq:convex-inversebarrier}
    f(x)
    = \frac12\,(x_1^2 + x_2^2)
      \;+\;
      \frac{\mu}{\,d - x_1 - x_2\,},
    \qquad
    \mu = 1,\; d = 1,
\end{equation}
defined on the open half-space $\{\,x \in \mathbb{R}^2 : x_1 + x_2 < d\,\}$.
The barrier term induces rapidly increasing curvature as the iterate
approaches the affine boundary $x_1 + x_2 = d$, resulting in a strongly
convex problem with extreme anisotropy and a highly ill-conditioned local
Hessian:
\[
\nabla^2 f(x)
\;=\;
I
\;+\;
\frac{\mu}{(d - x_1 - x_2)^3}
\begin{bmatrix}
1 & 1\\[0.2em] 1 & 1
\end{bmatrix},
\]
whose dominant eigenvalue grows on the order of
$(d-x_1-x_2)^{-3}$ as the boundary is approached.  
To expose this ill-conditioning, we initialize at a point extremely close to the
feasible boundary:
\[
x_0 = (0.01,\; 0.98)^\top,
\qquad x_1 + x_2 = 0.99 < 1,
\]
where the local condition number is already of order $10^6$. The unique minimizer for this problem lies along the symmetry line
$x_1 = x_2 = t$, where the first-order condition reduces to solving the cubic
\[
s(1-s)^2 + 2 = 0,\qquad s = x_1 + x_2.
\]
Its closed-form solution is
\[
s^\star
= \frac{2}{3}
   - \frac{1}{3}\!\left(\,(3\sqrt{87}+28)^{1/3}
                       + (3\sqrt{87}+28)^{-1/3}\right),
\qquad
x_1^\star = x_2^\star = \tfrac12 s^\star .
\]
Numerically,
\[
x^\star \approx (-0.3478103848,\,-0.3478103848)^\top,
\qquad
f^\star \approx 0.7107265761.
\]

\begin{figure}[h!]
\centering
\begin{subfigure}{0.9\textwidth}
    \centering
    \includegraphics[width=\linewidth]{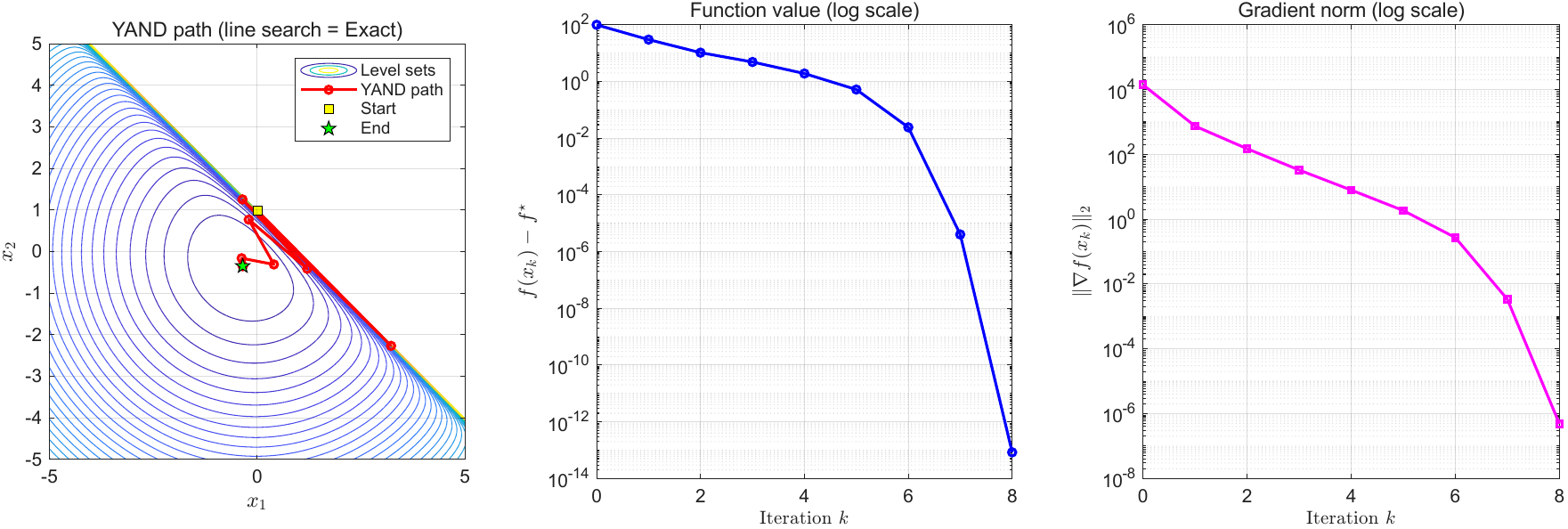}
    \caption{Exact line search}
    \label{fig:logbarrier-exact}
\end{subfigure}
\hfill
\begin{subfigure}{0.9\textwidth}
    \centering
    \includegraphics[width=\linewidth]{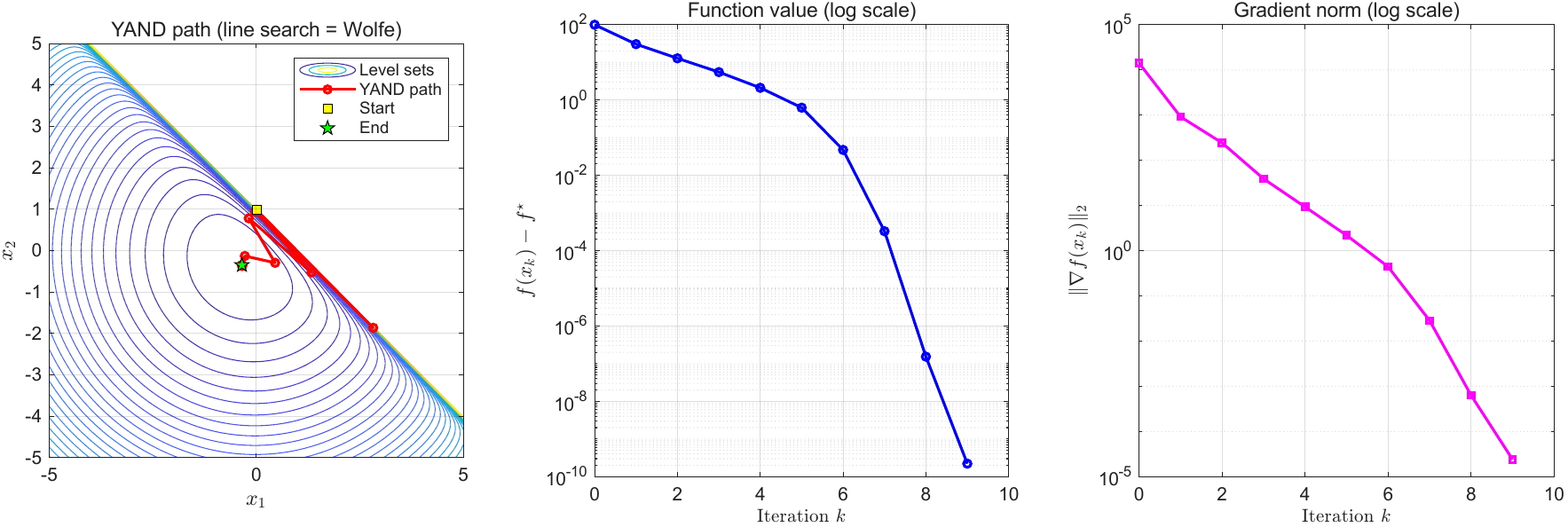}
    \caption{Strong Wolfe line search}
    \label{fig:logbarrier-wolfe}
\end{subfigure}
\hfill
\begin{subfigure}{0.9\textwidth}
    \centering
    \includegraphics[width=\linewidth]{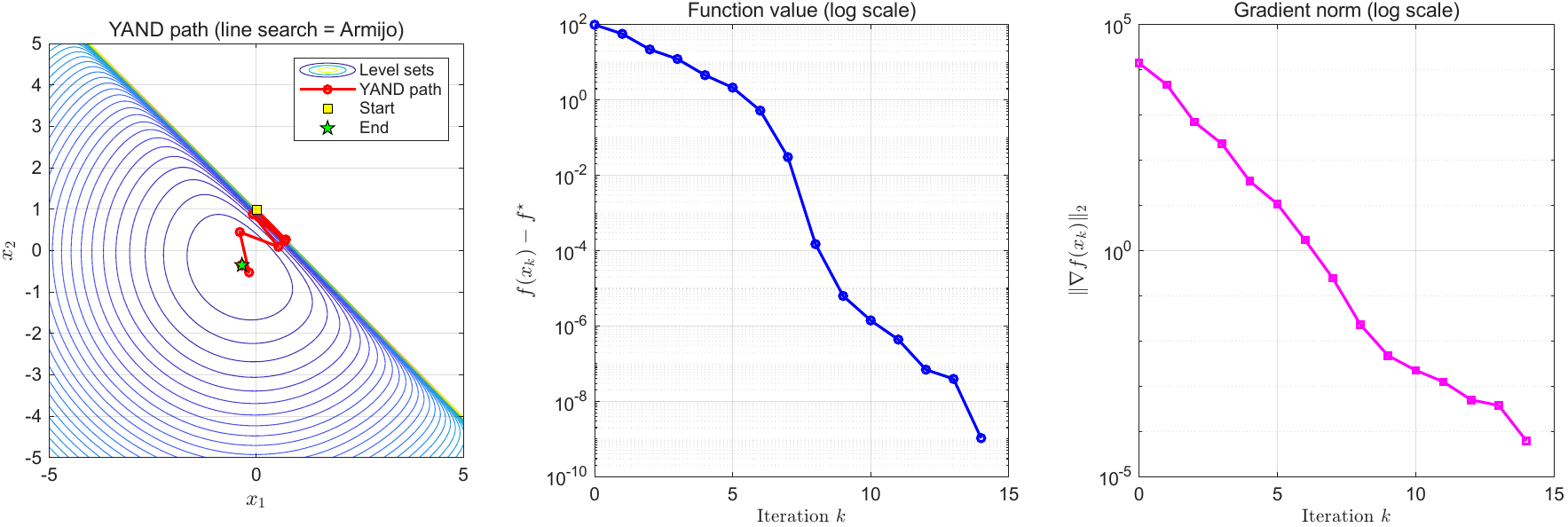}
    \caption{Armijo backtracking}
    \label{fig:logbarrier-armijo}
\end{subfigure}

\caption{
YAND on the ill-conditioned convex inverse barrier problem
\eqref{eq:convex-inversebarrier}.  
The initial point $x_0=(0.01,0.98)^\top$ lies extremely close to the affine
boundary $x_1+x_2=1$.
}
\label{fig:convex-inversebarrier-AND}
\end{figure}

Figure~\ref{fig:convex-inversebarrier-AND} reports the performance of the three
line-search variants of YAND.  
All variants remain stable and converge rapidly despite the extremely
large curvature variations near the boundary.
This example illustrates that the affine-normal direction adapts naturally
to strong anisotropy in the Hessian and remains effective even when the
local conditioning becomes extremely poor.

Taken together, the above examples demonstrate that the favorable
behavior of YAND is not restricted to quadratic objectives.
Even for highly nonlinear convex functions with strongly varying
curvature, the affine-normal direction adapts to the local geometry
and yields stable and efficient convergence.

\subsection{Smooth nonconvex problems}
\label{subsec:nonconvex-experiments}

We finally turn to smooth nonconvex objectives, where the geometry may
include curved valleys, saddle regions, and multiple basins of attraction.
These experiments investigate whether the favorable geometric behavior
observed in the convex setting persists beyond convexity, and whether the
empirical performance of YAND remains consistent with the convergence
theory developed earlier under standard line-search conditions.

\subsubsection{Rosenbrock function}

We first consider the classical Rosenbrock function
\begin{equation}\label{eq:rosenbrock}
f(x)=100(x_2-x_1^2)^2+(1-x_1)^2,
\end{equation}
whose global minimizer is $x^\star=(1,1)^\top$.
This example isolates the behavior of YAND in a narrow curved valley,
which is the most classical source of difficulty for smooth nonconvex
optimization. We initialize the method at the standard starting point
$
x_0=(-1.2,1.0)^\top .
$

\begin{figure}[h!]
\centering
\begin{subfigure}{0.9\textwidth}
    \centering
    \includegraphics[width=\linewidth]{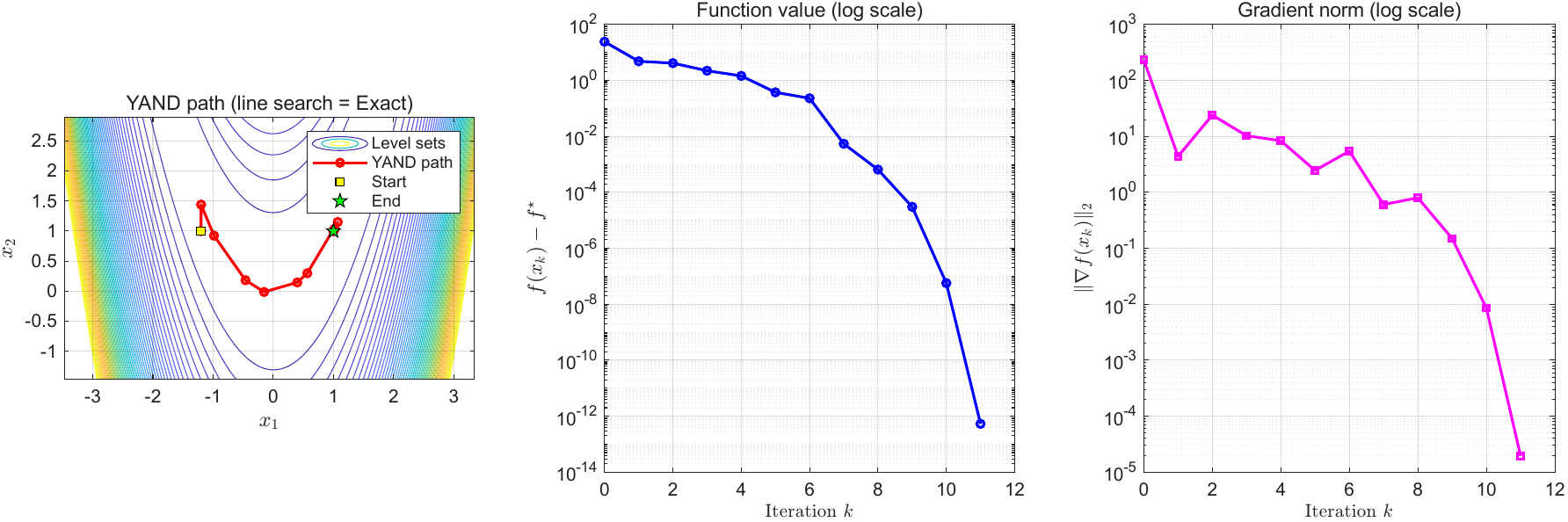}
    \caption{Exact line search}
    \label{fig:rosenbrock-exact}
\end{subfigure}
\hfill
\begin{subfigure}{0.9\textwidth}
    \centering
    \includegraphics[width=\linewidth]{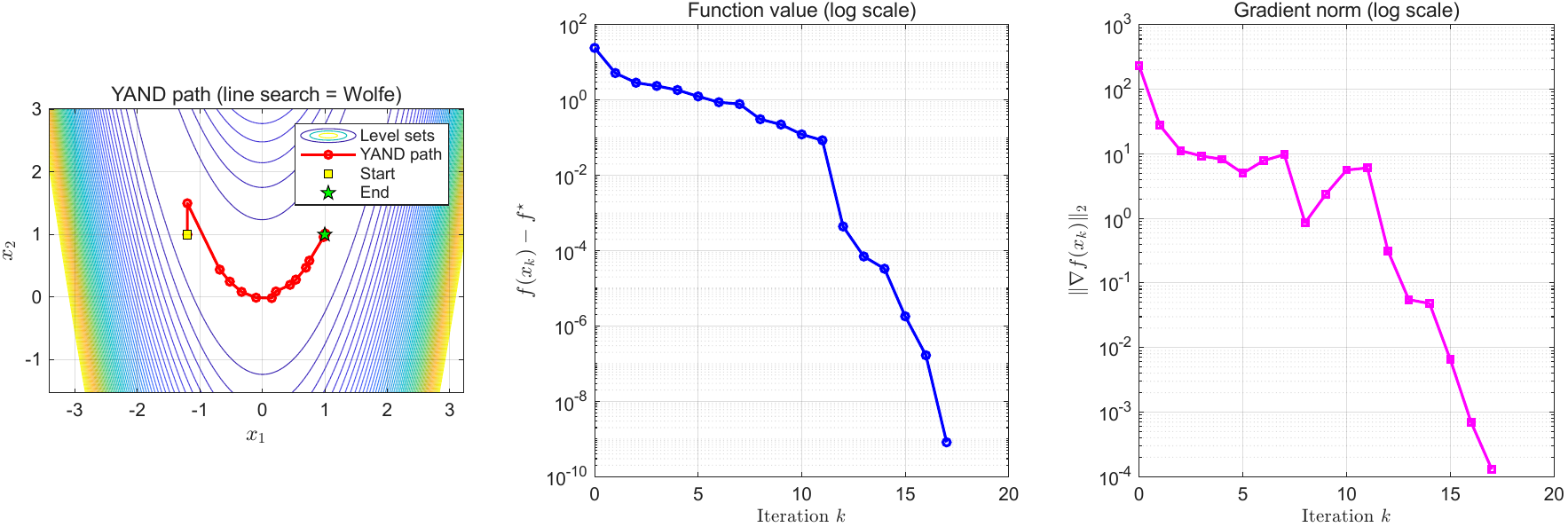}
    \caption{Strong Wolfe}
    \label{fig:rosenbrock-wolfe}
\end{subfigure}
\hfill
\begin{subfigure}{0.9\textwidth}
    \centering
    \includegraphics[width=\linewidth]{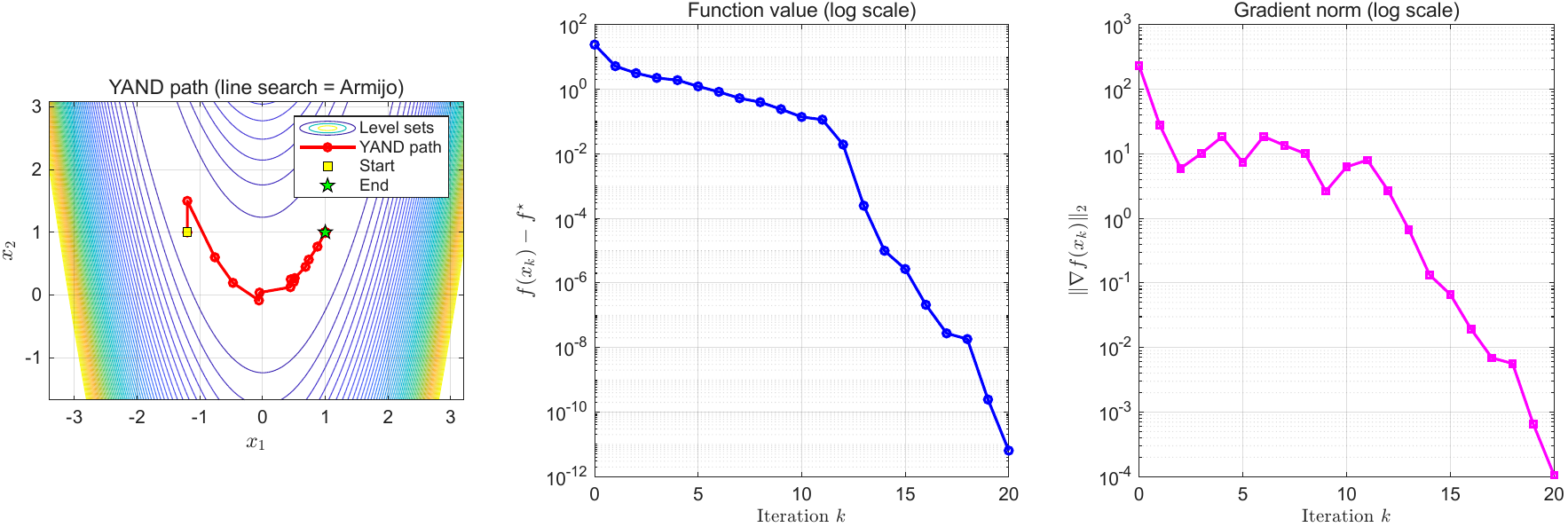}
    \caption{Armijo backtracking}
    \label{fig:rosenbrock-armijo}
\end{subfigure}
\caption{
YAND on the Rosenbrock function \eqref{eq:rosenbrock}
starting from $x_0=(-1.2,1.0)^\top$.
}
\label{fig:rosenbrock-AND}
\end{figure}

Figure~\ref{fig:rosenbrock-AND} reports the trajectories and convergence
profiles of YAND under the three line-search strategies.
All variants successfully follow the curved valley and converge
to the global minimizer.
The exact line search produces the most direct trajectory along the
valley, while the strong Wolfe and Armijo rules take more conservative
steps in regions of high curvature.

For comparison, Figures~\ref{fig:rosenbrock-gd} and
\ref{fig:rosenbrock-newton} illustrate the behavior of gradient descent
and damped Newton's method under the same Wolfe conditions.
Gradient descent exhibits the well-known zigzagging behavior when
traversing the narrow valley, resulting in slow progress.
Damped Newton accelerates once the iterates approach the minimizer,
but requires substantial damping in the early phase to maintain
stability.

\begin{figure}[h!]
\centering
\includegraphics[width=0.9\textwidth]{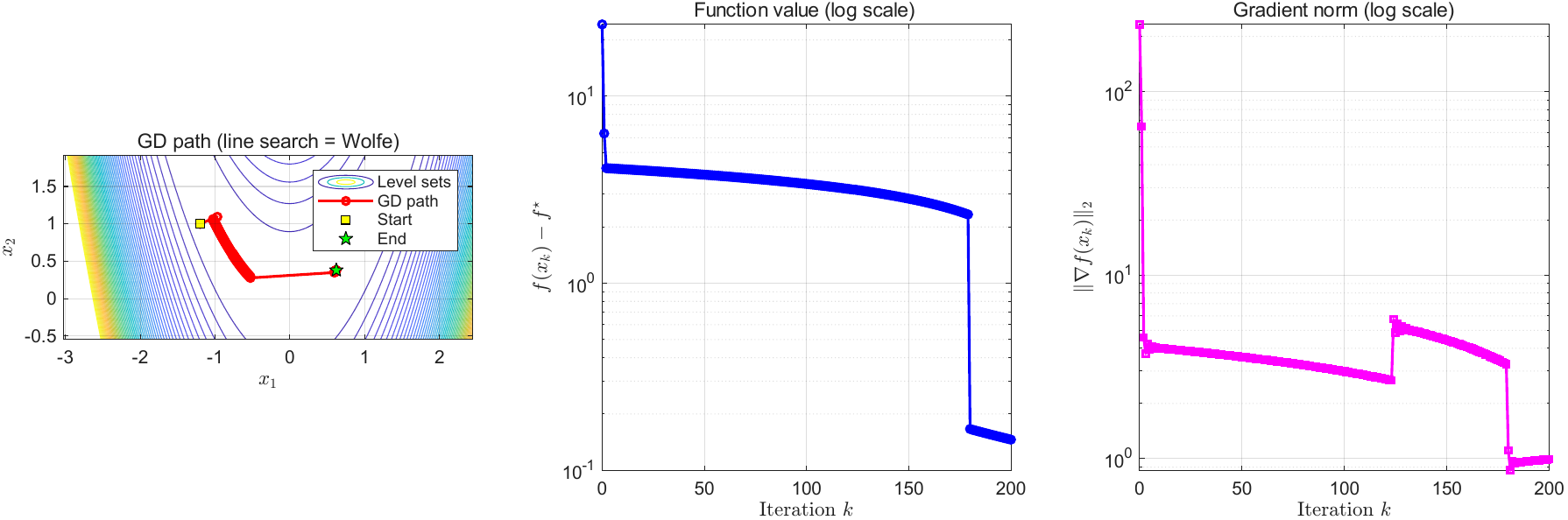}
\caption{Gradient descent on the Rosenbrock function
\eqref{eq:rosenbrock} with strong Wolfe line search.}
\label{fig:rosenbrock-gd}
\end{figure}

\begin{figure}[h!]
\centering
\includegraphics[width=0.9\textwidth]{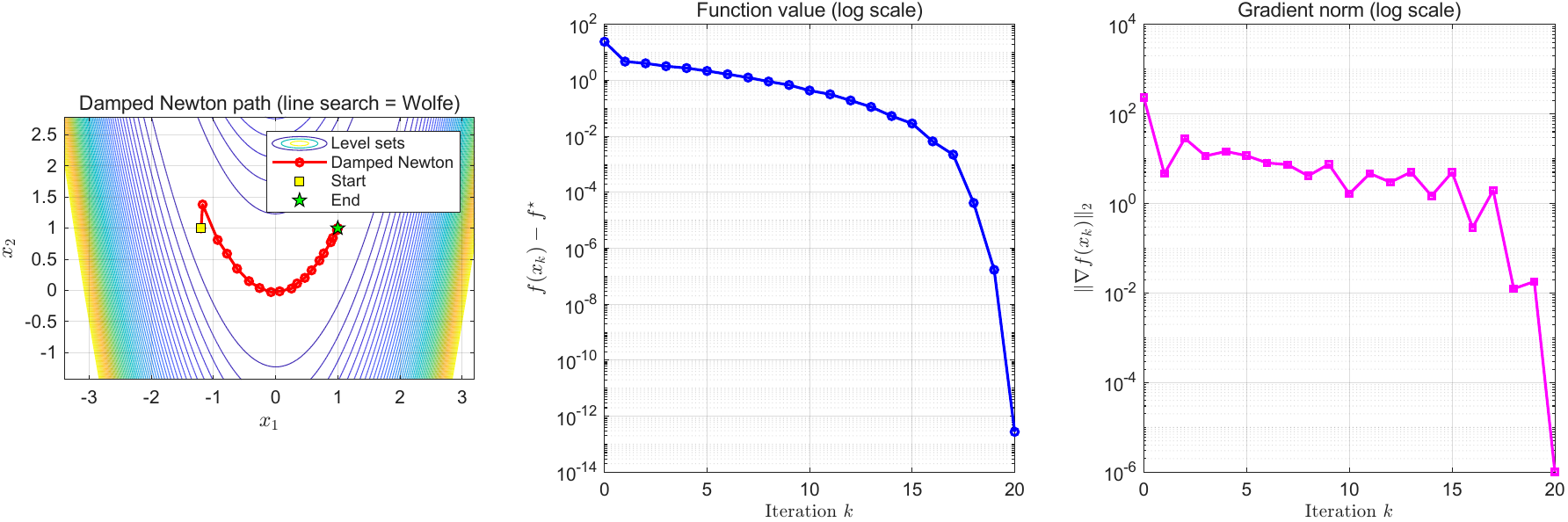}
\caption{Damped Newton on the Rosenbrock function
\eqref{eq:rosenbrock} with strong Wolfe line search.}
\label{fig:rosenbrock-newton}
\end{figure}

In contrast, YAND consistently follows the geometry of the level sets
and maintains stable progress throughout the optimization process.

\subsubsection{Tilted ring-shaped valley}

We next consider the nonconvex objective
\begin{equation}\label{eq:ring-tilted}
f(x)=(x_1^2+x_2^2-1)^2+0.1x_1,
\end{equation}
which forms a nearly circular valley with a small linear tilt. This example complements Rosenbrock by considering a nonconvex landscape
whose minimizer lies along a highly curved valley with a nontrivial
angular component.
The tilt breaks rotational symmetry and induces a unique global
minimizer along the ring. The starting point is chosen as
$
x_0=(0,1.5)^\top .
$

\begin{figure}[h!]
\centering
\begin{subfigure}{0.9\textwidth}
\centering
\includegraphics[width=\linewidth]{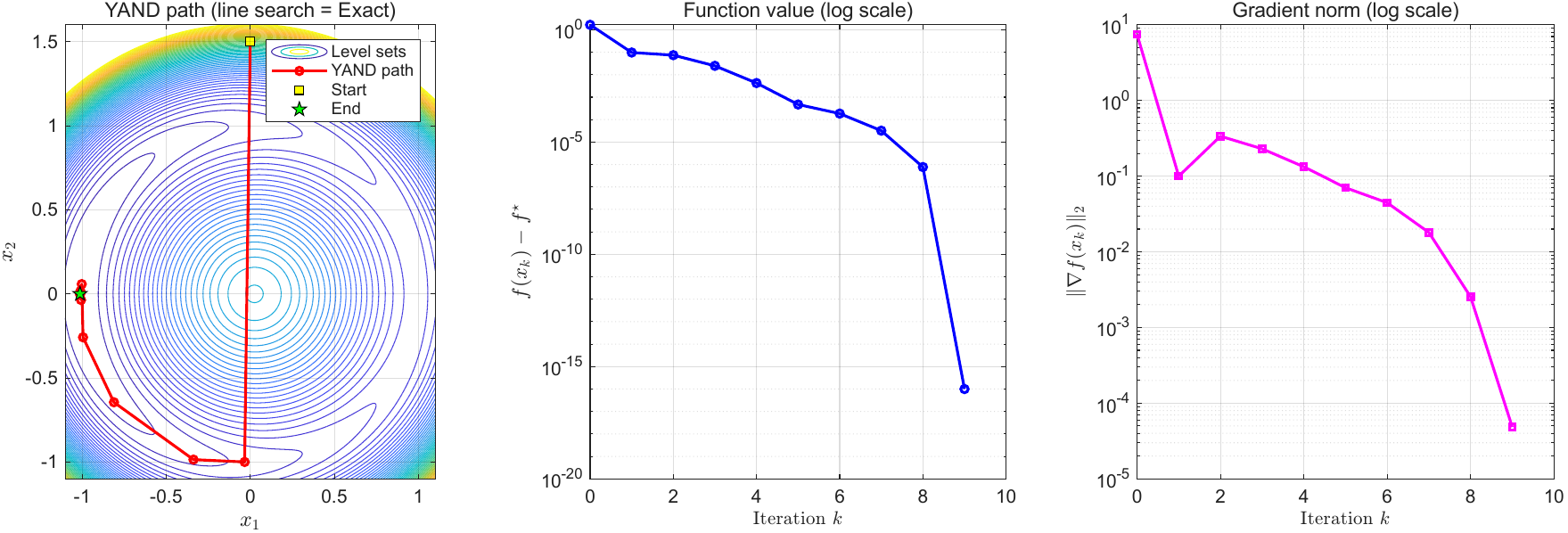}
\caption{Exact line search}
\end{subfigure}

\begin{subfigure}{0.9\textwidth}
\centering
\includegraphics[width=\linewidth]{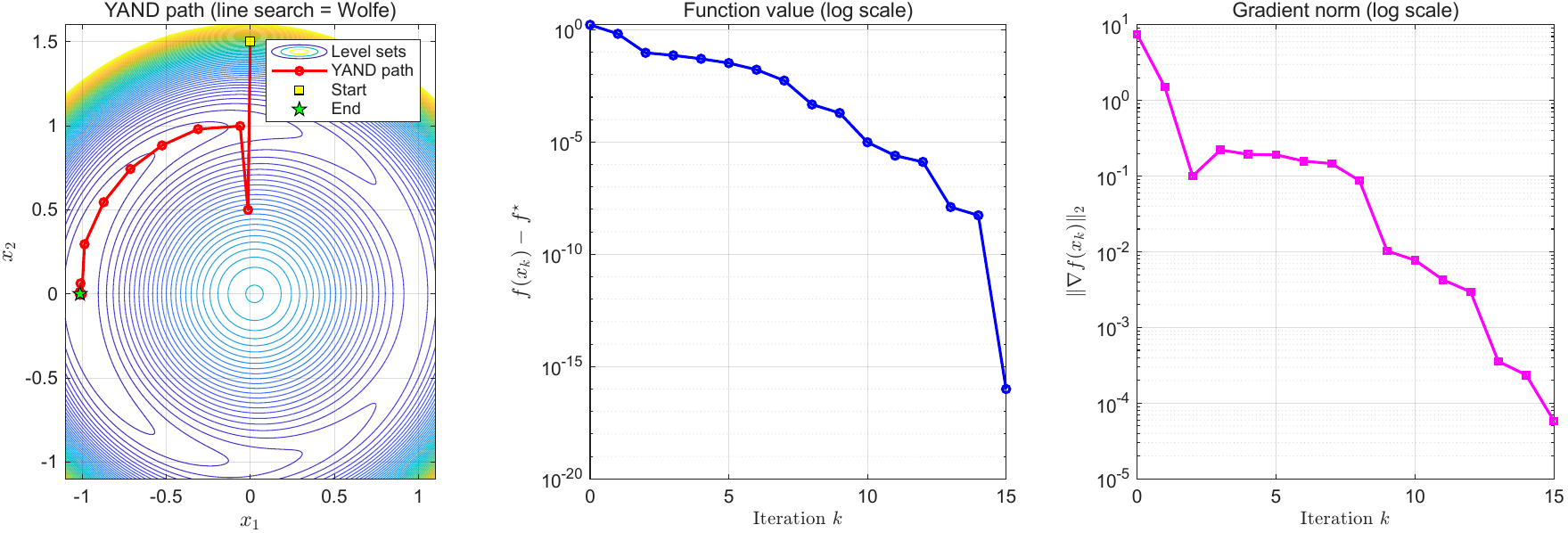}
\caption{Strong Wolfe}
\end{subfigure}

\begin{subfigure}{0.9\textwidth}
\centering
\includegraphics[width=\linewidth]{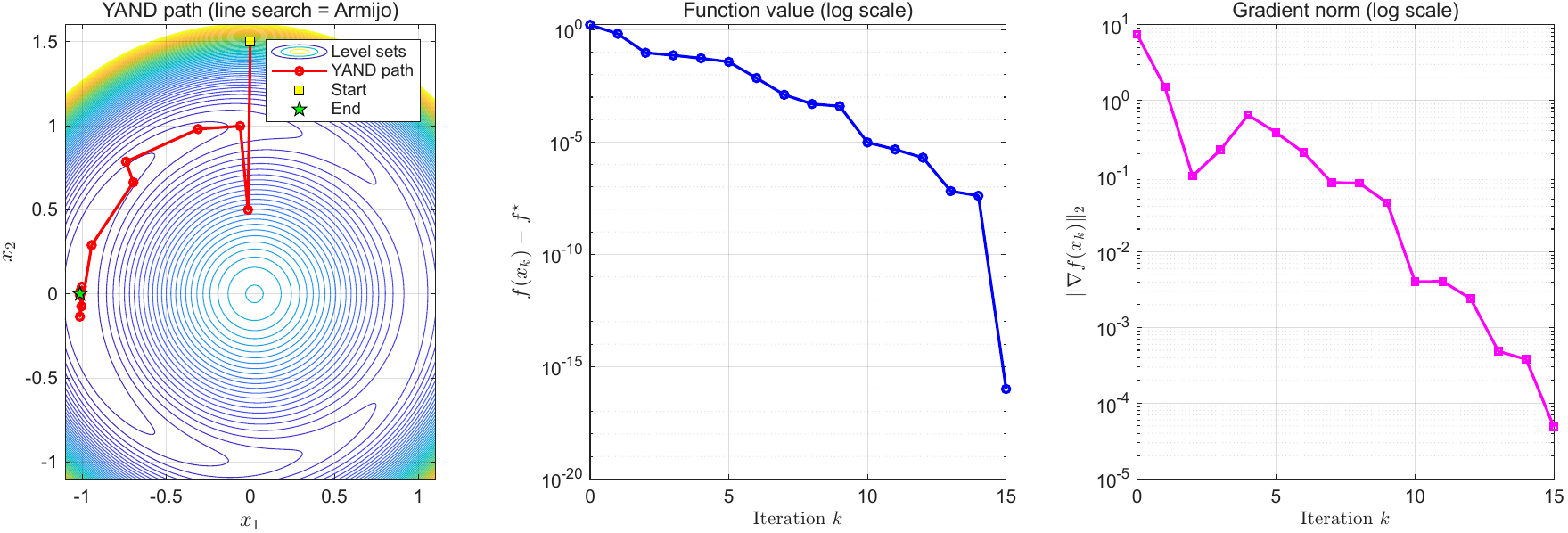}
\caption{Armijo backtracking}
\end{subfigure}

\caption{
YAND on the tilted ring-shaped valley
\eqref{eq:ring-tilted}.
}
\label{fig:ring-tilt-AND}
\end{figure}

Figure~\ref{fig:ring-tilt-AND} shows that all three line-search
variants descend toward the ring and subsequently follow its curved
structure toward the minimizer.
The exact line search produces the most direct trajectory, while
the Wolfe and Armijo rules take smaller steps but maintain stable
convergence.

\subsubsection{Saddle-type polynomial}
This example probes the behavior of the affine-normal direction near a
strict saddle, where the Hessian is indefinite. Consider the nonconvex polynomial
\begin{equation}\label{eq:saddle-poly}
f(x)=x_1^4-x_1^2+x_2^2.
\end{equation}
The origin is a strict saddle point with
$\nabla^2 f(0,0)=\mathrm{diag}(-2,2)$,
while local minima occur near $(\pm2^{-1/2},0)^\top$.
Starting from
$
x_0=(0.1,0.2)^\top,
$
the iterates must escape the saddle region before converging
to one of the wells.

\begin{figure}[h!]
\centering
\begin{subfigure}{0.9\textwidth}
\centering
\includegraphics[width=\linewidth]{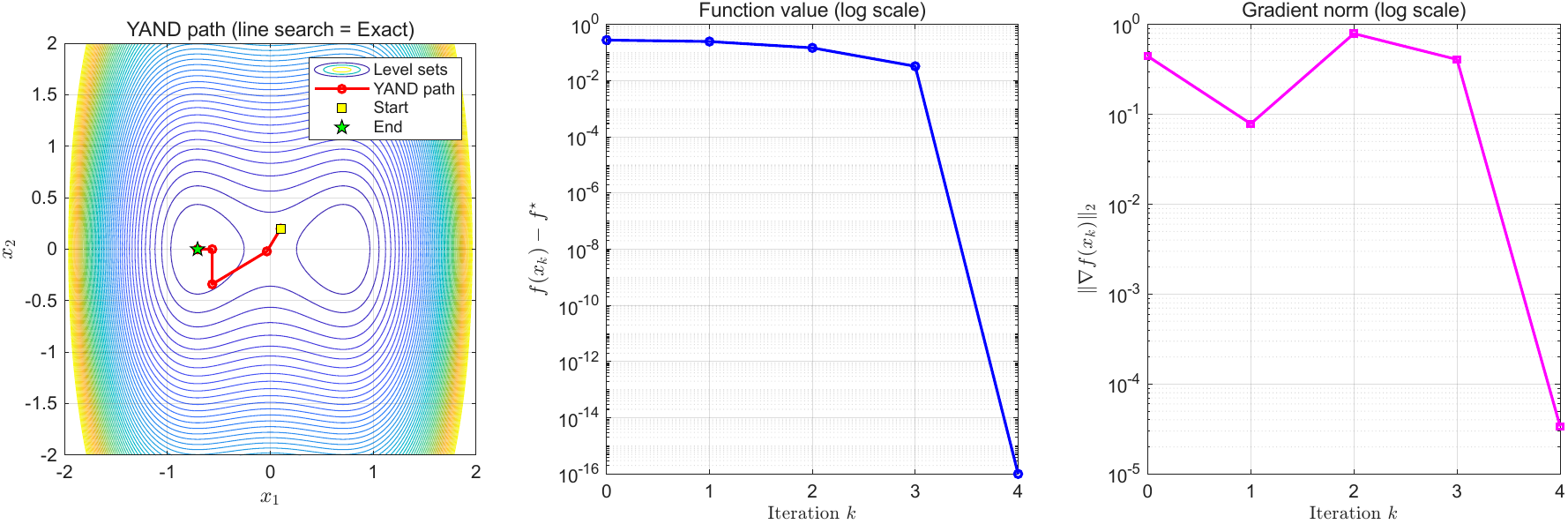}
\caption{Exact line search}
\end{subfigure}

\begin{subfigure}{0.9\textwidth}
\centering
\includegraphics[width=\linewidth]{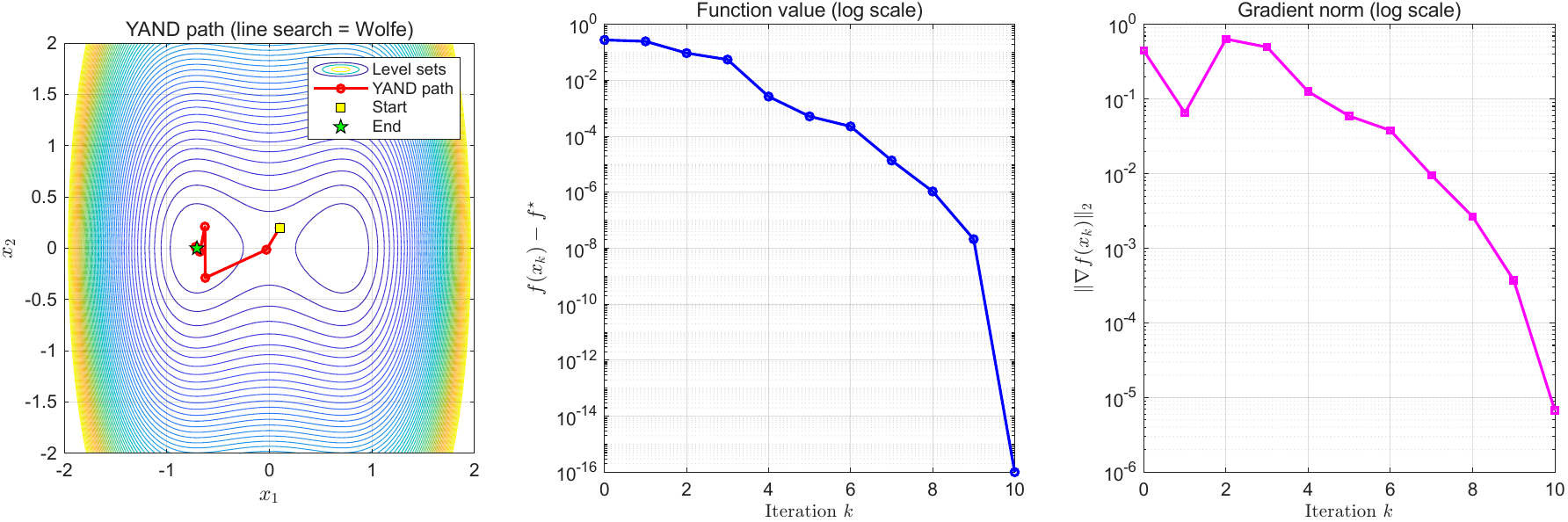}
\caption{Strong Wolfe}
\end{subfigure}

\begin{subfigure}{0.9\textwidth}
\centering
\includegraphics[width=\linewidth]{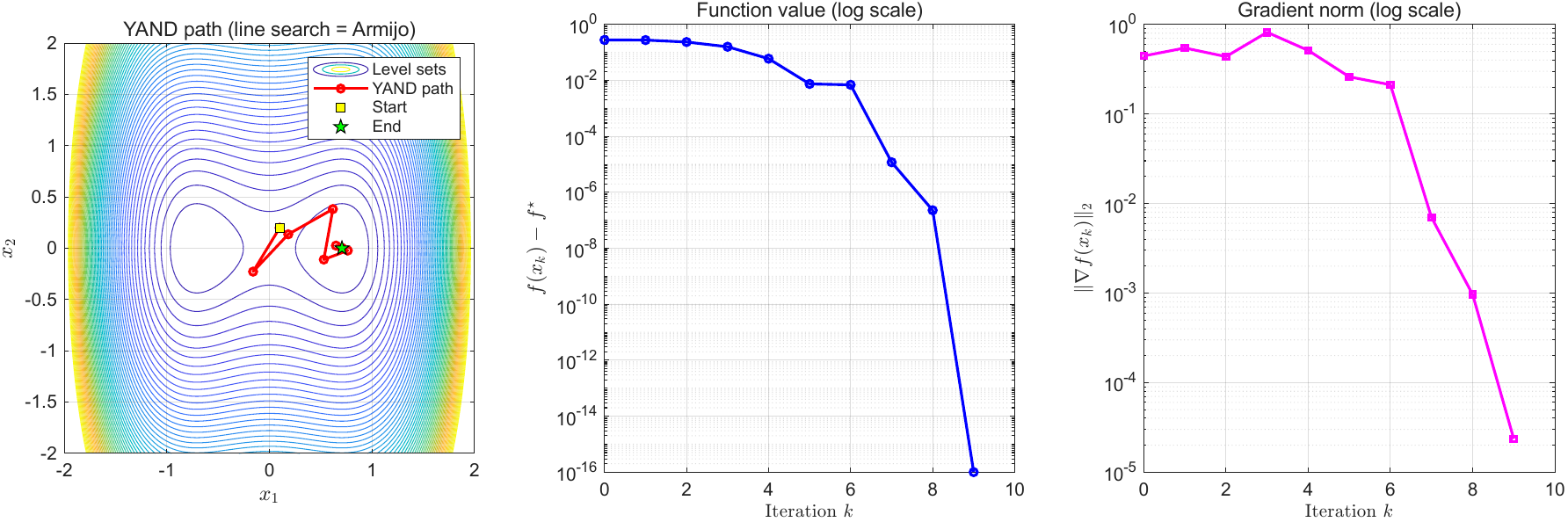}
\caption{Armijo backtracking}
\end{subfigure}

\caption{
YAND on the saddle-type polynomial
\eqref{eq:saddle-poly}.
}
\label{fig:saddle-poly-AND}
\end{figure}

Figure~\ref{fig:saddle-poly-AND} shows that all variants of YAND
escape the saddle and converge to a nearby minimizer.
The exact line search produces the fastest decrease,
while the Wolfe and Armijo rules take shorter steps
in regions of strongly negative curvature.

\subsubsection{Four-well quartic potential}
This example illustrates the basin-selection behavior of YAND in a
multi-well nonconvex landscape.
Consider the quartic function
\begin{equation}\label{eq:quartic-4well}
f(x)=(x_1^2-1)^2+(x_2^2-1)^2,
\end{equation}
which has four equivalent global minimizers at
$(\pm1,\pm1)^\top$.
The origin is a local maximum and saddle points occur
along the coordinate axes.

Starting from
$
x_0=(0.1,-1.5)^\top,
$
the trajectory must navigate the saddle geometry before
entering one of the wells.

\begin{figure}[h!]
\centering
\begin{subfigure}{0.9\textwidth}
\centering
\includegraphics[width=\linewidth]{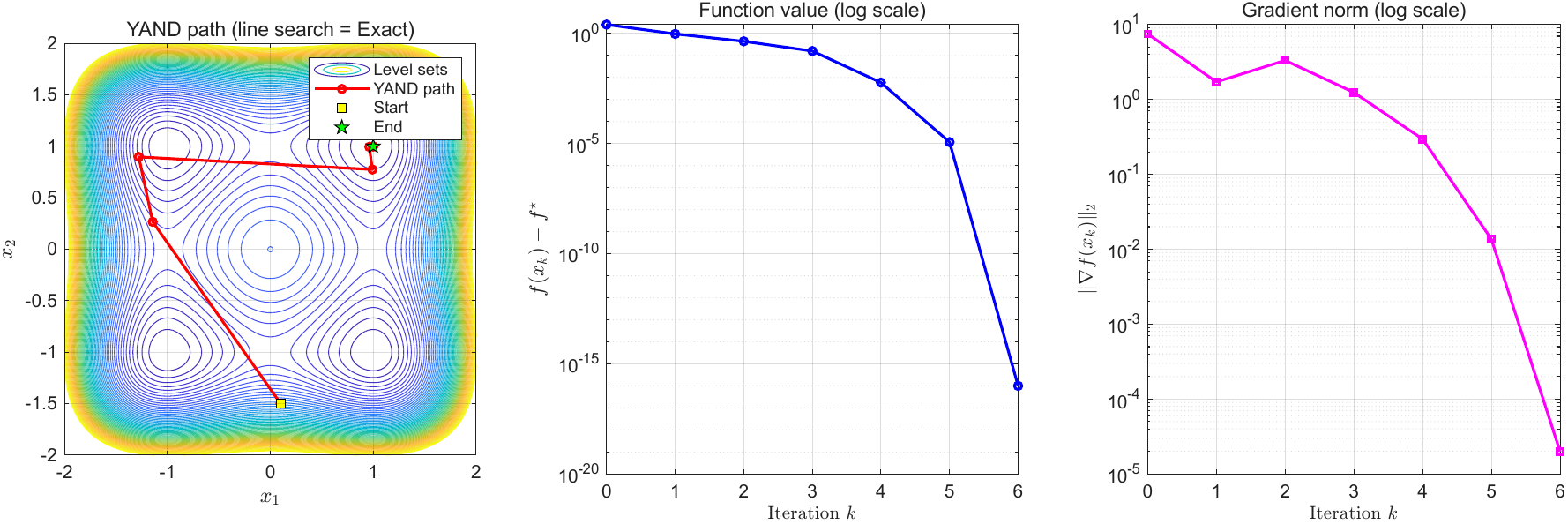}
\caption{Exact line search}
\end{subfigure}

\begin{subfigure}{0.9\textwidth}
\centering
\includegraphics[width=\linewidth]{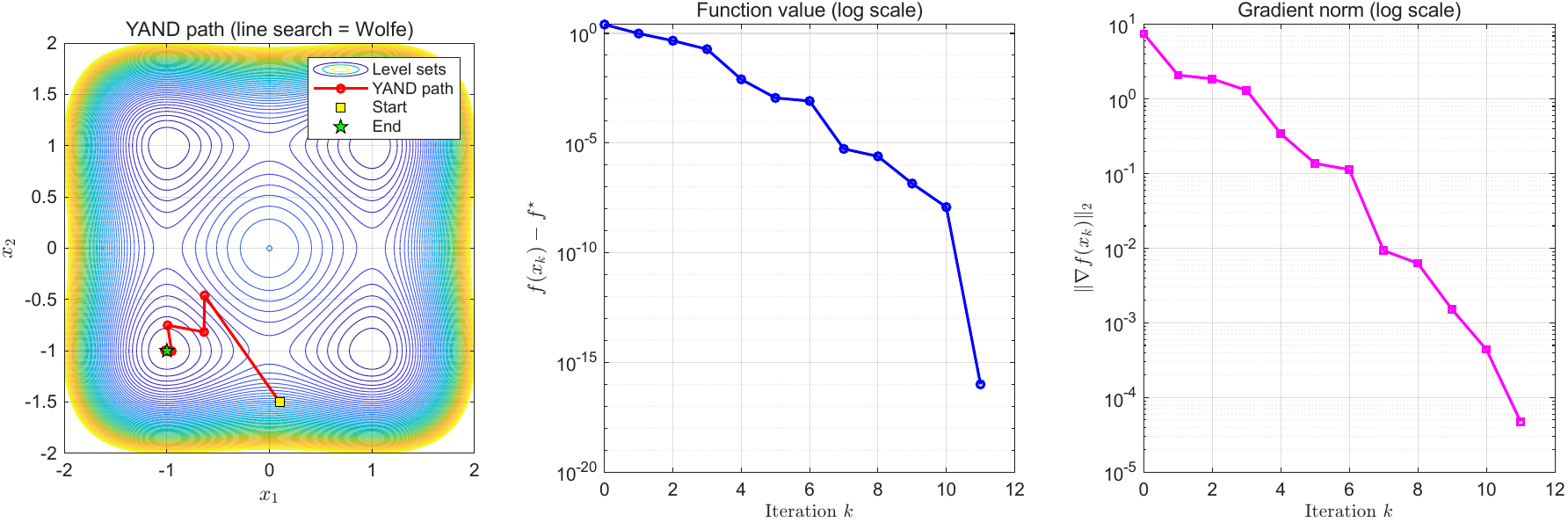}
\caption{Strong Wolfe}
\end{subfigure}

\begin{subfigure}{0.9\textwidth}
\centering
\includegraphics[width=\linewidth]{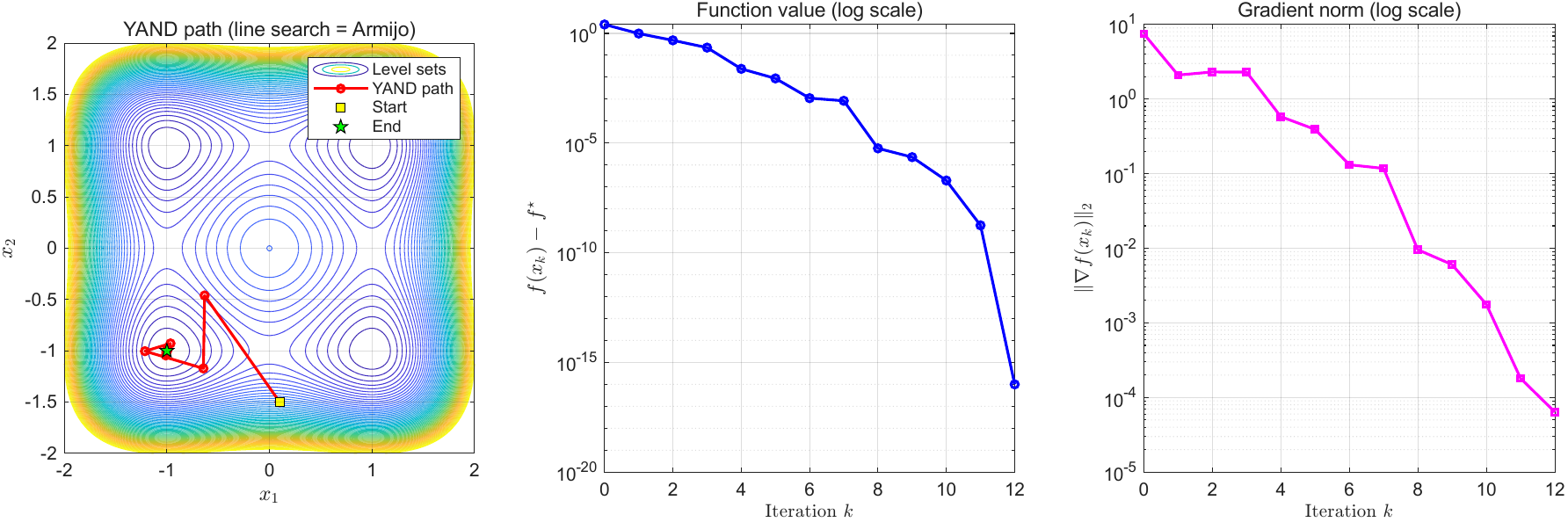}
\caption{Armijo backtracking}
\end{subfigure}

\caption{
YAND on the four-well quartic
\eqref{eq:quartic-4well}.
}
\label{fig:quartic-4well-AND}
\end{figure}

All three line-search strategies eventually converge to a global
minimizer.
The exact line search produces the shortest trajectory,
while Wolfe and Armijo take more conservative steps but maintain
stable descent.

\paragraph{Symmetry-induced convergence to a saddle.}

We also tested the same problem from the symmetric starting point
$
x_0=(0,-1.5)^\top .
$
Due to the symmetry of the objective, both the gradient and the
affine-normal direction remain confined to the invariant manifold
$x_1=0$.
Consequently the iteration reduces to the one-dimensional function
\[
g(t)=f(0,t)=1+(t^2-1)^2,
\]
whose minima occur at $t=\pm1$.
These correspond to the saddle points $(0,\pm1)^\top$ of the full
two-dimensional problem.

This example highlights a limitation typical of descent-type methods:
the iteration may converge rapidly to a saddle when symmetry confines
the trajectory to a lower-dimensional invariant manifold.
A small perturbation of the initial point breaks the symmetry and
steers the trajectory toward one of the true wells.

\paragraph{Summary of line-search strategies}

Across all test problems, the qualitative behavior of YAND with
different line-search rules can be summarized as follows.
Exact line search most closely matches the ideal affine-normal step
and typically yields the fastest convergence.
Strong Wolfe provides a good balance between robustness and
efficiency, while Armijo backtracking offers a simple and stable
alternative with slightly more conservative steps.

Taken together, these examples show that the affine-normal direction
remains effective across a broad range of smooth nonconvex geometries,
including curved valleys, saddle regions, and multi-well landscapes.
While, as expected, convergence in the nonconvex setting is only to
first-order stationary points in general, the observed numerical behavior
is stable and aligns well with the theoretical picture developed earlier.

\subsection{Summary of numerical results}

To summarize the behavior of the considered methods across the different
geometric regimes, Table~\ref{tab:summary-numerics} reports a qualitative
comparison of the representative experiments.

\begin{table}[t]
\centering
\caption{Qualitative summary of the representative numerical experiments.}
\label{tab:summary-numerics}

\footnotesize
\setlength{\tabcolsep}{3pt}
\renewcommand{\arraystretch}{1.14}

\begin{adjustbox}{max width=\textwidth,center}
\begin{tabular}{L{2.55cm} L{2.95cm} L{3.05cm} L{3.45cm} L{3.45cm}}
\toprule
Problem
& Geometry
& Main message
& YAND
& Classical baselines \\
\midrule

Well-conditioned quadratic
& Elliptical bowl
& Quadratic exactness
& One-step convergence with exact line search
& Matches Newton; faster than GD \\

\addlinespace[2pt]

Affine-scaled quadratic
& Affine-scaled anisotropy
& Robustness under affine scaling
& Essentially invariant after normalization
& GD is conditioning-sensitive; Newton remains invariant \\

\addlinespace[2pt]

Sixth-degree polynomial
& Nonlinear convex anisotropy
& Behavior beyond quadratics
& Rapid and stable convergence
& No exact quadratic equivalence \\

\addlinespace[2pt]

Inverse-barrier problem
& Barrier-induced curvature growth
& Robustness under extreme local conditioning
& Stable despite severe anisotropy
& Euclidean directions become more sensitive \\

\addlinespace[2pt]

Rosenbrock function
& Narrow curved valley
& Nonconvex valley tracking
& Stable progress along the valley
& GD zigzags; Newton requires damping \\

\addlinespace[2pt]

Tilted ring-shaped valley
& Curved ring-shaped basin
& Adaptation to curved nonconvex geometry
& Successfully follows the valley to the minimizer
& More sensitive to direction misalignment \\

\addlinespace[2pt]

Saddle-type polynomial
& Strict saddle with double well
& Behavior near indefinite Hessian
& Escapes the saddle and enters a nearby well
& More sensitive to saddle geometry \\

\addlinespace[2pt]

Four-well quartic potential
& Multi-basin landscape
& Basin selection and stationary-point limitation
& Stable basin convergence; symmetry may trap iterates
& Similar limitations occur for descent-type methods \\

\bottomrule
\end{tabular}
\end{adjustbox}
\end{table}

\section{Conclusion}

We have introduced Yau's \emph{affine normal descent} (YAND), a geometric
framework for smooth unconstrained optimization in which search directions
are defined intrinsically by the equi--affine normal of level-set hypersurfaces.
This perspective departs from classical approaches based on Euclidean or
quadratic models, and instead derives optimization directions directly from
the intrinsic geometry of level sets.

We established several fundamental properties of affine-normal directions.
In particular, we connected their analytic representation with the classical
slice--centroid construction, thereby linking computational formulas with
their geometric origin. We characterized precisely when affine-normal directions
yield strict descent and showed that, for strictly convex quadratic objectives,
YAND recovers Newton's method under exact line search.
We further developed a convergence theory establishing global convergence
under standard smoothness assumptions, linear convergence under strong convexity
or the Polyak--\L{}ojasiewicz condition, and quadratic local convergence near
nondegenerate minimizers.

We also analyzed the behavior of affine-normal directions under affine scalings,
showing that the method is inherently robust to arbitrarily ill-conditioned
transformations. This provides a geometric explanation for the stability of YAND
under severe anisotropic distortions of the objective.

Numerical experiments illustrate the geometric behavior of the method and its
robustness across a range of convex and nonconvex problems. Together, these
results suggest that affine differential geometry provides a natural and
powerful framework for designing curvature-aware optimization algorithms.

Several directions remain for future investigation. A central challenge is the
efficient computation or approximation of affine-normal directions in
high-dimensional settings, where analytic formulas involve higher-order
derivatives. Geometric constructions based on local moment information of
level sets, such as slice--centroid formulations, suggest a promising route
toward scalable implementations without explicit higher-order derivatives.
Extensions to constrained and stochastic optimization settings also constitute
natural directions for further development.

\section*{Acknowledgements}

Y.-S.\ N.\ was supported by the National Natural Science Foundation of China
(Grant No.\ 42450242) and China's National Program of Overseas High-Level Talent.
A.\ S.\ would like to acknowledge support from the Beijing Natural Science
Foundation (Grant No.\ BJNSF--IS24005) and the NSFC--RFIS Program
(Grant No.\ W2432008).
He also thanks the NSF AI Institute for Artificial Intelligence and Fundamental
Interactions at the Massachusetts Institute of Technology (MIT), funded by the
U.S.\ National Science Foundation under Cooperative Agreement PHY--2019786,
as well as China's National Program of Overseas High-Level Talent for generous
support.
All three authors gratefully acknowledge institutional support from the Beijing
Institute of Mathematical Sciences and Applications (BIMSA). The authors would also like to thank Prof.\ Liping Zhang of Tsinghua University
for helpful discussions.

\appendix
\section{The affine normal}

\subsection{Foundational concepts}

\subsubsection{Hypersurfaces and immersions}

\begin{definition}[Hypersurface Immersion]
Let $M^n$ be a smooth $n$-dimensional manifold. A \textbf{smooth immersion} $X: M^n \to \mathbb{R}^{n+1}$ is a $C^\infty$ map such that the differential $dX_p: T_pM \to T_{X(p)}\mathbb{R}^{n+1}$ is injective for all $p \in M$. This ensures that $X(M)$ is an embedded submanifold locally, though it may have self-intersections globally.

The \textbf{tangent space} at $p$ is $T_pX(M) = dX_p(T_pM)$, and the \textbf{normal space} is its orthogonal complement $N_pX(M) = (T_pX(M))^\perp$.
\end{definition}

\subsubsection{Connections and covariant derivatives}

\begin{definition}[Affine Connection]
An \textbf{affine connection} $\nabla$ on a manifold $M$ is a bilinear map $\nabla: \Gamma(TM) \times \Gamma(TM) \to \Gamma(TM)$ satisfying:
\begin{enumerate}
\item $\nabla_{fX}Y = f\nabla_XY$ ($C^\infty$-linear in the first argument)
\item $\nabla_X(fY) = X(f)Y + f\nabla_XY$ (Leibniz rule in the second argument)
\end{enumerate}
for all $X,Y \in \Gamma(TM)$, $f \in C^\infty(M)$.
\end{definition}

\begin{definition}[Euclidean Connection]
The \textbf{Euclidean flat connection} $D$ on $\mathbb{R}^{n+1}$ is defined for vector fields $U = \sum u^i\partial_i$, $V = \sum v^j\partial_j$ by:
\[
D_UV = \sum_{i,j} u^i\frac{\partial v^j}{\partial x^i}\partial_j.
\]
This connection is flat (zero curvature) and torsion-free.
\end{definition}

\subsubsection{Transversal vector fields}

\begin{definition}[Transversal Vector Field]
A smooth vector field $\xi$ along $X(M)$ is called \textbf{transversal} if for every $p \in M$:
\[
\xi(p) \notin T_{X(p)}X(M).
\]
Equivalently, $\{\partial_1X(p), \ldots, \partial_nX(p), \xi(p)\}$ forms a basis of $\mathbb{R}^{n+1}$.
\end{definition}

\subsection{Gauss formula and induced structures}

\begin{theorem}[Gauss Formula for Hypersurfaces]
Given a hypersurface immersion $X: M^n \to \mathbb{R}^{n+1}$ and a transversal vector field $\xi$, there exists a unique decomposition:
\[
D_{dX(X)}dX(Y) = dX(\nabla_XY) + h(X,Y)\xi \quad \text{for all } X,Y \in \Gamma(TM),
\]
where:
\begin{itemize}
\item $\nabla$ is an affine connection on $M$ (the \textbf{induced connection})
\item $h: \Gamma(TM) \times \Gamma(TM) \to C^\infty(M)$ is a symmetric bilinear form (the \textbf{affine fundamental form})
\item The decomposition is orthogonal with respect to the transversal direction
\end{itemize}
\end{theorem}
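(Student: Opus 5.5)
The plan is to prove the Gauss formula by pointwise linear algebra, using the transversality of $\xi$, and then to check that the resulting components have the tensorial properties claimed. Fix $p\in M$. By the definition of a transversal vector field, $\{\partial_1X(p),\dots,\partial_nX(p),\xi(p)\}$ is a basis of $\mathbb{R}^{n+1}$. Hence there is a direct-sum splitting
\[
\mathbb{R}^{n+1}=dX_p(T_pM)\oplus\mathbb{R}\,\xi(p).
\]
For vector fields $X,Y\in\Gamma(TM)$, the vector $D_{dX(X)}dX(Y)$ at $p$ therefore decomposes uniquely into a tangential component and a multiple of $\xi(p)$. Since $dX_p$ is injective, the tangential component equals $dX_p(W)$ for a unique $W\in T_pM$. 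I will define $\nabla_XY(p):=W$ and $h(X,Y)(p)$ to be the $\xi$-coefficient. Smoothness in $p$ follows from Cramer's rule, because the basis varies smoothly. This already gives existence and uniqueness of the decomposition; the phrase ``orthogonal'' in the statement should be read as ``complementary with respect to the splitting above'', not as Euclidean orthogonality.

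Next I verify the connection axioms for $\nabla$ and the $C^\infty(M)$-bilinearity of $h$. Both follow from the corresponding properties of $D$ together with uniqueness of the splitting. For $C^\infty$-linearity in the first slot, $D_{dX(fX)}dX(Y)=f\,D_{dX(X)}dX(Y)$, so both components scale by $f$. For the Leibniz rule,
\[
D_{dX(X)}\bigl(f\,dX(Y)\bigr)=X(f)\,dX(Y)+f\,D_{dX(X)}dX(Y).
\]
The first term on the right is purely tangential, so $\nabla_X(fY)=X(f)Y+f\nabla_XY$ and $h(X,fY)=fh(X,Y)$. Additivity in each slot is immediate.

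The main step is the symmetry of $h$, together with torsion-freeness of $\nabla$ as a by-product. Because $D$ is flat and torsion-free,
\[
D_{dX(X)}dX(Y)-D_{dX(Y)}dX(X)=dX([X,Y]).
\]
In local coordinates this is just the equality of mixed partials $\partial_i\partial_jX=\partial_j\partial_iX$ for the immersion. Substituting the decomposition gives
\[
dX\bigl(\nabla_XY-\nabla_YX-[X,Y]\bigr)+\bigl(h(X,Y)-h(Y,X)\bigr)\xi=0.
\]
By transversality both components vanish, so $h$ is symmetric and $\nabla$ is torsion-free.

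The only real subtlety is notational: $X$ denotes both the immersion and a vector field. I will write $dX(Y)$ for the pushforward and interpret $D_{dX(X)}dX(Y)$ as the directional derivative of the $\mathbb{R}^{n+1}$-valued function $dX(Y)$ along $X$ on $M$. With that reading, all of the steps above are routine.
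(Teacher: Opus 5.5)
Your proposal is correct and takes essentially the same approach as the paper: both use the transversal splitting $\mathbb{R}^{n+1}=dX_p(T_pM)\oplus\mathbb{R}\,\xi(p)$, with $\nabla$ defined as the tangential component and $h$ as the $\xi$-coefficient. The paper's proof stops after the splitting, so your checks fill in what it leaves implicit: the connection axioms, the symmetry of $h$ via $D_{dX(X)}dX(Y)-D_{dX(Y)}dX(X)=dX([X,Y])$, and reading ``orthogonal'' as ``complementary''.
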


\begin{proof}
Since $\xi$ is transversal, we can write any vector in $\mathbb{R}^{n+1}$ uniquely as a tangent part plus a multiple of $\xi$. The tangential projection defines $\nabla$, while the coefficient of $\xi$ defines $h$.
\end{proof}

\subsubsection{Weingarten formula}

\begin{theorem}[Weingarten Formula]
For the transversal field $\xi$ and any $X \in \Gamma(TM)$, we have:
\[
D_{dX(X)}\xi = -dX(S(X)) + \tau(X)\xi,
\]
where:
\begin{itemize}
\item $S: \Gamma(TM) \to \Gamma(TM)$ is the \textbf{shape operator} or \textbf{Weingarten map}
\item $\tau: \Gamma(TM) \to C^\infty(M)$ is the \textbf{transversal connection form}
\end{itemize}
\end{theorem}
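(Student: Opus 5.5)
The plan is to differentiate $\xi$ along the hypersurface directly and then split the result with respect to the splitting $\mathbb{R}^{n+1}=dX(T_pM)\oplus\mathbb{R}\,\xi(p)$. This mirrors the proof of the Gauss formula. Fix $p\in M$ and a tangent vector field $X\in\Gamma(TM)$. The vector $D_{dX(X)}\xi$ is the ambient directional derivative of the $\mathbb{R}^{n+1}$-valued function $\xi$ along the curve through $X(p)$ with velocity $dX_p(X_p)$. In local coordinates $(u^1,\dots,u^n)$ on $M$ with $X=\sum a^i\partial_{u^i}$, it is simply $\sum_i a^i\,\partial_{u^i}\xi$, so it is a well-defined smooth $\mathbb{R}^{n+1}$-valued function on $M$.

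\textbf{Step 1 (pointwise decomposition).} By transversality, $\{\partial_1X(p),\dots,\partial_nX(p),\xi(p)\}$ is a basis of $\mathbb{R}^{n+1}$ at every $p$. Hence there are unique numbers $s^1,\dots,s^n,t$ with
\[
D_{dX(X)}\xi=\sum_j s^j\,\partial_jX+t\,\xi .
\]
I would then define $S(X):=-\sum_j s^j\partial_{u^j}\in T_pM$ (injectivity of $dX_p$ makes this a well-defined tangent vector) and $\tau(X):=t$. This gives the formula at once.

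\textbf{Step 2 (smoothness and tensoriality).} The coefficients $s^j,t$ come from solving a linear system whose matrix $(\partial_1X,\dots,\partial_nX,\xi)$ is invertible and smooth. By Cramer's rule they are smooth in $p$, so $S(X)$ is a smooth vector field and $\tau(X)$ is a smooth function. Next, $D_{dX(fX)}\xi=f\,D_{dX(X)}\xi$ because the ambient derivative is $C^\infty$-linear in the direction. Additivity in $X$ is equally clear. Uniqueness of the decomposition then forces $S(fX+Y)=fS(X)+S(Y)$ and $\tau(fX+Y)=f\tau(X)+\tau(Y)$. So $S$ is a $(1,1)$-tensor field, and $\tau$ is a $1$-form, as the names \emph{shape operator} and \emph{connection form} require.

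\textbf{Step 3 (independence of choices).} The decomposition does not depend on the chart, because the splitting $dX(T_pM)\oplus\mathbb{R}\xi(p)$ is intrinsic. Only the coordinate expression of $S$ changes, and it does so tensorially.

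The only point needing care is the remark that $D_{dX(X)}\xi$ depends only on $X_p$ and on $\xi$ along $M$, not on any extension of $\xi$ off $X(M)$. I would dispose of this by computing it as the derivative along a curve in $M$, as in the coordinate formula above. Beyond that, the statement is essentially definitional, so I expect no genuine obstacle.
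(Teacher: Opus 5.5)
Your proposal is correct. The paper states the Weingarten formula without a separate proof, but the intended argument is the transversal splitting $\mathbb{R}^{n+1}=dX(T_pM)\oplus\mathbb{R}\,\xi(p)$ that it uses in its one-line proof of the Gauss formula, applied now to $D_{dX(X)}\xi$: the tangential part defines $-dX(S(X))$ and the $\xi$-coefficient defines $\tau(X)$. Your checks of smoothness via Cramer's rule and of $C^\infty(M)$-linearity, which make $S$ a $(1,1)$-tensor and $\tau$ a $1$-form, supply details the paper leaves implicit.
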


\subsection{Volume forms and equi--affine theory}

\subsubsection{Induced volume forms}

\begin{definition}[Induced Volume Form]
Given a transversal field $\xi$, the \textbf{induced volume form} $\theta_\xi$ is defined by:
\[
\theta_\xi(X_1, \ldots, X_n) = \det\left(dX(X_1), \ldots, dX(X_n), \xi\right).
\]
This is a nonvanishing $n$-form on $M$.
\end{definition}

\begin{definition}[Affine Metric Volume]
For a nondegenerate hypersurface, the \textbf{affine metric volume form} is:
\[
\omega_h(X_1, \ldots, X_n) = |\det(h(X_i, X_j))|^{1/2}.
\]
This volume form depends only on the affine fundamental form $h$.
\end{definition}

\subsubsection{Equi--affine conditions}

\begin{definition}[Equi--affine Structure]
A transversal field $\xi$ is called \textbf{equi--affine} if the induced connection $\nabla$ satisfies:
\begin{enumerate}
\item $\nabla$ is torsion-free: $\nabla_XY - \nabla_YX = [X,Y]$
\item $\nabla\theta_\xi = 0$ (the volume form is $\nabla$-parallel)
\item $\tau = 0$ (the transversal connection form vanishes)
\end{enumerate}
\end{definition}

\subsection{Existence and uniqueness}

\begin{theorem}[Existence and Uniqueness of Affine Normal]
Let $X: M^n \to \mathbb{R}^{n+1}$ be a nondegenerate hypersurface immersion. There exists a unique (up to sign) transversal vector field $\xi$ such that:

\begin{enumerate}
\item \textbf{Equi--affine Condition}: $\xi$ is equi--affine, i.e., $\nabla\theta_\xi = 0$ and $\tau = 0$

\item \textbf{Volume Compatibility}: The induced volume form is a constant multiple of the affine metric volume form:
\[
\theta_\xi = c \cdot \omega_h \quad \text{for some constant } c > 0
\]

\item \textbf{Normalization}: In local coordinates, if $\det(\partial_1X, \ldots, \partial_nX, \xi) = 1$, then the affine fundamental form $h$ becomes the affine metric.
\end{enumerate}

This unique $\xi$ is called the \textbf{affine normal} or \textbf{Blaschke normal}.
\end{theorem}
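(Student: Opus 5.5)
We will prove existence and uniqueness of the Blaschke normal by the classical argument: parametrize all transversal fields in terms of one reference field, and show that the two conditions pin the parametrization down up to sign.

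\emph{Reference field and transformation laws.} Fix any transversal field $\bar\xi$, for instance the Euclidean unit normal. Let $(\bar\nabla,\bar h,\bar S,\bar\tau)$ be its induced structures from the Gauss and Weingarten formulas. Every other transversal field can be written uniquely as
\[
\xi=\phi\,\bar\xi+dX(Z),
\]
with $\phi$ a nowhere-zero function and $Z$ a tangent field. Substituting into the Gauss formula gives the transformation laws
\[
h=\phi^{-1}\bar h,\qquad \nabla_XY=\bar\nabla_XY-\phi^{-1}\bar h(X,Y)Z,\qquad \theta_\xi=\phi\,\theta_{\bar\xi}.
\]
The Weingarten formula gives
\[
\tau=\bar\tau+d\log|\phi|+\phi^{-1}\bar h(Z,\cdot).
\]
These identities show that nondegeneracy of $h$ is independent of the choice of $\xi$. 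Torsion-freeness of $\nabla$ is automatic, since $\bar h$ is symmetric.

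\emph{Volume condition fixes $\phi$.} With $h=\phi^{-1}\bar h$ we have $\omega_h=|\phi|^{-n/2}\omega_{\bar h}$. The requirement $\theta_\xi=\omega_h$ (we take $c=1$; any other constant just rescales) therefore reads
\[
\phi\,\theta_{\bar\xi}=|\phi|^{-n/2}\omega_{\bar h}.
\]
This forces
\[
|\phi|^{(n+2)/2}=\frac{\omega_{\bar h}}{|\theta_{\bar\xi}|},
\]
so $|\phi|=\bigl(\omega_{\bar h}/|\theta_{\bar\xi}|\bigr)^{2/(n+2)}$. Hence $\phi$ is determined up to sign.

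\emph{Equi-affine condition fixes $Z$.} For a torsion-free $\nabla$ and a transversal $\xi$, differentiating $\theta_\xi$ along the Gauss/Weingarten formulas gives the standard identity $\nabla\theta_\xi=\tau\otimes\theta_\xi$. So $\nabla\theta_\xi=0$ is equivalent to $\tau=0$. Setting $\tau=0$ in the transformation law gives
\[
\bar h(Z,\cdot)=-\phi\bigl(\bar\tau+d\log|\phi|\bigr).
\]
Since $\bar h$ is nondegenerate, this has the unique solution $Z=-\phi\,\bar h^{\sharp}\bigl(\bar\tau+d\log|\phi|\bigr)$. Thus both conditions hold for exactly the two fields $\pm\xi$, and item 3 of the statement is just the restatement $\theta_\xi=\omega_h$ in a frame.

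\emph{Main obstacle.} The delicate step is verifying the transformation law for $\tau$ and the identity $\nabla\theta_\xi=\tau\theta_\xi$ with correct signs and factors of $\phi$. Any error there changes the exponent $2/(n+2)$ or the formula for $Z$. We would check it by recomputing in local coordinates in the normal-aligned frame. As a consistency test, the resulting $\xi$ should reproduce \eqref{eq:AN-analytic} and the normalization $\lambda=(\det h_{ij})^{-1/(n+2)}$ used in Theorem~\ref{thm:ANDescent}.
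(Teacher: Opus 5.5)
The paper gives no proof of this theorem. It is listed in the appendix as standard background from affine differential geometry (cf.\ \cite{NomizuSasaki1994}), so there is no in-paper argument to compare against. Your proposal is the classical proof, and it is correct:
\begin{itemize}
\item the transformation laws $h=\phi^{-1}\bar h$, $\nabla_XY=\bar\nabla_XY-\phi^{-1}\bar h(X,Y)Z$, $\theta_\xi=\phi\,\theta_{\bar\xi}$ and $\tau=\bar\tau+d\log|\phi|+\phi^{-1}\bar h(Z,\cdot)$ all hold with the signs and factors you wrote;
\item so does the identity $\nabla\theta_\xi=\tau\otimes\theta_\xi$;
\item the structural point is right: $Z$ affects neither $h$ nor $\theta_\xi$, so the volume condition fixes $|\phi|$ independently of $Z$, and then $\tau=0$ is a nondegenerate linear equation for $Z$.
\end{itemize}

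One point needs to be stated explicitly rather than in a parenthesis. Uniqueness up to sign holds only once the constant $c$ is pinned, so ``we take $c=1$'' is not a harmless normalization. Suppose $\xi_0$ satisfies (1)--(2) with $c=1$, and let $\lambda>0$ be a constant. Then $\lambda\xi_0$ has $\tau=d\log\lambda=0$ and $h=\lambda^{-1}h_0$, so $\omega_h=\lambda^{-n/2}\omega_{h_0}$ and $\theta_{\lambda\xi_0}=\lambda^{(n+2)/2}\omega_h$. With $c$ merely ``some constant,'' every constant multiple of $\xi_0$ satisfies (1)--(2). What restores uniqueness is item~3, read as $|\det h_{ij}|=1$ in coordinates with $\theta_\xi(\partial_1,\dots,\partial_n)=1$, which is exactly $c=1$. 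Say that this is where $c=1$ comes from.

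Two related technical points:
\begin{itemize}
\item As the paper defines it, $\omega_h$ is a density, so (2) really means $|\theta_\xi|=c\,\omega_h$. Otherwise it needs an orientation, and then your signed equation would fix the sign of $\phi$. This is the true source of the ``$\pm$.''
\item A global reference field $\bar\xi$ exists only if $M$ is orientable; otherwise your argument is local.
\end{itemize}

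On your proposed consistency test: against \eqref{eq:AN-analytic} it succeeds. Take the constant reference field $\bar\xi=-e_{n+1}$, so that $\bar\tau=0$ and $|\theta_{\bar\xi}|=1$, and write the level set as a graph over its tangent plane. Then $\bar h_{ij}=f_{ij}/\|\nabla f\|$ at $z$, and $Z/\phi=-\bar h^{\sharp}d\log|\phi|$ reproduces the tangential part of \eqref{eq:AN-analytic} exactly. The $f^{ij}f_{n+1,j}$ term comes from the implicit-function expansion of the third derivatives of the graph function.

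The same computation gives the normal scale $|\phi|=|\det f_{ij}|^{1/(n+2)}\|\nabla f\|^{-n/(n+2)}$. This is a \emph{positive} power of $|\det|$, and it is real at hyperbolic points too. The power $-1/(n+2)$ is the conformal factor of the Blaschke metric $h=\phi^{-1}\bar h$, not of the normal. So do not expect to reproduce $\lambda=(\det h_{ij})^{-1/(n+2)}$ from the proof of Theorem~\ref{thm:ANDescent}. A mismatch there is not evidence of an error in your transformation laws.
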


\subsection{Local coordinate expressions}

\subsubsection{Coordinate formulation}

Let $(u^1, \ldots, u^n)$ be local coordinates on $M$, and write the immersion as $X(u^1, \ldots, u^n)$. The coordinate frame is:
\[
X_i = \frac{\partial X}{\partial u^i}, \quad X_{ij} = \frac{\partial^2 X}{\partial u^i\partial u^j}.
\]

The Gauss formula becomes:
\[
X_{ij} = \sum_{k=1}^n \Gamma_{ij}^k X_k + h_{ij}\xi.
\]

The Weingarten formula is:
\[
\xi_i = \frac{\partial \xi}{\partial u^i} = -\sum_{j=1}^n S_i^j X_j.
\]

\subsubsection{Determinant formulation}

In the equi--affine normalization with $\det(X_1, \ldots, X_n, \xi) = 1$, the affine metric components are given by:
\[
h_{ij} = -\det(X_1, \ldots, X_n, \xi_{ij}).
\]

The affine normal can be expressed in terms of the position vector as:
\[
\xi = \frac{1}{n}\Delta X,
\]
where $\Delta$ is the Laplace-Beltrami operator with respect to the affine metric.

\subsection{Transformation properties}

\begin{theorem}[Affine Invariance]
The affine normal $\xi$ and affine metric $h$ are invariant under the unimodular affine group $\mathrm{SL}(n+1, \mathbb{R}) \ltimes \mathbb{R}^{n+1}$. Specifically:

If $\tilde{X} = AX + b$ with $A \in \mathrm{SL}(n+1, \mathbb{R})$, $b \in \mathbb{R}^{n+1}$, then:
\begin{itemize}
\item The affine normal transforms as $\tilde{\xi} = A\xi$
\item The affine metric transforms as $\tilde{h} = h$
\item The induced connection remains unchanged: $\tilde{\nabla} = \nabla$
\end{itemize}
\end{theorem}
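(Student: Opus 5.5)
The plan is to work entirely with the structure equations of the appendix, namely the Gauss formula $X_{ij}=\Gamma_{ij}^kX_k+h_{ij}\xi$, the Weingarten formula $\xi_i=-S_i^jX_j$ (with $\tau=0$), and the equi--affine normalization $\det(X_1,\dots,X_n,\xi)=\omega_h$. The key observation is that every one of these objects is characterized by relations that are linear in $X$ and its derivatives, together with a single determinant condition. We fix a local parametrization $X(u)$ of the hypersurface and let $\xi$, $h$, $\nabla$ denote its Blaschke structure. For $\tilde X=AX+b$, the same parameter domain $u$ is used, so that $\tilde X_i=AX_i$ and $\tilde X_{ij}=AX_{ij}$; the translation $b$ drops out immediately after one differentiation.

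\textbf{Step 1 (the candidate normal).} Set $\hat\xi:=A\xi$ and apply $A$ to the Gauss formula:
\[
\tilde X_{ij}=\Gamma_{ij}^k\tilde X_k+h_{ij}\,\hat\xi .
\]
Since $A$ is invertible, $\hat\xi$ is transversal to $\tilde X$. By the uniqueness of the decomposition in the Gauss formula theorem, the connection induced by $\hat\xi$ is exactly $\nabla$ and its fundamental form is exactly $h$. In the same way, applying $A$ to the Weingarten formula gives $\hat\xi_i=-S_i^j\tilde X_j$, so $\hat\xi$ has shape operator $S$ and transversal form $\tau=0$.

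\textbf{Step 2 (volume compatibility).} Compute
\[
\theta_{\hat\xi}=\det(A X_1,\dots,AX_n,A\xi)=\det A\cdot\theta_\xi=\theta_\xi .
\]
This is the only place where $\det A=1$ is used. Because $h$ is unchanged, $\omega_h=|\det h_{ij}|^{1/2}$ is also unchanged, so the relation $\theta_{\hat\xi}=c\,\omega_h$ holds with the same constant $c$. In addition, $\nabla\theta_{\hat\xi}=\nabla\theta_\xi=0$. Thus $\hat\xi$ satisfies all three conditions of the Existence and Uniqueness theorem for $\tilde X$. Uniqueness then forces $\tilde\xi=\pm A\xi$, and the sign is fixed as $+$ by transporting the orientation (the inward side is carried to the inward side because $A$ maps the interior of $X$ to the interior of $\tilde X$). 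The conclusions $\tilde h=h$ and $\tilde\nabla=\nabla$ then follow from Step 1 applied to the genuine normal $\tilde\xi=A\xi$.

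\textbf{Main obstacle.} The main obstacle is making the uniqueness step honest. The stated theorem gives uniqueness only up to sign, and the normalization involves $\omega_h$, where $h$ itself depends on the choice of $\xi$. A rescaling $\xi\mapsto\lambda\xi$ sends $h\mapsto\lambda^{-1}h$, hence $\omega_h\mapsto|\lambda|^{-n/2}\omega_h$ and $\theta\mapsto\lambda\theta$. To handle this, I would first check that the pair $(\hat\xi,h)$ satisfies the \emph{self-consistent} normalization $\theta_{\hat\xi}=\omega_h$ (with $c=1$), which pins down $\lambda$ up to sign. Adding a tangential component to $\xi$ is excluded by $\tau=0$ together with $\nabla\theta=0$. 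Both facts transfer verbatim under $A$ by Steps 1 and 2. For that reason I would phrase the final uniqueness argument entirely in terms of these transported identities, rather than relying on any coordinate-dependent formula such as \eqref{eq:AN-analytic}.
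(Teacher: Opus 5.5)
Your argument is correct. The paper gives no proof of this appendix theorem. The only argument in the paper that bears on it is the proof of Lemma~\ref{lem:affine-covariance}, and that proof is essentially your Step~2 alone. It pushes a representative of the normal forward by the unimodular map, notes that $\det(AX_1,\dots,AX_n,A\xi)=\det(X_1,\dots,X_n,\xi)$, and invokes uniqueness with the inward orientation to conclude collinearity.

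Your route uses the same mechanism but is more complete. Applying $A$ to the Gauss and Weingarten formulas (your Step~1) is what shows that $h$, $\nabla$, $S$ and $\tau=0$ carry over to $\tilde X$. The paper's lemma checks only the determinant and leaves implicit that the affine area density and the equi-affine condition are transported. Step~1 is also what gives the two remaining claims $\tilde h=h$ and $\tilde\nabla=\nabla$, and the exact identity $\tilde\xi=A\xi$ rather than collinearity up to a positive factor.

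Your remark on normalization is well taken. With ``some constant $c>0$'', as in the appendix's existence theorem, uniqueness fails under constant rescalings $\xi\mapsto\lambda\xi$, so the constant must be fixed (for example $c=1$). Once it is fixed and you keep the orientation of the parameter domain, $\lambda\,\theta_\xi=|\lambda|^{-n/2}\omega_h$ forces $\lambda=1$ outright, not just $\lambda=\pm1$. So the sign needs no separate ``inward'' argument, which in any case only makes sense for locally convex hypersurfaces.

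Tangential components are excluded by $\tau=0$ together with nondegeneracy of $h$, since $\tau_{\xi+Z}(X)=\tau_\xi(X)+h(X,Z)$.
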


\begin{theorem}[Conformal Relation to Euclidean Normal]
For a locally strictly convex hypersurface, let $\nu$ be the Euclidean unit normal and $H$ the mean curvature. Then the affine normal is related to the Euclidean normal by:
\[
\xi = H^{1/(n+2)} \nu + \text{tangential component}.
\]
In particular, for surfaces in $\mathbb{R}^3$ ($n=2$), we have $\xi = H^{1/4}\nu + \cdots$.
\end{theorem}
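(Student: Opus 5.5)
The plan is to compute the Euclidean-normal component of the Blaschke normal directly in a normal-aligned graph chart, using the normalization characterization from the existence-and-uniqueness theorem above. The tangential part is left as an unspecified remainder, as the statement allows. Fix $p$ on the locally strictly convex hypersurface. After a rigid motion, which preserves $\nu$, the Euclidean curvatures and volume, put $p$ at the origin with $\nu=e_{n+1}$ pointing to the convex side. Then write the hypersurface as a graph $X(u)=(u,\Phi(u))$ with $\Phi(0)=0$, $\nabla\Phi(0)=0$, and $D^2\Phi(0)=\diag(k_1,\dots,k_n)$, where $k_i>0$ are the principal curvatures.

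\textbf{Step 1 (normal component from the normalization).} Write $\xi=a\,\nu+\sum_i b_iX_i$ at $p$. Since $\det(X_1,\dots,X_n,X_i)=0$, the equi--affine normalization $\det(X_1,\dots,X_n,\xi)=1$ of part (3) forces $a\det(X_1,\dots,X_n,e_{n+1})=a$ in the graph frame. Meanwhile the Gauss formula gives $X_{ij}=\Gamma^k_{ij}X_k+h_{ij}\xi$. Comparing $e_{n+1}$-components yields $\Phi_{ij}=h_{ij}\,a$ at $p$, i.e.\ $h_{ij}=\Phi_{ij}/a$.

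\textbf{Step 2 (volume compatibility).} Part (2) of the uniqueness theorem requires $\theta_\xi=\omega_h$. At $p$ this reads $a=|\det h_{ij}|^{1/2}=a^{-n/2}(\det\Phi_{ij})^{1/2}$. Solving gives $a^{(n+2)/2}=(\det \Phi_{ij})^{1/2}$, hence
\[
a=\Bigl(\det D^2\Phi(0)\Bigr)^{1/(n+2)}.
\]
The positive root is the inward choice fixed by local strict convexity, consistent with Theorem~\ref{thm:ANDescent}.

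\textbf{Step 3 (identify the curvature factor).} The remaining task is to express $\det D^2\Phi(0)$, formed from the principal curvatures $k_1,\dots,k_n$ in the orthonormal frame, as the curvature scalar $H$ of the statement, giving $\xi=H^{1/(n+2)}\nu+\text{tangential}$. For $n=1$ this is immediate, since there is a single curvature $k_1$. For $n\ge2$ I would match it against the paper's convention: Section~\ref{sec:affine-normal} already describes the ``mean curvature factor'' as the scalar normalization that is dropped from \eqref{eq:AN-analytic}.

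I expect this identification to be the main obstacle. Steps 1--2 are mechanical and determine the normal factor exactly as the $(n+2)$-th root of the curvature determinant. The statement, however, names the mean curvature $H$, so the proof must pin down which curvature scalar $H$ denotes before the final equality can be asserted. I would first check the case $n=1$ and then the case of the round sphere, where all $k_i$ coincide, to fix the convention before writing the general identification.
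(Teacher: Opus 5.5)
Your Steps 1--2 are sound, and they carry the whole content here. At $p$ one has $X_i=e_i$, so only the normal coefficient $a$ enters the $e_{n+1}$-component of the Gauss formula, and $\theta_\xi(X_1,\dots,X_n)=a$. The volume condition then gives $a=(\det D^2\Phi(0))^{1/(n+2)}=(k_1\cdots k_n)^{1/(n+2)}$. Two small corrections. First, in the graph chart you must not impose $\det(X_1,\dots,X_n,\xi)=1$: that would force $a=1$ and contradict Step~2. What you actually use is $\theta_\xi(X_1,\dots,X_n)=a$ together with $\theta_\xi=\omega_h$. Second, you tacitly take the Blaschke choice $c=1$ in part~(2); with the paper's arbitrary constant $c$, the coefficient picks up a factor $c^{2/(n+2)}$.

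The genuine gap is Step~3, and it cannot be closed, because the statement as written is false for $n\ge 2$. The scalar you obtained, $k_1\cdots k_n=\det(\text{Weingarten map})$, is the Gauss--Kronecker curvature $K$. No convention for the mean curvature, normalized $\frac1n\sum_i k_i$ or not, agrees with it.

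Your own sphere test exposes this rather than fixing a convention. For a sphere of radius $R$, $K^{1/(n+2)}=R^{-n/(n+2)}$, whereas $H^{1/(n+2)}$ is $R^{-1/(n+2)}$ or $(n/R)^{1/(n+2)}$. At a point of a surface in $\R^3$ with $k_1=1$, $k_2=4$, one gets $K^{1/4}=\sqrt2$ against $(5/2)^{1/4}$ or $5^{1/4}$. Only for curves ($n=1$) do the two coincide identically.

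The paper supplies no proof of this appendix statement. The loose phrase ``mean curvature factor'' in Section~\ref{sec:affine-normal} is not a convention you can match against. The correct classical result is
\[
\xi=K^{1/(n+2)}\,\nu+\text{tangential component},\qquad K=k_1\cdots k_n ,
\]
with $\nu$ the unit normal toward the convex side; equivalently, $h=K^{-1/(n+2)}\,\mathrm{II}$. For surfaces this reads $\xi=K^{1/4}\nu+\cdots$ with $K$ the Gauss curvature. Your Steps 1--2 already prove this in full; the tangential part is fixed by $\tau=0$ but is irrelevant to the claim. You should finish by stating that $H$ has to be read as $K$, rather than trying to identify the determinant with a mean curvature.
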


\subsection{Special cases and examples}

\subsubsection{Ellipsoids and affine spheres}

\begin{definition}[Affine Sphere]
A hypersurface is called an \textbf{affine sphere} if all affine normals meet at a point (proper affine sphere) or are parallel (improper affine sphere).
\end{definition}

\begin{example}[Ellipsoids]
For an ellipsoid $\frac{x^2}{a^2} + \frac{y^2}{b^2} + \frac{z^2}{c^2} = 1$, the affine normals all pass through the center. This explains why YAND finds the minimum of quadratic functions in one step.
\end{example}

\begin{example}[Paraboloids]
For a paraboloid $z = \frac{1}{2}(ax^2 + by^2)$, the affine normals are parallel to the $z$-axis, making it an improper affine sphere.
\end{example}

\subsubsection{Graphs of functions}

For a hypersurface given as the graph of a function $x_{n+1} = f(x_1, \ldots, x_n)$, the affine normal has the explicit formula:

In normal-aligned coordinates (where $\nabla f$ points in the $x_{n+1}$ direction), the affine normal direction is:
\[
d_{\mathrm{AN}} \propto \left(f^{ij}\left(-\frac{1}{n+2}\|\nabla f\| f^{pq}f_{pqi} + f_{n+1,i}\right), -1\right),
\]
where $[f^{ij}]$ is the inverse of the tangent-tangent Hessian block.

\subsubsection{Regularization and robustness}

When the affine metric degenerates (parabolic points), regularization strategies include:

\begin{itemize}
\item Adding a small multiple of the identity: $h_\epsilon = h + \epsilon I$
\item Switching to Euclidean normal in degenerate regions
\item Using trust-region modifications
\end{itemize}

\subsection{Generalizations and extensions}

\subsubsection{Higher-order affine normals}

The theory extends to \textbf{affine normal of higher order}, defined through higher-order affine invariants and related to higher-order optimization methods.

\subsubsection{Relative affine geometry}

In \textbf{relative affine geometry}, one fixes a reference hypersurface and studies affine invariants relative to this reference, leading to preconditioned optimization methods.

\subsubsection{Affine spectral theory}

The eigenvalues of the shape operator $S$ with respect to the affine metric $h$ define \textbf{affine principal curvatures}, which characterize the affine shape of the hypersurface.

\bibliographystyle{plain}
\bibliography{references}

\section*{Author Information}

\noindent
Yi-Shuai Niu$^{1}$, Artan Sheshmani$^{1,3}$, and Shing-Tung Yau$^{1,2}$

\medskip

\noindent
$^{1}$ Beijing Institute of Mathematical Sciences and Applications (BIMSA), Beijing 101408, China

\noindent
$^{2}$ Yau Mathematical Sciences Center, Tsinghua University, Beijing 100084, China

\noindent
$^{3}$ IAIFI Institute, Massachusetts Institute of Technology, Cambridge, MA 02139, USA

\medskip

\noindent
\textit{E-mail addresses:}
niuyishuai@bimsa.cn (Yi-Shuai Niu),
artan@mit.edu (Artan Sheshmani),
styau@tsinghua.edu.cn (Shing-Tung Yau)

\end{document}